\newtheoremstyle{nonum}{}{}{\itshape}{}{\bfseries}{.}{ }{\thmnote{#3}}
\newtheorem{thm}{Theorem}[section]
\newtheorem{cor}[thm]{Corollary}
\newtheorem{lem}[thm]{Lemma}
\newtheorem{prop}[thm]{Proposition}
\theoremstyle{definition}
\newtheorem{exm}[thm]{Example}
\newtheorem{defn}[thm]{Definition}
\theoremstyle{nonum}
\theoremstyle{definition}
\newtheorem{rem}[thm]{Remark}
\newcommand{\iprod}[2]{\langle #1,#2 \rangle} 
\newcommand{\kn}{\mathcal{K}^n}   
\newcommand{\R}{\mathbb R}
\newcommand{\C}{\mathbb C}
\newcommand{\T}{\mathbb T}
\def\K{{\cal K}}
\def\A{{\cal A}}
\def\L{{\cal L}}
\def\I{{\cal I}}
\def\J{{\cal J}}
\def\T{{\cal T}}
\def\S{{\cal S}}
\def\cE{{\cal E}}
\begin{document}
\title{Order isomorphisms on convex functions in windows}
\author{S. Artstein-Avidan, D. Florentin and V. Milman
\thanks{The research was supported in part by Israel Science Foundation: first
and second named authors by grant No. 865/07, third named author by
grant No. 491/04.  The authors were also supported in part by BSF
grant No. 2006079.}}
\maketitle

\begin{abstract}
In this paper we give a characterization of all order isomorphisms
on some classes of convex functions. We deal with the class $Cvx(K)$
consisting of lower-semi-continuous convex functions defined on a
convex set $K$, and its subclass $Cvx_0(K)$ of non negative
functions attaining the value zero at the origin. We show that any
order isomorphism on these classes must be induced by a point map on
the epi-graphs of the functions, and determine the exact form of
this map. To this end we study convexity preserving maps on subsets
of $\R^n$, and also in this area we have some new interpretations,
and proofs. \end{abstract}
\section{Introduction}
In recent years, a big research project initiated by the first and
third named authors has been carried out, in which a
characterization of various transforms by their most simple and
basic properties has been found. Among these are the Fourier
transform (see \cite{AAM} and \cite{AAFM}), the Legendre transform
(see \cite{AM}), polarity for convex sets (see \cite{BS},
\cite{AM2}, \cite{Slo}), the derivative (see \cite{AKM}) and various
other transforms. This paper is part of this effort.

One example for such characterization is the understanding of
bijective order isomorphisms for certain partially ordered sets (see
Section \ref{Sect_Background} for definitions and details). In this
category one includes the Legendre transform, which is, up to linear
terms, the unique bijective order reversing map on the set of all
convex (lower-semi-continuous) functions on $\R^n$, with respect to
the pointwise order. In this paper we discuss analogous results
where the convex functions are defined on a ``window'', that is on a
convex set $K\subseteq\R^n$, and several other variants as well.

One main tool in the proof of such results is the fundamental
theorem of affine geometry, which states that an injective mapping
on $\R^n$ which sends lines to lines, and whose image is not
contained in a line, must be affine linear. When working with
``windows'', one immediately encounters a need for a similar theorem
for maps defined on a subset of $\R^n$. Such  theorems exist in the
literature, and a mapping which maps intervals to intervals on a
subset of $\R^n$ must be of a very specific form, which we call here
``fractional linear'', discussed in Section \ref{Sect_I.P.M}. A
remark about this name is in need: the mappings are of the form:
$\frac{Ax+b}{\langle c,x\rangle +d}$ for $A\in L_n(\R), b,c\in
\R^n$ and $d\in \R$, with some extra restriction. In the literature, the name ``fractional linear
maps'' sometimes refers to M\"{o}bius transformations, which is not
the case here (note that a M\"{o}bius transformation on a subset of
${\mathbb C} = \R^2$ does not preserve intervals, but these notions
are indeed connected - see Example \ref{example-mobious}). One could
name them ``permissable projective transformation'' but we prefer to
think about them exclusively in $\R^n$ and not on the projective
space. Another option was to call them ``convexity preserving
maps'', which describes their action rather than their functional
form, but this hides the fact that they are of a very simple form.

The classification of interval preserving transforms of a convex
subset of $\R^n$ is known (see \cite{Shiffman}). However, since this
is an essential part for our study of the order isomorphisms on
functions in windows, we dedicate the whole of Section
\ref{Sect_I.P.M} to this topic. We also provide some new insights
and results, and give a seemingly new geometric proof of the main
fact which is that such maps are fractional linear. Some parts of
this section are elementary and may be known to the reader, but we
include them as they too serve as intuition for the way these maps
behave.

In Sections \ref{Sect_Background}-\ref{Sect_Cvx0} we turn to the
main topic of this paper, namely characterization of order
preserving (and reversing) isomorphisms on classes of convex
functions defined on windows. Let $T\subseteq K$ be two closed
convex sets. The class of all lower-semi-continuous convex functions
$\{f:K \to\R\cup\{\infty\}\}$ is denoted $Cvx(K)$, and its subclass
of non negative functions satisfying $f(T)=0$ is denoted $Cvx_T(K)$.
In Section \ref{Sect_Background} we give background on general order
isomorphisms. In Sections \ref{Sect_Cvx}, \ref{Sect_Cvx0} we deal
with characterization of such transforms on $Cvx(K)$ and $Cvx_0(K)$
respectively. In both cases, the proof is based on finding a subset
of convex functions which are extremal, in some sense. The extremal
elements are relatively simple functions, which can be described by
a point in $\R^{n+1}$. We show that an order isomorphism is
determined by its action on the extremal family, and that its
restriction to this family must be a bijection. Therefore the
transform induces a bijective point map on a subset of $\R^{n+1}$.
By applying our uniqueness theorem, we show that this point map must
be fractional linear. We then discuss some generalizations of these
theorems to other classes of non-negative convex functions.

\section{Interval Preserving Maps}\label{Sect_I.P.M}

We start this section with a simple but curious fact which  is
stated again and proved as Theorem \ref{Thm_Polara-By-FL} below.
This fact demonstrates the idea that fractional linear maps should
be a key ingredient in convexity theory. Consider a convex body $K$
(actually any closed convex set will do) which includes the origin
and is included in the half-space $H_1 = \{x_1<1\}\subseteq\R^n$.
One may take its polar, defined by
\[ K^{\circ} = \{ y: \sup_{x\in K}\langle x, y \rangle \le 1\}. \]
Since polarity reverses the partial order of inclusion on closed
convex sets including the origin, $K^{\circ}$ includes the set
$[0,e_1]$ which is the polar of $H_1$. Translate it by $-e_1$ (so
now it includes $[-e_1, 0]$, and in particular includes the origin)
and then take its polar again. In other words, we constructed a map
mapping certain convex sets (which include $0$ and are included in
the half-space $H_1$) to convex sets, given by
\[F(K) =
(K^\circ - e_1)^\circ.\] Clearly this mapping is order preserving.

While polarity is a ``global'' operation, it turns out that this
mapping is actually induced by a {\rm point map} on $H_1$,
$\tilde{F}: H_1 \to \R^n$, which preserves intervals, and can be
explicitly written as $\tilde{F}(x)=\frac{x}{1-x_1}$. (This is a
simple calculation, and for completeness we provide it in the proof
of Theorem \ref{Thm_Polara-By-FL} below.)

This map is a special case of the so called ``fractional linear
maps'' which are the main topic of this section.

\subsection{Definition and simple observations}
\begin{defn} Let $D \subseteq \R^n$. A function $f:D\to\R^n$ is called
an interval preserving  map, if $f$ maps every interval
$[x,y]\subseteq D$ to an interval $[z,w]$.
\end{defn}

\begin{lem}\label{Lem_Edge-To-Edge} Let $D \subseteq \R^n$, and let
$f: D \to \R^n$ be an injective interval preserving map. Then for
every $x,y\in D$ with $[x,y]\subseteq D$ we have that $f([x,y]) =
[f(x), f(y)]$.
\end{lem}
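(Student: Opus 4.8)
The statement to prove is that an injective interval-preserving map $f:D\to\R^n$ sends each interval $[x,y]\subseteq D$ \emph{onto} the full interval $[f(x),f(y)]$, not merely into some interval. By hypothesis $f([x,y])=[z,w]$ for some points $z,w$, so the content is to identify $\{z,w\}$ with $\{f(x),f(y)\}$, i.e.\ to show that the endpoints of $[x,y]$ go to the endpoints of the image interval. The plan is to argue that $f$ restricted to the segment $[x,y]$ is injective (immediate from injectivity of $f$) and that an injective map of a segment onto a segment must carry endpoints to endpoints.

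First I would reduce to one dimension: parametrize $[x,y]$ by $t\mapsto (1-t)x+ty$ for $t\in[0,1]$ and $[z,w]$ by $s\mapsto(1-s)z+sw$ for $s\in[0,1]$ (discarding the degenerate case where $x=y$, or where $z=w$, which forces $x=y$ by injectivity). Then $f$ induces a map $g:[0,1]\to[0,1]$ with $(1-g(t))z+g(t)w = f((1-t)x+ty)$; $g$ is injective because $f$ is. The key structural fact I would extract from interval preservation is that $g$ is \emph{monotone}: for any $0\le t_1<t_2<t_3\le 1$, the middle point $(1-t_2)x+t_2 y$ lies on the segment between the images of the two outer points (since $f$ maps the sub-interval between the outer two points to the interval between their images, and the middle point is in that sub-interval), hence $g(t_2)$ lies between $g(t_1)$ and $g(t_3)$. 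A function on $[0,1]$ with this ``betweenness'' property is monotone, and being injective it is strictly monotone.

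A strictly monotone injection $g:[0,1]\to[0,1]$ satisfies $\{g(0),g(1)\}\subseteq\{0,1\}$: indeed $g(0)$ and $g(1)$ are the min and max (in some order) of $g$ on $[0,1]$, and if, say, $g(0)=c\in(0,1)$ then no point of $[0,1]$ maps below $c$, contradicting surjectivity of $f$ onto all of $[z,w]$ (some point must map to the endpoint on the appropriate side). Since $g$ is injective we cannot have $g(0)=g(1)$, so $\{g(0),g(1)\}=\{0,1\}$, which translates back to $\{f(x),f(y)\}=\{z,w\}$ and hence $f([x,y])=[f(x),f(y)]$.

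The main obstacle is establishing monotonicity of $g$ cleanly from the interval-preserving hypothesis — in particular making precise the claim that the image of a sub-segment sits \emph{between} the images of the outer endpoints. This requires invoking the hypothesis on the sub-interval $[\,(1-t_1)x+t_1y,\ (1-t_3)x+t_3y\,]$ (which is contained in $[x,y]\subseteq D$), observing its image is an interval containing $f$ of both outer points and $f$ of the middle point, and concluding the betweenness relation for the parameters. Once monotonicity is in hand, the endpoint argument is routine. One should also dispatch the trivial cases ($x=y$) at the outset.
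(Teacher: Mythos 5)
There is a genuine gap at the monotonicity step, and it is the crux of the whole lemma. To show that $g(t_2)$ lies between $g(t_1)$ and $g(t_3)$ you appeal to the claim that ``$f$ maps the sub-interval between the outer two points to the interval between their images.'' But that is exactly the statement of the lemma, applied to the sub-interval $[p_1,p_3]$ where $p_i=(1-t_i)x+t_iy$: the hypothesis only guarantees that $f([p_1,p_3])$ equals \emph{some} interval $[u,v]$, with no control on which points of $[p_1,p_3]$ map to its endpoints. Your closing remark makes the gap explicit but does not close it: from the observation that $f([p_1,p_3])$ is an interval containing $f(p_1)$, $f(p_2)$, $f(p_3)$ one cannot conclude any betweenness relation among these three collinear points --- an interval containing three points is compatible with any one of them sitting in the middle. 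So as written the argument is circular.

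The missing ingredient is an injectivity argument of the following kind (which is precisely how the paper proves the lemma, directly and without the parametrization detour): if, say, $f(p_1)$ lay strictly between $f(p_2)$ and $f(p_3)$, then $f([p_2,p_3])$, being an interval containing both $f(p_2)$ and $f(p_3)$, would contain the segment $[f(p_2),f(p_3)]$ and hence $f(p_1)$; this produces a point $c\in[p_2,p_3]$ with $f(c)=f(p_1)$ and $c\neq p_1$, contradicting injectivity. The paper applies this one-step contradiction directly to the endpoint $y$ (assume $f(y)\in(w,z)$, pick $b,b'$ on either side of $f(y)$ inside $[w,z]$, take preimages $a,a'\in[x,y)$, and observe $f([a,a'])\ni f(y)$), which finishes the proof immediately. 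If you insert the same device to justify betweenness, your monotonicity route does go through (surjectivity of $g$ onto $[0,1]$ is indeed part of the hypothesis, since $f([x,y])=[z,w]$ exactly), but it becomes a longer path to the same key idea.
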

\noindent{\bf Proof.} Indeed, assume that $f([x,y]) = [w,z]$, and
that, say, $f(y) \in (w,z)$. Pick a point $b\in (w, f(y))$, and a
point $b'\in (f(y), z)$. Then for some $a,a' \in (x,y)$, $b = f(a)$
and $b' = f(a')$. Consider $f([a,a'])$. It is an interval that
includes the points $b$ and $b'$, and therefore it includes $f(y)$,
whereas $y\not\in [a,a']$ - in contradiction to the injectivity of
$f$. \qed

\begin{lem}  Let $D \subseteq \R^n$, and let
$f: D \to \R^n$ be an injective interval preserving map. Then the
inverse $f^{-1}: f(D) \to D$ is interval preserving.
\end{lem}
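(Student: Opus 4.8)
The plan is to fix an arbitrary interval $[z,w]\subseteq f(D)$ and to show that its full preimage under $f$ is again an interval, by identifying it explicitly. Since $z,w\in f(D)$ and $f$ is injective, the two points $x:=f^{-1}(z)$ and $y:=f^{-1}(w)$ are well-defined elements of $D$. My aim is to establish
\[ f^{-1}([z,w]) = [x,y], \]
which at once exhibits $f^{-1}([z,w])$ as an interval and thereby proves that $f^{-1}$ is interval preserving.

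The content of the argument is carried by a single application of Lemma \ref{Lem_Edge-To-Edge}. Once one knows that the segment $[x,y]$ lies in $D$, that lemma applies to it and yields $f([x,y]) = [f(x),f(y)] = [z,w]$. Both inclusions of the displayed identity then drop out simultaneously: applying $f^{-1}$ to the equality $f([x,y])=[z,w]$ — which is legitimate because $f$ is injective and $[x,y]\subseteq D$ — gives exactly $f^{-1}([z,w]) = [x,y]$. In particular no separate betweenness or collinearity computation is required; injectivity converts the one set equality $f([x,y])=[z,w]$ into the desired statement about preimages, and the endpoints of $[z,w]$ correspond under $f^{-1}$ to the endpoints of $[x,y]$.

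Thus the whole proof reduces to one step, which I expect to be the main obstacle: showing that the segment $[x,y]$ joining the two preimages is itself contained in $D$, so that Lemma \ref{Lem_Edge-To-Edge} is available. This is the only place where the geometry of the domain $D$ enters. The inclusion must be extracted from the hypothesis $[z,w]\subseteq f(D)$ together with the interval-preserving property of $f$; heuristically, the fact that the entire segment $[z,w]$ — not merely its endpoints — lies in the image should force the preimage data over $[z,w]$ to fill out a genuine segment inside $D$. I would try to make this precise by a connectedness argument along $[z,w]$: parametrize it as $u(t)=(1-t)z+tw$, consider $p(t):=f^{-1}(u(t))\in D$ (defined for every $t$ since $[z,w]\subseteq f(D)$), and transport short subsegments of $[z,w]$ back through $f^{-1}$ so as to propagate membership in $D$ outward from the endpoints $x$ and $y$ and to force the path $p(t)$ to trace the straight segment $[x,y]$. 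Securing this last claim is the crux of the whole lemma.
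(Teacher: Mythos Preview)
Your reduction is exactly the paper's: take $[z,w]\subseteq f(D)$, set $x=f^{-1}(z)$, $y=f^{-1}(w)$, and invoke Lemma~\ref{Lem_Edge-To-Edge} to get $f([x,y])=[z,w]$, whence $f^{-1}([z,w])=[x,y]$ by injectivity. The paper's proof stops right there; it never addresses the inclusion $[x,y]\subseteq D$, but simply writes the given interval as $[f(a),f(b)]$ and applies Lemma~\ref{Lem_Edge-To-Edge} as though $[a,b]\subseteq D$ were automatic.

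You are right to flag this as the crux, but your proposed connectedness argument cannot succeed: the inclusion $[x,y]\subseteq D$ --- and with it the lemma as literally stated --- is \emph{false} for general $D$. Take $D=[0,1)\cup[2,3]\subset\R$ and define $f(t)=t$ on $[0,1)$ and $f(t)=t-1$ on $[2,3]$. This $f$ is injective and interval preserving (every interval contained in $D$ lies entirely in one of the two pieces, on each of which $f$ is affine), and $f(D)=[0,2]$. Yet for $[\tfrac12,\tfrac32]\subseteq f(D)$ one has $f^{-1}\bigl([\tfrac12,\tfrac32]\bigr)=[\tfrac12,1)\cup[2,\tfrac52]$, which is not an interval. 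So no argument from the stated hypotheses alone can force $[x,y]\subseteq D$; in this example your parametrized path $p(t)=f^{-1}(u(t))$ genuinely jumps, and there is no continuity of $f^{-1}$ available at this stage to rule that out.

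In practice the paper works throughout with convex (or open) domains $D$; see the hypotheses of the surrounding lemmas and of Theorem~\ref{Thm_FL-Unique-nD}. Under convexity the inclusion $[x,y]\subseteq D$ is trivial, and then both your argument and the paper's collapse to the same one-line application of Lemma~\ref{Lem_Edge-To-Edge}. The step you singled out as ``the crux of the whole lemma'' is really an unstated standing hypothesis, not something to be proved.
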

\noindent{\bf Proof.} Let $I = [f(a),f(b)]$ be an interval in the
image, and $f(c)\in I$. From Lemma \ref{Lem_Edge-To-Edge}
$f([a,b])=I$, so by injectivity, $c\in [a, b]$. \qed

\begin{rem} Clearly, an interval preserving map $f$ must be
convexity preserving, i.e. $f$ must map every convex set $K$ to a
convex set $f(K)$. We will actually need the opposite direction,
given in the following lemma.
\end{rem}

\begin{lem}\label{Lem_Inv-is-CPM} Let $D\subseteq\R^n$ be a convex
set, and let $f:D\to \R^n$ be an injective interval preserving map.
Then the inverse image of a convex set in $f(D)$ is convex.
\end{lem}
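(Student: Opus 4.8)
The plan is to show that if $C \subseteq f(D)$ is convex, then $f^{-1}(C)$ is convex. Take two points $p, q \in f^{-1}(C)$; I must show $[p,q] \subseteq f^{-1}(C)$. Since $D$ is convex, the interval $[p,q]$ lies in $D$, so by Lemma \ref{Lem_Edge-To-Edge} we have $f([p,q]) = [f(p), f(q)]$. Now $f(p), f(q) \in C$ and $C$ is convex, so $[f(p), f(q)] \subseteq C$, which gives $f([p,q]) \subseteq C$, hence $[p,q] \subseteq f^{-1}(C)$. That is the whole argument, and it is short; the substantive content has already been extracted into Lemma \ref{Lem_Edge-To-Edge}, which converts the \emph{a priori} weak hypothesis (intervals go to \emph{some} interval) into the strong statement (endpoints go to endpoints).

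The one subtlety I would be careful about is the role of convexity of $D$: it is exactly what guarantees that $[p,q] \subseteq D$, so that Lemma \ref{Lem_Edge-To-Edge} applies. Without it, $[p,q]$ need not be a subset of the domain and the conclusion can fail. I would also note that $f^{-1}(C)$ here means the preimage $\{x \in D : f(x) \in C\}$, so there is no circularity with the inverse map — injectivity of $f$ is not even strictly needed for this direction beyond what Lemma \ref{Lem_Edge-To-Edge} already assumes.

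If anything counts as "the main obstacle," it is purely bookkeeping: making sure we invoke the right lemma. One might be tempted to argue via the previous lemma (that $f^{-1}$ is interval preserving on $f(D)$), but that only handles preimages of \emph{intervals}, not of arbitrary convex sets, and moreover $f(D)$ need not be convex so "$f^{-1}$ interval preserving" does not automatically upgrade to "$f^{-1}$ convexity preserving." Working directly from Lemma \ref{Lem_Edge-To-Edge} sidesteps this and keeps the proof a couple of lines long.
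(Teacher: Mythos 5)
Your proof is correct and is essentially identical to the paper's: both take two points in the preimage, use convexity of $D$ to place the segment inside the domain, apply Lemma \ref{Lem_Edge-To-Edge} to get $f([p,q])=[f(p),f(q)]\subseteq C$, and conclude. Your side remarks on the role of convexity of $D$ and why the previous lemma would not suffice are accurate but not needed for the argument itself.
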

\noindent{\bf Proof.} Let $K\subseteq f(D)$ be a convex set, and let
$x,y\in f^{-1}(K)$. We wish to show that $[x,y]\subseteq f^{-1}(K)$.
The interval $[x,y]$ is contained in the convex domain $D$, and by
Lemma 2.2 we know that \[ f([x,y]) = [f(x), f(y)]\subseteq K,\]
where the last inclusion is due to convexity of $K$. This implies
$[x,y]\subseteq f^{-1}(K)$. \qed

\begin{lem}\label{Lem_Int-Pres-Continuous} Let $D\subseteq\R^n$ be an
open domain and $f:D\to\R^n$ an injective interval preserving map,
then $f$ is continuous.
\end{lem}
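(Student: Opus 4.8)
The plan is to show that an injective interval preserving map $f$ on an open convex domain $D$ is continuous by first establishing that $f$ is monotone along each line segment, and then upgrading this one-dimensional control to joint continuity using the open-domain and convexity hypotheses. Fix a point $x_0\in D$ and a small ball $B=B(x_0,r)\subseteq D$. For any unit vector $u$, the restriction of $f$ to the chord through $x_0$ in direction $u$ is an injective interval preserving map of a one-dimensional interval into $\R^n$; by Lemma \ref{Lem_Edge-To-Edge} it maps subintervals to subintervals with endpoints going to endpoints, so its image is a genuine (possibly degenerate, but injectivity rules that out) segment traversed monotonically. In particular along each such chord $f$ is a homeomorphism onto a segment, hence continuous in the one-variable sense. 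This already gives continuity of $f$ restricted to any line, but not yet joint continuity.

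Next I would exploit convexity in dimension $\ge 2$ to get genuine joint continuity at $x_0$. Consider a small simplex (or cross-polytope) $\Delta\subseteq B$ with $x_0$ in its interior and vertices $v_0,\dots,v_n$. By Lemma \ref{Lem_Edge-To-Edge} each edge $[v_i,v_j]$ maps to the segment $[f(v_i),f(v_j)]$, and by the remark preceding Lemma \ref{Lem_Inv-is-CPM}, $f(\Delta)$ is convex; combined with $f([v_i,v_j])=[f(v_i),f(v_j)]$ one sees $f(\Delta)\subseteq \mathrm{conv}\{f(v_0),\dots,f(v_n)\}$, a bounded set. Moreover, for a point $p$ near $x_0$, writing $p$ as a convex combination of the vertices with coefficients close to those of $x_0$, I want to squeeze $f(p)$: $p$ lies on a chord of $\Delta$ through one of the faces, and iterating the one-dimensional monotonicity along a chain of at most $n$ chords connecting $x_0$ to $p$ through points whose images we can pin down, the image $f(p)$ is forced into a small neighbourhood of $f(x_0)$. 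Making this quantitative: given $\eps>0$, continuity along each of finitely many reference chords provides a $\delta$ so that moving a controlled amount along any chord moves the image by at most $\eps/n$; chaining $n$ such moves and using the triangle inequality bounds $\|f(p)-f(x_0)\|$ by $\eps$ whenever $\|p-x_0\|<\delta'$ for a suitable $\delta'$.

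The main obstacle, and where I would spend the most care, is the passage from continuity-along-lines to joint continuity: a function can be separately (even linearly-) continuous along every line through a point yet fail to be continuous there, so the argument genuinely needs the interval preserving hypothesis (not just line-continuity) together with convexity of the domain. The key leverage is that $f$ does not merely send lines to lines but sends the whole configuration of nested segments to a nested configuration with matching endpoints, so the image of a small convex neighbourhood of $x_0$ is trapped inside the convex hull of the images of finitely many nearby points, and that convex hull shrinks to $f(x_0)$ as those points approach $x_0$ along their respective chords. Once the ``trapping'' inclusion $f(p)\in\mathrm{conv}\{f(q_1),\dots,f(q_m)\}$ is set up with each $q_i\to x_0$, diameter bounds on convex hulls plus one-dimensional continuity close the argument. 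I would also remark that openness of $D$ is used only to guarantee that $x_0$ is interior to such a simplex $\Delta\subseteq D$; on the relative interior of a lower-dimensional convex domain the same proof applies within the affine hull.
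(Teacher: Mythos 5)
Your proof is essentially correct, but it takes a genuinely different and longer route than the paper. The paper's argument is a three-line reduction to Lemma \ref{Lem_Inv-is-CPM}: take an open ball $B_y$ around $y=f(x)$; its preimage is convex, and if $x$ were not interior to it one could pick $x_{out}$ with $[x,x_{out}]\cap f^{-1}(B_y)=\{x\}$, whence $f([x,x_{out}])=[f(x),f(x_{out})]$ would be a nondegenerate segment meeting the open ball $B_y$ only at its endpoint $y$ --- impossible. Your route instead establishes one-dimensional continuity along chords (via the monotone bijection of a segment onto $[f(a),f(b)]$, which is sound) and then traps $f(p)$ for $p$ near $x_0$ inside $\mathrm{conv}\{f(w_0^t),\dots,f(w_n^t)\}$, where $w_i^t=x_0+t(v_i-x_0)$ are vertices of a shrinking simplex lying on $n+1$ \emph{fixed} chords; since $f(x_0)$ also lies in that hull and its diameter tends to $0$ by chord-continuity, continuity at $x_0$ follows. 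What your approach buys is an explicit, quantitative picture; what it costs is length and two soft spots you should repair. First, the intermediate ``chain of at most $n$ chords'' paragraph is not sound as written: the chords in the chain depend on $p$, so separate continuity along each member of a \emph{continuum} of chords gives no uniform $\delta$ --- exactly the separate-versus-joint continuity trap you yourself flag. Discard that paragraph entirely; your final trapping argument, which uses only finitely many fixed chords, is the correct mechanism. Second, the inclusion $f(\Delta)\subseteq\mathrm{conv}\{f(v_0),\dots,f(v_n)\}$ does not follow from ``$f(\Delta)$ is convex and edges go to edges'' (that yields the reverse inclusion $\mathrm{conv}\{f(v_i)\}\subseteq f(\Delta)$); prove it instead by induction on the dimension of the faces, writing $p\in[v_0,q]$ with $q$ in the opposite face and applying Lemma \ref{Lem_Edge-To-Edge} repeatedly, as the paper does in the proof of Corollary \ref{Cor_genFL-On-Simplex}. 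With those two repairs your proof is complete.
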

\noindent{\bf Proof.} Let us prove that $f$ is continuous at a point
$x\in D$. We may assume that $D$ is convex (restrict $f$ to an open
convex neighborhood of $x$). Let $y=f(x)\in f(D)$ and $B_y$ an open
ball containing $y$. If $x\in int(f^{-1}(B_y))$ we are finished (we
have a neighborhood of $x$ that is mapped into $B_y$). We claim this
must be the case. Indeed, assume otherwise, then $x$ is on the
boundary of the set $f^{-1}(B_y)$, which by Lemma
\ref{Lem_Inv-is-CPM}, is convex. Let $x_{out} \in D$ such that $[x,
x_{out}]\cap f^{-1}(B_y)=\{x\}$. Then $f([x, x_{out}])$ is an
interval $I\subseteq f(D)$ such that $I\cap B_y=\{y\}$, but no such
interval exists, since $B_y$ is open. \qed

\subsection{Fractional linear maps}
Clearly, linear maps are interval preserving. It turns out that when
the domain of the map is contained in a half-space of $\R^n$, there
is a larger family of (injective) interval preserving maps. Indeed,
fix a scalar product $\iprod{\cdot}{\cdot}$ on $\R^n$, let $A\in
L_n(\R)$ be a linear map, $b,c \in \R^n$ two vectors and $d \in \R$
some constant, then the map
 \[ v \mapsto \frac{1}{\iprod{c}{v} + d} (Av + b)\]
is defined on the open half space $\iprod{c}{v} < - d$ and is
interval preserving. One can check interval preservation directly,
or deduce it from the projective description in Section
\ref{Sect_Proj-Desc}, as well as an injectivity argument. A
necessary and sufficient condition for this map to be injective is
that the associated matrix $\hat{A}$ (defined below) is invertible.
The matrix $A$ itself need not be invertible, for an example see
Remark \ref{Rem_example-not-inv}. We call these maps ``fractional
linear maps'', see the introduction for a remark about this name.

\subsubsection{Projective description}\label{Sect_Proj-Desc}
Consider the projective space $\R P^n = P(\R^{n+1})$, the set of
$1$-dimensional subspaces of $\R^{n+1}$. It is easily seen that
\[ \R P^n = \R^n \cup \R P^{n-1},\]
where one can geometrically think of $\R^n$ as $\R^n\times
\{1\}\subseteq \R^{n+1}$, so that each line which is not on the
hyperplane $e_{n+1}^{\perp}(\approx \R^n)$, intersects the shifted
copy of $\R^{n}$ at exactly one point. The lines which lie on
$e_{n+1}^{\perp}$ are thus lines in an $n$-dimensional subspace, and
are identified with $\R P^{n-1}$.

Any linear transformation on $\R^{n+1}$ induces a transformation on
$\R P^n$, mapping lines to lines. Thus it induces in particular a
map $F: \R^n \to \R^n \cup \R P^{n-1}$. It is easily checked that
the part mapped to $\R P^{n-1}$ is either empty - in which case the
induced transformation on $\R^n$ is linear, or an affine hyperplane
$H$ - in which case the induced transformation $F: \R^n \setminus
H\to \R^n$ is fractional linear.

Indeed, if the matrix associated with the original transformation in
${\cal L}(\R^{n+1}, \R^{n+1})$ is $\hat{A} \in GL_{n+1}$, then the
hyperplane in $\R^n$ mapped to $e_{n+1}^{\perp}$ is simply $\{ x\in
\R^n: (\hat{A}(x,1))_{n+1}= 0\}$. If $\hat{A}$ is given by
\[ \hat{A} = \left(
         \begin{array}{cc}
           A & b \\
           c^T & d \\
         \end{array}\right)\]
with $A\in M_{n\times n}$, $b,c\in\R^n$ and $d\in\R$, then the set
$\{x:\,\iprod{c}{x}+d \neq0\}\subset\R^n$ is exactly the pre image
of $\R P^{n-1}$ under $F$. It is an $n-1$ dimensional subspace if
$c\neq0$, and empty if $c=0$ ($\hat{A} \in GL_{n+1}$ implies
$d\neq0$ in that case).

Pick any vector $x\in \R^n$ which is not in this hyperplane, then it
is mapped to  $y = \hat{A}(x,1) \in \R^{n+1}$ which has a
non-vanishing  $(n+1)^{th}$ coordinate, $y_{n+1} = \iprod{c}{x}+d$.
Normalize $y \to y/y_{n+1}$, so that the last coordinate is $1$, and
consider only the first $n$ coordinates of this vector (we denote
the projection to the first $n$ coordinates by $P_n$). Thus, under
the above map, $x$ is mapped to \begin{equation}\label{eq:F}
 F(x) =
P_{n} \left(\hat{A} (x,1)/(\hat{A}(x,1))_{n+1}\right) = \frac{Ax +
b}{ \iprod{c}{x} + d }.\end{equation}

 Denote the domain of the map
by $D\subseteq\R^n$. Clearly, if $D=\R^n$, the condition $\phi(x) =
(\hat{A}(x,1))_{n+1}\neq 0$ for all $x\in D$ implies that this
(affine linear) function $\phi(x)$ is constant, which means that our
induced map is affine linear. However, when $D$ is contained in a
half space (for example, if $D$ is a convex set strictly contained
in $\R^n$), there are many choices of $\hat{A}$ which satisfy this
condition. Indeed, $(\hat{A}(x,1))_{n+1}= \iprod{c}{x} + d$ for some
$c\in \R^n$ and $d \in \R$ ($(c,d)$ is the $(n+1)^{th}$ row of
$\hat{A}$), and the condition is that \[\forall x \in D \qquad
\iprod{c}{x} \neq -d,\] which can be satisfied for appropriate
chosen $c$ and $d$; for every direction $c$ in which $D$ is bounded,
there exists a critical $d$ such that from this value onwards the
condition is satisfied (the critical $d$ may or may not be chosen,
depending on the boundary of $D$). Other ways of describing these
maps will be given in Section \ref{Sect_More-Reps}.

\noindent{\bf Notation.} For future reference we denote the map $F$
associated with a matrix $\hat{A}$ by $F_A$ and the matrix $\hat{A}$
associated with a map $F$ by $A_F$. Note that $\hat{A}$ is defined
uniquely up to a multiplicative constant. We say that $A_F$ induces
$F$, and may also write $F\sim A_F$.

\begin{exm}\label{example-mobious} We are, in fact, very much
familiar with one class of projective transformations: M\"{o}bius
transformations of the extended complex plane. These are just
projective transformations of the complex projective line $P^1(\C)$
to itself. We describe points in $P^1(\C)$ by homogeneous
coordinates $[z_0, z_1]$, and then a projective transformation
$\tau$ is given by $\tau ([z_0, z_1]) = [az_0 + bz_1, cz_0 + dz_1]$
where $ad - bc \not= 0$. This corresponds to the invertible linear
transformation
\[ T = \left(
         \begin{array}{cc}
           a & b \\
           c & d \\
         \end{array}
       \right).\]

It is convenient to write $P^1(\C) = \C\cup\{+\infty\}$ where the
point $\{+\infty\}$ is now the 1-dimensional space $z_1 = 0$. Then
if $z_1 \not= 0$, $[z_0, z_1] = [z, 1]$ and $\tau ([z, 1]) = [az +
b, cz + d]$ and if $cz + d \not= 0$ we can write $\tau ([z, 1]) = [
(az + b)/(cz + d) , 1]$ which is the usual form of a M\"{o}bius
transformation, i.e. \[ z \mapsto \frac{ az + b}{ cz + d}.\]

The advantage of projective geometry is that the point $1 = [1, 0]$
plays no special role. If $cz + d = 0$ we can still write $\tau ([z,
1]) = [az + b, cz + d] = [az + b, 0] = [1, 0]$ and if $z = 1$ (i.e.
$[z_0, z_1] = [1, 0]$) then we have $\tau([1, 0]) = [a, c]$. In this
note, however, we work over $\R$ and these transformations when
considered as acting over $\R^2$ do not preserve intervals.
\end{exm}

\begin{exm} If we view the real projective plane $P^2(\R)$ in the
same way, we get some less familiar transformations. Write $P^2(\R)
= \R^2 \cup P^1(\R)$ where the projective line at infinity is $z =
0$. A linear transformation $T:\R^3 \to \R^3$ can then be written as
the matrix \[ T =\left(\begin{array}{ccc}
                        a_{11} & a_{12} & b_{1}\\
                        a_{21} & a_{22} & b_{2}\\
                        c_{1}  & c_{2}  & d    \\
                        \end{array}\right),\]
and its action on $[x, y, 1]$ can be expressed, with $v = (x, y) \in
\R^2$, as \[ v \to \frac{ 1}{\iprod{c}{v} + d } (Av + b)\] where $A$
is the matrix $\{a_{ij}\}$ and $b, c$ are the vectors $(b_1, b_2)$,
$(c_1, c_2)$.  Each such transformation can be considered as a
composition of an invertible linear transformation, a translation
and an inversion $v \to  v/(\iprod{c}{v} + d)$. Clearly it is easier
here to consider projective transformations defined by $3\times 3$
matrices.
\end{exm}

\begin{rem}\label{Rem_example-not-inv} Consider the matrix
\[\hat{A} =\left(\begin{array}{ccc}
                  1 & 0 & 0\\
                  0 & 0 & 1\\
                  0 & 1 & 0\\
                 \end{array}\right).\] It gives rise to the
transformation \[(x,y)\mapsto
\left(\frac{x}{y},\frac{1}{y}\right),\] which is injective (where it
is defined). The upper $2\times 2$ block (or $n\times n$, in the
general case) is not invertible, though. We will get back to this
transformation in later sections.
\end{rem}

\subsubsection{Basic properties}\label{Sect_Basic-Prop}
\begin{enumerate}[]
\item 1. {\em Preservation of intervals.} It is very easy to check
that the map $F$ defined above in (\ref{eq:F}) preserves intervals.
Indeed, an interval in $\R^n$ is a subset of a line, which
corresponds to a two dimensional plane in $\R^{n+1}$. The latter is
mapped by  $A_F$ to a two dimensional plane, and after the radial
projection to the level $x_{n+1}=1$ we again get a line.

\item 2. {\em Maximal domain.} A non affine fractional linear
map $F$ can be extended to a half space. The only restriction is
that for $x \in D$ one has $\iprod{c}{x} \neq -d$, that is, $D$
cannot intersect some given affine hyperplane $H$. Since we are
interested in a convex domain, we must choose one side, which means
the domain can be extended to a half space. It is not immediately
clear why it cannot be extended further. To see why it cannot be
extended further {\em while preserving intervals}, consider a point
$x_0 \in H$. We shall see that there is no way to define $F$ on
$x_0$. Indeed, take two rays emanating from $x_0$ into the domain of
$F$, say $H^{+}$ (and not on $H$); $\{x_0 + \lambda y: \lambda>0\},
\{x_0+\lambda y': \lambda>0\}$. The fact that the rays are not on
$H$ means that $\iprod{c}{y}\neq0$, likewise for $y'$. Moreover,
$\iprod{c}{y}$ and $\iprod{c}{y'}$ have the same sign (say,
positive, if we are in $H^+$). Assume $F(x) =
\frac{Ax+b}{\iprod{c}{x} + d}$. Remember $\iprod{c}{x_0} = -d$. Then
$F(x_0+\lambda y)= \frac{A(x_0+\lambda y)+b}{\iprod{c}{x_0 + \lambda
y} + d}=\frac {Ay}{\iprod{c}{y}} + \frac{1}
{\lambda}\cdot\left(\frac{Ax_0+b} {\iprod{c}{y}}\right)$, and
similarly $F(x_0+\lambda y')= \frac {Ay'}{\iprod{c}{y'}} +
\frac{1}{\lambda}\cdot\left(\frac{Ax_0+b} {\iprod{c}{y'}}\right)$.
We see the two rays are mapped to two parallel half lines, which by
injectivity of $F$ are not identical, and therefore $F(x_0)$ cannot
be chosen so that it lies on both these lines. This means $F$ cannot
be extended to a domain which intersects $H$, and still preserve
intervals.

\item 3. {\em The image}. The image of a (non-affine) fractional
linear map $F$, whose domain is maximal (meaning it is an open half
space) is an open half space. Indeed, let $\hat{A}=A_F$ be the
associated matrix, and let $\hat{A}(\{ (x,1): x\in \R^n\}) = E
\subseteq \R^{n+1}$. Then the image of $F$ is the radial projection
into $\{ (x,1): x\in \R^n\}$ of the part of $E$ with positive
$(n+1)^{th}$ coordinate. It is easily checked that this is a half
space in $\{ (x,1): x\in \R^n\}\sim\R^n$, whose boundary is the
hyperplane \[\partial (Im(F)) = \{Ax:\iprod{c}{x}=1\}, \quad
\mbox{where}\quad \hat{A} = \left(\begin{array}{cc}
                                   A & b \\
                                   c^T & d \\
                                  \end{array}\right).\] Note that it
does {\em not} depend on $b$ and $d$.

\item 4. {\em Composition.} It is easily checked that $A_{F\circ G}
= A_F \cdot A_G$. In particular, the composition of two fractional
linear maps is again a fractional linear map. As for the domains:
The maximal domain of each of the maps is a half space, and so is
the image, thus the map is formally defined only on
$G^{-1}(Im(G)\cap dom(F))$, and by the previous remarks it can be
extended to be defined on some half space.

\item 5. {\em The inverse map.} It is easily checked that
$A_{F^{-1}} = A_{F}^{-1}$ and in particular, every fractional linear
map has an inverse, which is also fractional linear. The domain of
$F^{-1}$ is the image of $F$, which by previous remarks is exactly
the radial projection into $\{ (x,1): x\in \R^n \}$ of the part of
$E = A(\{ (x,1): x\in \R^n \})$ with, say, positive $(n+1)^{th}$
coordinate.
\end{enumerate}

\subsubsection{More properties}
To continue we first need two properties of fractional linear maps,
given in Lemma \ref{Lem_Transitivity-n+2} and Lemma
\ref{Lem_Mizdahim-Interior}. The first is a transitivity result.

\begin{lem}\label{Lem_Transitivity-n+2} Fix a point $p$ in the
interior of the simplex $\Delta = \{ z=\sum z_ie_i: 0\le z_i,
\sum_{i=1}^nz_i\le 1\}$, where $\{e_i\}_{i=1}^n$ is the standard
basis of $\R^n$. Given $n+2$ points, $x_0, x_1, \ldots, x_{n}, y$ in
$\R^n$ such that $y$ is in the interior of $conv(x_i)_{i=0}^{n}$,
there exists an open convex domain $D$ which contains the points and
a fractional linear map $F: D \to \R^n$ such that for $1\le i\le n$,
$F(x_i) = e_i$, $F(x_{0}) = 0$ and $F(y) = p$.
\end{lem}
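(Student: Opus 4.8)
The plan is to reduce the problem to two successive applications of ordinary affine geometry, using a single non‑affine fractional linear map as the ``bridge'' between them. The key observation is that a fractional linear map $F$ with associated matrix $\hat A = \left(\begin{smallmatrix} A & b \\ c^{T} & d\end{smallmatrix}\right)$ acts, via the projective picture of Section~\ref{Sect_Proj-Desc}, as an \emph{arbitrary} projective transformation of $\R P^{n}$ once we are allowed to choose $\hat A \in GL_{n+1}$ freely (subject only to the domain avoiding the hyperplane $\{\iprod{c}{x}+d=0\}$). So the real content is: given the $n+2$ points $x_{0},\dots,x_{n},y$ with $y \in \operatorname{int} \operatorname{conv}(x_{i})_{i=0}^{n}$ and the fixed target point $p \in \operatorname{int}\Delta$, find $\hat A \in GL_{n+1}$ carrying $x_{i} \mapsto e_{i}$ ($1\le i\le n$), $x_{0}\mapsto 0$, $y\mapsto p$, and then check that the maximal (half‑space) domain of the induced map can be taken to contain all $n+2$ points, which forces it to contain their convex hull, and we let $D$ be a bounded open convex neighborhood of that hull inside the half‑space.

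First I would handle the combinatorial/affine step. Since $y$ lies in the interior of $\operatorname{conv}(x_{i})_{i=0}^{n}$, the points $x_{0},\dots,x_{n}$ are affinely independent, i.e.\ they span a genuine $n$‑simplex, and $y = \sum_{i=0}^{n}\lambda_{i}x_{i}$ with all $\lambda_{i}>0$, $\sum \lambda_{i}=1$. There is a unique affine map $L$ of $\R^{n}$ sending $x_{0}\mapsto 0$ and $x_{i}\mapsto e_{i}$ for $1\le i\le n$; under $L$ the point $y$ goes to the point $q := (\lambda_{1},\dots,\lambda_{n})$, which lies in the interior of the standard simplex $\Delta$ (because $\lambda_{0} = 1-\sum_{i=1}^{n}\lambda_{i}>0$ and each $\lambda_{i}>0$). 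So after this affine normalization the task becomes: produce a fractional linear self‑map $G$ of (a neighborhood of) $\Delta$ fixing $0$ and each $e_{i}$ and sending $q\mapsto p$, where $p,q$ are \emph{both} interior points of $\Delta$; then $F := G\circ L$ works, since the composition of an affine map with a fractional linear map is fractional linear (Basic property~4).

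Now for the heart of the matter: a fractional linear map fixing the $n+1$ points $0, e_{1},\dots,e_{n}$. In homogeneous coordinates these are the $n+1$ ``vertices'' $[e_{n+1}], [e_{1}],\dots,[e_{n}]$ of the fundamental simplex in $\R P^{n}$, so a linear map $\hat A$ fixing all of them projectively must be \emph{diagonal}: $\hat A = \operatorname{diag}(t_{1},\dots,t_{n},t_{n+1})$ with all $t_{j}\neq 0$. The induced map on $\R^{n}$ is then $x = (x_{1},\dots,x_{n}) \mapsto \bigl(\tfrac{t_{1}x_{1}}{\iprod{c}{x}+t_{n+1}},\dots,\tfrac{t_{n}x_{n}}{\iprod{c}{x}+t_{n+1}}\bigr)$ where $\iprod{c}{x} = \sum_{i=1}^{n}(t_{i}-t_{n+1})x_{i}$ — wait, more cleanly: the $(n+1)$st homogeneous coordinate of $\hat A(x,1)$ is just $t_{n+1}$, so in fact with a diagonal $\hat A$ the denominator is constant and $G$ is merely the \emph{linear} map $x_{i}\mapsto (t_{i}/t_{n+1})x_{i}$, which cannot move an interior point of $\Delta$ to another interior point while fixing the vertices. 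This shows I must \emph{not} require $0$ and the $e_{i}$ to be honest fixed points of $\hat A$ as linear data — rather I should use the extra projective freedom: choose $\hat A$ so that $[e_{n+1}]\mapsto[e_{n+1}]$, $[e_{i}]\mapsto[e_{i}]$ \emph{as projective points}, which is automatic for diagonal $\hat A$, but realize that the genuinely new maps come from first applying a \emph{projective} transformation that permutes the roles of the $n+2$ points $e_{1},\dots,e_{n},0$ and one auxiliary point. The clean way, which I would adopt, is the classical fact that the projective group $PGL_{n+1}$ acts simply transitively on \emph{ordered $(n+2)$-tuples of points in general position}; here $\{0,e_{1},\dots,e_{n},q\}$ and $\{0,e_{1},\dots,e_{n},p\}$ are two such tuples (general position because $q$, resp.\ $p$, is interior to the simplex and hence off every facet hyperplane), so there is a unique projective transformation carrying the first to the second, it fixes $0$ and the $e_{i}$, sends $q\mapsto p$, and is represented by some $\hat A\in GL_{n+1}$.

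The remaining step is to control the domain. Having fixed $\hat A$, the map $G$ is fractional linear with maximal domain the open half‑space $\{x: \iprod{c}{x}+d>0\}$ (choosing the sign so that $q$, equivalently $0$, lies in it — note $\iprod{c}{0}+d = d \ne 0$ since $\hat A$ is invertible, and the value of $\iprod{c}{\cdot}+d$ at $0, e_{1},\dots,e_{n}, q$ is nonzero at each by general position, and we may rescale $\hat A$ so these values are positive; here I need the small lemma that the \emph{signs} at these $n+2$ points can be made simultaneously positive, which holds because $q$ is a positive convex combination of $0,e_{1},\dots,e_{n}$ so $\iprod{c}{q}+d$ is a positive combination of the other $n+1$ values and cannot be the lone sign‑disagreeing one). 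By Basic property~1 this half‑space domain is mapped into $\R^{n}$ preserving intervals, hence is convexity preserving on it; it contains the convex hull $\operatorname{conv}(0,e_{1},\dots,e_{n})$ of the normalized points together with $q$, and I take $D_{0}$ to be any bounded open convex set with $\operatorname{conv}(0,e_{1},\dots,e_{n}) \subset D_{0} \subset \{x:\iprod{c}{x}+d>0\}$. Pulling back through $L$, set $D := L^{-1}(D_{0})$, an open convex set containing $x_{0},\dots,x_{n},y$, and $F := G|_{D_{0}}\circ L|_{D}$; then $F$ is fractional linear (Basic property~4), $F(x_{0})=0$, $F(x_{i})=e_{i}$, $F(y)=p$, as required.

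\medskip

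The step I expect to be the main obstacle is the honest verification of \emph{simple transitivity of $PGL_{n+1}$ on $(n+2)$-tuples in general position} in exactly the form I need — i.e.\ that ``$q$ interior to $\operatorname{conv}(0,e_{1},\dots,e_{n})$'' really does put the tuple $(0,e_{1},\dots,e_{n},q)$ in general position in $\R P^{n}$ (equivalently, homogeneously, that $(e_{n+1},e_{1},\dots,e_{n}, \widetilde q)$ with $\widetilde q = (\lambda_{1},\dots,\lambda_{n},1)$ has every $(n+1)$ of its $n+2$ vectors linearly independent), and the accompanying sign bookkeeping that lets me realize the resulting projective map by a \emph{genuine} fractional linear map whose maximal domain contains all the relevant points rather than separating them. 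Concretely this amounts to checking that the representing matrix $\hat A$, after the unavoidable global rescaling, has its last row $(c,d)$ evaluating positively at each of $0,e_{1},\dots,e_{n},q$; the positive‑combination remark above is what makes this go through, but it is the one place where the hypothesis ``$p,q$ \emph{interior}'' (not merely ``in general position'') is genuinely used, and it must be stated carefully. Everything else — the existence and uniqueness of the affine map $L$, and the fact that composing with $L$ keeps us in the fractional linear class — is routine given the material of Sections~\ref{Sect_Proj-Desc}–\ref{Sect_Basic-Prop}.
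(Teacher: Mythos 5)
Your overall plan --- an affine normalization sending $x_0\mapsto 0$, $x_i\mapsto e_i$, $y\mapsto q\in int(\Delta)$, followed by a fractional linear self-map of (a neighborhood of) $\Delta$ fixing $0,e_1,\dots,e_n$ and sending $q$ to $p$ --- is exactly the paper's plan. The divergence is in the second step: the paper simply writes the matrix down explicitly (diagonal block $A_{i,i}=p_i/z_i$, $b=0$, $d=(1-\sum p_i)/(1-\sum z_i)$, $c_i=p_i/z_i-d$), after which the domain condition is a one-line check, whereas you invoke the classical simple transitivity of $PGL_{n+1}$ on $(n+2)$-tuples in general position and then must verify a posteriori that the representing matrix can be scaled so that its last row is positive at all the relevant points. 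A side error first: in your ``diagonal matrix'' paragraph you misidentify the homogeneous lifts --- the point $e_i\in\R^n$ lifts to $[(e_i,1)]=[e_i+e_{n+1}]$, not to $[e_i]$, so the projective stabilizer of $\{0,e_1,\dots,e_n\}$ is not the group of diagonal matrices but the $n$-parameter family of matrices diagonalizable in the basis $v_0=e_{n+1}$, $v_i=e_i+e_{n+1}$; these are generically non-affine and do move interior points. You discard that branch, but the conclusion you draw from it is false and the ensuing discussion is muddled.

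The genuine gap is in the step you yourself flag as the crux, the sign bookkeeping. You need the affine functional $\ell(x)=\langle c,x\rangle+d$ (last row of $\hat A$ applied to $(x,1)$) to have one fixed sign at \emph{all} of $0,e_1,\dots,e_n$, since only then does affinity of $\ell$ put the whole simplex in one half-space. Your argument --- $\ell(q)$ is a positive combination of $\ell(0),\ell(e_1),\dots,\ell(e_n)$ and so ``cannot be the lone sign-disagreeing one'' --- excludes only the pattern in which the $n+1$ vertex values agree and $q$ dissents; it says nothing about, say, $\ell(e_1)<0<\ell(0),\dots,\ell(e_n)$, which is exactly what must be ruled out. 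Ruling it out requires using that $p$ (not only $q$) is interior: writing $(q,1)=\sum_i\lambda_i v_i$ and $(p,1)=\sum_i\pi_i v_i$ with all $\lambda_i,\pi_i>0$, the requirement $\hat A(q,1)\parallel(p,1)$ for $\hat A$ diagonal in the basis $\{v_i\}$ forces the eigenvalues to be $\mu_i=t\,\pi_i/\lambda_i$, all of one sign, and $\ell(0)=\mu_0$, $\ell(e_i)=\mu_i$. Once you carry out that computation your proof closes --- but at that point you have effectively rederived the paper's explicit matrix, which is why the paper's direct construction is the cleaner route.
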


\begin{rem} By invertibility (see Item 5.~above) an equivalent
formulation is as follows: there exists a fractional linear map $F:
\Delta \to \R^n$ such that for $1\le i\le n$, $F(e_i) = x_i$, $F(0)
=x_{0}$ and $F(p) = y$.
\end{rem}

\begin{rem} From Lemma \ref{Lem_Mizdahim-n+2}, it will follow that
the map in Lemma \ref{Lem_Transitivity-n+2} is unique.
\end{rem}

\begin{rem}\label{Rem_Transitivity-n+2-Classic} Let us compare Lemma
\ref{Lem_Transitivity-n+2} with the more standard transitivity
result of projective geometry, which can be found for example in
\cite{Prasolov-Tichomirov}
(Theorem 2, page 59):\\
Let $A_1, \ldots, A_{n+2}$ and $B_1, \ldots, B_{n+2}$ be two sets of
points in general position in $\R P^n$. Then there exists a unique
projective transformation $f: \R P^n \to \R P^n$ such that $f(A_i) =
B_i$ for $i = 1, \ldots, n+2$. Indeed, they have the same flavor,
however we demand more (in both sets, one point is in the convex
hull of all the others) and get more; the whole convex hull is in
the domain of the fractional linear map (i.e. it is mapped, within
$\R P^n=\R^n\cup\R P^{n-1}$, to the part not in $\R P^{n-1}$).
\end{rem}

\noindent{\bf Proof of Lemma \ref{Lem_Transitivity-n+2}.} First let
us build an affine linear map which maps $x_i$ to $e_i$ for $i = 1,
\ldots, n$ and $x_{0}$ to $0$. This is clearly possible by linear
algebra. So we are left with the following task: given $z$ in the
interior of the simplex, build a fractional linear map $F$ whose
domain contains the simplex, such that $F(e_i) = e_i$, $F(0) = 0$
and $F(z) = p$.

To describe this map $F$, consider its associated matrix $\hat{A}$ in
$GL_{n+1}$. Let us give the matrix elements which produce the
desired map. Let the matrix be given by \[ \hat{A} =
\left(\begin{array}{cccc}
&   &  &   0   \\
& A &  & \vdots\\
&   &  &   0   \\
& c^T &  &   d   \\
\end{array} \right),\] where $A$ is an $n\times n$ matrix, $c\in
\R^n$, and $d\in \R^+$. Let $A$ be the diagonal matrix with diagonal
entries $A_{i,i} = \frac{p_i}{z_i}$, let $d = \frac{1- \sum
p_i}{1-\sum z_i}$, and let the vector $c$ be given by $c_i =
\frac{p_i}{z_i}- d$. The matrix induces a fractional linear map on
the domain $\{ x: \iprod{c}{x} > -d \}$. We must verify that the
points $0, \{e_i\}_{i=1}^n$ are in this domain. Indeed, $d>0$ since
the points are in the simplex, and also $c_i > -d$. Finally, it is
easily checked that the associated fractional linear map satisfies
the desired conditions. \qed

Once we know that the map from Lemma \ref{Lem_Transitivity-n+2}
exists, it follows that it is unique. Indeed, by the Theorem  quoted
in Remark \ref{Rem_Transitivity-n+2-Classic}, there exists only one
fractional linear map which maps $n+2$ given points to another $n+2$
given points. We formulate it below, and for completeness provide
the proof.

\begin{lem}\label{Lem_Mizdahim-n+2} Let $F_1: D_1 \to \R^n$ and
$F_2: D_2 \to \R^n$ be two fractional linear maps, where
$D_i\subseteq \R^n$. Let $\{x_i\}_{i=0}^{n+1}$ be $(n+2)$ points in
$D_1\cap D_2$ such that one is in the interior of the convex hull of
the others. If $F_1(x_i) = F_2(x_i)$ for every $0\le i\le n+1$, then
the two maps coincide on all of $D_1 \cap D_2$ and moreover, are
induced by the same matrix in $GL_{n+1}$ (up to multiplication by a
non-zero scalar).
\end{lem}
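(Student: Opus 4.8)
The plan is to reduce the claim to the classical uniqueness theorem from projective geometry quoted in Remark~\ref{Rem_Transitivity-n+2-Classic}, but one must be careful because that theorem is stated for projective transformations of $\R P^n$, whereas here $F_1$ and $F_2$ are a priori only defined on subsets $D_1, D_2$ of $\R^n$. The first step is therefore to lift: by the projective description in Section~\ref{Sect_Proj-Desc}, each $F_i$ is induced by a matrix $\hat A_i = A_{F_i} \in GL_{n+1}$, well-defined up to a nonzero scalar, and $\hat A_i$ defines an honest projective transformation $\bar F_i$ of $\R P^n$ which agrees with $F_i$ on the lifted points $(x_j,1)$, $0 \le j \le n+1$ (these lifts lie in the domain $\R^n \subseteq \R P^n$ where the projective map has its last homogeneous coordinate nonzero, since $F_i(x_j)$ is defined).

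Next I would check the general-position hypothesis needed to apply the classical theorem. The $(n+2)$ lifted points $(x_0,1),\dots,(x_{n+1},1)$ are in general position in $\R P^n$ precisely because one of the $x_j$ lies in the \emph{interior} of the convex hull of the other $n+1$: those $n+1$ points are then affinely independent (an affine dependence among them would force them into a hyperplane, whose convex hull has empty interior in $\R^n$), which translates to the corresponding $n+1$ lifts being linearly independent in $\R^{n+1}$, and the last point is a strictly positive combination of all $n+1$ lifts, hence not in the span of any $n$ of them. So all $\binom{n+2}{n+1}$ sub-families of $n+1$ lifts are linearly independent — exactly general position. Likewise the images $F_1(x_j) = F_2(x_j)$ share the same general-position property, being the image of an affinely independent configuration with one interior point under an injective (hence affine-rank-preserving) fractional linear map. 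Now the classical theorem applies: there is a \emph{unique} projective transformation of $\R P^n$ sending $(x_j,1) \mapsto (F_1(x_j),1)$, so $\bar F_1 = \bar F_2$ as maps on $\R P^n$, which means $\hat A_1$ and $\hat A_2$ are equal up to a nonzero scalar.

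Finally, once $\hat A_1 = \lambda \hat A_2$, the two fractional linear maps are given by the same formula \eqref{eq:F} wherever that formula makes sense, i.e.\ wherever the denominator $\iprod{c}{x}+d$ is nonzero; in particular they agree on all of $D_1 \cap D_2$, since both are defined (denominator nonvanishing) there. This gives the conclusion, including the ``same matrix up to scalar'' refinement.

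For completeness I would also sketch the proof of the classical uniqueness statement itself (the paper says it provides it): if $\bar F$ is a projective transformation fixing $n+2$ points in general position, lift them to vectors $v_0,\dots,v_{n+1} \in \R^{n+1}$; by general position $v_0,\dots,v_{n-1}, v_n$ (say) form a basis, and write $v_{n+1} = \sum_{j} \alpha_j v_j$ with all $\alpha_j \neq 0$ (again by general position). If $\hat A$ induces $\bar F$ and fixes each projective point, then $\hat A v_j = \mu_j v_j$ for scalars $\mu_j$; applying $\hat A$ to $v_{n+1} = \sum \alpha_j v_j$ and comparing coordinates in the basis forces $\mu_j \mu_{n+1}^{-1} = 1$, so all $\mu_j$ are equal and $\hat A$ is a scalar multiple of the identity, i.e.\ $\bar F = \mathrm{id}$. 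Applying this to $\bar F_2^{-1} \circ \bar F_1$ yields the uniqueness used above. The main obstacle is not any single computation but the bookkeeping: translating ``interior point of a convex hull in $\R^n$'' into ``general position in $\R P^n$'' cleanly in both the domain and the range, and making sure the reduction to the projective picture does not silently lose the distinction between the actual (half-space) domains $D_i$ and the full projective map.
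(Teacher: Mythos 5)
Your proof is correct, but it takes a genuinely different route from the paper's. You reduce the lemma to the classical uniqueness theorem of projective geometry (the one the paper quotes in Remark~\ref{Rem_Transitivity-n+2-Classic}): you lift to $\R P^n$, verify that ``one point in the interior of the convex hull of the other $n+1$'' translates into projective general position of the lifted configuration (and that general position is preserved in the image, since the inducing matrices are invertible), invoke uniqueness to get $\hat A_1=\lambda\hat A_2$, and then descend to conclude agreement on $D_1\cap D_2$; you also supply the standard eigenvector proof of the classical theorem. The paper instead deliberately stays in $\R^n$: it uses the transitivity Lemma~\ref{Lem_Transitivity-n+2} to normalize both maps so that the points become $0,e_1,\dots,e_n,p$ and both maps fix them, and then shows by a short explicit computation with the block matrix ($d=1$, $b=0$, $A$ diagonal with $a_i=c_i+1$, and the condition $F_1(p)=p$ forcing all $a_i=1$) that such a map is the identity. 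The two arguments rest on the same linear algebra --- a matrix in $GL_{n+1}$ with $n+1$ independent eigenvectors plus one extra fixed point in general position is scalar --- but your version buys independence from the transitivity lemma and makes the link to the projective literature explicit, at the cost of the general-position bookkeeping (your justification that the image configuration is in general position via ``affine-rank-preserving'' is slightly informal, though easily made precise: an invertible linear map on the lifts preserves linear independence of every $(n+1)$-subfamily). The paper's version is more self-contained within its affine framework and reuses machinery it has already built.
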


\noindent{\bf Proof of Lemma \ref{Lem_Mizdahim-n+2}.} Without loss
of generality, by Lemma \ref{Lem_Transitivity-n+2}, we can assume
that $x_{0} = 0$, $x_i= e_i$ for $i= 1, \ldots, n$, and that
$x_{n+1} = p$ is any point we desire in the interior of the convex
hull of $\{x_i\}_{i=1}^{n}$. Furthermore, by the same lemma, we may
assume that $F_1(x_i) = F_2(x_i) = x_i$ for all $i$, and therefore
we may simply compare, say, $F_1$ to $Id$ (and then also $F_2$).
Consider the matrix which induces $F_1$, given by some
 \[ \left(\begin{array}{cccc}
                   &   &  &   \\
                   & A &  & b \\
                   &   &  &   \\
                   & c^T &  & d \\
                  \end{array} \right),\] where $A$ is an $n\times n$
matrix, $b$ and $c$ are vectors in $\R^n$ and $d \in \R$. In fact,
$d\neq0$ since $0$ is in the domain of $F_1$, which is
$\{x:\iprod{c}{x} > -d\}.$ Without loss of generality we let $d =
1$. From the condition $F_1(0) = 0$ we see that $b=0$. From
$F_1(e_i) = e_i$ we see that $A$ is diagonal, let us denote $A_{i,i}
= a_i$, so that $c_i = a_i - 1$. Finally, for $F_1(p) = p$ we see
that
\[ p_i = \frac{a_ip_i}{1 + \sum (a_j-1) p_j}.\]
This implies that for all $i$, $a_i = 1 + \sum (a_j-1) p_j$, and in
particular $a_1=\ldots=a_n$. This means that $(a_1-1)(1-\sum p_j) =
0$, and since $p$ is not on the hyperplane passing through
$\{e_i\}_{i=1}^n$, it implies $a_i = 1$ for all $i$, that is, $F_1$
is the identity mapping. The same holds for $F_2$. \qed

As a consequence we get the following useful fact:
\begin{cor}\label{Lem_Mizdahim-Interior} Let $F_1: D_1 \to \R^n$ and
$F_2: D_2 \to \R^n$ be two fractional linear maps, where
$D_i\subseteq \R^n$. Let $D\subseteq D_1 \cap D_2$ be some open
domain in $\R^n$ such that $F_1|_D = F_2|_D$. Then the two maps
coincide, i.e. they are induced by the same matrix, and their
maximal extension is the same function, with the same maximal
domain.
\end{cor}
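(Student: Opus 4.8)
The plan is to reduce Corollary \ref{Lem_Mizdahim-Interior} to Lemma \ref{Lem_Mizdahim-n+2} by producing, inside the given open domain $D$, a configuration of $n+2$ points to which that lemma applies. First I would pick a point $q$ in the interior of $D$; since $D$ is open, there is a small simplex $\sigma = conv(q_0, q_1, \ldots, q_n) \subseteq D$ with $q$ in its interior and with the $q_i$ affinely independent. Taking the $n+2$ points $x_0 = q_0, \ldots, x_n = q_n, x_{n+1} = q$, one of which (namely $q$) lies in the interior of the convex hull of the others, we have $x_i \in D \subseteq D_1 \cap D_2$ for all $i$, and because $F_1|_D = F_2|_D$ we get $F_1(x_i) = F_2(x_i)$ for every $0 \le i \le n+1$.

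Now Lemma \ref{Lem_Mizdahim-n+2} applies verbatim and yields two conclusions: the maps coincide on all of $D_1 \cap D_2$, and they are induced by the same matrix in $GL_{n+1}$ up to a non-zero scalar. It remains only to translate the second conclusion into the statement about maximal extensions. Recall from Item 2 of Section \ref{Sect_Basic-Prop} that the maximal domain of a (non-affine) fractional linear map is determined by its inducing matrix $\hat A = \begin{pmatrix} A & b \\ c^T & d \end{pmatrix}$: it is the half-space $\{x : \iprod{c}{x} > -d\}$ (one must fix the side so that $D$ — hence the common point $q$ — lies in it, which pins down the sign of the scalar multiple), and in the affine case it is all of $\R^n$. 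Since $F_1$ and $F_2$ are induced by the same matrix (after the scalar normalization), this half-space, and the formula \eqref{eq:F} defining the extension on it, are literally the same for both maps. Hence the maximal extensions agree as functions with identical maximal domains.

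The only mild subtlety — and the step I would be most careful about — is the scalar ambiguity: ``induced by the same matrix up to a non-zero scalar'' a priori allows $\hat A_{F_1} = \lambda \hat A_{F_2}$ with $\lambda < 0$, which would flip $\{ \iprod{c}{x} > -d \}$ to the opposite half-space. This is ruled out because both maps are actually defined and equal on the open set $D$, so both domains contain $D$; a non-affine fractional linear map's maximal domain is exactly one of the two open half-spaces bounded by $H = \{\iprod{c}{x} = -d\}$, and only one of them meets $D$. In the affine case there is nothing to check since the domain is all of $\R^n$ regardless. Thus the normalization can always be chosen with $\lambda > 0$, and the proof is complete. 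This step is routine but worth stating, since it is the one place where ``same projective matrix'' does not immediately give ``same affine map on the same set.'' \qed
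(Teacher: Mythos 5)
Your proof is correct and follows exactly the route the paper intends: the corollary is stated as an immediate consequence of Lemma \ref{Lem_Mizdahim-n+2}, obtained by choosing $n+2$ points in general position (one interior to the convex hull of the others) inside the open set $D$. Your extra remark on the scalar ambiguity is sound, though even milder than you suggest, since the formula \eqref{eq:F} is itself invariant under rescaling $\hat{A}$ by any non-zero scalar, so only the choice of side of the defining hyperplane (fixed by $D$) needs to be pinned down.
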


\subsection{Uniqueness}
When the domain of an interval preserving map is assumed to be all
of $\R^n$, it is a well known classical theorem that the map must be affine
linear, as stated in the fundamental theorem of affine geometry,
quoted below as Theorem \ref{Thm_FTAG-classical}. As a reference
see, for example, \cite{Prasolov-Tichomirov}, or \cite{Artin} for
the projective counterpart. More generally, interval preservation
can be replaced by ``collineation''. More far reaching
generalizations also exist, and we refer the reader to the
forthcoming \cite{AS} where an elaborate account of these is given.

\begin{thm}\label{Thm_FTAG-classical}[The Fundamental Theorem of
Affine Geometry] Let $m\ge 2$ and $f:\R^n\to\R^m$ be a bijective
interval preserving map. Then $f$ must be an affine transformation.
\end{thm}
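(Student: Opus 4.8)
\textbf{Proof plan for Theorem \ref{Thm_FTAG-classical}.}

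The plan is to reduce the statement to its one-dimensional core and then bootstrap back up to $\R^n$, using the structural lemmas already established (Lemmas \ref{Lem_Edge-To-Edge}, \ref{Lem_Inv-is-CPM}, \ref{Lem_Int-Pres-Continuous}). First I would fix an affine line $\ell\subseteq\R^n$; since $f$ preserves intervals, $f(\ell)$ is a convex subset of some affine line $\ell'$, and by Lemma \ref{Lem_Edge-To-Edge} the restriction $f|_\ell:\ell\to\ell'$ is injective and maps intervals onto intervals, hence is a continuous (by Lemma \ref{Lem_Int-Pres-Continuous}), strictly monotone bijection of $\R$ onto $\R$ (surjectivity of $f$ forces $f(\ell)$ to be the whole line once one checks $\ell'$ is hit, using that preimages of lines are convex). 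The key algebraic step is then to show $f|_\ell$ is affine: two lines in $\R^n$ meeting at a point, together with the parallelism structure, let one transport the additive structure along $\ell$. Concretely, midpoints are characterized projectively-affinely — the midpoint of $[x,y]$ is the intersection of the two diagonals of any parallelogram with $x,y$ as opposite vertices — and $f$ preserves parallelograms (it preserves lines and, being a bijection of $\R^n$, preserves parallelism since parallel lines are exactly coplanar lines that do not meet, a condition $f$ and $f^{-1}$ both respect once we know $f^{-1}$ is also interval preserving, which follows since $f^{-1}$ exists and is interval preserving by the same lemmas applied to the bijection $f$). Hence $f$ preserves midpoints, so $f|_\ell$ is a continuous midpoint-preserving bijection of $\R$, therefore affine.

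Next I would globalize. Having shown $f$ is affine on every line and preserves parallelism, pick an origin $o$ and set $g(x)=f(x)-f(o)$; then $g(0)=0$ and $g$ still sends lines through $0$ to lines through $0$ affinely, so $g(\lambda x)=\lambda g(x)$ for all scalars $\lambda$ (the scalar action on each line through $0$ is an affine bijection of $\R$ fixing $0$, hence linear — and one checks the multiplicative constant is the same $\lambda$ by testing on the midpoint, i.e. on $\lambda=1/2$, and then extending by the affine/continuity structure). Finally additivity $g(x+y)=g(x)+g(y)$ follows from the parallelogram law: $x+y$ is the fourth vertex of the parallelogram with vertices $0,x,y$, and $g$ preserves this configuration; when $x,y$ are linearly dependent one first perturbs or uses a limiting/continuity argument, or argues directly on the line through $0,x,y$ using already-established homogeneity. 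Thus $g$ is linear and $f$ is affine.

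The main obstacle, I expect, is not any single step but making the ``$f$ preserves parallelism'' claim airtight, since this is where the full-space hypothesis $D=\R^n$ and bijectivity are genuinely used — on a proper convex window this fails, which is exactly why fractional linear maps appear. The clean way is: parallel lines are coplanar and disjoint; $f$ maps a plane to a plane (it is convexity preserving and injective, so two intersecting lines spanning a plane go to two intersecting lines spanning a plane, and the affine hull is preserved); disjointness is preserved by injectivity; disjointness is reflected because $f^{-1}$ is also interval preserving and injective. A secondary nuisance is the degenerate case in the additivity step when $x$ and $y$ lie on a common line through the origin, handled by homogeneity rather than the parallelogram. Throughout, the continuity from Lemma \ref{Lem_Int-Pres-Continuous} is what upgrades ``midpoint-preserving'' to ``affine'' on each line, so no measurability or Hamel-basis pathology intervenes. \qed
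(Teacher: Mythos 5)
Your outline is correct, but note that the paper does not prove this theorem at all: it is quoted as a classical fact (the Fundamental Theorem of Affine Geometry) with references to Prasolov--Tikhomirov and Artin, so there is no in-paper argument to compare against step by step. What you have written is essentially the standard textbook proof, and you have correctly isolated the two points that make it work: (i) preservation of parallelism, which genuinely needs bijectivity onto all of $\R^n$ (two disjoint coplanar lines must go to two disjoint coplanar lines, and the same for $f^{-1}$), and is exactly what fails on a proper window; and (ii) the continuity supplied by Lemma \ref{Lem_Int-Pres-Continuous}, which turns the midpoint (Jensen) equation on each line into affinity and sidesteps the semilinear/field-automorphism issue that arises over general fields. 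The sub-steps you leave as ``one checks'' are all routine but worth recording: the fact that an injective interval-preserving map cannot send a two-dimensional convex set into a line (three cevians from a vertex of a triangle would have images that are segments in one line sharing only an endpoint, which is impossible for more than two of them) is what you need both to conclude $f(\ell)=\ell'$ from surjectivity and to show planes go to planes; and the degenerate case of additivity for collinear $x,y$ is, as you say, absorbed by homogeneity. One remark specific to this paper: the theorem is also a formal consequence of the paper's own Theorem \ref{Thm_FL-Unique-nD} applied with $K=\R^n$, since a fractional linear map whose maximal domain is all of $\R^n$ must have constant denominator and hence be affine (see the projective description in Section \ref{Sect_Proj-Desc}); the proof of Theorem \ref{Thm_FL-Unique-nD} does not rely on Theorem \ref{Thm_FTAG-classical}, so this derivation is not circular, though it is of course heavier machinery than your direct argument.
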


In this section we discuss the fact that when the domain is a convex
set (or, more generally, a connected open domain), the only interval preserving maps
are fractional linear. This result was obtained by Shiffman in
\cite{Shiffman}. His method of proof is different from ours, and
works in the projective setting, where he shows that any such map
can be extended (using Desargues' theorem) to a mapping of the whole
projective space, and then, from the fundamental theorem of
projective geometry he concludes that it must be projective linear.
We work in a more elementary way, never leaving $\R^n$. However,
Shiffman's result is in a more general setting where not {\em all}
intervals are assumed to be mapped to intervals, but only a
subfamily which is large enough. This is important in some
applications, in particular in the proof of Theorem
\ref{Thm_CvxTK-Preserving}.

The main theorem discussed in this section is the following.

\begin{thm}\label{Thm_FL-Unique-nD} Let $n\ge2$ and let $K\subseteq\R^n$
be a convex set with non empty interior. If $F:K\to\R^n$ is an
injective interval preserving map, then $F$ is a fractional linear
map.
\end{thm}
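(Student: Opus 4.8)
\textbf{Proof plan for Theorem \ref{Thm_FL-Unique-nD}.}
The plan is to reduce the statement to the classical Fundamental Theorem of Affine Geometry (Theorem \ref{Thm_FTAG-classical}) by ``inverting'' a suitable fractional linear map so that the composite becomes an affine bijection of all of $\R^n$. First I would normalize the problem: since $K$ has nonempty interior, pick $n+2$ affinely ``generic'' points in $\mathrm{int}(K)$, namely $x_0,\dots,x_n$ whose convex hull has nonempty interior together with an interior point $y$ of that hull. By Lemma \ref{Lem_Transitivity-n+2} (and the invertibility noted in Item 5) there is a fractional linear map $G$ carrying these points to $0,e_1,\dots,e_n,p$; composing $F$ with $G^{-1}$ (and using that fractional linear maps form a group under composition, Item 4) it suffices to prove the theorem for an $F$ that fixes these $n+2$ points. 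So from now on assume $F(0)=0$, $F(e_i)=e_i$, $F(p)=p$, and the goal becomes: show $F$ is the restriction of the identity.

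The main work is a \emph{continuation / local-to-global} argument showing that near any point of $\mathrm{int}(K)$ the map $F$ agrees with \emph{some} fractional linear map, and that these local fractional linear pieces are forced to be the identity. Concretely I would proceed as follows. Step one: around each point $q\in\mathrm{int}(K)$ choose a small open simplex $S_q\subseteq K$; restricted to the $2$-dimensional affine planes meeting $S_q$, $F$ sends intervals to intervals, so $F|_{S_q}$ is an injective interval preserving map on an open convex set carrying intervals to intervals inside a plane. The heart of the matter is the genuinely new geometric input the authors advertise: an injective interval preserving map on a small convex set must coincide there with a fractional linear map. I would prove this by the classical synthetic device — choose enough reference points inside $S_q$, build (via Lemma \ref{Lem_Transitivity-n+2}) a fractional linear map $H$ matching $F$ on those points, and then show $F=H$ on all of $S_q$ by exploiting that both maps preserve the cross-ratio-type incidences: lines through a fixed point, parallelism, and intersections of lines are all preserved by $F$ (an interval preserving injection preserves ``three collinear points'' and, via Lemma \ref{Lem_Inv-is-CPM}, also the convex position of points), so one can reconstruct the image of an arbitrary point of $S_q$ purely from the images of the reference points by iterated line intersections, exactly as $H$ does. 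Step two: continuity of $F$ (Lemma \ref{Lem_Int-Pres-Continuous}) plus Corollary \ref{Lem_Mizdahim-Interior} lets one glue: if two such local fractional linear representatives agree on an open set they are induced by the same matrix, so by connectedness of $\mathrm{int}(K)$ there is a single fractional linear map $\tilde F$ with $F=\tilde F$ on all of $\mathrm{int}(K)$, and hence (again by continuity, pushing to the relative boundary points that lie in $K$) on all of $K$. Step three: since $\tilde F$ fixes the $n+2$ points $0,e_1,\dots,e_n,p$, Lemma \ref{Lem_Mizdahim-n+2} forces $\tilde F=\mathrm{Id}$, so $F$ is the restriction of the identity; undoing the normalization, the original $F$ equals $G^{-1}$ restricted to $K$, which is fractional linear.

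I expect the main obstacle to be precisely the local reconstruction step — proving that an interval preserving injection on a small convex open set is fractional linear there — since this is where one must actually \emph{use} geometry rather than cite a transitivity lemma. The delicate points are: (i) arranging enough reference configurations inside an arbitrarily small simplex so that every other point is obtained as an intersection of lines each determined by two already-controlled points, while staying inside the convex domain where $F([x,y])=[F(x),F(y)]$ (Lemma \ref{Lem_Edge-To-Edge}); (ii) handling directions of parallelism, which live on the hyperplane at infinity and are not literally in $K$, by approximating parallel lines with lines meeting at a far-away point and passing to the limit using continuity; and (iii) making sure the auxiliary fractional linear map $H$ furnished by Lemma \ref{Lem_Transitivity-n+2} actually has $S_q$ (not merely the reference points) inside its domain, which follows because its domain is an open half-space and $S_q$ can be taken small. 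Once this local statement is in hand, the globalization via connectedness and Corollary \ref{Lem_Mizdahim-Interior}, and the final pinning-down via Lemma \ref{Lem_Mizdahim-n+2}, are routine.
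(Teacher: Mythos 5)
Your plan is essentially the paper's own proof: the local statement you isolate is exactly Corollary \ref{Cor_genFL-On-Simplex} (proved there via the rigidity Lemma \ref{Lem_Id-On-Simplex}, whose two-dimensional case is precisely your ``iterated line intersections'' argument), and the globalization by covering with overlapping simplices, gluing via Corollary \ref{Lem_Mizdahim-Interior}, and the separate treatment of boundary points all match the paper's argument. The one step to make explicit \emph{before} your opening normalization is that $F$ sends the interior point $y$ into the interior of $\mathrm{conv}\{F(x_i)\}$ (the paper proves this by induction on the dimension of faces, using that interior points are intersections of intervals with endpoints on the boundary), since without it Lemma \ref{Lem_Transitivity-n+2} cannot be invoked on the image configuration.
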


The proof of the theorem relies on the following lemma:

\begin{lem}\label{Lem_Id-On-Simplex} Assume $n\ge2$. Let
$\Delta\subseteq U\subseteq\R^n$, where $U$ is an open set, and
$\Delta$ is a non-degenerate simplex with vertices $x_0,\dots,x_n$.
Let $p$ belong to the interior of $\Delta$. If $F: U \to\R^n$ is an
injective interval preserving map that fixes all $(n+1)$ vertices of
$\Delta$ and the interior point, that is $F(x_i)=x_i$ for every
$0\le i\le n$, and $F(p) = p$, then $F|_\Delta =Id|_\Delta$.
\end{lem}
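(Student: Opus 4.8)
The plan is to show that $F$ fixes every point of $\Delta$ by a dimension-reduction argument, reducing the $n$-dimensional statement to lower-dimensional instances of itself, with the two-dimensional case handled directly. The key observation is that $F$ restricted to any line segment it preserves is, by Lemma \ref{Lem_Edge-To-Edge}, a bijection onto the image segment; and on a one-dimensional segment an interval-preserving bijection that fixes the two endpoints together with one interior point must be the identity (this is the classical one-dimensional fact: the cross-ratio is determined, or more elementarily, it is a monotone bijection of an interval fixing three points, hence if it moved some point we could iterate to reach a contradiction with order-preservation — in fact one can see it directly since fixing two endpoints forces monotone increasing, and then the standard argument via the fundamental theorem on the line applies). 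So the heart of the matter is to produce, for an arbitrary point $q \in \Delta$, enough collinear triples of already-known fixed points through $q$.

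First I would set up the base case $n = 2$. Here $\Delta$ is a triangle with vertices $x_0, x_1, x_2$, and $p$ an interior point. The lines through $p$ and each vertex $x_i$ meet the opposite edge in a point; since $F$ fixes $x_i$ and $p$, it maps the line $\langle x_i, p\rangle$ into itself, and it maps the opposite edge $[x_j, x_k]$ into itself (its endpoints are fixed), so it fixes the intersection point of that line with that edge. This gives three new fixed points, one on each edge. Now iterate: any two fixed points on distinct edges span a chord of $\Delta$ on which $F$ acts as an interval-preserving bijection fixing the endpoints; intersecting such chords produces further interior fixed points. The standard projective fact is that starting from four points in general position (here $x_0, x_1, x_2, p$, no three collinear) the points constructible by repeatedly intersecting lines through pairs of constructed points form a dense subset of the plane (this is essentially the von Staudt construction / the density of the "projective net"). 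Since $F$ fixes a dense subset of $\Delta$ and is continuous on the interior by Lemma \ref{Lem_Int-Pres-Continuous} (restricting to the open set $U$), and extends continuously to $\Delta$ via the edge arguments, $F|_\Delta = \mathrm{Id}$.

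For the inductive step, assume the result in dimension $n-1 \ge 2$. Given the $n$-simplex $\Delta$ with vertices $x_0, \ldots, x_n$ and interior point $p$, consider a facet, say $\Delta' = \mathrm{conv}(x_0, \ldots, x_{n-1})$, an $(n-1)$-simplex lying in an affine hyperplane $H$. I would like to apply the inductive hypothesis to $F|_{\Delta'}$, but for that I need $F$ to fix $n+1$ points of $\Delta'$ in "general position within $H$" — the $n$ vertices plus one interior point of $\Delta'$. The $n$ vertices are fixed by hypothesis; to get a fixed interior point of $\Delta'$, use the line through $x_n$ and $p$: it meets the facet $\Delta'$ at some interior point $p'$, and since $F$ fixes $x_n$ and $p$, it preserves this line, and since $F$ maps $\Delta'$ into $H$ (it maps the facet, whose vertices are fixed, into the hyperplane they span) it fixes $p' = \langle x_n, p\rangle \cap \Delta'$. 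Also $F(\Delta')$, being convex with the $n$ fixed vertices, lies in $\Delta'$ — actually one should argue $F$ maps $\Delta'$ into $H \cap (\text{image})$, and in any case $F|_{\Delta'}$ is an injective interval-preserving self-map of a neighborhood within $H$ fixing $n+1$ points in general position, so by induction $F|_{\Delta'} = \mathrm{Id}$. The same argument applies to every facet. Now every facet of $\Delta$ is fixed pointwise. Finally, take an arbitrary $q$ in the interior of $\Delta$: a generic line through $q$ meets the boundary $\partial \Delta$ in two points, both now fixed, so $F$ preserves that chord and acts on it as an interval-preserving bijection fixing both endpoints — hence, by the one-dimensional fact, one needs a third fixed point on the chord, which for a careful choice of line can be arranged to pass also through $p$ (or through two boundary points and using that the chord through $p$ is handled, then bootstrapping). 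More cleanly: $F$ fixes $\partial\Delta$ pointwise and fixes $p$; any line through $p$ meets $\partial\Delta$ in two fixed points, so $F$ is the identity on every line through $p$, hence on all of $\Delta$.

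\textbf{Main obstacle.} The delicate point is the base case $n=2$: justifying that the iterated line-intersection construction starting from $\{x_0, x_1, x_2, p\}$ yields a \emph{dense} set of fixed points in $\Delta$, and that the one-dimensional rigidity (interval-preserving bijection of a segment fixing three points is the identity) is valid — this is where one must either invoke a classical projective-net density statement or spell out the harmonic-conjugate construction, and where continuity (Lemma \ref{Lem_Int-Pres-Continuous}) is essential to pass from the dense fixed set to all of $\Delta$. The inductive step is then comparatively routine, modulo the bookkeeping of checking that $F$ genuinely restricts to a self-map of each lower-dimensional face on an open neighborhood within the face's affine hull.
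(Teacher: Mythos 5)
Your overall architecture (induction on dimension, a hands-on planar base case, and producing fixed points as intersections of segments that $F$ must preserve) matches the paper's, but there is a genuine gap in the one-dimensional rigidity statement on which several of your steps lean. You assert that an injective interval-preserving bijection of a segment onto itself fixing the two endpoints and one interior point must be the identity. This is false: in dimension one \emph{every} strictly monotone bijection of an interval is interval preserving (the paper remarks on exactly this just before Theorem \ref{Thm_FL-Unique-1D}), so such a map can fix any finite set of points and still be far from the identity; your parenthetical justification (``monotone bijection fixing three points\dots the standard argument via the fundamental theorem on the line'') does not exist. Consequently the concluding step of your induction --- ``any line through $p$ meets $\partial\Delta$ in two fixed points, so $F$ is the identity on every line through $p$'' --- does not follow: knowing that $F$ maps a chord onto itself (Lemma \ref{Lem_Edge-To-Edge}) and fixes three of its points gives nothing more from one-dimensional considerations alone. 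The rigidity has to come from the ambient map, and the paper extracts it by a different mechanism: a point is shown to be fixed only when it is the \emph{unique} intersection of two transversal segments (or a segment and a line), each of which $F$ maps into itself. Once the facets are fixed pointwise by induction, every interior point of $\Delta$ is the unique intersection of two chords with fixed endpoints, and that is how the paper finishes; your step is repaired by replacing the single line through $p$ with two transversal chords through the given point.

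The base case has a second gap, which you flag yourself: you invoke density of the projective (von Staudt) net generated by $x_0,x_1,x_2,p$ without proof, and in addition the construction must be confined to $\Delta$ (intersection points of lines through net points may fall outside the triangle, where nothing is known about $F$), which you do not address. The paper's base case avoids both issues with a self-contained argument: by Lemma \ref{Lem_Int-Pres-Continuous} the fixed-point set $D$ is closed in $\Delta$; if an edge contained a non-fixed point, one takes a maximal open gap $(a',b')$ of $D$ along that edge and constructs, purely by intersecting preserved segments inside the triangle (using the line through a vertex and $p$, and then cross-chords of the subtriangle $a'b'c$), a new fixed point strictly inside the gap --- a contradiction. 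You should either adopt that argument or supply a proof of the density statement together with its localization to $\Delta$; as written, both the base case and the final step of the induction rest on claims that are not established.
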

%

\noindent{\bf Proof of Lemma \ref{Lem_Id-On-Simplex}.} The proof
goes by induction on the dimension $n$. Begin with $n=2$. Consider a
two dimensional simplex $\Delta$, that is, a triangle in $\R^2$,
with vertices $a,b,c,$ and a point $p\in int(\Delta)$. Since $F$ is
injective and interval preserving, by Lemma
\ref{Lem_Int-Pres-Continuous} it is continuous, which implies that
the set $D=\{x\in\Delta : F(x)=x\}$ is closed.

Let us check that all the edges are contained in $D$. Assume the
contrary, namely that there is a point $e\in [a,b]$, $e\not\in D$.
Since $D$ is closed, there exists an interval $[a',b']\subseteq
[a,b]$, such that $a',b'\in D$, but $(a',b')\cap D=\emptyset$. Now
we will find a point $e'\in (a',b')\cap D$ in contradiction, thus
concluding that no such $e$ exists. Let us find two points $a''\in
[a',c]$ and $b''\in [b',c]$, such that $a'',b''\in D$. To this end,
consider the intervals $[a',c]$, $[b',c]$. They are both mapped to
themselves by $F$, and both intersect the line $L$ containing $a$
and $p$, for which we have $F(L)\subseteq L$. Let $a''\in [a',c]$
and $b''\in [b',c]$ be the points of intersection with $L$. Then
$F(a'')=a''$ since this is the only point in $[a',c]$ and in $L$,
and similarly $F(b'')=b''$.
\begin{figure}[h]
\begin{center}
\scalebox{0.25}{\includegraphics{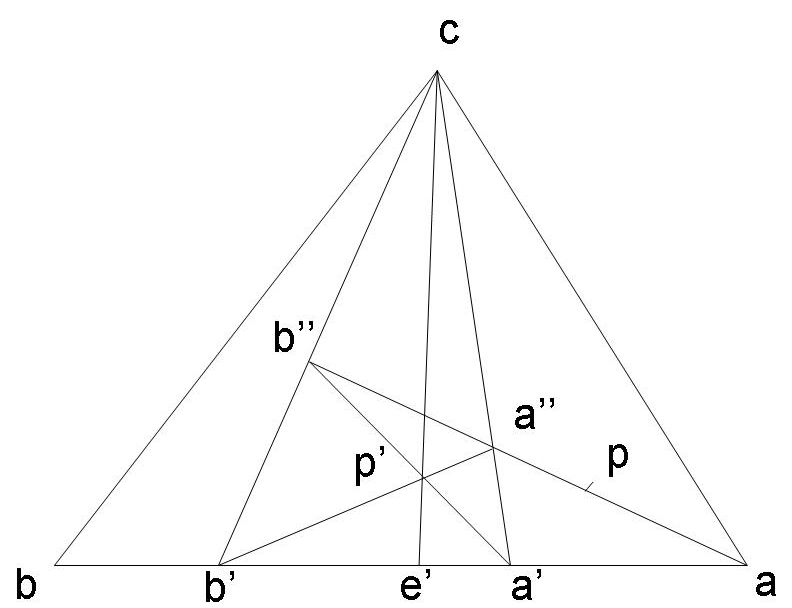}}
\end{center}
\end{figure}

Now we look at the intersection of $[a'',b']$ with $[b'',a']$. This
is a point $p'$ in the interior of the triangle $a'b'c$. The line
between $c$ and $p'$ intersects with $[a',b']$ at some point $e'\in
(a',b')$, and by the same argument as before, $e'\in D$. We get a
contradiction which proves that $F$ is the identity map on the edges
of $\Delta$.

Next, for every point $y$ in the interior, we draw two intervals
containing $y$ - each connecting a vertex with an edge, and get that
the two intervals must be mapped to themselves (since the end points
are on the edges and are thus mapped to themselves). This implies,
as before, $F(y)=y$, which completes the proof for $n=2$.

For the inductive step, we assume that the proposition is true for
dimension $n-1$, and prove it for dimension $n$. Let $\Delta$ be an
$n$ dimensional simplex. Denote by
$\Delta_i:=Conv\{x_0,\dots,x_{i-1}, x_{i+1},\dots,x_n\}$ the face of
$\Delta$ opposite to $x_i$. First we claim that $F(\Delta_i) =
\Delta_i$. Indeed, this is due to interval preservation, together
with the fact that the vertices are mapped to themselves. Denote by
$y\in relint(\Delta_i)$ the unique point in the intersection of
$\Delta_i$, with the line connecting $x_i$ and $p\in int(\Delta)$.
Interval preservation implies that $F(y)$ remains on this line, and
since it must remain on the face, we get $F(y)=y$. By applying the
claim to the ($n-1$) dimensional simplex $\Delta_i$, we conclude
that $F|_{\Delta_i}=Id|_{\Delta_i}$. The fact that the restriction
of $F$ to each of the faces is the identity, combined with interval
preservation, implies that $F|_\Delta=Id|_\Delta$ simply by
representing a point in the interior as the intersection of two
intervals with endpoints on faces. \qed

By the transitivity result from Lemma \ref{Lem_Transitivity-n+2}, we
may state a corollary of the above lemma for general maps on the
simplex.

\begin{cor}\label{Cor_genFL-On-Simplex}
Assume $n\ge2$. Let $\Delta\subseteq U\subseteq\R^n$, where $U$ is
an open set, and $\Delta$ is a non-degenerate simplex with vertices
$x_0,\dots,x_n$. If $F:U \to\R^n$ is an injective interval
preserving map 
then there exists a fractional linear map $F_A$ such that $F|_\Delta
={F_A}|_\Delta$.
\end{cor}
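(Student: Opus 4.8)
\textbf{Proof proposal for Corollary \ref{Cor_genFL-On-Simplex}.}
The plan is to reduce the general case to Lemma \ref{Lem_Id-On-Simplex} by pre-composing $F$ with a suitable fractional linear map, exactly in the spirit of the standard ``move the reference configuration to a canonical one'' trick. Concretely, pick a point $p\in \operatorname{int}(\Delta)$ and look at the $n+2$ points $x_0,\dots,x_n,p$; since $p$ lies in the interior of $\operatorname{conv}(x_i)_{i=0}^n$, Lemma \ref{Lem_Transitivity-n+2} (or rather its inverse formulation in the remark following it) applies. The image points $F(x_0),\dots,F(x_n),F(p)$ are again in ``convex position'' of the required type: because $F$ is injective and interval preserving, Lemma \ref{Lem_Edge-To-Edge} gives $F([x_i,x_j])=[F(x_i),F(x_j)]$, so the $F(x_i)$ are affinely independent (otherwise some $F(x_k)$ would lie in the convex hull of the others and injectivity on the corresponding interval would fail), and $F(p)$, being on the images of the cevians through $p$, lies in the interior of $\operatorname{conv}(F(x_i))_{i=0}^n$.

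Given this, apply Lemma \ref{Lem_Transitivity-n+2} to produce a fractional linear map $G$ with $G(F(x_i))=x_i$ for $0\le i\le n$ and $G(F(p))=p$. The composition $H:=G\circ F$ is then an injective interval preserving map defined on an open neighborhood of $\Delta$ (after shrinking $U$ so that $F(U)$ lands in the maximal domain of $G$, which is legitimate since we only care about the restriction to $\Delta$ and $G$'s maximal domain is an open half-space by Item 2 of Section \ref{Sect_Basic-Prop}) which fixes all $n+1$ vertices of $\Delta$ and the interior point $p$. By Lemma \ref{Lem_Id-On-Simplex}, $H|_\Delta = \operatorname{Id}|_\Delta$, i.e. $(G\circ F)|_\Delta=\operatorname{Id}|_\Delta$. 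Since $G$ is fractional linear it has a fractional linear inverse $G^{-1}$ (Item 5 of Section \ref{Sect_Basic-Prop}); setting $F_A:=G^{-1}$ we get $F|_\Delta = G^{-1}|_\Delta = F_A|_\Delta$ on the simplex, as desired.

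The only genuinely delicate point is verifying that the image configuration $F(x_0),\dots,F(x_n),F(p)$ has the precise structure required to invoke Lemma \ref{Lem_Transitivity-n+2} — namely affine independence of the $F(x_i)$ together with $F(p)$ being an \emph{interior} point of their convex hull. Affine independence follows from injectivity plus Lemma \ref{Lem_Edge-To-Edge} as sketched above (a collinearity among three image vertices would force a vertex onto an edge, contradicting injectivity on that edge). For the interior condition one uses that $p$ can be written as the intersection of $n$ suitable segments joining a vertex $x_i$ to an interior point of the opposite face $\Delta_i$; interval preservation carries these segments to segments joining $F(x_i)$ to a relative-interior point of $F(\Delta_i)=\operatorname{conv}(F(x_j))_{j\neq i}$, and one checks these image segments meet only at $F(p)$, which therefore lies in the interior of $\operatorname{conv}(F(x_i))$. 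A small amount of care is also needed regarding domains: $F$ is a priori only defined on $U$, so $G\circ F$ is defined on $U\cap F^{-1}(\operatorname{dom} G)$, but this set is an open neighborhood of $\Delta$, which is all that Lemma \ref{Lem_Id-On-Simplex} needs.
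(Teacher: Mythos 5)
Your overall strategy is exactly the paper's: show that the image configuration $F(x_0),\dots,F(x_n),F(p)$ is a non-degenerate simplex with $F(p)$ in its interior, use Lemma \ref{Lem_Transitivity-n+2} to build a fractional linear $G$ so that $G\circ F$ fixes the vertices and $p$, invoke Lemma \ref{Lem_Id-On-Simplex}, and conclude $F|_\Delta=G^{-1}|_\Delta$. The domain bookkeeping at the end is fine.

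The gap sits precisely in the step you flag as delicate. First, your justification of affine independence of the $F(x_i)$ --- ``a collinearity among three image vertices would force a vertex onto an edge'' --- only excludes collinear triples; for $n\ge 3$ the $n+1$ image points could be affinely dependent (e.g.\ four coplanar points in $\R^3$ forming a convex quadrilateral) without any one of them lying in the convex hull of the others, so no edge-injectivity is directly violated by your argument. Second, your interior argument invokes the identity $F(\Delta_i)=\operatorname{conv}\{F(x_j)\}_{j\neq i}$, which you have not established: a priori $F(\Delta_i)$ is only some convex set containing the points $F(x_j)$, $j\neq i$. That identity, together with non-degeneracy, is exactly what the paper proves by induction on the dimension $k$ of the faces: assuming $k$-faces map onto $k$-dimensional simplices spanned by the image vertices (base case $k=1$ being Lemma \ref{Lem_Edge-To-Edge}), a point in the relative interior of a $(k+1)$-face is the intersection of two segments with endpoints on the relative boundary, so the $(k+1)$-face maps onto the convex hull of its image vertices, and injectivity forces the dimension to be preserved. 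Inserting that induction repairs both points and makes your proof coincide with the paper's.
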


\noindent{\bf Proof of Corollary \ref{Cor_genFL-On-Simplex}.} Let
$p$ belong to the interior of $\Delta$. The main step is to show
that the mapping $F$ maps the point $p$ to a point in the interior
of $conv\{F(x_i)\}_{i=0}^n$, so that we may invoke transitivity and
Lemma \ref{Lem_Id-On-Simplex}. To this end we shall use induction
and prove the following claim: an injective interval preserving map
must map simplices of dimension $k$, for any $k\ge 1$, to simplices
of the same dimension, whose vertices are the images of the original
vertices. Once this is done, an interior point must be mapped to an
interior point by injectivity of $F$. The case $k=1$ is almost by
definition (see Lemma \ref{Lem_Edge-To-Edge}). Assume this is the
case for simplices of dimension $\le k$ and let $y_0, \ldots
y_{k+1}$, the vertices of some $(k+1)$ dimensional simplex, in
general position, be given. By induction, the relative boundary of
the convex hull is mapped to the relative boundary of the simplex
$\{F(y_j)\}_{j=0}^{k+1}$. Since a point in the interior can be
written as the intersection of two intervals with endpoints on the
boundary, we get that the interior of the simplex
$conv\{  y_j \}_{j=0}^{k+1}$ is mapped to the interior of
$conv\{F(y_j)\}_{j=0}^{k+1}$, as needed. Applying this, we have that
the points $\{x_i\}_{i=0}^n$ are mapped to points
$\{F(x_i)\}_{i=0}^n$ which are the vertices of a non degenerate
simplex, $F(\Delta) = conv \{F(x_i)\}_{i=0}^n=:\Delta'$, and for any
point $p\in int(\Delta)$ we have that $F(p) \in int(\Delta')$. To
prove the corollary, chose any $p\in \Delta$, and compose $F$ with
some fractional linear $G$ so that $(G\circ F)(x_i) = x_i$ for $i=0,
\ldots n$ and $(G\circ F)(p) = p$. Using Lemma
\ref{Lem_Id-On-Simplex} we have that $G\circ F = Id$ on $\Delta$,
and therefore $F|_{\Delta} = G^{-1}|_{\Delta}$, which is fractional
linear, as claimed. \qed

\noindent{\bf Proof of Theorem \ref{Thm_FL-Unique-nD}.} First we
prove the theorem under the assumption that $K$ is open and convex,
and at the end of the proof we remark on the extension to general
convex $K$ (with non empty interior).

First we note that for every simplex $\Delta$ inside $K$ the
statement holds: consider $n+2$ points $x_0,\dots,x_n,p\in\R^n$,
arranged as a simplex $\Delta$ and a point in its interior, as in
Corollary \ref{Cor_genFL-On-Simplex}. Since $F$ is injective and
interval preserving, by Corollary \ref{Cor_genFL-On-Simplex}
$F|_{\Delta}$ is fractional linear.

Next, consider the union of two simplices $\Delta_1$ and $\Delta_2$
such that the intersection has a non empty interior. $F|_{\Delta_i}$
is fractional linear on each simplex $\Delta_1$ and $\Delta_2$, and
these mappings coincide on the intersection, so they must be induced
by the same matrix, by Corollary \ref{Lem_Mizdahim-Interior}.

Finally, by covering the domain $K$ with simplices so that each two
are connected by a chain of simplices $\{\Delta_i\}_{i=0}^N$, with
the property that the intersection of $\Delta_i$ and $\Delta_{i+1}$
has a non empty interior, we get that there is one map which induces
all of the maps $F|_{\Delta}$ for all these simplices, meaning that
$F$ itself is a fractional linear map. Such a covering exists, for
example an infinite family $\{\Delta_{x,y}:x,y\in K \}$, where
$\Delta_{x,y}$ is some simplex  which contains $x$ and $y$ in the
interior will do (such a simplex exists for every $x$ and $y$). This
completes the proof in the case where $K$ is open.

For a general convex $K$ with non empty interior we must deal with
the boundary of $K$. We know there exists a fractional linear map
$G:U\to\R^n$ s.t. $F|_{int (K)}=G|_{int(K)}$, where $U$ is the
maximal domain of $G$ (an open half space), and of course
$int(K)\subseteq U$. We wish to show that $K\subseteq U$, and that
$F=G$ also on $K \cap \partial K$. Take $x\in K \cap\partial K$. We
first claim that $x\in U$, for which we need only show that
$x\not\in H=\partial U$. However, we have shown in item 2 of Section
\ref{Sect_Basic-Prop} that $G$ cannot be extended to be defined on
any point of $H$ so that it is still interval preserving, from which
we conclude $K \subseteq U$. Indeed, this was shown by considering
two points $a,b$ in the interior of $K$, to which correspond
intervals $[a,x)$ and $[b,x)$ which are mapped to intervals, by $G$.
Were $x$ on the boundary, these intervals would have been parallel,
and no way to define $F(x)$ would have existed. When $x\not\in H$,
the intervals $[G(a), G(x)]$ and $[G(b), G(x)]$ have a unique point
of intersection $G(x)$, and we conclude that $F(x) = G(x)$. \qed

\begin{rem}\label{Rem_FL-Uniq-Open-Conn} Theorem
\ref{Thm_FL-Unique-nD} can be proved for a general open connected
set $K$; we only used convexity of $K$ when arguing that $K$ can be
covered by simplices to get the wanted chains. This argument holds
also whenever $K$ is open and connected. Indeed, to get this
covering we took between every two points $x,y\in K$ a simplex
$\Delta_{x,y}$. This simplex is now replaced by a chain of simplices
connecting $x$ and $y$, constructed using an $\epsilon$ neighborhood
of the path between $x$ and $y$.
\end{rem}

To complete the picture let us also attend to the case $n=1$,
although this will not be used in the sequel. Obviously, a similar
theorem cannot be proved in $\R$, since, for example, all continuous
functions are interval preserving. The next theorem, Theorem
\ref{Thm_FL-Unique-1D}, gives a characterization of one dimensional
fractional linear maps. The theorem is a local version of the more
well-known fact from projective geometry, stating that maps
preserving cross ratio are linear when the domain and range are
lines, and projective when the domain and range are extended lines.

We recall that the {\em cross ratio} of four numbers (thought of as
coordinates of points on a line) is defined to be \[ [a,b,c,d] :=
\left(\frac{c-a}{c-b}\right)/\left(\frac{d-a}{d-b}\right).\] For
details and discussion see, for example, \cite{Prasolov-Tichomirov}.

\begin{rem} \label{Rem_Preserve-Cross-Ratio} Note that
$[a,b,c,x] = [a',b',c',x']$ implies $x' = \frac{\alpha x +
\beta}{\gamma x + \delta}$, where $\alpha,\beta,\gamma,\delta$ are
some function of $a,b,c,a',b',c'$. Conversely, every fractional
linear map on $\R$ preserves the cross ratio of any four points in
its domain.
\end{rem}

\begin{rem}\label{Rem_Cross-Ratio-Permut} Regarding permutations
of $a,b,c,d$, we have the following:
\[ [A,B,c,d] =     [B,A,c,d]^{-1},\]
\[ [a,b,C,D] =     [a,b,D,C]^{-1},\]
\[ [a,B,C,d] = 1 - [a,C,B,d]     ,\] and using the rule for these
three transpositions, the cross ratio of any permutation of
$a,b,c,d$ can be derived from $[a,b,c,d]$. Moreover, as a
consequence, we see that if we have $[a,b,c,d] = [x,y,z,w]$, then
for every permutation $\sigma$ we also have that
$[\sigma(a),\sigma(b), \sigma(c), \sigma(d)]=[\sigma(x),\sigma(y),
\sigma(z), \sigma(w)]$.
\end{rem}

A basic notion when dealing with one dimensional fractional linear
maps is the projection of one line to another line, through a so
called ``focus point'' situated outside the two lines. See
\cite{Prasolov-Tichomirov} for more details on the relation between
fractional linear maps, preservation of cross ratio, and projection.
\begin{thm}\label{Thm_FL-Unique-1D} Let $I\subseteq\R$ be a convex
set, either bounded or not, and $f:I\to\R$. Assume further that $f$
preserves cross ratio on $I$, so for every four distinct points
$a<b<c<d\in I$
\[ [f(a), f(b), f(c), f(d)] = [a,b,c,d].\]
Then $f$ is fractional linear on $I$. In fact, it is true also if
$a,b,c\in I$ are three (distinct) fixed points, and we assume only
that $f$ preserves cross ratio of $a,b,c,d$ for any $d\in
I\setminus\{a,b,c\}$.
\end{thm}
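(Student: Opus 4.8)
The plan is to reduce the statement to the three-point version and then pin down $f$ by exhibiting an explicit fractional linear map agreeing with it. First I would fix three distinct points $a<b<c$ in $I$ and work with the second assertion of the theorem, since it is the stronger one (the first follows from it: if $f$ preserves the cross ratio of all quadruples, it certainly preserves those with first three entries $a,b,c$). By Remark \ref{Rem_Preserve-Cross-Ratio}, the equation $[a,b,c,x] = [f(a),f(b),f(c),x']$ determines $x'$ as a fractional linear function of $x$, with coefficients depending only on $a,b,c$ and their images; call this map $g$. The map $g$ is a genuine one-dimensional fractional linear map (its associated $2\times 2$ matrix is invertible precisely because $f(a),f(b),f(c)$ are distinct, which follows from taking the limit as $x$ approaches $a$, $b$ or $c$ and using that the cross ratio degenerates — or simply because cross-ratio preservation forces injectivity of $f$ on $\{a,b,c,x\}$). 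The claim is then simply that $f = g$ on $I$.

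To see $f = g$, take any $d \in I\setminus\{a,b,c\}$. By hypothesis $[a,b,c,d] = [f(a),f(b),f(c),f(d)]$, and by construction of $g$ we also have $[a,b,c,d] = [f(a),f(b),f(c),g(d)]$. Hence $[f(a),f(b),f(c),f(d)] = [f(a),f(b),f(c),g(d)]$, and since for fixed distinct first three arguments the cross ratio $t \mapsto [f(a),f(b),f(c),t]$ is an injective function of $t$ (it is itself a fractional linear function of $t$ with invertible matrix), we conclude $f(d) = g(d)$. Together with $f(a)=g(a)$, $f(b)=g(b)$, $f(c)=g(c)$ this gives $f = g$ on all of $I$, so $f$ is fractional linear.

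The one genuine point requiring care is the injectivity of $t\mapsto[f(a),f(b),f(c),t]$ and the invertibility of the matrix defining $g$; both hinge on the images $f(a),f(b),f(c)$ being pairwise distinct. I would establish this at the outset: if, say, $f(a)=f(b)$, then for any $d$ the quantity $[f(a),f(b),f(c),f(d)]$ either is undefined or collapses, whereas $[a,b,c,d]$ is a well-defined finite nonzero number (as $a,b,c,d$ are distinct) — contradiction. Once distinctness is in hand, everything else is the elementary algebra of cross ratios recorded in Remarks \ref{Rem_Preserve-Cross-Ratio}--\ref{Rem_Cross-Ratio-Permut}, and no appeal to the higher-dimensional machinery is needed. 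I do not expect any serious obstacle; the main thing is to phrase the cross-ratio bookkeeping cleanly and to note explicitly that ``$f$ preserves cross ratios of quadruples $(a,b,c,d)$'' already forces the single fractional linear candidate $g$ upon us, after which uniqueness is automatic.
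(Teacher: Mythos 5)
Your proposal is correct and follows essentially the same route as the paper: both reduce to the three-fixed-points statement and invoke Remark \ref{Rem_Preserve-Cross-Ratio} to read off the fractional linear form of $f$ from the single identity $[f(a),f(b),f(c),f(x)]=[a,b,c,x]$. The only point you gloss over is that when $x<c$ the hypothesis literally gives the cross-ratio identity for the \emph{sorted} quadruple (e.g.\ $[f(x),f(a),f(b),f(c)]=[x,a,b,c]$ when $x<a$), so the step you label ``by hypothesis'' really requires the permutation identities of Remark \ref{Rem_Cross-Ratio-Permut}; that four-case reduction is essentially the entire content of the paper's proof, and since you cite the right remark at the end this is a matter of making the bookkeeping explicit rather than a genuine gap. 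Your verification that $f(a),f(b),f(c)$ are pairwise distinct (so that the candidate map $g$ is nondegenerate and $t\mapsto[f(a),f(b),f(c),t]$ is injective) is a detail the paper omits and is a worthwhile addition.
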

\noindent{\bf Proof.} Let $a,b,c\in I$ such that $a<b<c$, and $f$
preserves cross ratio of $a,b,c,x$ for any $x\in I\setminus
\{a,b,c\}$. Let $x\in I$. We consider four cases; $x<a$, $a<x<b$,
$b<x<c$, and $c<x$. For each case, the preservation of cross ratio
yields a different equation;

$\hskip 21pt x<a \Rightarrow [f(x), f(a), f(b), f(c)] = [x,a,b,c],$

$ a<x<b          \Rightarrow [f(a), f(x), f(b), f(c)] = [a,x,b,c],$

$b<x<c\hskip  1pt\Rightarrow [f(a), f(b), f(x), f(c)] = [a,b,x,c],$

$ c<x \hskip 22pt\Rightarrow [f(a), f(b), f(c), f(x)] = [a,b,c,x].$

By Remark \ref{Rem_Cross-Ratio-Permut}, each of these equations
implies $[f(a), f(b), f(c), f(x)] = [a,b,c,x]$, and thus by Remark
\ref{Rem_Preserve-Cross-Ratio}, we get $f(x)=\frac{\alpha x +
\beta}{\gamma x + \delta}$ for some $\alpha, \beta, \gamma, \delta$
which depend only on $a, b, c$, $f(a), f(b), f(c)$. Therefore $f$ is
a fractional linear map on $I$. \qed

\subsection{Other representations and properties}\label{Sect_More-Reps}
\subsubsection{Canonical form}
In what follows, we denote by $x=(x_1,\dots,x_n)$ the coordinates of
a point $x$ with respect to the standard basis $\{e_i\}$.
\begin{defn} Let $H^+$ be the half space $\{x_1>1\}$.
The mapping $F_0:H^+\to H^+$ given by
\[F_0(x)=\frac{x}{x_1-1}\]
will be called the {\em canonical}               fractional linear map.
\end{defn}
It is useful to note that the group of fractional linear maps is
generated by its subgroup of affine linear maps, and the above map.
\begin{thm}\label{Thm_Can-Form} Let $F$ be an injective non-affine
fractional linear map with $F(x_0) = y_0$. Then there exist $B, C\in
GL_n$ such that $B(F(Cx+x_0)-y_0)=F_0(x)$.
\end{thm}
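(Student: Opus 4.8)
The plan is to reduce a general non-affine fractional linear map to the canonical form $F_0$ by pre- and post-composing with affine maps, using the freedom we have in choosing the associated matrix $\hat A$ (which is only defined up to scalar) and the structure of $F$ revealed by the projective description in Section \ref{Sect_Proj-Desc}. The key observation is Item 4 of Section \ref{Sect_Basic-Prop}: composition of fractional linear maps corresponds to multiplication of the associated matrices, and affine maps are exactly those whose matrix has bottom row $(0,\dots,0,\ast)$. So the statement $B(F(Cx+x_0)-y_0) = F_0(x)$ is, on the level of matrices, the claim that $\hat A_F$ can be brought to the matrix of $F_0$ by left and right multiplication by matrices of the affine type (with the translation-by-$x_0$ and translation-by-$-y_0$ absorbed into the affine factors). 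Note $F_0$ has matrix $\left(\begin{smallmatrix} I & 0 \\ e_1^T & -1\end{smallmatrix}\right)$, whose bottom row is $(e_1^T,-1)$, i.e.\ genuinely non-affine.

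First I would normalize the domain issue: since $F$ is non-affine, by Item 2 its maximal domain is an open half space $H = \{\,\langle c, x\rangle > -d\,\}$ for the bottom row $(c^T,d)$ of $\hat A_F$, with $c \neq 0$. Choose an invertible linear map $C_0$ and a translation so that $x_0 \mapsto 0$ and the hyperplane $\{\langle c,x\rangle = -d\}$ is carried to $\{x_1 = 1\}$ (this is possible: translate $x_0$ to $0$, then apply a linear map taking $c$ to a suitable multiple of $e_1$ so that the affine hyperplane becomes $x_1 = 1$ — the constant $d$ and the scaling are matched up by rescaling $\hat A$). After this right-composition with an affine map, the new fractional linear map $\tilde F$ has associated matrix with bottom row exactly $(e_1^T, -1)$, is defined on $\{x_1 > 1\}$, and sends $0 \mapsto \tilde y_0$ for some point $\tilde y_0$. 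Explicitly $\tilde F(x) = \frac{\tilde A x + \tilde b}{x_1 - 1}$.

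Next I would kill $\tilde b$ and the linear part $\tilde A$ by a left affine composition. Post-composing with the translation $z \mapsto z - \tilde b/(\text{something})$ — more carefully, writing $\tilde A x + \tilde b = \tilde A(x - v) + (\tilde A v + \tilde b)$ and noting $x_1 - 1 = \langle e_1, x - v\rangle + (\langle e_1, v\rangle - 1)$ shows this is not quite a pure translation; instead the cleanest route is: post-compose $\tilde F$ with a general affine map $z \mapsto Bz + w$ and compute that the resulting matrix is $\left(\begin{smallmatrix} B\tilde A + w e_1^T & B\tilde b - w \\ e_1^T & -1 \end{smallmatrix}\right)$. I want to choose $B \in GL_n$ and $w \in \R^n$ to make the top-left block equal $I$ and the top-right block equal $0$. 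The equation $B\tilde b = w$ fixes $w$ once $B$ is chosen, and then $B\tilde A + w e_1^T = B\tilde A + B\tilde b e_1^T = B(\tilde A + \tilde b e_1^T) = I$ forces $B = (\tilde A + \tilde b e_1^T)^{-1}$. The main obstacle, and the only real content, is therefore showing that the matrix $\tilde A + \tilde b e_1^T$ is invertible: this is precisely where injectivity of $F$ is used. By the remark in Section \ref{Sect_Proj-Desc} (and Remark \ref{Rem_example-not-inv}), injectivity of a fractional linear map is equivalent to invertibility of the full $(n+1)\times(n+1)$ matrix $\hat A$; a short block-determinant computation (expanding $\det\left(\begin{smallmatrix} \tilde A & \tilde b \\ e_1^T & -1\end{smallmatrix}\right)$ along the last row, or using the Schur complement with respect to the $(-1)$ entry) gives $\det \hat A = \pm\det(\tilde A + \tilde b e_1^T)$, so $\hat A \in GL_{n+1}$ forces $\tilde A + \tilde b e_1^T \in GL_n$, and $B$ exists. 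With this choice the matrix of $B\tilde F(\,\cdot\,) + w$ is exactly the matrix of $F_0$, hence the two maps agree on their common open domain, and by Corollary \ref{Lem_Mizdahim-Interior} they coincide; folding the translations by $x_0$ and $y_0$ into the affine factors $C$ and $B$ gives the statement as written.
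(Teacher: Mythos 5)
Your proof is correct and follows essentially the same route as the paper: reduce to $F_0$ by affine pre- and post-composition, using the matrix calculus for composition together with the equivalence of injectivity of $F$ with $\hat{A}\in GL_{n+1}$. The only difference is bookkeeping order --- the paper subtracts $y_0$ first, so that $G(0)=0$ forces $b=0$ and the linear block is invertible outright, which avoids your rank-one correction $\tilde{A}+\tilde{b}\,e_1^T$ and its determinant computation; note also that your final translation $w=B\tilde{b}$ does equal $-By_0$ (since $\tilde{F}(0)=F(x_0)=y_0$ gives $\tilde{b}=-y_0$), so the statement in the exact form $B(F(Cx+x_0)-y_0)=F_0(x)$ is indeed recovered.
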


\noindent{\bf Proof of Theorem \ref{Thm_Can-Form}.} Define
$G(x):=F(x+x_0)-y_0$, then $G(0)=0$. $G$ is an injective non-affine
fractional linear map, with an inducing matrix of the form:
\[\left(\begin{array}{cccc}
         &   &  &   \\
         & A'&  & b \\
         &   &  &   \\
         & c^T &  & d \\
        \end{array}\right).\] From $0\in Dom(G)$ it follows
$d\neq0$, so (using the multiplicative degree of freedom) we let
$d=-1$. Also, $G(0)=0$ implies $b=0$. Since $G$ is injective, the
inducing matrix is invertible, and by $b=0$ this implies that $A'\in
GL_n$. Non-linearity of $G$ implies $c\neq 0$. Therefore we can
write for some $A'\in GL_n$, $0\neq c\in\R^n$, that
\[G(x) = \frac{A'x}{\iprod{c}{x} - 1}.\] Pick $C \in GL_n$ such that
$C^{t}c = e_1$. We get $\iprod{c}{Cx} = \iprod{e_1}{x} = x_1$.
Therefore \[G(Cx) =\frac{A'Cx}{x_1 -1}.\] Finally, by letting $B =
(A'C)^{-1}$, we get $(B\circ G\circ C)(x) = \frac{x}{x_1-1}$, and so
\[ B(F(Cx+x_0)-y_0) =\frac{x}{x_1-1},\] as required. \qed

\begin{rem} For simplicity, assume below $x_0=y_0=0$. The
representation in Theorem \ref{Thm_Can-Form} is clearly not unique,
as $C$ can be chosen in any way satisfying just one linear
condition, and $B$ depends on $C$. Another form which can be given
is:
\[C^{-1}A'^{-1}FC = \frac{x}{x_1-1},\] where $A'$ is uniquely
determined, and $C$ as before. Yet a third way to view this
representation is: \[ F(x) = \frac{A'x}{\iprod{c}{x} - 1},\] as was
shown in the proof. This form has the advantage of emphasizing the
degrees of freedom of a fractional linear map, since both the point
$c$ and the matrix $A'$ are determined uniquely.
\end{rem}

\subsubsection{Geometric structure}\label{Sect_Geom_Desc}
The mapping $F_0(x) = \frac{x}{x_1-1}$ is defined on
$H^+=\{x_1>1\}$, and satisfies $F_0(H^+)=H^+$. It is an involution
on $H^+$ (and on $H^-=\{x_1<1\}$ as well). Denote the boundary of
$H^+$ by $H$.

For every affine hyperplane parallel to $H$, namely $H_t=\{x:
x_1=t\}$ (for $t\neq 1$), we have $F_0(H_t)=H_{f(t)}$, where
$f(t)=\frac{t}{t-1}$. The restriction $F_0: H_t\to H_{f(t)}$,
thought of as a map on $\R^{n-1}$, is a linear map - in fact, it is
simply a scalar map; $x\mapsto \frac{1}{t-1} x$. In particular we
see that in this family of parallel hyperplanes (shifts of $H$),
parallel hyperplanes are mapped to parallel hyperplanes. This
behavior is unique to shifts of $H$. Indeed, take $v\in\R^n$, then
($F_0^{-1}=F_0$):
\begin{align*}
F_0(\{x: \iprod{x}{v}=c\}) &= \{F_0(x): \iprod{x}{v}=c\}=\{ x:
\iprod{F_0(x)}{v}=c\}\\&=\{x: \iprod{x}{v}=c(x_1-1)\}=\{x:
\iprod{x}{v}=c\iprod{x}{e_1}-c\}\\&=\{x: \iprod{x}{v-ce_1}=-c\}.
\end{align*}
And so we see that if $v\neq\lambda e_1$, hyperplanes parallel to
$v^\perp$ are mapped to hyperplanes which are {\em not} parallel;
$(v-ce_1)^\perp\neq (v-c'e_1)^\perp$ for $c\neq c'$.

These considerations, by Theorem \ref{Thm_Can-Form}, may be applied
to a general fractional linear mapping $F$. There are two
hyperplanes, the first of which, say $H_1$, is the boundary of the
maximal domain of $F$, and the second, $H_2$, is the boundary of the
image of $F$, such that any translate of $H_1$ (which is in the
domain) is mapped to a translate of $H_2$, and moreover, the map $F$
restricted to each translate of $H_1$ is linear. In any other
direction, however, two parallel hyperplanes are mapped to two
hyperplanes which are not parallel.

As for a linear subspace $V$ of $\R^n$ of dimension $0\le k\le n$,
we have $F_0(V)=V$ (by this we mean $F_0(V\cap H^-)=V\cap H^-$ and
$F_0(V\cap H^+)=V\cap H^+$, since $F_0$ is not defined on the
intersection with $H$). For $n-1$ dimensional subspaces, we have
seen it in the formula given above for the image of hyperplanes
under $F_0$; substituting $c=0$ yields $F_0(v^\perp)=v^\perp$ for
every $v\in\R^n$. But in fact it is true trivially for subspaces of
any dimension; simply note that $F_0(x)$ is in the direction of $x$.
In fact, this is a particular case of the more general phenomenon;
lines (more precisely: their intersection with the domain) through a
fixed point in the domain of $F$, $x_0 \in dom(F)$, are mapped into
lines through $F(x_0)$. This is due to interval preservation of $F$.
Since $F$ is smooth, this mapping of lines (but not of points along
the lines) is the linear map given by the differential of $F$,
$dF(x_0)$.

We can say even more about the geometric structure of $F$. For a
point $y_0$ on the boundary of the maximal domain of $F$, the family
of all the rays emanating from the  point $y_0$ (into the domain) is
mapped to the family of all half lines in the image of $F$ which are
parallel to some vector $y_0'$, and vice versa. Again, by Theorem
\ref{Thm_Can-Form} it is enough to show this for the specific map
$F_0(x) = \frac{x}{x_1 - 1}$. Consider a point $\hat{y}=(1,y)$ on
$H$; a ray emanating from $\hat{y}$ {\em into the domain} can be
written, for some $(1,u)\in H$ as
\[ R = \{(1,y) + t(1,u): t\in \R^+\}. \]
It is mapped to the half line\[l' = \{ F( (1,y) + t(1,u)): t\in
\R^+\} = \{{(1,u)}+\frac{1}{t}(1,y): t\in \R^+\} = \{(1,u)+s(1,y) :
s\in \R^+\}.\] So we have seen that for $a$ and $b$ on $H$, the ray
$a+b\R^+$ is mapped under $F_0$ to $b + a \R^+$ and vice versa. For
example all rays emanating from the point $e_1\in H$ are mapped to
all lines perpendicular to $H$. Note that the part of $l$ which is
close to the point $\hat{y}$ (small $t$) is mapped to the part of
$l'$ which is far from the hyperplane $H$ (large $s$). In a sense,
the point $\hat{y}$ is mapped to ``infinity'' in direction opposite
to $H$.

This also shows that fractional linear maps act as a lens on
straight lines intersecting the defining hyperplane. Indeed, a cone
of rays with base $B$, emanating from the point $a$ in $H$, is
mapped to a half infinite cylinder with base $B$, in the direction
$a$. If $a\in B$, the corresponding line is the only one in the cone
which is mapped to itself. When considering a general non-affine
fractional linear map, we get that an infinite cone with base $B$ is
mapped to a half infinite cylinder with base $T(B)$ for some linear
$T$, and vice versa. Of course, if the fractional linear map is
affine it also does this, but by mapping cones to themselves and
cylinders to themselves.
\begin{figure}[h]
\begin{center}
\scalebox{0.24}{\includegraphics{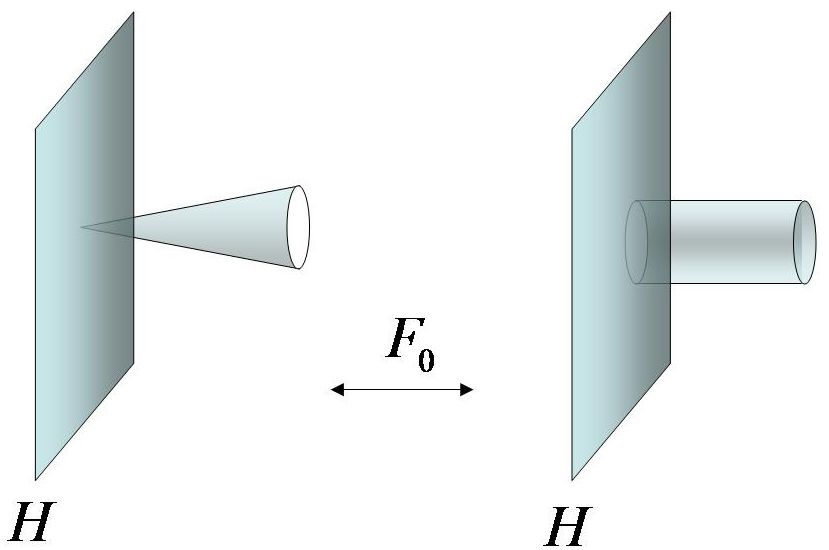}}
\end{center}
\end{figure}

\subsection{Additional results}
\subsubsection{Fractional linear maps and polarity}
For a closed convex set $T$ containing $0$, denote its polar set as
before by $T^\circ$. We claim that in a sense, the ``root'' of a
fractional linear map is the polar map. The following theorem states
that the so called ``distortion'' of fractional linear maps
corresponds to two actions of polarity, each with respect to a
different point of origin.

\begin{thm}\label{Thm_Polara-By-FL}
Let $0\in K\subseteq\{x_1<1\}\subseteq\R^n$ be a closed convex set.
Then for the canonical form of a fractional linear map,
$F_0(x)=\frac{x}{x_1-1}$, the following holds: \[F_0(K) =
(e_1-K^\circ)^\circ.\]
\end{thm}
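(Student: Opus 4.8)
The plan is to reduce the statement to the bipolar theorem by directly manipulating the inequality that defines the polar. Throughout set $H^-=\{x_1<1\}$, so that $0\in K\subseteq H^-$; recall from Section \ref{Sect_Geom_Desc} that $F_0$ is an involution of $H^-$, and note that a one-line check on the first coordinate (if $x_1<1$ then $\frac{x_1}{x_1-1}<1$) already gives $F_0(K)\subseteq H^-$. Hence it suffices to prove two things: that $(e_1-K^\circ)^\circ\cap H^- = F_0(K)$, and that $(e_1-K^\circ)^\circ\subseteq H^-$.

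First I would check that the right-hand side is a legitimate polar: since $K\subseteq H_1=\{x_1<1\}$ we have $H_1^\circ=[0,e_1]\subseteq K^\circ$, hence $e_1\in K^\circ$ and $0=e_1-e_1\in e_1-K^\circ$, so $e_1-K^\circ$ is a closed convex set containing the origin. Unwinding the definition of the polar,
\[
z\in(e_1-K^\circ)^\circ \iff \forall\, y\in K^\circ:\ \langle e_1-y,z\rangle\le 1 \iff \forall\, y\in K^\circ:\ \langle y,z\rangle\ge z_1-1 .
\]
For $z\in H^-$ one has $z_1-1<0$, so dividing the last inequality by $z_1-1$ reverses it and gives the equivalent statement $\langle y,\tfrac{z}{z_1-1}\rangle\le 1$ for every $y\in K^\circ$; that is, $F_0(z)=\tfrac{z}{z_1-1}\in K^{\circ\circ}=K$, the last equality being the bipolar theorem ($K$ closed, convex, $0\in K$). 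Since $F_0$ is an involution of $H^-$, the condition $F_0(z)\in K$ is the same as $z\in F_0(K)$, and this proves $(e_1-K^\circ)^\circ\cap H^- = F_0(K)$.

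It remains to rule out points of $(e_1-K^\circ)^\circ$ with $z_1\ge 1$. When $K$ is a convex body, $0\in\mathrm{int}(K^\circ)$, so for any $z\ne 0$ a sufficiently small multiple $-\varepsilon z$ lies in $K^\circ$; feeding $y=-\varepsilon z$ into the characterization above would force $-\varepsilon\|z\|^2\ge z_1-1\ge 0$, which is absurd. Hence $(e_1-K^\circ)^\circ\subseteq H^-$ and the two sides of the identity coincide.

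The step needing the most care is precisely this last one, i.e. the behaviour near the bounding hyperplane $\{x_1=1\}$: $F_0$ is undefined there, so for a general unbounded closed convex $K\subseteq H^-$ the map $F_0$ "forgets" the part of $K$ that recedes toward $\{x_1=1\}$, and $F_0(K)$ may fail to be closed while $(e_1-K^\circ)^\circ$ always is — so in that generality the identity is to be read up to taking closures. Everything else is the elementary chain of equivalences above. As a by-product one recovers the induced point map on $H_1$ mentioned in the introduction: with $\tilde F(x)=-F_0(x)=\frac{x}{1-x_1}$ the analogous identity reads $\tilde F(K)=(K^\circ-e_1)^\circ$, using $(-C)^\circ=-C^\circ$.
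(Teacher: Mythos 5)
Your proof is correct, and at its core it is the same computation as the paper's: both unwind the inequality defining the polar, divide by the affine functional $x_1-1$ (whose sign is controlled on the relevant set), and close the argument with the bipolar theorem together with the fact that $F_0$ is an involution of $\{x_1<1\}$. The differences are organizational: you argue pointwise about membership of $z$ in $(e_1-K^\circ)^\circ$ directly in terms of $K$, whereas the paper runs a chain of set identities for $(T-e_1)^\circ$ with $T=K^\circ$, using the auxiliary involution $G(x)=\frac{-x}{x_1+1}$, and only substitutes $T=K^\circ$ at the end. What your organization buys is precisely the caveat you raise in your last paragraph, and you should know that it is not excessive caution: the identity as stated fails for some unbounded closed convex $K$. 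Take $K=\{x:x_1\le 0\}$; then $K^\circ=\{te_1:t\ge0\}$, so $(e_1-K^\circ)^\circ=\{z:0\le z_1\le 1\}$, while $F_0(K)=\{z:0\le z_1<1\}$ is not closed, and the two sides differ by the slice $\{z_1=1\}$ (in general, the discrepancy is exactly the set of $z$ with $z_1=1$ and $-z$ in the recession cone of $K$). The paper's own argument slips exactly where you are careful: it asserts $[0,e_1]\subseteq T\Rightarrow(T-e_1)^\circ\subseteq\{x_1>-1\}$, whereas polarity only yields the closed half-space $\{x_1\ge-1\}$, and the subsequent substitution $x\mapsto\frac{-x}{1+x_1}$ silently discards the points with $x_1=-1$. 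Your argument is complete as written for compact $K$ (where $0\in\mathrm{int}(K^\circ)$), and your reading of the general case ``up to closures'' is the correct repair, since a closed convex subset of $\{z_1\le1\}$ containing $0$ is the closure of its intersection with $\{z_1<1\}$.
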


In \cite{BS}, \cite{Slo} the authors prove uniqueness theorems for
order isomorphisms 
on various families of convex sets. Here we see new such maps, on
the family of closed convex bodies which are contained in a
half space. Uniqueness of these maps in some weak sense (among point
maps) follows immediately from  the uniqueness theorem
\ref{Thm_FL-Unique-nD}.  Applying techniques from those papers one
can get uniqueness of these maps among all order isomorphisms on
this class of convex bodies.

\noindent{\bf Proof of Theorem \ref{Thm_Polara-By-FL}.} Let $T$ be a
closed convex set. Clearly
\[[0,e_1]\subseteq T \quad\Leftrightarrow\quad [-e_1,0]\subseteq T-e_1
\quad\Leftrightarrow\quad (T-e_1)^\circ\subseteq\{x_1>-1\},\] and
therefore under our assumptions for every $x\in(T-e_1)^\circ$ we
have $0<1+x_1$.
 We define $G(-x)=-F_0(x)$, or explicitly $G(x)=\frac{-x}{x_1+1}$. Note that
$F_0$ is an involution on $\{x_1\neq1\}$, and hence $G$ is an
involution on $\{x_1\neq-1\}$. Compute
\begin{align*}
(T-e_1)^\circ&=\left\{x\in\R^n : \iprod{x}{y-e_1}\le 1 \quad \forall
y\in T \right\}\\&=\left\{x\in\R^n : \iprod{x}{y}\le 1+x_1
\quad\forall y\in T \right\}\\&=\left\{x\in\R^n :\left\langle
\frac{-x}{1+x_1},-y\right\rangle\le 1 \quad\forall y\in T\right\}\\&
=\left\{x\in\R^n : \iprod{G(x)}{-y}\le 1 \quad\forall y\in T
\right\}\\&=\left\{G^{-1}(x)\in\R^n : \iprod{x}{y}\le 1\quad \forall
y\in (-T) \right\}\\&=G^{-1}\left( \left\{x\in\R^n : \iprod{x}{y}\le
1\quad \forall y\in (-T) \right\} \right)\\&=
G^{-1}((-T)^\circ)=G(-T^\circ)=-F_0(T^\circ),
\end{align*}
which in turn implies
\[F_0(T^\circ) = (e_1-T)^\circ,\] for sets $T$ which contain
the interval $[0,e_1]$, or conversely, such that $T^\circ\subseteq\{
x_1<1\}$. Therefore we can formulate it in the following way, for a
closed convex $K\subseteq\{x_1<1\}$ such that $0\in K$ we have
\[F_0(K) = (e_1-K^\circ)^\circ.\] \qed

\begin{rem}
Recall that $\{x_1=1\}$ is the defining hyperplane of $F_0$, so we
cannot hope to get that result for $K$ which intersects this
hyperplane. In the other side of this hyperplane, however, we do not
have $0$, and again cannot work with $K^\circ$.
\end{rem}

\begin{rem} By Theorem \ref{Thm_Can-Form}, once we understand the
action of $F_0$ on convex bodies, we understand the action of all
(non-affine) fractional linear maps on convex bodies, and the only
difference is in some linear maps and translations.
\end{rem}
%
%

\subsubsection{Sets that can be preserved}
The fractional linear maps clearly have a non-linear ``distortion''
of the image. As we saw above, when approaching the  defining
hyperplane, the map diverges. However, fractional linear maps
preserve some structure, for example, they preserve combinatorial
structure of polytopes (number of vertices, faces of every
dimension, intersection between faces, etc). We will investigate
which sets can be preserved by fractional linear maps. 

We present some examples of simple convex sets $K$ for which there
exist fractional linear maps $F$ with $F(K)=K$. This will also shed
some light on the question: ``given sets $K_1$, $K_2$, does there
exist a fractional linear map $F$ such that $F(K_1)=K_2$?''. This
question will have consequences in the next section, where we deal
with classes of functions supported on convex sets (``windows''),
and see that the existence of any order isomorphism between two such
classes depends on the existence of a fractional linear map between
the corresponding windows (more precisely; between the corresponding
{\em cylinders}, either $K_i\times\R^+$ or $K_i\times\R$).

Let us start with an explicit two dimensional example: A non-affine
fractional linear map which preserves the Euclidean disk.

\begin{exm}\label{Exm_2-Dim-Ball}{\bf Euclidean ball, 2 dimensions.}
Define $T:D\to\R^2$, where $D=\{(x,y)\in\R^2 : x<2\}$, in the
following way: \[ \begin{pmatrix} x\\ y \end{pmatrix} \mapsto
\begin{pmatrix} T_1(x)\\ T_2(x,y) \end{pmatrix} =
\begin{pmatrix} \frac{2x-1}{2-x}\\ \frac{\sqrt{3}y}{2-x}
\end{pmatrix}.\] Note that $x^2+y^2=1$ implies $T_1(x)^2 + T_2(x,y)^2
= 1$, that is, $S^1$ is mapped to itself by $T$. It is easy to check
that $T$ maps $S^1$ onto itself. By the interval preservation
property of $T$, this implies that the unit ball is mapped to
itself. Note that $T(0)\neq 0$, with correspondence to Theorem
\ref{Thm_Pres-Symm-Plus-0}.
\end{exm}

\begin{exm}\label{Exm_n-Dim-Ball}{\bf Ellipsoids in $n$
dimensions.} The above explicit example can be extended easily to
the Euclidean ball in $\R^n$.  However, let us discuss this case, or
more generally, the case of an ellipsoid in $\R^n$, in a slightly
more abstract way. Note that a conic section is always mapped by a
fractional linear map to a conic section. Indeed, a conic section in
$\R^n$ is given as a section of the cone $C = \{x_{n+1}^2 =
\sum_{i=1}^{n} x_{i}^2\}$ by a hyperplane (identified with $\R^n$).
Equivalently, we may take the section of a linear image of the cone,
$A(C)$ (for $A\in GL_{n+1}(\R)$) by the hyperplane $\{ x_{n+1} = 1\}
\subseteq \R^{n+1}$. Viewing fractional linear maps as traces of
linear maps on $\R^{n+1}$ (say, given by a matrix $B$), we
immediately get that the image of the conic section corresponding to
$A(C)$ is the conic section corresponding to $BA(C)$. Next, letting
$\cal E$ be some closed ellipsoid in the domain of a non-affine
fractional linear map (so, it is bounded away from the defining
hyperplane), it is mapped to a conic section, but since $F$ is
continuous, this must be a compact conic section, and in particular
a {\em bounded} one. Thus, $F(\cal E)$ is an ellipsoid $\cal E'$.
Finally, since any two ellipsoids can be mapped to one another via
an affine linear map, we can find an invertible affine
transformation $A$ such that $AF(\cal E) = {\cal E}$, and $AF$ is a
non-affine fractional linear map.
\end{exm}

Before moving on to the next convex set, we mention that for
Euclidean balls (and hence ellipsoids) we also have a transitivity
result, in the flavor of Lemma \ref{Lem_Transitivity-n+2} for
simplices. It is given in the following proposition.

\begin{prop} Let $B_n$ denote the open unit ball in $\R^n$,
and $\cE$ be some open ellipsoid, with $p\in\cE$. Then there exists
a bijective fractional linear map $F:\cE \to B_n$ with $F(p)=0$.
\end{prop}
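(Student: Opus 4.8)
The plan is to reduce the general statement to the special case of the unit ball $B_n$ itself, and there to build the map explicitly using the already–established geometric picture of the canonical fractional linear map $F_0$ acting as a ``lens''. First I would dispose of the ellipsoid: by Example \ref{Exm_n-Dim-Ball} (or directly, since any two ellipsoids differ by an affine map) there is an invertible affine map $A$ with $A(\cE)=B_n$; such an $A$ is a fractional linear map, so it suffices to find a bijective fractional linear $G:B_n\to B_n$ with $G(A(p))=0$. Thus we are reduced to: \emph{given $q\in B_n$, find a bijective fractional linear self-map of $B_n$ sending $q$ to $0$.} By composing with an orthogonal rotation (again affine, hence fractional linear) we may further assume $q=te_1$ for some $t\in(-1,1)$, so the only real content is a one-parameter family of maps moving a point of the $x_1$-axis to the origin while fixing $B_n$.

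For this core step I would imitate Example \ref{Exm_2-Dim-Ball} but in $\R^n$ and with a free parameter. The natural candidate is a map of the form
\[
F(x)=\left(\frac{\alpha x_1+\beta}{\gamma x_1+\delta},\ \frac{\lambda x_2}{\gamma x_1+\delta},\ \dots,\ \frac{\lambda x_n}{\gamma x_1+\delta}\right),
\]
i.e. a fractional linear map whose defining hyperplane $\{\gamma x_1+\delta=0\}$ is orthogonal to $e_1$ and which acts ``radially'' in the last $n-1$ coordinates. This is fractional linear (it is the trace of an obvious block matrix in $GL_{n+1}$, invertible once $\alpha\delta-\beta\gamma\neq 0$ and $\lambda\neq 0$). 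I would impose three conditions: $F(\pm e_1)=\pm e_1$ (so the two ``poles'' of the sphere along $e_1$ are fixed), which pins down the ratios among $\alpha,\beta,\gamma,\delta$ up to one parameter, and $F(te_1)=0$, which fixes that last parameter; then $\lambda$ is determined (up to sign) by requiring that the sphere be preserved. Concretely one checks the algebraic identity: with the coefficients so chosen, $x_1^2+\sum_{i\ge2}x_i^2=1$ forces $F_1(x)^2+\sum_{i\ge 2}F_i(x)^2=1$. (This is exactly the pattern of Example \ref{Exm_2-Dim-Ball}, where $t=1/2$, and amounts to verifying one polynomial identity after clearing the common denominator.) Since $F$ maps $S^{n-1}$ bijectively onto $S^{n-1}$ and is interval preserving, and since the defining hyperplane of $F$ can be arranged not to meet $\overline{B_n}$, by interval/convexity preservation (Lemma \ref{Lem_Edge-To-Edge} and the remark that interval preserving maps are convexity preserving) $F$ maps the closed ball onto the closed ball; hence it maps the open ball onto the open ball, with $F(te_1)=0$ by construction.

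Assembling: given $\cE$ and $p\in\cE$, take the affine $A$ with $A(\cE)=B_n$, then a rotation $R\in O(n)$ with $R(A(p))=te_1$, then the explicit $F$ above, then finally compose with one more rotation/reflection if one wants a prescribed orientation (not needed for the statement). The composition $F\circ R\circ A:\cE\to B_n$ is fractional linear by Item 4 of Section \ref{Sect_Basic-Prop}, bijective, and sends $p\mapsto 0$, as required.

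The main obstacle is the core one-parameter construction in $\R^n$: choosing the coefficients $\alpha,\beta,\gamma,\delta,\lambda$ so that \emph{simultaneously} the sphere is preserved, the point $te_1$ goes to $0$, and the defining hyperplane stays outside $\overline{B_n}$ (so that ``$F$ preserves $S^{n-1}$'' upgrades, via interval preservation, to ``$F$ preserves $B_n$'' rather than swapping inside and outside). All of this is elementary but the bookkeeping — verifying the quadratic identity after clearing denominators, and checking the hyperplane avoids $\overline{B_n}$ for the whole range $t\in(-1,1)$ — is where care is needed; everything else is a routine invocation of composition of fractional linear maps and of interval/convexity preservation from Section \ref{Sect_I.P.M}.
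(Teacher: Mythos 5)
Your proposal is correct and follows essentially the same route as the paper: reduce to the unit ball by an affine map and a rotation, then write down an explicit one-parameter fractional linear self-map of the ball (with defining hyperplane orthogonal to $e_1$ and a uniform scaling in the remaining coordinates) moving the point on the axis to the origin; your quadratic identity does close up, with $\lambda^2=1-t^2$. The only cosmetic difference is that the paper constructs the map sending $0$ to $\lambda e_1$ and invokes invertibility of fractional linear maps, whereas you normalize by fixing $\pm e_1$ and go directly from $te_1$ to $0$.
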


\noindent{\bf Proof.} There exists an {\em affine linear} map that
maps $\cE$ to $B_n$ and $p$ to $p'$, and an orthogonal
transformation which maps $p'$ to $\lambda e_1$ for $0\le\lambda<1$.
If $\lambda=0$ we are done, with $F$ being an affine map.\\ Assume
otherwise; then by invertibility of f.l. maps, our task is to find a
bijective fractional linear map $G:B_n\to B_n$ such that
$G(0)=\lambda e_1$, for a given $0<\lambda<1$. Let $a:=1/\lambda$,
$c:=\sqrt{a^2-1}$ (so, $1<a, 0<c$). One possible choice of $G$ is
induced by the $(n+1) \times (n+1)$ matrix
\[ A_{G} = \left(\begin{array}{cccc}
                                      &     &  &    \\
                                      &  D  &  & e_1\\
                                      &     &  &    \\
                                      & e_1^T &  &  a \\
                                     \end{array}\right),\]
where $D$ is diagonal with eigenvalues $\{a,c,\dots,c\}$. The direct
formula corresponding to that choice of $G$ is:
\[G
\begin{pmatrix} x_1\\ \vdots\\x_n \end{pmatrix} = \frac{1}{x_1+a}
\begin{pmatrix} ax_1+1\\ cx_2\\ \vdots\\ cx_n \end{pmatrix}.\]
\qed

We turn to the second example which is again in $\R^2$, a trapezoid.
\begin{exm}{\bf Trapezoid.} Let $\alpha>0$, and $D=\{(x,y)\in\R^2 :
x<1+\alpha^{-1}\}$. Define $T:D\to\R^2$ and $A:\R^2\to\R^2$ in the
following way: \[ T \begin{pmatrix} x\\ y \end{pmatrix} =
\begin{pmatrix} T_1(x)\\ T_2(x,y) \end{pmatrix}=
\begin{pmatrix} \frac{x}{1+\alpha-\alpha x}\\
\frac{(1+\alpha)y}{1+\alpha-\alpha x} \end{pmatrix},\qquad A
\begin{pmatrix} x\\ y \end{pmatrix} =
\begin{pmatrix} 1-y\\ x \end{pmatrix}.\]

The affine linear map $A$ is the $\pi/2$ rotation around $(1/2,
1/2)$, and so it maps the four points $(0,0), (1,0), (0,1), (1,1)$
to themselves in a cyclic manner i.e. to $(1,0), (0,1), (1,1),
(0,0)$ respectively. The fractional linear map $T$ fixes the three
points $(0,0), (1,0), (0,1)$, and maps $(1,1)$ to $(1,1+\alpha)$.

Denote by $K$ the trapezoid with vertices $(0,0), (1,0), (0,1)$,
$(1,1+\alpha)$, and consider $F:K\to K$ defined by $F:=T\circ A\circ
T^{-1}$. It is obvious that $F$ is not affine, and that it maps the
four vertices of $K$ to themselves cyclically, thus by interval
preservation, $F(K)=K$. These two facts can also be verified from
the direct formula of $F$:\[
\begin{pmatrix} x\\ y \end{pmatrix}\mapsto \begin{pmatrix}
\frac{\alpha x - y + 1}{\alpha x + \alpha y + 1} \\
\frac{(\alpha + 1)^2 x}{\alpha x + \alpha y + 1} \end{pmatrix}.\]
\end{exm}

Note, had we chosen $\alpha=0$, our trapezoid $K$ would be a square,
and we would get that $T$, therefore $F$, are both affine maps, and
thus we see that at least with this construction, we did not get a
non-affine fractional linear map that preserves the cube in $\R^2$.
This is, in fact, a general result in $\R^n$.

We denote by $Q^n$ the unit ball of the $l_\infty$ norm in $\R^n$,
and by $B_1^n$ the unit ball of the $l_1$ norm in $\R^n$:
\[Q^n:=\{x\in\R^n : -1\le x_i\le 1, \quad i=1,...,n\},\]
\[B_1^n:=\{x\in\R^n : \sum_{i=1}^n |x_i|\le 1, \quad i=1,...,n\}.\]
\begin{thm}\label{Thm_Cube} Any bijective fractional linear map
$F: Q^n \to Q^n$ is affine.
\end{thm}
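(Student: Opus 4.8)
The plan is to exploit the extremal structure of the cube: a fractional linear map $F:Q^n\to Q^n$ is a bijection, it is interval preserving (Lemma behaviour from Section \ref{Sect_Basic-Prop}), and by Lemma \ref{Lem_Edge-To-Edge} applied to $F$ and to $F^{-1}$ (which is again fractional linear, by Item 5), $F$ must permute the faces of $Q^n$ of every dimension, preserving the inclusion relations between them. In particular $F$ induces a combinatorial automorphism of the face lattice of the cube, hence a permutation $\sigma$ of the $2^n$ vertices that is realized by a signed permutation matrix $P$ (an affine — indeed linear — symmetry of $Q^n$). The first step is therefore to replace $F$ by $P^{-1}\circ F$, which is still a fractional linear self-map of $Q^n$, but now \emph{fixes every vertex} of $Q^n$. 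So it suffices to prove: a fractional linear bijection $F:Q^n\to Q^n$ that fixes all $2^n$ vertices is the identity.

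For this reduced statement I would first handle $n=1$ separately (there $Q^1=[-1,1]$, and a fractional linear map of $\R$ fixing $-1$ and $1$ need not be the identity, so one genuinely needs $n\ge 2$; but the theorem is trivial content for $n=1$ anyway since "affine" there is automatic — actually a fractional linear self-map of $[-1,1]$ fixing both endpoints is $\frac{x}{1+\epsilon(1-x^2)/\text{stuff}}$, not affine, so the theorem as literally stated is an $n\ge 2$ statement; I will assume $n\ge 2$). For $n\ge 2$ the key tool is Lemma \ref{Lem_Mizdahim-n+2}: a fractional linear map on $\R^n$ is determined by its values on $n+2$ points in general position with one in the interior of the convex hull of the others. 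The cube has $2^n\ge n+2$ vertices for $n\ge 2$, but they are \emph{not} in general position (no vertex is in the interior of the hull of others — they are all extreme points), so I cannot apply that lemma directly to the vertices. Instead I would argue that $F$ fixes each edge pointwise: an edge $[v,w]$ of $Q^n$ is mapped to $[F(v),F(w)]=[v,w]$ by Lemma \ref{Lem_Edge-To-Edge}, so $F$ restricts to a fractional linear self-map of that segment fixing both endpoints. The restriction to a line is a one-dimensional fractional linear map, and — here is the crucial point — I must promote "fixes the two endpoints of the edge" to "is the identity on the edge," which requires a \emph{third} fixed point on (the line through) the edge. That third point comes from the two-dimensional faces: consider a $2$-face $S$ of $Q^n$, a square with vertices $v_1,v_2,v_3,v_4$, all fixed by $F$; the diagonals of $S$ are intervals with endpoints fixed, hence mapped to themselves, and their intersection is the center of $S$, which is therefore also fixed. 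Now the two lines supporting a pair of opposite edges of $S$ together with the diagonals give, on each edge, the two endpoints plus the intersection with a diagonal — but that intersection is an endpoint, so that is not yet a third interior point.

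The cleanest way past this obstacle is to invoke Corollary \ref{Cor_genFL-On-Simplex} directly: pick any non-degenerate simplex $\Delta$ with all $n+1$ vertices among the vertices of $Q^n$ (possible since $n\ge 2$, e.g. $0$... wait, $0\notin Q^n$'s vertex set — use $-e_1-\dots-e_n$ and its $n$ neighbors, which form a simplex since the edges from a vertex of the cube are linearly independent), and pick $p\in\mathrm{int}(\Delta)$. By the combinatorial argument above, $F$ fixes the $n+1$ vertices of $\Delta$; I then need $F(p)=p$ for \emph{some} interior point $p$ of $\Delta$, and once I have that, Lemma \ref{Lem_Id-On-Simplex} gives $F=\mathrm{Id}$ on $\Delta$, hence $F=\mathrm{Id}$ on $Q^n$ by Corollary \ref{Lem_Mizdahim-Interior} (two fractional linear maps agreeing on an open set agree everywhere). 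To get a fixed interior point: $F$ fixes all vertices of $Q^n$, hence (by Lemma \ref{Lem_Edge-To-Edge} applied repeatedly together with interval preservation, exactly as in the proof of Corollary \ref{Cor_genFL-On-Simplex}) $F$ maps each $k$-face of $Q^n$ onto itself and maps its interior to its interior; in particular $F$ maps the center $c=0$ of $Q^n$ to an interior point, but $0$ is the unique point lying on all $n$ main diagonals of $Q^n$ (the segments $[-v,v]$ for vertices $v$), each of which is mapped to itself with fixed endpoints — so $F(0)\in[-v,v]$ for every $v$, forcing $F(0)=0$. Now $0$ may not lie in the particular $\Delta$ I chose; so instead I choose $\Delta$ to \emph{contain} $0$ in its interior, which is possible: take $n+1$ vertices of $Q^n$ whose convex hull contains the origin in its interior (for instance, in suitable coordinates, $n$ vertices near one "orthant vertex" won't work, but a set like $\{(- 1,\dots,-1)\}\cup\{(1,-1,\dots,-1)$ rotated$\}$ — more simply, for $n=2$ take three of the four corners, whose triangle does \emph{not} contain $0$; better: use a simplex spanned by vertices of $Q^n$ together with the fact that $F$ fixes all face centers). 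The robust choice: $F$ fixes the center of every $2$-face and every edge's endpoints; in fact by the face-wise induction $F$ fixes the barycenter of every face, and in particular fixes $n+2$ points — the $n+1$ vertices of some simplex $\Delta\subset Q^n$ plus a point we can arrange in its interior among these barycenters — in general position with one interior to the others; then Lemma \ref{Lem_Mizdahim-n+2} finishes. The main obstacle, which the above resolves, is precisely bridging from "$F$ permutes/fixes the extreme points of $Q^n$" (which are never in the interior of the hull of the others, so the uniqueness lemma does not apply to them) to producing the required interior fixed point, and this is done by using that $F$, being interval preserving, also fixes barycenters of faces and the center of the cube.
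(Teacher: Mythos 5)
Your overall strategy (reduce to a map fixing all vertices, then force it to be the identity via the paper's uniqueness lemmas) is reasonable and quite different from the paper's, but the crucial step is left unfilled and, as written, fails. Everything up to ``$F$ fixes every vertex and $F(0)=0$'' is fine. The problem is producing the configuration that Lemma \ref{Lem_Mizdahim-n+2} (or Lemma \ref{Lem_Id-On-Simplex}) actually requires: $n+1$ fixed points forming a non-degenerate simplex \emph{together with a fixed point in its interior}. The simplex you name --- a vertex $v_0$ of $Q^n$ and its $n$ neighbours --- does not contain $0$, nor any face barycenter, in its interior for any $n\ge 2$ (a point of that simplex is $v_0+2\sum\lambda_ie_i$ with $\lambda_i>0$, $\sum\lambda_i<1$, and a face barycenter would force each $\lambda_i\in\{0,\tfrac12,1\}$ with $\sum\lambda_i<1$, which is impossible for $n\ge2$). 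Worse, for $n=2$ \emph{no} choice works: the only fixed points your argument produces are the four vertices and the center, every triangle on three vertices has $0$ on its boundary (a diagonal is an edge of the triangle), and there are no other face barycenters. So the sentence ``then Lemma \ref{Lem_Mizdahim-n+2} finishes'' is asserting exactly the part that is missing. To close the gap you would need either the full classical projective uniqueness theorem ($n+2$ points in general position, no interiority hypothesis --- which is quoted in Remark \ref{Rem_Transitivity-n+2-Classic} but is strictly stronger than what the paper proves in Lemma \ref{Lem_Mizdahim-n+2}), or an extra step such as: each main diagonal $[-v,v]$ carries three fixed points ($\pm v$ and $0$), a one-dimensional fractional linear map with three fixed points is the identity, hence the diagonals are pointwise fixed, and then every interior point is the intersection of two chords with fixed endpoints. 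For $n\ge 3$ one can instead exhibit a correct simplex (e.g.\ $\mathbf{1}$ together with $-\mathbf{1}+2e_i$, $i=1,\dots,n$, whose convex hull contains $0$ in its interior), but you do not do this, and it still leaves $n=2$ open. A secondary, smaller issue: the claim that the vertex permutation induced by $F$ is realized by a signed permutation matrix rests on the classical identification of the combinatorial automorphism group of the cube with the hyperoctahedral group; it is true but deserves at least a sentence.

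For comparison, the paper's proof is along entirely different lines and avoids fixed-point bookkeeping altogether: by the geometric description in Section \ref{Sect_Geom_Desc}, a non-affine fractional linear map sends parallel hyperplanes to parallel hyperplanes in only \emph{one} direction (translates of its defining hyperplane). Since any two non-opposite facets of $Q^n$ intersect, $F$ must send each pair of opposite (parallel) facets to such a pair; having \emph{two} pairs of parallel facets mapped to parallel pairs, not all lying in translates of a single hyperplane, already forces $F$ to be affine. That argument is a few lines and works uniformly for $n\ge2$ (and also for $B_1^n$); your route, if completed, would buy the stronger conclusion that a fractional linear self-map of $Q^n$ fixing all vertices is the identity, but at the cost of the unresolved interior-fixed-point step above.
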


\begin{thm}\label{Thm_Cross-Polytope} Any bijective fractional
linear map $F: B_1^n \to B_1^n$ is affine.
\end{thm}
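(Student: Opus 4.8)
The plan is to exploit the combinatorial structure of $B_1^n$ together with the rigidity of fractional linear maps established in Lemmas \ref{Lem_Transitivity-n+2} and \ref{Lem_Mizdahim-n+2}. First I observe that a bijective fractional linear map $F:B_1^n\to B_1^n$ must permute the $2n$ vertices $\{\pm e_i\}_{i=1}^n$ of the cross-polytope, since $F$ preserves the combinatorial structure of polytopes (vertices go to vertices, facets to facets, incidences to incidences), and the vertices of $B_1^n$ are exactly its $0$-dimensional faces. Likewise $F$ must fix the interior point $0$ or move it to another interior point; but in fact I expect the right normalization to come from applying an affine map after $F$ so as to pin down enough points. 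The key structural fact I would use is that each facet of $B_1^n$ is the convex hull of $n$ vertices, one chosen from each pair $\{e_i,-e_i\}$ with a choice of sign, and two vertices $e_i,-e_i$ are never on a common facet (they are ``antipodal'').

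The central step is to show that $F$, after composing with a suitable affine symmetry of $B_1^n$ (signed permutation matrix), must fix all $2n$ vertices. Since the automorphism group of the combinatorial structure of the cross-polytope is exactly the signed permutation group, which is realized by affine linear symmetries of $B_1^n$, we may assume $F$ fixes every vertex $\pm e_i$. Now I want to locate an interior point that is also fixed, so that I can invoke Lemma \ref{Lem_Id-On-Simplex} (or rather Corollary \ref{Cor_genFL-On-Simplex}) on a simplex spanned by $n+1$ of the vertices plus a fixed interior point. The natural candidate is $0$: since $F$ fixes $e_i$ and $-e_i$, and $F$ preserves intervals, $F$ maps the segment $[-e_i,e_i]$ to itself; hence $F(0)$ lies on the intersection $\bigcap_i [-e_i,e_i]=\{0\}$, so $F(0)=0$. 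Thus $F$ fixes the $n+1$ vertices $e_1,\dots,e_n,0$ of the simplex $\Delta=\mathrm{conv}\{0,e_1,\dots,e_n\}$ and one interior point would still be needed; but actually I can apply Corollary \ref{Cor_genFL-On-Simplex} directly: $F$ restricted to $\Delta$ agrees with some fractional linear map, and a fractional linear map fixing $0,e_1,\dots,e_n$ and (by the antipodal argument) also $-e_1$, which lies on the affine line through $0$ and $e_1$ extended, is forced to be the identity by the uniqueness Lemma \ref{Lem_Mizdahim-n+2}, once we check that $-e_1$ together with $0,e_1,\dots,e_n$ contains $n+2$ points in the required configuration (one interior to the convex hull of the rest). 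If not literally in that configuration, I would instead pick a genuine interior point $p$ of $\Delta$ whose image is forced by interval preservation: $p$ on two chords with fixed endpoints on the boundary faces of $\Delta$ (which are fixed by the inductive/facet argument) is fixed, and then Lemma \ref{Lem_Id-On-Simplex} gives $F=\mathrm{Id}$ on $\Delta$, hence on all of $B_1^n$ by Corollary \ref{Lem_Mizdahim-Interior} since $\Delta$ has nonempty interior.

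The main obstacle I anticipate is the bookkeeping in the second step: verifying that a fractional linear map fixing the vertices of $B_1^n$ genuinely fixes an interior point (the argument via $\bigcap_i[-e_i,e_i]=\{0\}$ is clean, so this should go through), and then confirming that one of the simplices on which we can run Corollary \ref{Cor_genFL-On-Simplex} actually has a fixed interior point — this may require choosing the simplex and the interior point carefully, perhaps using several antipodal chords $[-e_i,e_i]$ and their pairwise intersections to manufacture a fixed point strictly inside a full-dimensional simplex. An alternative, possibly cleaner, route is to use the fact (analogous to Theorem \ref{Thm_Cube}) that $F$ must map each of the $2^n$ facets to a facet; pulling back the facet normals (which are $\pm(1,\dots,1)$-type sign vectors), one sees that the associated matrix $\hat A$ of $F$ must, up to scalar, conjugate this finite set of covectors to itself, forcing the bottom row $c$ of $\hat A$ to vanish — so $F$ is affine outright. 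I would present the geometric simplex argument as the main line and mention the normal-vector argument as a remark.
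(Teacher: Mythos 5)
Your opening steps are correct and give a genuinely different route from the paper's: the induced permutation of the $2n$ vertices is a signed permutation (so it sends antipodal pairs to antipodal pairs), each chord $[-e_i,e_i]$ is therefore carried onto another such chord by Lemma \ref{Lem_Edge-To-Edge}, and hence $F(0)\in\bigcap_i[-e_i,e_i]=\{0\}$. That is a clean derivation of $F(0)=0$. The gap is in how you finish. As you yourself suspect, the fixed points $0,\pm e_1,\dots,\pm e_n$ contain \emph{no} subset of $n+2$ points in the configuration required by Lemma \ref{Lem_Mizdahim-n+2}: any $n+1$ of the vertices whose convex hull is full-dimensional must contain an antipodal pair $\{e_j,-e_j\}$, and then $0=\tfrac12 e_j+\tfrac12(-e_j)$ sits on an edge of that simplex, not in its interior; and if $0$ is taken as one of the outer points, no other fixed point lies in the interior of the resulting simplex. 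Your fallback — a point $p$ ``on two chords with fixed endpoints on the boundary faces of $\Delta$ (which are fixed by the inductive/facet argument)'' — presumes that faces of $\Delta$ are \emph{pointwise} fixed, which has not been established: you only know the vertices are fixed, and a one-dimensional fractional linear map can fix both endpoints of a segment without being the identity on it. So the last step, as written, does not close.

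The fix is short, and you essentially have all the pieces. Either run the computation from the proof of Lemma \ref{Lem_Mizdahim-n+2} directly: write $F(x)=\frac{Ax+b}{\iprod{c}{x}+1}$ (normalizing $d=1$, legitimate since $0$ is in the domain); $F(0)=0$ gives $b=0$; $F(e_i)=e_i$ gives $Ae_i=(1+c_i)e_i$ while $F(-e_i)=-e_i$ gives $Ae_i=(1-c_i)e_i$, whence $c=0$ and $F$ is linear. Or, cleaner still: once you have $F(0)=0$ (which needs only that antipodal chords go to antipodal chords, no normalization by a signed permutation), Theorem \ref{Thm_Pres-Symm-Plus-0} — whose proof is independent of the cube and cross-polytope theorems — applies verbatim to the centrally symmetric body $B_1^n$ and yields that $F$ is linear. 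For comparison, the paper's own proof is your parenthetical ``alternative route'' made precise: opposite facets of $B_1^n$ are characterized combinatorially as the unique non-intersecting pairs, so $F$ sends (at least) two pairs of opposite parallel facets to such pairs; by Section \ref{Sect_Geom_Desc} a non-affine fractional linear map preserves parallelism of hyperplanes in only one direction (translates of its defining hyperplane), so two independent preserved directions force $F$ to be affine. Your sketch of that alternative (``conjugating the set of facet covectors forces $c=0$'') omits this one-direction-of-parallelism mechanism, which is the actual engine of the paper's argument.
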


We use the following lemma:
\begin{lem} Let $K\subset\R^n$ be a non-degenerate closed polytope,
and $f:K\to \R^n$ a fractional linear map. If two pairs of opposite
and parallel facets are mapped to such pairs, the map must be affine.
\end{lem}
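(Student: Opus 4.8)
The plan is to argue by contradiction. Suppose $f$ is not affine; then $f$ is a genuinely non-affine fractional linear map, and I will contradict the hypothesis by exploiting the rigidity of parallelism recorded in Section~\ref{Sect_Geom_Desc}. Recall from that discussion, combined with Theorem~\ref{Thm_Can-Form}, that a non-affine fractional linear map has a \emph{single} distinguished hyperplane direction --- the direction of $H_1=\partial\,(\text{maximal domain of }f)$ --- with the property that $f$ sends a pair of distinct parallel hyperplanes to a pair of parallel hyperplanes if and only if that pair is parallel to $H_1$; any pair of parallel hyperplanes in a different direction is mapped to a non-parallel pair. (Transporting the explicit statement for the canonical map $F_0(x)=x/(x_1-1)$ --- where $F_0(\{\iprod{x}{v}=c\})=\{\iprod{x}{v-ce_1}=-c\}$, so distinct parallel hyperplanes with normal $v$ go to parallel ones exactly when $v\parallel e_1$ --- through the linear conjugation in Theorem~\ref{Thm_Can-Form} is harmless, since affine maps preserve both parallelism and the ``parallel-to-parallel'' relation.)

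Next I set up the polytope side. Since $K$ is a non-degenerate polytope, each of its facets is $(n-1)$-dimensional and spans a unique supporting hyperplane of $K$; in particular distinct facets lie in distinct hyperplanes, as two distinct facets contained in one supporting hyperplane would merge into a single, larger face. Let the two given distinct pairs of opposite parallel facets be $\{G_1^+,G_1^-\}$ and $\{G_2^+,G_2^-\}$, lying in the supporting hyperplanes $P_i^\pm=\mathrm{aff}(G_i^\pm)$ with $P_i^+\parallel P_i^-$. I claim the two pairs point in different directions. If instead all four of $P_1^\pm,P_2^\pm$ had a common normal $u$, then the only supporting hyperplanes of $K$ with normal $\pm u$ are the two extreme ones, $\{\iprod{x}{u}=h_K(u)\}$ and $\{\iprod{x}{u}=-h_K(-u)\}$; since $P_i^+\neq P_i^-$ for each $i$, the unordered pairs $\{P_1^+,P_1^-\}$ and $\{P_2^+,P_2^-\}$ would both equal this pair of extreme hyperplanes, forcing $\{G_1^+,G_1^-\}=\{G_2^+,G_2^-\}$ and contradicting that we were given two distinct pairs.

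Now combine the two ingredients. A fractional linear map carries the intersection of any hyperplane with its domain into a hyperplane (immediate from the projective description in Section~\ref{Sect_Proj-Desc}); since $f|_{P_i^\pm}$ has a non-singular differential, the image $f(G_i^\pm)$ is $(n-1)$-dimensional, so its affine hull is precisely the hyperplane $f(P_i^\pm)$. By the hypothesis, $f(G_1^+)\parallel f(G_1^-)$ and $f(G_2^+)\parallel f(G_2^-)$, i.e. $f(P_1^+)\parallel f(P_1^-)$ and $f(P_2^+)\parallel f(P_2^-)$. By the rigidity statement of the first paragraph, this forces both the direction of $\{P_1^+,P_1^-\}$ and the direction of $\{P_2^+,P_2^-\}$ to coincide with the direction of $H_1$ --- hence with each other --- contradicting the previous paragraph. (Even more simply: at least one of the two pairs, say the second, is not parallel to $H_1$, so $f$ must map $P_2^+,P_2^-$ to non-parallel hyperplanes, directly contradicting $f(P_2^+)\parallel f(P_2^-)$.) Therefore $f$ is affine. $\qed$

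\textbf{Main obstacle.} The only place that needs real care is the ``unique distinguished direction'' claim for non-affine fractional linear maps. It is essentially contained in Section~\ref{Sect_Geom_Desc}, but to invoke it cleanly one should carry out the one-line linear-algebra check that for $F_0$ the vectors $v-ce_1$ and $v-c'e_1$ are parallel (with $c\neq c'$) only when $v\parallel e_1$, and then conjugate by the affine maps supplied by Theorem~\ref{Thm_Can-Form}. A minor additional point is the verification that $\mathrm{aff}(f(G_i^\pm))=f(P_i^\pm)$, which follows from $f(G_i^\pm)$ being $(n-1)$-dimensional inside the hyperplane $f(P_i^\pm)$; and the elementary polytope fact, used above, that two distinct pairs of parallel facets of a non-degenerate polytope cannot share a common normal direction.
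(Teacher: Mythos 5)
Your proof is correct and follows essentially the same route as the paper: both arguments rest on the fact from Section \ref{Sect_Geom_Desc} that a non-affine fractional linear map preserves parallelism of hyperplanes only in the single direction of its defining hyperplane, combined with the observation that two distinct pairs of parallel facets of a full-dimensional polytope cannot share a common normal. You simply spell out a couple of steps (the affine-hull verification and the distinctness of the two normal directions) that the paper leaves implicit.
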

\noindent{\bf Proof.} By Section \ref{Sect_Geom_Desc}, if $f$ is not
affine, there is only one direction in which $f$ maps parallel
hyperplanes to parallel hyperplanes. Therefore, if two $n-1$
dimensional subsets are parallel (but are not contained in the same
hyperplane), and mapped to parallel sets, they must lie on a
translate of the defining hyperplane of $f$. Assume that $F_1, F_2$
are two parallel facets of $K$, and likewise $F_3, F_4$. There is no
hyperplane whose shifts contain all four facets, since $K$ is a
polytope of full dimension (there are no more than two parallel
facets). Therefore, the fact that the pair $F_1, F_2$ is mapped to a
similar pair, and likewise $F_3, F_4$, implies that $f$ is affine.
\qed

\noindent{\bf Proof of Theorems \ref{Thm_Cube},
\ref{Thm_Cross-Polytope}.} Both the facets of $Q^n$ and of $B_1^n$
have the property that every two non-opposite facets intersect.
Therefore, every pair of opposite facets is mapped to such a pair. In
particular, we have two such pairs, and by the previous lemma this
implies that $f$ is affine. \qed

Next, we prove that if $K$ is a centrally symmetric convex body, the
only fractional linear maps which may preserve both $K$ and $\{0\}$
are affine.

\begin{thm}\label{Thm_Pres-Symm-Plus-0} Let $K\subseteq\R^n$ be a
closed, convex, centrally symmetric body, and let $F:K\to K$ be a
bijective fractional linear map. If $F(0)=0$, then $F$ is linear.
\end{thm}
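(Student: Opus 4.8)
The plan is to use the canonical form (Theorem \ref{Thm_Can-Form}) to reduce a non-affine $F$ to the map $F_0(x)=\frac{x}{x_1-1}$, and then to exploit the geometric description from Section \ref{Sect_Geom_Desc} together with central symmetry to force a contradiction. Suppose toward a contradiction that $F$ is non-affine. First I would record what $F(0)=0$ buys us: writing the inducing matrix as $\left(\begin{smallmatrix} A & b \\ c^T & d \end{smallmatrix}\right)$, the condition $0\in dom(F)$ gives $d\neq 0$, normalize $d=-1$, and $F(0)=0$ gives $b=0$, so $F(x)=\frac{Ax}{\iprod{c}{x}-1}$ with $A\in GL_n$ and (by non-affineness) $c\neq 0$. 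The defining hyperplane of the maximal domain is $H=\{\iprod{c}{x}=1\}$, and since $K$ must avoid $H$ (being in the domain of $F$) and $K$ is symmetric, $K$ lies strictly between $H$ and $-H=\{\iprod{c}{x}=-1\}$.

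The key step is to look at how $F$ interacts with the central symmetry $x\mapsto -x$. Consider the map $G(x):=-F(-x)$; since central symmetry is linear, $G$ is again a fractional linear map, and because $F(K)=K$ and $K$ is symmetric, we get $G(K)=K$ as well, with $G(0)=0$. Explicitly $G(x)=\frac{Ax}{-\iprod{c}{x}-1} = \frac{Ax}{\iprod{c}{x}+1}$ after adjusting the scalar — so $G$ has inducing matrix with the same upper block $A$ but $(n+1)$-th row $(-c^T,-1)$, i.e. the defining hyperplane of $G$ is $-H$. Now both $F$ and $G$ map $K$ bijectively onto itself, so $H:=F^{-1}\circ G$ (abuse of notation aside) — or rather the composition argument — shows that after composing we can compare $F$ with $G$ on the open set $int(K)$. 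The cleanest route: apply $F$, then note $F$ and $G$ are two fractional linear self-maps of $K$ fixing $0$; if $F\neq G$ we still have both non-affine with distinct defining hyperplanes $H$ and $-H$, and I would derive a contradiction from the fact (Section \ref{Sect_Basic-Prop}, item 3, and Section \ref{Sect_Geom_Desc}) that a non-affine fractional linear map sends cones based on $H$ to cylinders, so the "distortion" is pinned to the single direction normal to $H$; symmetry of $K$ would force $H$ and $-H$ to play interchangeable roles for the self-map, which is impossible for a genuine non-affine map since its behavior near $H$ (divergence to infinity) and near $-H$ (regular, bounded) are qualitatively different — yet the symmetric copy of a neighborhood of $K\cap(\text{near }H)$ is a neighborhood of $K\cap(\text{near }-H)$.

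To make the contradiction precise I expect to argue as follows. Look at the restriction of $F$ to translates of $H$: by the discussion after Theorem \ref{Thm_Can-Form}, $F$ maps each translate of $H$ meeting $dom(F)$ to a translate of $H_2=\partial(Im(F))$, linearly, while in every other direction parallel hyperplanes go to non-parallel ones. Since $F(K)=K$, applying this to the two boundary-most slabs of $K$ in the $c$-direction and using symmetry of $K$, both the "$+H$ side" and the "$-H$ side" of $K$ would have to be the distinguished direction of $F$, contradicting uniqueness of that direction (the analogue of the Lemma used for Theorems \ref{Thm_Cube}, \ref{Thm_Cross-Polytope}). \textbf{The main obstacle} is handling the possibility $c\parallel$ the symmetry axis cleanly — i.e. when $H$ itself is "symmetric" as a direction; there one cannot separate $H$ from $-H$ by symmetry and must instead use the explicit formula $F(x)=\frac{Ax}{\iprod{c}{x}-1}$, compute $F(F_{-}(x))$ where $F_-(x)=-F(-x)$, and show $F\circ F_-^{-1}$ is a non-identity affine map fixing $0$ and preserving $K$ in a way incompatible with $F$ being non-affine — ultimately forcing $c=0$. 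I would finish by invoking Corollary \ref{Lem_Mizdahim-Interior} to conclude that the matching of $F$ and its symmetrized version on the interior of $K$ propagates to equality of inducing matrices, which contradicts $c\neq -c$. Hence $F$ is affine, and being a bijection of $K$ fixing $0$, it is linear. \qed
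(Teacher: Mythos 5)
Your reduction to the form $F(x)=\frac{Ax}{\iprod{c}{x}-1}$ with $A\in GL_n$, $d=-1$, $b=0$ and $c\neq 0$ is exactly how the paper's proof begins, but the contradiction you propose does not go through. The hyperplanes $H=\{\iprod{c}{x}=1\}$ and $-H=\{\iprod{c}{x}=-1\}$ are \emph{parallel}: both are translates of $c^{\perp}$, so they determine the \emph{same} distinguished direction of $F$. Hence the claim that ``both the $+H$ side and the $-H$ side of $K$ would have to be the distinguished direction of $F$, contradicting uniqueness of that direction'' is vacuous --- only one direction is in play, and the lemma behind Theorems \ref{Thm_Cube} and \ref{Thm_Cross-Polytope} needs two pairs of parallel facets \emph{not} lying in translates of a single hyperplane. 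The qualitative remark that $F$ diverges near $H$ but is regular near $-H$ also proves nothing: $K$ need not come close to either hyperplane, and Example \ref{Exm_2-Dim-Ball} exhibits a non-affine fractional linear self-map of a symmetric body, so symmetry of $K$ alone cannot force $H$ and $-H$ to ``play interchangeable roles''; the hypothesis $F(0)=0$ must be used more substantially than through $b=0$. Finally, your fallback computation is wrong: with $F_-(x)=-F(-x)$ one gets $A_{F_-}=\bigl(\begin{smallmatrix}A&0\\-c^{T}&-1\end{smallmatrix}\bigr)$ and $A_{F\circ F_-^{-1}}=\bigl(\begin{smallmatrix}I&0\\2c^{T}A^{-1}&1\end{smallmatrix}\bigr)$, so $(F\circ F_-^{-1})(x)=\frac{x}{2\iprod{A^{-T}c}{x}+1}$ is \emph{not} affine when $c\neq0$. (This composition could actually be salvaged: it maps every ray through the origin into itself and maps $K$ onto $K$ with $0\in int(K)$, so it fixes every point of $\partial K$, forcing $\iprod{A^{-T}c}{x}=0$ on $\partial K$ and hence $c=0$. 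But you did not make that argument, and you proposed it only for an ill-defined special case --- central symmetry $x\mapsto -x$ has no ``axis''.)

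For comparison, the paper's proof is a three-line computation from the very formula you wrote down: $F(-x)=\frac{A(-x)}{\iprod{c}{-x}-1}=\frac{Ax}{\iprod{c}{x}+1}$, so $F(x)$ and $F(-x)$ are scalar multiples of the single vector $Ax$ with scalars of opposite sign (since $\iprod{c}{x}-1<0<\iprod{c}{x}+1$ on $K$). Taking $x\in\partial K$ with $\iprod{c}{x}\neq0$, both images lie on $\partial K$ by bijectivity, so central symmetry forces $F(-x)=-F(x)$; equating norms gives $|\iprod{c}{x}-1|=|\iprod{c}{x}+1|$, i.e.\ $\iprod{c}{x}=0$, a contradiction. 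No canonical form, composition, or ``distinguished direction'' analysis is needed.
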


\noindent{\bf Proof of Theorem \ref{Thm_Pres-Symm-Plus-0}.} As
usual, since $F(0)=0$ we assume that the inducing matrix of $F$ has
the form: \[ F\sim\begin{pmatrix} A &0\\ v^T & -1 \end{pmatrix}, \]
where $A\in GL_n$, and $0, v\in\R^n$. Therefore $F(x) =
\frac{Ax}{\iprod{v}{x}-1}$.\\ We need to show that $v=0$. Otherwise,
let $x\in\partial K$ be such that $\iprod{v}{x}\neq0$ (for example,
take $x$ in the direction of $v$). The interval $[x,-x]$ is mapped
by $F$ to the interval $[F(x), F(-x)]$. Since $F$ is surjective,
$F(x)$ and $F(-x)$ are also on the boundary of $K$, and by the
formula they are in opposite direction, which means that
$F(-x)=-F(x)$, by symmetry of $K$. By $\|{F(x)}\| = \|{F(-x)}\|$ we
get $|{\iprod{v}{x}+1}| = |{\iprod{v}{x}-1}|$, meaning
$\iprod{v}{x}=0$, in contradiction to our choice of $x$. Thus we
conclude $v=0$, which means that $F$ is linear. \qed

\begin{rem} The theorem remains correct also when the condition
``closed" is omitted. If the closure of $K$ is contained in the
maximal domain of $F$ (the half space parallel to the defining
hyperplane), then by continuity of $F$ we get that the same
conditions hold for the closure of $K$, apply the theorem, and
conclude that $F$ is linear. In the other case, i.e. when the
closure of $K$ intersects the defining hyperplane, one must be more
careful, and we omit the details completing the proof.
\end{rem}

\begin{rem} The condition $F(0) = 0$ cannot be omitted. Indeed, we
have seen examples of symmetric bodies preserved by non-affine
fractional linear maps, for instance in Example
\ref{Exm_2-Dim-Ball}.
\end{rem}

\begin{thm}\label{Thm_Pres-Simplex-Plus-Center} Let
$\Delta\subseteq\R^n$ be a closed, non-degenerate simplex, and
$p\in\Delta$ its center of mass. If $F:\Delta\to \Delta$ is a
bijective fractional linear map with $F(p)=p$, then $F$ is affine
linear.
\end{thm}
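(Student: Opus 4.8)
The plan is to reduce to the identity-on-simplex lemma, Lemma \ref{Lem_Id-On-Simplex}, by showing that a fractional linear self-map of a simplex fixing the center of mass must in fact fix all $n+1$ vertices. First I would use the combinatorial rigidity of fractional linear maps: since $F$ is a bijection $\Delta \to \Delta$ and preserves intervals, it preserves the face structure of $\Delta$, so it permutes the vertices, the edges, and more generally the $k$-faces among themselves (this is the same reasoning as in the proof of Corollary \ref{Cor_genFL-On-Simplex}, that simplices map to simplices with vertices going to vertices). Thus $F$ induces a permutation $\sigma$ of the vertex set $\{x_0,\dots,x_n\}$, and by affine invariance we may assume $\Delta$ is the standard simplex with $x_0 = 0$, $x_i = e_i$, and $p = \frac{1}{n+1}\sum e_i$ (including the vertex at the origin).

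Next I would show $\sigma$ is the identity. The center of mass $p$ is the unique point of $\Delta$ all of whose barycentric coordinates equal $\frac{1}{n+1}$, and since affine permutations of the vertices fix $p$, composing $F$ with the affine map realizing $\sigma^{-1}$ on the vertices gives a fractional linear self-map $\tilde F$ of $\Delta$ that fixes every vertex and fixes $p$. Now Lemma \ref{Lem_Id-On-Simplex} applies directly to $\tilde F$ (restricted to an open neighborhood, which exists because a fractional linear map fixing all vertices of $\Delta$ has $\Delta$ strictly inside its maximal domain — its defining hyperplane cannot meet $\Delta$, by Item 2 of Section \ref{Sect_Basic-Prop}), yielding $\tilde F|_\Delta = \mathrm{Id}|_\Delta$. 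Hence the original $F$ restricted to $\Delta$ coincides with the affine permutation map, which by Corollary \ref{Lem_Mizdahim-Interior} means $F$ itself is that affine map; in particular $F$ is affine linear, as claimed.

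The main obstacle is the step asserting that $F$ must genuinely permute the vertices rather than, say, collapse or distort the boundary: I would be careful to invoke that $F$ is a \emph{bijection} of $\Delta$ onto itself, so $F^{-1}$ is also interval preserving (the inverse of a fractional linear map is fractional linear, Item 5), and apply the "simplices go to simplices with vertices preserved" claim to both $F$ and $F^{-1}$ to conclude $F$ maps the vertex set bijectively onto itself; extreme points go to extreme points because $F$ and $F^{-1}$ both preserve convexity. A secondary subtlety is verifying that the barycenter $p$ is sent to the barycenter \emph{only if} $\sigma = \mathrm{id}$: actually any vertex permutation fixes $p$, so the equation $F(p) = p$ does \emph{not} by itself force $\sigma$ to be trivial — which is why the argument instead pre-composes with the affine realization of $\sigma^{-1}$ and lets Lemma \ref{Lem_Id-On-Simplex} do the work, after which the conclusion "$F$ is affine" follows regardless of which $\sigma$ occurred. \qed
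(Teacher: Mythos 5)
Your proof is correct, but it follows a different chain of lemmas than the paper. The paper's argument is shorter: it lets $A:\Delta\to\Delta$ be the (unique, invertible) affine map with $A(x_i)=F(x_i)$ on the vertices, observes that $A(\Delta)=\Delta$ forces $A(p)=p$ because the center of mass is an affine invariant, and then — since $F$ and $A$ now agree at the $n+2$ points $x_0,\dots,x_n,p$ with $p$ interior to the convex hull of the rest — invokes Lemma \ref{Lem_Mizdahim-n+2} directly to conclude $F=A$. You instead pre-compose with the affine realization of the vertex permutation and reduce to Lemma \ref{Lem_Id-On-Simplex}, the geometric ``identity on a simplex'' lemma for general injective interval-preserving maps. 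Both routes are sound and both rest on the same preliminary observation (which the paper leaves implicit and you rightly justify via extreme points and the interval preservation of $F$ and $F^{-1}$) that $F$ permutes the vertices; your remark that $F(p)=p$ alone does not pin down the permutation, and that the conclusion holds for any $\sigma$, is exactly the right caution. The trade-off: the paper's use of Lemma \ref{Lem_Mizdahim-n+2} is the more economical tool once $F$ is already known to be fractional linear (it is an algebraic uniqueness statement proved by a matrix computation), whereas your appeal to Lemma \ref{Lem_Id-On-Simplex} imports the heavier inductive geometric argument but would apply verbatim to any injective interval-preserving self-bijection of $\Delta$, not only to fractional linear ones. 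Your care in checking that $\Delta$ sits inside an open subset of the maximal domain (so the lemma's hypotheses are met) is a detail worth having.
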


\noindent{\bf Proof of Theorem \ref{Thm_Pres-Simplex-Plus-Center}.}
Denote by $x_0,...,x_n$ the vertices of $\Delta$. Let
$A:\Delta\to\Delta$ be the affine map defined by the conditions
$A(x_i)=F(x_i)$, $i=0,...,n$. Such a map obviously exists, moreover
it is unique, and it is invertible. Note that $A(\Delta)=\Delta$
implies $A(p)=p$, since the center of mass is a linear invariant. By
Lemma \ref{Lem_Mizdahim-n+2}, this implies $F=A$, meaning that $F$
is affine linear. \qed

\begin{rem} As in the case of symmetric bodies, the condition
$F(p)=p$ cannot be omitted. In fact we have seen in lemma
\ref{Lem_Transitivity-n+2} a transitivity result, stating that
fractional linear maps can map any simplex to itself, with an
arbitrary permutation on the vertices, and in addition map a given
point inside - say, the center of mass - to an arbitrary point
inside. In the last theorem we have seen that among these maps, the
affine maps are the only ones which map the center of mass to
itself.

However, the choice of a different point inside will not give the
same result. Meaning, for any point $p'$ in the interior of $\Delta$
which is not the center of mass, there exists a non-affine
fractional linear map $F$ such that $F(\Delta)=\Delta$ and
$F(p')=p'$. The construction is quite simple - find a linear map $A:
\Delta \to \Delta$ which permutes the vertices and does {\em not}
fix the point $p'$ (such a map is easily seen to exist), and then
compose it with a fractional linear which fixes the vertices but
``restores'' $A(p')$ to $p'$ (that map will be non-affine, since the
only affine map which fixes all the vertices is the identity map).
This composition is the wanted map.
\end{rem}

\section{Background on order isomorphisms}
\label{Sect_Background} Our main interest in what follows is order
preserving and order reversing transforms on convex functions, when
the functions are restricted to being defined on a convex body in
$\R^n$ rather than the whole space. It turns out that this
restriction changes the picture entirely, and a new family of
transformations appears. These transformations are based on
fractional linear maps, which we studied in detail in Section
\ref{Sect_I.P.M}.

\subsection{General order isomorphisms}\label{Sect_Gen-Ord-Iso}
\begin{defn} If $\S_1, \S_2$ are partially ordered sets, and
$\T:\S_1\to\S_2$ is a bijective transform, such that for every $f, g
\in \S_1$: $f \le g \quad \Leftrightarrow\quad \T f \le \T g$, we
say that $\T$ is an {\em order preserving isomorphism}.
\end{defn}

\begin{defn} If $\S_1, \S_2$ are partially ordered sets, and
$\T:\S_1\to\S_2$ is a bijective transform, such that for every $f, g
\in \S_1$: $f \le g \quad \Leftrightarrow\quad \T f \ge \T g$, we
say that $\T$ is an {\em order reversing isomorphism}.
\end{defn}

\begin{defn} A partially ordered set $\S$ is said to be {\em closed
under supremum}, if for every $\{f_\alpha\}\subseteq \S$, there
exists a unique element in $\S$, denoted $\sup \{f_\alpha\}$, with
the following two properties:

1. For every $\alpha$, $f_\alpha \le \sup \{f_\alpha\}$ (bounding
from above).

2. If $g\in \S$ also bounds $\{f_\alpha\}$ from above, then $\sup
\{f_\alpha\}\le g$ (minimality).
\end{defn}

\begin{defn} A partially ordered set $\S$ is said to be {\em closed
under infimum}, if for every $\{f_\alpha\}\subseteq \S$, there
exists a unique element $f\in \S$, with the following two
properties:

1. For every $\alpha$, $f \le f_\alpha$ (bounding from below).

2. If $g\in \S$ also bounds $\{f_\alpha\}$ from below, then $g\le f$
(maximality).
\end{defn}

Consider the case where $\S$ is a partially ordered set which
contains a minimal element, and is closed under supremum. When $\S$
is one of the classes of convex functions we deal with, $\sup
\{f_\alpha\}$ may be given by the pointwise supremum. However, the
corresponding pointwise $\inf \{f_\alpha\}$ operation may not give a
convex function. To obtain an infimum operation (denoted
$\hat{\inf}$), we use the supremum operation in the following way:
\[\hat{\inf_{\alpha\in A}} \{f_\alpha\}:=
\sup\{g\in\S:\forall\alpha\in A\quad g\le f_\alpha\}.\] That is,
$\hat{\inf} \{f_\alpha\}$ is the largest element which is below the
family $\{f_\alpha\}$. Using $\hat{\inf}$, we see that these
classes are also closed under infimum; the first property is due to
the minimality of $\sup$, and the second holds since $\sup$ is a
bound from above. Dealing with convex functions, we have:
\begin{enumerate}
\item $\hat{\inf}\{f_\alpha\}\leq \inf\{f_\alpha\}$.

\item When $\inf\{f_\alpha\}$ is already a convex function,
$\inf\{f_\alpha\}=\hat{\inf}\{f_\alpha\}$.\\ For example, if $f$ is
a convex function, then $f = \inf\{\delta_{x, f(x)}\}$ (recall that
$\delta_{x, c}(y)=+\infty$ for $y\neq x$, and $\delta_{x, c}(x)=c$).
Thus $\inf\{\delta_{x, f(x)}\} = \hat{\inf}\{\delta_{x, f(x)}\} =
f$.
\end{enumerate}

Next we follow Proposition 2.2 from \cite{AM3}, which states that an
order preserving isomorphism $\T$ must satisfy $\T(\sup\{f_\alpha\})
=\sup \{\T f_\alpha\}$ and $\T(\hat{\inf} \{f_\alpha\}) = \hat{\inf}
\{\T f_\alpha\}$, that is, $\sup$ and $\hat{\inf}$ are {\em
preserved} by $\T$. Similarly, an order reversing isomorphism
satisfies $\T(\sup \{f_\alpha\})=\hat{\inf} \{\T f_\alpha\}$ and
$\T(\hat{\inf} \{f_\alpha\}) = \sup \{\T f_\alpha\}$, that is,
$\sup$ and $\hat{\inf}$ are {\em interchanged} by $\T$. We will
prove this lemma for the case of order isomorphisms and order
reversing isomorphisms between two {\em possibly different}
partially ordered sets.

\begin{prop}\label{Prop_Sup-To-Sup} Let $\S_1, \S_2$ be partially
ordered sets closed under supremum and infimum, and let $\T:\S_1 \to
\S_2$ be an order preserving isomorphism. Then for any family
$f_\alpha \in \S_1$ we have
\[\T ( \hat{\inf} \{f_{\alpha}\} ) = \hat{\inf} \{ \T f_\alpha \},\]
\[\T ( \sup       \{f_{\alpha}\} ) = \sup       \{ \T f_\alpha \}.\]
\end{prop}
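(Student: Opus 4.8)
The plan is to prove the two identities directly from the defining (minimality/maximality) properties of $\sup$ and $\hat\inf$, using the fact that $\T$ is a bijective order isomorphism. I will first establish the $\sup$ identity and then deduce the $\hat\inf$ identity, either by a symmetric argument applied to $\T^{-1}$ or by unwinding the definition of $\hat\inf$ in terms of $\sup$.

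For the $\sup$ identity, set $g := \sup\{f_\alpha\}$ in $\S_1$. Since $f_\alpha \le g$ for every $\alpha$ and $\T$ is order preserving, $\T f_\alpha \le \T g$ for every $\alpha$, so $\T g$ is an upper bound for $\{\T f_\alpha\}$ in $\S_2$; by minimality of the supremum in $\S_2$ this gives $\sup\{\T f_\alpha\} \le \T g$. For the reverse inequality, let $h := \sup\{\T f_\alpha\}$ in $\S_2$ and consider $\T^{-1} h \in \S_1$. For each $\alpha$ we have $\T f_\alpha \le h$, and applying $\T^{-1}$ (which is also order preserving, since $\T$ is an order isomorphism) yields $f_\alpha \le \T^{-1} h$; thus $\T^{-1} h$ is an upper bound for $\{f_\alpha\}$, so by minimality of $g = \sup\{f_\alpha\}$ we get $g \le \T^{-1} h$, and applying $\T$ gives $\T g \le h = \sup\{\T f_\alpha\}$. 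Combining the two inequalities yields $\T(\sup\{f_\alpha\}) = \sup\{\T f_\alpha\}$.

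For the $\hat\inf$ identity, recall $\hat\inf\{f_\alpha\} = \sup\{ u \in \S_1 : u \le f_\alpha \text{ for all }\alpha\}$. The cleanest route is to run the completely symmetric argument: write $g := \hat\inf\{f_\alpha\}$, the greatest lower bound of $\{f_\alpha\}$ in $\S_1$; then $g \le f_\alpha$ for all $\alpha$ gives $\T g \le \T f_\alpha$, so $\T g$ is a lower bound for $\{\T f_\alpha\}$ and hence $\T g \le \hat\inf\{\T f_\alpha\}$ by maximality in $\S_2$; conversely, if $h := \hat\inf\{\T f_\alpha\}$ then $h \le \T f_\alpha$ for all $\alpha$, so $\T^{-1} h \le f_\alpha$ for all $\alpha$, whence $\T^{-1} h \le g$ by maximality in $\S_1$ and therefore $h \le \T g$. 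This gives $\T(\hat\inf\{f_\alpha\}) = \hat\inf\{\T f_\alpha\}$. (Alternatively, one can apply the already-proven $\sup$ identity to the family $\{u \in \S_1 : u \le f_\alpha\ \forall\alpha\}$ after checking that $\T$ carries this family onto $\{v \in \S_2 : v \le \T f_\alpha\ \forall\alpha\}$, which is immediate from bijectivity and order preservation.)

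I do not expect a serious obstacle here; the only thing to be careful about is to use $\T^{-1}$ being order preserving (which is exactly the content of $\T$ being an order \emph{isomorphism}, i.e. the equivalence "$f \le g \Leftrightarrow \T f \le \T g$", not merely the implication), and to invoke closedness under supremum and infimum of \emph{both} $\S_1$ and $\S_2$ so that all the relevant suprema and infima exist and are unique. The order-reversing case, if needed, follows the same template with the single change that $\T$ and $\T^{-1}$ reverse inequalities, which swaps the roles of "upper bound" and "lower bound" and hence interchanges $\sup$ and $\hat\inf$.
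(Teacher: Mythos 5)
Your proof is correct and follows essentially the same route as the paper's: both establish each identity by the two-sided inequality argument, using order preservation of $\T$ in one direction and of $\T^{-1}$ (equivalently, surjectivity plus the reverse implication) in the other. No issues.
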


\begin{prop}\label{Prop_Sup-To-Inf} Let $\S_1, \S_2$ be partially
ordered sets closed under supremum and infimum, and let $\T:\S_1 \to
\S_2$ be an order reversing isomorphism. Then for any family
$f_\alpha \in \S_1$ we have
\[\T ( \hat{\inf} \{f_{\alpha}\} ) = \sup       \{ \T f_\alpha \},\]
\[\T ( \sup       \{f_{\alpha}\} ) = \hat{\inf} \{ \T f_\alpha \}.\]
\end{prop}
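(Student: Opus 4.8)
The plan is to mimic the proof of Proposition \ref{Prop_Sup-To-Sup}, working purely from the defining (universal) properties of $\sup$ and $\hat{\inf}$ and merely keeping track of the fact that $\T$ now reverses inequalities. The two displayed identities are mirror images of one another, so I would prove the second, $\T(\sup\{f_\alpha\}) = \hat{\inf}\{\T f_\alpha\}$, in full, and then obtain the first either by the same argument run ``upside down'' or by applying what has been proved to the order reversing isomorphism $\T^{-1}:\S_2\to\S_1$ (which is again an order reversing isomorphism between posets closed under $\sup$ and $\hat{\inf}$).

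For the identity $\T(\sup\{f_\alpha\}) = \hat{\inf}\{\T f_\alpha\}$, set $g := \T(\sup\{f_\alpha\})$. First I would show $g$ lies below every $\T f_\alpha$: since $f_\alpha \le \sup\{f_\alpha\}$ for each $\alpha$, order reversal gives $g = \T(\sup\{f_\alpha\}) \le \T f_\alpha$. By maximality of $\hat{\inf}$ among the lower bounds of $\{\T f_\alpha\}$ in $\S_2$, this yields $g \le \hat{\inf}\{\T f_\alpha\}$. For the reverse inequality I use surjectivity of $\T$: write $\hat{\inf}\{\T f_\alpha\} = \T(e)$ for some $e \in \S_1$. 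For each $\alpha$ we have $\T e = \hat{\inf}\{\T f_\beta\} \le \T f_\alpha$, hence, by order reversal, $f_\alpha \le e$; thus $e$ is an upper bound of $\{f_\alpha\}$, and by minimality of $\sup$ we get $\sup\{f_\alpha\} \le e$. Applying $\T$ once more (order reversal) gives $\T(e) \le \T(\sup\{f_\alpha\}) = g$, i.e. $\hat{\inf}\{\T f_\alpha\} \le g$. Combining the two inequalities proves the identity.

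The first identity, $\T(\hat{\inf}\{f_\alpha\}) = \sup\{\T f_\alpha\}$, follows by the symmetric argument: from $\hat{\inf}\{f_\alpha\} \le f_\alpha$ one gets $\T(\hat{\inf}\{f_\alpha\}) \ge \T f_\alpha$, so $\T(\hat{\inf}\{f_\alpha\})$ is an upper bound of $\{\T f_\alpha\}$ and hence $\ge \sup\{\T f_\alpha\}$ by minimality of $\sup$; and writing $\sup\{\T f_\alpha\} = \T(e)$ via surjectivity, $\T f_\alpha \le \T e$ gives $e \le f_\alpha$, so $e$ is a lower bound of $\{f_\alpha\}$, whence $e \le \hat{\inf}\{f_\alpha\}$ by maximality of $\hat{\inf}$, and applying $\T$ gives $\sup\{\T f_\alpha\} = \T(e) \ge \T(\hat{\inf}\{f_\alpha\})$. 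There is no serious obstacle here; the only steps requiring care — and the only places the hypotheses are genuinely used beyond order reversal — are that to realize $\hat{\inf}\{\T f_\alpha\}$ (resp. $\sup\{\T f_\alpha\}$) as $\T$ of an element of $\S_1$ one needs $\T$ onto, and that to pass back and forth one needs the full equivalence $f\le g \Leftrightarrow \T f \ge \T g$, not merely one implication, together with the uniqueness built into the definitions of $\sup$ and $\hat{\inf}$.
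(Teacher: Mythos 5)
Your proposal is correct and follows exactly the strategy the paper indicates: it mirrors the proof of Proposition \ref{Prop_Sup-To-Sup}, using surjectivity to realize $\hat{\inf}\{\T f_\alpha\}$ (resp.\ $\sup\{\T f_\alpha\}$) as $\T(e)$ and then playing the two universal properties against each other, with the inequalities flipped by order reversal. The paper itself omits this proof, stating only that it ``follows the exact same lines'' as the order preserving case, and your argument is precisely that adaptation.
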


Both proofs are almost identical to the proof of Proposition 2.2 in
\cite{AM3}, but we cannot apply it directly, since here the domain
and image of $\T$ may be different sets. Therefore we prove below
only Proposition \ref{Prop_Sup-To-Sup} (the proof of Proposition
\ref{Prop_Sup-To-Inf} follows the exact same lines).

\noindent{\bf Proof of Proposition \ref{Prop_Sup-To-Sup}.} Let
$\{f_\alpha\}_ {\alpha\in A}\subseteq\S_1$. Denote $f=\sup
\{f_\alpha\}$, and $g$ such that $\T g = \sup \{\T f_\alpha\}$ -
such $g$ exists due to surjectivity of $\T$. We wish to show that
$\T f = \T g$, i.e. $f = g$. Since $f \ge f_\alpha$ for all
$\alpha$, we get $\T f \ge \T f_\alpha$ for all $\alpha$, thus $\T f
\ge \sup \{\T f_\alpha\} = \T g$, which implies $f \ge g$. On the
other hand, since $\T g \ge \T f_\alpha$ for all $\alpha$, we have
$g \ge f_\alpha$ for all $\alpha$, thus $g \ge \sup \{f_\alpha\} =
f$. We have seen $f \ge g {\rm ~and~} g \ge f$, therefore $f = g$.\\
For $\hat{\inf}$, denote $f=\hat{\inf} \{f_\alpha\}$, and $g$ such
that $\T g = \hat{\inf} \{\T f_\alpha\}$. We wish to show that $\T
f=\T g$, i.e. $f = g$. Since $f\le f_\alpha$ for all $\alpha$, we
get $\T f \le \T f_\alpha$ for all $\alpha$, thus $\T f \le \hat
{\inf} \{\T f_\alpha\} = \T g$, which implies $f \le g$. On the
other hand, since $\T g \le \T f_\alpha$ for all $\alpha$, we get $g
\le f_\alpha$ for all $\alpha$, thus $g \le \hat{\inf} \{f_\alpha\}=
f$. We have seen $f \ge g {\rm ~and~} g \ge f$, therefore $f = g$.
\qed

\subsection{Order isomorphisms of convex functions}
In a recent series of papers, the first and third named authors have
crystallized the concept of duality and investigated order reversing
isomorphisms (called there ``abstract duality'') for various classes
of objects and functions, see \cite{AM}, \cite{AM2}. The main
theorem in \cite{AM} can be stated in two equivalent forms which we
quote here for future reference.

Recall the Legendre transform $\L$ for a function $\phi: \R^n \to \R
\cup \{\infty\}$; one first fixes a scalar product
$\iprod{\cdot}{\cdot}$ on $\R^n$ (that is, a pairing between the
space and the dual space). The Legendre transform $\L$ is then
defined by \begin{equation} (\L\phi)(x)=\sup_{y} \{ \iprod{x}{y}
-\phi(y)\}.
\end{equation} It is an involution on the class of all
lower-semi-continuous convex functions on $\R^n$, denoted
$Cvx(\R^n)$. More precisely, $Cvx(\R^n)$ consists of all convex
l.s.c. functions $f: \R^n \to \R \cup \{ +\infty\}$, together with
the constant $-\infty$ function.

\begin{thm} Let $\T :Cvx(\R^n)\to Cvx(\R^n)$ be an order reversing
involution. Then there exist $C_0\in\R$, $v_0\in\R^n$ and a
symmetric transformation $B\in GL_n$, such that
\[ (\T \phi) (x) = (\L \phi)(Bx +v_0) + \iprod{x}{v_0}+C_0.\]
\end{thm}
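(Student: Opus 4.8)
The plan is to reduce the statement about an arbitrary order reversing involution $\T$ on $Cvx(\R^n)$ to a statement about the Legendre transform $\L$ by considering the composition $\S := \L \circ \T$. Since $\L$ is itself an order reversing bijection of $Cvx(\R^n)$, the composition $\S$ is an \emph{order preserving} bijection of $Cvx(\R^n)$ onto itself. The key point is that an order isomorphism of $Cvx(\R^n)$ must be induced by a point map: by Proposition \ref{Prop_Sup-To-Sup}, $\S$ commutes with pointwise $\sup$ and with $\hat{\inf}$, so $\S$ is determined by its action on the $\hat{\inf}$-generators of the class, namely the ``delta functions'' $\delta_{x,c}$ (or, equivalently but more robustly, the affine-plus-delta extremal functions / minimal functions of the lattice). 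One shows $\S$ permutes these extremal elements, hence induces a bijection $\varphi$ on $\R^{n+1}$ (parametrizing $\delta_{x,c}$ by $(x,c)$), and that $\S f = \hat{\inf}\{\varphi(\delta_{x,f(x)})\}$ for every $f$. One then identifies the constraints on $\varphi$ coming from the order: it must preserve the order on epigraphs, and in fact must send lines to lines in the appropriate sense, so that by the uniqueness theory of Section \ref{Sect_I.P.M} (the fundamental theorem of affine geometry, Theorem \ref{Thm_FTAG-classical}, since here the domain is all of $\R^{n+1}$ rather than a window) $\varphi$ is affine linear.

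Concretely, the next step is to pin down the affine map $\varphi$ on $\R^{n+1}=\R^n_x\times\R_c$. Monotonicity in the $c$ variable (shifting a function up stays above) forces $\varphi$ to respect the last coordinate direction, and the fact that $\S$ fixes the constant functions $0$ appropriately, together with preservation of the ``slope'' structure, forces the linear part to be block-triangular. Carrying this through, one gets that $\S$ acts on a general $f$ by
\[ (\S f)(x) = f(Bx + v_0) + \iprod{x}{v_0} + C_0 \]
for some symmetric $B\in GL_n$, $v_0\in\R^n$, $C_0\in\R$ — here symmetry of $B$ and the matching of the linear term $\iprod{x}{v_0}$ to the translation $Bx+v_0$ is exactly the rigidity coming from $\T = \L^{-1}\circ\S = \L\circ\S$ being an \emph{involution}: writing out $\T^2 = Id$ and using the known conjugation formula $\L(f(B\cdot+v_0)+\iprod{\cdot}{v_0}+C_0)(x)$ in terms of $\L f$ produces an algebraic relation on $(B,v_0,C_0)$ that is satisfied precisely when $B$ is symmetric and the two $v_0$'s agree. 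Substituting back, $(\T\phi)(x) = (\L\phi)(Bx+v_0)+\iprod{x}{v_0}+C_0$, which is the claim.

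The main obstacle I expect is the first step: showing rigorously that an abstract order isomorphism $\S$ of $Cvx(\R^n)$ is induced by a point map on $\R^{n+1}$ that is interval/line preserving. The subtlety is selecting the right family of ``extremal'' functions — the delta functions $\delta_{x,c}$ are the $\hat{\inf}$-irreducible elements, but one must verify (a) that $\S$ maps this family bijectively onto itself (this uses that $\S$ and $\S^{-1}$ both preserve $\hat{\inf}$, so irreducibility is preserved), and (b) that the induced map on the index set $\R^{n+1}$ carries collinear triples (in the sense dictated by which $\delta$'s have a common nontrivial affine minorant) to collinear triples, so that Theorem \ref{Thm_FTAG-classical} applies. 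A secondary technical point is handling the constant function $-\infty$ and functions taking the value $+\infty$ on large sets, i.e. checking everything is consistent at the ``boundary'' of the class; but this is routine once the interior picture is established. The involution computation at the end is a finite-dimensional algebra exercise and should not pose real difficulty.
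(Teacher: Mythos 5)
The first thing to note is that the paper does not prove this statement: it is quoted verbatim from \cite{AM} ``for future reference'', so there is no in-paper proof to compare against. Your strategy, however, is essentially the one of \cite{AM} and is exactly the template the present paper follows for its window versions (see the proof of Theorem \ref{Thm_Cvx-Bij}): pass to the order preserving bijection $\S=\L\circ\T$, show via the extremal delta functions that $\S$ is induced by a point map on $\R^{n+1}$ which preserves intervals, invoke a rigidity theorem for such maps --- here Theorem \ref{Thm_FTAG-classical}, since the domain is all of $\R^{n+1}$, rather than the fractional-linear Theorem \ref{Thm_FL-Unique-nD} --- and then determine which affine maps of $\R^{n+1}$ act on epigraphs. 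You correctly single out the delicate step (interval preservation of the full map $(x,c)\mapsto(y,d)$, not merely of its base projection), and the paper's proof of Theorem \ref{Thm_Cvx-Bij} shows in detail how to carry it out using surjectivity and the $\hat{\inf}$ structure. Within this paper the shortest route is simply to invoke the order preserving classification quoted immediately below the statement, which hands you $(\S\phi)(x)=C_1\phi(Bx+v_0)+\iprod{v_1}{x}+C_0$ for free, and then do the involution algebra.

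One point where your write-up is logically out of order: you assert that ``carrying this through'' the affine analysis already yields $(\S f)(x)=f(Bx+v_0)+\iprod{x}{v_0}+C_0$ with $B$ symmetric. It does not; the point-map analysis alone produces the five-parameter family $C_1 f(Bx+v_0)+\iprod{v_1}{x}+C_0$ with $C_1>0$ and $v_0,v_1$ independent, and it is only after writing $\T=\L\circ\S$ explicitly (a change of variables in the supremum defining $\L$ gives $(\T\phi)(x)=C_1(\L\phi)\bigl(B^{-T}(x-v_1)/C_1\bigr)-\iprod{x}{B^{-1}v_0}+\mathrm{const}$) and imposing $\T^2=Id$ that one obtains $C_1=1$, the symmetry of the matrix, and the matching of the two vectors. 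Since you attribute this rigidity to the involution in the very next clause, I read this as a presentational slip rather than a gap; the involution computation itself is, as you say, routine (one checks directly that the claimed formula squares to the identity precisely when $B=B^{T}$, with no constraint on $C_0$).
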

We call these two properties ``abstract duality'', and so we say
that on the class $Cvx(\R^n)$ there is, up to linear terms, only one
duality transform, $\L$. More generally we have:
\begin{thm} Let $\T:Cvx(\R^n)\to Cvx(\R^n)$ be an order reversing
isomorphism. Then, there exist $C_0\in \R, C_1\in \R^+, v_0, v_1\in
\R^n$ and $B\in GL_n$, such that
\[ (\T \phi) (x) =
C_0 + \iprod{v_1}{x} + C_1 (\L \phi)(B(x+v_0)).\]
\end{thm}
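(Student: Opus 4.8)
The plan is to deduce the general order-reversing isomorphism statement from the involution version quoted just before it (the theorem characterizing order-reversing involutions $\T: Cvx(\R^n)\to Cvx(\R^n)$). The key observation is that if $\T$ is \emph{any} order-reversing isomorphism, then $\T\circ\T$ is an order-\emph{preserving} isomorphism, and $\T^{-1}\circ\L$ (where $\L$ is the Legendre transform) is order-preserving as well. So the first step is to establish a lemma on order-preserving isomorphisms of $Cvx(\R^n)$: every order-preserving bijection $S: Cvx(\R^n)\to Cvx(\R^n)$ has the form $(S\phi)(x) = C_0 + \iprod{v_1}{x} + \phi(B^{-1}(x - w_0))$ for suitable constants — i.e. it is a composition of a pointwise translation by a linear functional, an additive constant, and a linear change of variables. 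This can be proved by the same technique used elsewhere in the paper: $S$ must preserve $\sup$ and $\hat\inf$ (Proposition \ref{Prop_Sup-To-Sup}), hence must permute the $\sup$-indecomposable elements, which for $Cvx(\R^n)$ are the delta functions $\delta_{x,c}$ together with the affine functions; tracking where these go pins down a point map on the epi-graph, which by the fundamental theorem of affine geometry (Theorem \ref{Thm_FTAG-classical}) must be affine, and compatibility with the order forces the stated form.

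Granting the order-preserving lemma, the second step is short: given an order-reversing isomorphism $\T$, the composition $\L\circ\T$ is order-preserving (Legendre reverses order, $\T$ reverses order, so the composite preserves it). Therefore $\L\circ\T = S$ for an $S$ of the form above, which gives $\T = \L^{-1}\circ S = \L\circ S$ since $\L$ is an involution. Writing out $\L\circ S$ explicitly using $(S\phi)(x) = C_0 + \iprod{v_1}{x} + \phi(B^{-1}(x-w_0))$ and the identity $\L(\phi(A\cdot + b) + \iprod{u}{\cdot} + c)(x) = (\L\phi)((A^{-1})^T(x-u)) - \iprod{(A^{-1})^T(x-u)}{b}\, \cdots$ — a routine but slightly fiddly computation with the behavior of the Legendre transform under affine substitutions and additions of linear-plus-constant terms — one collects the result into the claimed form $(\T\phi)(x) = C_0' + \iprod{v_0'}{x} + C_1 (\L\phi)(B'(x+v_0''))$. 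The constant $C_1 > 0$ appears precisely because $S$ contributes a linear change of variables, which under $\L$ becomes a change of variables on the dual side together with an overall positive scaling of $\L\phi$ only if one allows $S$ itself to have scaled the function — so one must be slightly careful: in fact the cleanest route is to allow $S$ to include a positive multiplicative constant $C_1$ from the start (the order-preserving analogue genuinely does allow $(S\phi) = C_1\phi(\cdot) + \textrm{affine}$, since scaling by a positive constant preserves pointwise order), and then $C_1$ is inherited directly.

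The main obstacle, and the place where most of the real work sits, is the order-preserving lemma itself — specifically, identifying the $\sup$-indecomposable (extremal) elements of $Cvx(\R^n)$ and showing $S$ restricts to a bijection on them that is "geometric" enough to apply the fundamental theorem of affine geometry. The subtlety is that $Cvx(\R^n)$ has \emph{two} families of extremals, the delta functions $\delta_{x,c}$ (parametrized by $\R^n\times\R$, the epi-graph side) and the affine functions $\ell(y) = \iprod{a}{y} + b$ (parametrized by $\R^n\times\R$, the "slope side"), and an order isomorphism could in principle mix them; one must argue from order-theoretic properties — e.g. which extremals lie below a given function, or lattice-theoretic distinctions between the two families (the delta functions are exactly the elements with no nonconstant function strictly below them, whereas affine functions are minimal in a different sense) — that $S$ sends delta functions to delta functions. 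Once that is pinned down, the induced map on $\R^{n+1}$ sends epi-graphs of affine functions (lower half-spaces) to such, hence is affine by Theorem \ref{Thm_FTAG-classical}, and the rest is bookkeeping. (Alternatively one can cite that this order-preserving classification is essentially contained in \cite{AM} or \cite{AM3}; but I would prefer to give the short self-contained argument since the extremal-family machinery is developed in this very paper.)
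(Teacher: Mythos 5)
Your overall route is sound, and it is essentially the one this paper itself endorses: the theorem is quoted here from \cite{AM} without proof, but the paper explicitly treats the order-reversing and order-preserving classifications as equivalent via composition with $\L$ (``as usual, this is equivalent to the following''), and the order-preserving classification is exactly Theorem \ref{Thm_Cvx-Bij} in the special case $K_1=K_2=\R^n$ (where the fractional linear point map on $\R^{n+1}$ is defined on the whole space and hence must be affine, matching Theorem \ref{Thm_FTAG-classical}), combined with the admissibility analysis of Section \ref{Sect_Which-FL-Work-Cvx}, which yields precisely the form $C_1\phi(Bx+v_0)+\iprod{v_1}{x}+C_0$ with $C_1>0$. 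Two corrections to your sketch, neither fatal. First, your identification of the extremal family is off: the delta functions are \emph{not} $\sup$-indecomposable --- for instance $\delta_{x,c}=\max\{\,c+|\cdot-x|,\ \delta_{x,c'}\}$ for any $c'<c$, with neither factor equal to $\delta_{x,c}$. The order-theoretic property that isolates them, and the one actually used in the proof of Theorem \ref{Thm_Cvx-Bij}, is membership in the family $P$: any two functions lying \emph{above} $f$ are comparable. With that characterization the ``mixing'' you worry about cannot occur for an order-preserving map, since delta functions have this property and affine functions do not (two nonparallel affine functions above a given one are incomparable), so $P$ is carried to $P$ automatically; mixing of the two extremal families ($P$ versus the dual family $Q$ of affine functions) is an issue only for order-\emph{reversing} maps, which is exactly why one composes with $\L$ first. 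Second, your final computation is cleaner if you factor $\T=S\circ\L$ (i.e.\ set $S=\T\circ\L$, which is order preserving) rather than $\T=\L\circ S$: then $(\T\phi)(x)=C_1(\L\phi)(Bx+v_0)+\iprod{v_1}{x}+C_0$ drops out with no Legendre bookkeeping at all.
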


As usual, this is equivalent to the following

\begin{thm} Let $\T:Cvx(\R^n)\to Cvx(\R^n)$ be an order preserving
isomorphism. Then there exist $C_0\in \R, C_1\in \R^+, v_0, v_1 \in
\R^n$ and $B \in GL_n$, such that
\[ (\T\phi)(x) = C_1\phi(Bx+v_0) + \iprod{v_1}{x} + C_0.\]
\end{thm}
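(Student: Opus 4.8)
The plan is to reduce the order-preserving statement to the order-reversing one already quoted, by composing with a fixed order-reversing isomorphism, and then to unravel how the ``linear terms'' transform under this composition. First I would fix once and for all the Legendre transform $\L$, which by the previous theorem is (up to the stated linear corrections) the unique order-reversing isomorphism of $Cvx(\R^n)$; in particular $\L$ is an order-reversing involution. Given an order-preserving isomorphism $\T: Cvx(\R^n)\to Cvx(\R^n)$, the composition $\S := \L\circ\T$ is then an order-reversing isomorphism of $Cvx(\R^n)$ to itself. Applying the order-reversing theorem to $\S$, there exist $C_0'\in\R$, $C_1'\in\R^+$, $v_0',v_1'\in\R^n$ and $B'\in GL_n$ with
\[ (\S\phi)(x) = C_0' + \iprod{v_1'}{x} + C_1'(\L\phi)(B'(x+v_0')). \]
Since $\T = \L^{-1}\circ\S = \L\circ\S$ (using that $\L$ is an involution), the result will follow once I compute $\L$ applied to the right-hand side above and check that the outcome has the claimed form $C_1\phi(Bx+v_0)+\iprod{v_1}{x}+C_0$.

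The key computation is therefore: given $\psi(x) = c_0 + \iprod{w}{x} + c_1(\L\phi)(Ax+u)$ with $c_1>0$, $A\in GL_n$, express $(\L\psi)(x)$ in terms of $\phi$. This is a routine but essential manipulation with the definition $(\L\psi)(x)=\sup_y\{\iprod{x}{y}-\psi(y)\}$. The additive constant $c_0$ contributes $-c_0$. The linear term $\iprod{w}{x}$ shifts the supremum variable and produces a translation of the argument. The scalar $c_1>0$ in front of $\L\phi$ turns, under a further Legendre transform, into a scaling of both the argument and the value: one uses the homogeneity relation $\L(c_1 g)(x) = c_1 (\L g)(x/c_1)$, valid for $c_1>0$. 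Finally the affine substitution $y\mapsto Ay+u$ inside $\L\phi$ becomes, after the outer Legendre transform, a substitution by the inverse transpose $(A^{-1})^T$ together with an extra linear term coming from $u$. Assembling all of this, $(\L\psi)(x)$ indeed has the form $C_1\phi(Bx+v_0)+\iprod{v_1}{x}+C_0$ for suitable $C_1>0$, $B\in GL_n$, $v_0,v_1\in\R^n$, $C_0\in\R$; one should note that $B$ need not be symmetric here, consistent with the statement (the symmetric case is special to involutions).

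The main obstacle is bookkeeping rather than conceptual: one must carefully track how each of the five parameters $(C_0',C_1',v_0',v_1',B')$ of $\S$ propagates through the Legendre transform, making sure the signs and the transpose/inverse on $B'$ come out correctly, and verifying that positivity of the leading scalar is preserved (so that $C_1\in\R^+$ as claimed). A minor point to check is the behaviour on the degenerate elements of $Cvx(\R^n)$ (the constant $-\infty$ function, and functions taking the value $+\infty$ identically on part of their domain), where one should confirm that both $\T$ and the formula act consistently; since $\L$ interchanges these extreme elements in a controlled way and $\S$ respects the order, this causes no real difficulty. Once the parameter translation is carried out, the theorem follows immediately.
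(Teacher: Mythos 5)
Your proposal is correct and follows essentially the route the paper intends: the paper simply quotes the order-reversing classification from \cite{AM} and asserts the order-preserving version is equivalent "as usual," and your composition $\S=\L\circ\T$ followed by unwinding the Legendre transform of an affinely perturbed $\L\phi$ is exactly that equivalence, with all the individual transformation rules ($\L(g+c_0)=\L g-c_0$, the shift from the linear term, $\L(c_1g)(x)=c_1(\L g)(x/c_1)$, and the inverse-transpose rule for the affine substitution) stated correctly. Carrying out the bookkeeping gives $C_1=C_1'$, $B=\frac{1}{C_1'}(B')^{-T}$ and the remaining parameters as explicit affine images of $(C_0',v_0',v_1')$, confirming the claimed form.
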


\subsection{Order isomorphisms of geometric convex
functions}\label{Sect_Cvx0-Old} The subclass of $Cvx(\R^n)$
consisting of non negative functions with $f(0) = 0$ is denoted by
$Cvx_0(\R^n)$. Next we follow \cite{hidden-structures} to define two
transforms $\J$ and $\A$ on this class. Consider the following
transform, defined on $Cvx_0(\R^n)$:
\begin{equation}
({\A}f)(x)=\left\{
  \begin{array}{ll}
    \sup_{\{y\in\R^n: f(y)>0\}}\frac{\iprod{x}{y}-1}{f(y)} &
    \mbox{if}~~ x \in \{f^{-1}(0)\}^{\circ}\\
    {+\infty} & \mbox{if}~~ x \not\in \{f^{-1}(0)\}^{\circ}
  \end{array} \right\}.\end{equation}
(with the convention $\sup \emptyset = 0$). One may check that it is
order reversing. This transform (with its counterpart $\J$ defined
below) first appeared in the classical monograph \cite{Rock}, but
remained practically unnoticed until recently. For details, a
geometric description, and more, see \cite{hidden-structures}. Next
define: \[ \J = \L \A = \A \L.
\] Clearly, as a composition of two order reversing isomorphisms, it
is an order preserving isomorphism. The formula for $\J$ can be
computed (again, see \cite{hidden-structures} for details), and has
the form:
\[ (\J f)(x) = \inf \{ r>0 ~:~ f(x/r)\le 1/r\},\]
with the convention $\inf \emptyset = +\infty$. It turns out that,
apart from the identity transform, up to linear variants, this is
the only order preserving transform on the class $Cvx_0(\R^n)$. It
was shown in \cite{hidden-structures} that the following uniqueness
theorems for $\J$ hold.

\begin{thm}\label{Thm_Cvx0-n=1-Old} If $\T :Cvx_0(\R^+) \to
Cvx_0(\R^+)$ is an order isomorphism, then there exist two constants
$\alpha>0$ and $\beta >0$ such that either (a-la-$\I$) for every
$\phi \in Cvx_0(\R^+)$,
\[ (\T \phi)(x) = \beta \phi (x/\alpha),\]
or (a-la-$\J$), for every $\phi \in Cvx_0(\R^+)$,
\[ (\T \phi)(x) = \beta (\J \phi) (x/\alpha).\]
\end{thm}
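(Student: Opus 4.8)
The plan is to reduce the classification of order isomorphisms on $Cvx_0(\R^+)$ to a point-map problem, exactly in the spirit of the strategy announced in the introduction. First I would identify a convenient family of ``extremal'' functions inside $Cvx_0(\R^+)$ whose image under $\T$ can be controlled. The natural candidates are the minimal generators: on the one hand the delta-type functions $\delta_{x,c}$ for $x>0$, $c\ge 0$ (together with $\delta_{0,0}$), and on the other the ``elementary wedge'' functions, i.e. the functions of the form $g_{a,s}(x)=\max\{0,s(x-a)\}$ for $a\ge 0$, $s>0$ (these are the building blocks under $\sup$). Using Proposition \ref{Prop_Sup-To-Sup}, $\T$ preserves $\sup$ and $\hat{\inf}$, so it must carry a family that generates $Cvx_0(\R^+)$ under $\sup$ to another such family, and likewise for $\hat{\inf}$. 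Since every $f\in Cvx_0(\R^+)$ is both the $\hat{\inf}$ of the delta functions lying above its graph and the $\sup$ of the affine (wedge) functions lying below it, the transform $\T$ is completely determined once we know its action on, say, the two-parameter family of wedge functions, and that action must be a bijection of the parameter set onto itself (or onto the parameter set of the delta family, in the order-reversing-looking case).

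Next I would translate the preservation of order among wedge functions into incidence/order relations on the parameter $(a,s)\in \R^+\times\R^+$ (equivalently on a subset of $\R^2$, thinking of a wedge as determined by its vertex $(a,0)$ and the point where it crosses height $1$, namely $(a+1/s,1)$, so really by a pair of points in $\R^+$ on two horizontal lines). The key observation is that $g_{a,s}\le g_{a',s'}$ holds iff the corresponding data are nested in a way that encodes an interval/convexity condition; chaining these inequalities across a suitable one-parameter subfamily shows that $\T$ induces a map on an interval of $\R$ that preserves the relevant betweenness, hence — by the one-dimensional uniqueness result, Theorem \ref{Thm_FL-Unique-1D} (cross-ratio preservation), or equivalently a direct application of the monotonicity forced by the order — a fractional linear map on that interval. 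One then has to check that the only fractional linear maps compatible with the constraints ``$f(0)=0$'' and ``$f\ge 0$'' (i.e. the endpoint constraints of the window $\R^+$ at $0$ and at $+\infty$) are the two families $x\mapsto \beta\phi(x/\alpha)$ and $x\mapsto \beta(\J\phi)(x/\alpha)$: the affine subgroup fixing the origin gives the first, and post-composition with the canonical fractional linear map (which, at the level of the one-dimensional window, is exactly the involution implementing $\J$) gives the second, with no further freedom because any genuinely projective piece that moved $0$ or $+\infty$ would violate nonnegativity or the value-zero-at-$0$ condition.

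Finally I would reassemble: having pinned down $\T$ on the generating family as one of the two fractional-linear normal forms, I use $\T(\sup\{f_\alpha\})=\sup\{\T f_\alpha\}$ to extend the identification to all of $Cvx_0(\R^+)$, obtaining either $(\T\phi)(x)=\beta\phi(x/\alpha)$ or $(\T\phi)(x)=\beta(\J\phi)(x/\alpha)$ for all $\phi$. The main obstacle I anticipate is the bookkeeping in the middle step: showing rigorously that the induced point map on parameters really does send intervals to intervals (so that Theorem \ref{Thm_FL-Unique-1D} or the fundamental theorem applies) rather than merely being monotone, and separately ruling out ``mixed'' behaviour where $\T$ looks order-preserving on one part of the family and order-reversing on another. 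Handling the degenerate functions (the $\delta_{0,0}$ minimal element and the constant-zero function, which must go to the minimal element and to itself respectively) and the boundary behaviour at $0$ and $\infty$ is where the two alternatives (a)–(b) get separated, and care is needed because $Cvx_0(\R^+)$ is not symmetric under the window's two ends — exactly the asymmetry that makes $\J$, rather than a full projective group, the extra transform.
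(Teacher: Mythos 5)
First, a point of orientation: the paper does not prove Theorem \ref{Thm_Cvx0-n=1-Old} at all --- it is quoted from \cite{hidden-structures} --- but the paper's Section \ref{Sect_Cvx0-1D-ind} (Lemma \ref{Lem_Rule-Of-I}) gives a direct proof of the bounded-window analogue, which shows what the genuine argument must contain. Measured against that, your outline has the right global shape (find extremal families, induce a point map on parameters, invoke a uniqueness theorem for fractional linear maps), but two of its steps do not survive scrutiny. The first is the choice of families. The functions $\delta_{x,c}$ with $x>0$ are not elements of $Cvx_0(\R^+)$ (they fail $f(0)=0$), so they cannot serve as your $\hat{\inf}$-generators; the correct objects are the triangle functions $\max\{1_{[0,z]},l_c\}$. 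More importantly, your wedge functions $\max\{0,s(x-a)\}$ with $a>0$ are \emph{not} extremal in the lattice-theoretic sense used in the paper (Lemma \ref{Lem_Only-Extremal} shows the only extremal elements are the indicators $1_{[0,z]}$ and the linear functions $l_c$), and ``being a $\sup$-generating family'' is not a property preserved elementwise by $\T$: $\T$ carries a generating family to \emph{some} generating family, not to one of the same shape. Without extremality you have no control over where a wedge goes, so the induced ``parameter map'' is not even well defined at this stage. The dichotomy you defer (``ruling out mixed behaviour'') is exactly the paper's Lemma \ref{Lem_Dichotomy}, proved via the comparability structure of the two extremal chains; that part is recoverable, but it must be run on the correct families.

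The second and more serious gap is the step ``preserves the relevant betweenness, hence \dots a fractional linear map on that interval.'' In dimension one this inference is false: every monotone map preserves betweenness and maps intervals to intervals, and the paper is explicit that Theorem \ref{Thm_FL-Unique-nD} has no one-dimensional analogue for interval preservation --- the one-dimensional rigidity statement, Theorem \ref{Thm_FL-Unique-1D}, requires preservation of the \emph{cross ratio}, which monotonicity does not give. Your proposal never explains how cross-ratio preservation (or an equivalent constraint) is to be extracted from the order structure, and this is precisely the heart of the matter. In the paper's bounded-case proof one plays the indicator, linear, and triangle/$g$-type functions against each other to obtain the multiplicative functional equation $A(cu)=A(c)A(u)/A(1)$ for the slope map, deduces $A(c)=\alpha c^{\gamma}$, forces $\gamma=1$ by a limiting argument, and only then obtains the cross-ratio identity $[0,tx,x,x_1]=[S(0),S(tx),S(x),S(x_1)]$ that makes Theorem \ref{Thm_FL-Unique-1D} applicable. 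None of that mechanism appears in your outline, so as written the plan bottoms out at ``the parameter map is an increasing (or decreasing) bijection,'' which is far from the claimed normal forms.
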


In higher dimensions, it was shown that
\begin{thm}\label{Thm_Cvx0-n>1-Old} Let $n \ge 2$. Any order
isomorphism $\T: Cvx_0(\R^n)\to Cvx_0(\R^n)$ is either of the form
$\T f = C_0 f\circ B$ or of the form $\T f = C_0(\J f) \circ B$ for
some $B\in GL_n$ and $C_0>0$.
\end{thm}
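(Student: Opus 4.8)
The plan is to run the now-standard three-step scheme: reduce $\T$ to a point map, recognise that point map as fractional linear, and classify. For the reduction, note first that by Proposition~\ref{Prop_Sup-To-Sup} an order preserving isomorphism $\T$ of $Cvx_0(\R^n)$ commutes with $\sup$ and with $\hat{\inf}$; in particular it fixes the minimum $0$ and the maximum $\delta_{0,0}$. I would then single out a family $\mathcal F\subseteq Cvx_0(\R^n)$ of ``extremal'' functions --- the natural candidates are those whose epigraph in $\R^{n+1}$ is as simple as possible: the indicators $\delta_C$ of closed convex $C\ni 0$, and their $\J$-images, the $1$-homogeneous (gauge) functions, each of which is encoded by a point in $\R^{n+1}$ (a bounding half-space or cone of the epigraph). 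What makes this work is that membership in $\mathcal F$ should be expressible purely in order-theoretic terms (via the order type of the up-set or down-set of the function), so that $\T(\mathcal F)=\mathcal F$, while at the same time every $f\in Cvx_0(\R^n)$ is a $\sup$ (and a $\hat\inf$) of members of $\mathcal F$. Granting this, $\T$ is determined by the bijection $\T|_{\mathcal F}$, which therefore induces a bijective point map $\Phi$ on the corresponding parameter set in $\R^{n+1}$.

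Next I would show $\Phi$ is fractional linear. Collinearity of three parameters in $\R^{n+1}$ translates, on the function side, into one extremal function being the join (or meet) of the two others --- an order-theoretic relation preserved by $\T$ --- so $\Phi$ sends a rich supply of intervals to intervals. Since one does not get \emph{all} intervals for free but only a large enough subfamily, the appropriate tool is Shiffman's refinement of the interval-preservation theorem rather than Theorem~\ref{Thm_FL-Unique-nD} verbatim (the excerpt flags precisely this point); invoking it --- working in the connected domain of definition inside $\R^{n+1}$, so $n+1\ge 3$, cf.\ Remark~\ref{Rem_FL-Uniq-Open-Conn} --- yields that $\Phi$ is (the restriction of) a fractional linear map.

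Finally I would classify the admissible $\Phi$. The constraints are: $\Phi$ must carry the cylinder $\R^n\times\R^+$ (the universe of epigraphs) to itself, must fix the parameter of $0$ and the parameter of $\delta_{0,0}$, and must respect the affine/homogeneous behaviour ``at infinity'' in the height variable. By the rigidity developed in Section~\ref{Sect_I.P.M} --- a non-affine fractional linear self-map is, up to linear maps and translations, the canonical inversion $x\mapsto x/(x_1-1)$ (Theorem~\ref{Thm_Can-Form}), and self-maps fixing a distinguished apex/center of a symmetric body or cone are forced to be linear (Theorem~\ref{Thm_Pres-Symm-Plus-0}, Section~\ref{Sect_Geom_Desc}) --- only two families survive: those giving $\T f = C_0\, f\circ B$, and those giving $\T f = C_0\,(\J f)\circ B$ (here one uses that $\J$ is exactly the order isomorphism induced by the inversion). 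The hypothesis $n\ge 2$ enters precisely where the fractional-linear uniqueness is invoked (it is false for $n=1$), so the degenerate one-dimensional configurations and the base case are handled separately by Theorem~\ref{Thm_Cvx0-n=1-Old}. I expect the main obstacle to be Step~1: choosing $\mathcal F$ so that it is simultaneously order-characterizable, generating under $\sup/\hat\inf$, and carries enough honest collinearity structure to feed the fractional-linear theorem; a secondary nuisance is the bookkeeping in the last step --- ruling out ``mixed'' maps and fixing the constant --- which one settles by evaluating $\Phi$ on a handful of explicit extremal functions.
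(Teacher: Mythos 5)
Your high-level scheme (extremal family $\to$ point map on $\R^{n+1}$ $\to$ fractional linear $\to$ classify) is the right one, but the concrete choice you make in Step 1 does not work, and Step 1 is where the whole proof lives. The family you propose --- indicators of arbitrary closed convex $C\ni 0$ together with arbitrary gauge functions --- is infinite-dimensional: such a function is encoded by the body $C$, not by a point of $\R^{n+1}$, so a bijection of this family cannot induce a point map. Worse, these functions are not extremal in the order-theoretic sense that $\T$ is guaranteed to preserve: if $C=conv(C_1\cup C_2)$ with $C_1,C_2\subsetneq C$ convex and containing $0$, then $1_C=\hat{\inf}\{1_{C_1},1_{C_2}\}$ with both $1_{C_i}$ strictly larger, so indicators of general bodies are decomposable (similarly for gauges under $\sup$ of their linear minorants). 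The functions that actually are $\hat{\inf}$-extremal in $Cvx_0(\R^n)$ and are honestly parametrized by a point of $(\R^n\setminus\{0\})\times\R^+$ are the triangle functions $\lhd^{z,h}=\max\{1_{[0,z]},l_{h/|z|}\}=\hat{\inf}\{1_{\{0\}},\delta_{z,h}\}$. The paper (which, to be precise, only quotes this theorem from the earlier work and proves the window generalization, Theorem \ref{Thm_Cvx0-n>1-New}, by the same route) reaches them by first showing $\T$ acts ray-wise (Lemma \ref{Lem_Raywise}, using that $\max\{f,g\}=1_{\{0\}}$ forces supports meeting only at $0$), then classifying the extremal elements on each ray as segment-indicators $1_{[0,z]}$ and ray-linear functions $l_c$, establishing the $\I$/$\J$ dichotomy and monotonicity (Lemmas \ref{Lem_Only-Extremal}--\ref{Lem_Monotone-Bij}), and only then assembling the point map via triangles (Proposition \ref{Prop_Triangle-Rule-Is-FL}). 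Your plan has no substitute for this reduction.

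Two smaller points. The collinearity you want is not ``one extremal function is the join or meet of the other two'': for $(x_3,h_3)\in[(x_1,h_1),(x_2,h_2)]$ the triangle $\lhd^{x_3,h_3}$ is only singled out by a maximality/minimality property relative to $\hat{\inf}\{\lhd^{x_1,h_1},\lhd^{x_2,h_2}\}$, and the argument genuinely fails for intervals crossing the vertical axis $\{0\}\times\R^+$ (two points on opposite rays have $0$ in the convex hull of their base points); accordingly the paper proves interval preservation only off that axis and patches the pieces together with Corollary \ref{Lem_Mizdahim-Interior} --- Shiffman's theorem is not needed for $Cvx_0$, only for the $Cvx_T(K)$ generalization. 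Finally, the classification is not obtained from Theorem \ref{Thm_Pres-Symm-Plus-0}: one writes the $(n+2)\times(n+2)$ matrix of $F$ and imposes that $F$ preserve the axis $\{0\}\times\R^+$ and the cylinder $\R^n\times\R^+$ (Section \ref{Sect_Which-FL-Work-Cvx0}, Remark \ref{Exm_0-Int-K1}), which forces exactly $F(x,y)=(Ax/d,\,y/d)$ or $F(x,y)=(Ax/y,\,b/y)$, i.e.\ the two forms $\T f=C_0 f\circ B$ and $\T f=C_0(\J f)\circ B$ in the statement.
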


It is interesting to notice, and will be quite important in the
sequel, that the map (on functions) $\J$ is actually induced by a
point map on the epi-graphs of those functions. Indeed, one can
check that for every $f\in Cvx_0(\R^n)$, the bijective map
$F:\R^n\times\R^+\to\R^n\times\R^+$ given by
\[ F(x,y) = \left( \frac{x}{y}, \frac{1}{y} \right),  \]
satisfies
\[ epi (\J f) = F(epi (f)),\] where
\[ epi (f) = \{ (x,y)\in \R^n \times \R^+ : f(x) < y\}.\]
See \cite{hidden-structures} for details. Moreover, we see that $F$
is actually a fractional linear map. We will get back to this issue
frequently in the next two sections.

Clearly, if we have a point map which preserves the set ``epi-graphs
of (a certain subset of) convex functions'' then it induces an order
preserving transform on this subset. It is not clear that, in some
cases, any order preserving transform is induced by such a point
map. However, this turns out to be the case both in the theorems
described above, and in all theorems in the next two sections. Let
us emphasize that this is also, usually, the idea behind the proof.
First one shows that the transform must be induced by some point
map, and moreover, one which preserves intervals. Next one uses some
theorem which classifies all interval preserving maps (for example,
the fundamental theorem of affine geometry, or Theorem
\ref{Thm_FL-Unique-nD}), and finally one checks which of these maps
really induces a transform on the right class, by this getting a
full classification of order preserving transforms.

\subsection{Order reversing isomorphisms}
Considering order reversing transforms, the situation is slightly
different, since there are two different cases. The first case is
when one is given a set on which there is a known order reversing
transform, such as $\L$ on $Cvx(\R^n)$ or on $Cvx_0(\R^n)$, for
example. In that case the classification of order reversing
transforms is completely equivalent to the classification of order
preserving ones, by composing each of them with the known transform.
For example, the theorems above give the following:
\begin{thm} Let $n \ge 2$. If $\T: Cvx(\R^n)\to Cvx(\R^n)$ is an
order reversing involution, then $\T$ is of the form $\T f = (\L f)
\circ B + C_0$, for some symmetric $B\in GL_n$ and $C_0\in\R$.
\end{thm}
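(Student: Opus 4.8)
The plan is to bootstrap from the already-established classification of order-\emph{preserving} isomorphisms of $Cvx(\R^n)$ and then extract the extra rigidity forced by $\T$ being an involution. Since $\L$ is an order-reversing bijection of $Cvx(\R^n)$ and so is $\T$, the composition $\L\circ\T$ is an order-\emph{preserving} isomorphism of $Cvx(\R^n)$. Applying the classification of such maps and then $\L$ (recall $\L\circ\L = Id$), one finds that $\T$ has the form stated in the order-reversing isomorphism theorem quoted above, namely
\[ (\T\phi)(x) = C_0 + \iprod{v_1}{x} + C_1\,(\L\phi)\bigl(B(x+v_0)\bigr) \]
for some $C_1\in\R^+$, $C_0\in\R$, $v_0,v_1\in\R^n$ and $B\in GL_n$. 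It remains only to decide which of these maps are involutions.

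I would then impose $\T\circ\T = Id$. The computation needs only the standard behaviour of $\L$ under the operations appearing in the formula: $\L(cf)(y)=c\,(\L f)(y/c)$ for $c>0$; $\L(f+\iprod{v}{\cdot})(y)=(\L f)(y-v)$; $\L(f+c)(y)=(\L f)(y)-c$ for a constant $c$; and $\L(f\circ A)(y)=(\L f)\bigl((B^{-1})^{*}y\bigr)-\iprod{(B^{-1})^{*}y}{a}$ for the invertible affine map $A(x)=Bx+a$. Chaining these gives first $\L(\T\phi)$ and then $\T(\T\phi)$ in the shape $(\text{a constant})\cdot\phi(\text{an affine image of }x)+(\text{an affine function of }x)$. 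Demanding that this be $\phi(x)$ for all $\phi$ and all $x$: the coefficient of $\phi$ forces $C_1^{2}=1$, hence $C_1=1$ since $C_1>0$; the affine image of $x$ must be $x$, which forces $(B^{-1})^{*}B=Id$, i.e. $B=B^{*}$, together with the relation $v_1=Bv_0$; and with these in hand the leftover affine-in-$x$ term vanishes identically (using $B=B^{*}$ and $\iprod{Bv_0}{x}=\iprod{v_0}{Bx}$). Substituting back, $\T\phi(x)=(\L\phi)\bigl(B(x+v_0)\bigr)+\iprod{Bv_0}{x}+C_0$ with $B=B^{*}\in GL_n$ and $C_0\in\R$, which is the asserted form.

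For completeness one should also verify the converse, that every such map is indeed an order-reversing involution. Order-reversal is immediate, since $\L$ is order-reversing while the remaining operations $f\mapsto f\circ A$, $f\mapsto f+\iprod{Bv_0}{\cdot}$ and $f\mapsto f+C_0$ are order-preserving bijections; involutivity is the computation of the previous paragraph carried out with the now-known parameters, where it comes down to the symmetry of $B$.

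The step I expect to be the main obstacle is the involution computation itself: one must track carefully the order in which the scaling $C_1$, the inner translation by $Bv_0$, the outer linear term $\iprod{v_1}{\cdot}$ and the constant $C_0$ are transported by $\L$, and in particular the interaction of $\L$ with the adjoint and the inverse of the a priori non-symmetric matrix $B$, before the three constraints $C_1=1$, $B=B^{*}$, $v_1=Bv_0$ emerge. The reduction to the order-preserving case and the converse check are routine.
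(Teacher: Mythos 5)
Your reduction to the order-preserving classification by composing with $\L$ is exactly the route the paper takes (the paper offers nothing beyond that one-line reduction), and your involution computation is correct: imposing $\T\circ\T=Id$ on $(\T\phi)(x)=C_0+\iprod{v_1}{x}+C_1(\L\phi)(B(x+v_0))$ does force $C_1=1$, $B=B^{*}$ and $v_1=Bv_0$, and nothing more. Note, however, that what you have correctly derived is $(\T\phi)(x)=(\L\phi)(B(x+v_0))+\iprod{Bv_0}{x}+C_0$, and the translation terms cannot be discarded: for $v_0\neq0$ this is a genuine order reversing involution (your own converse check covers this case) which is \emph{not} of the form $(\L f)\circ B'+C_0'$ — testing on $\phi=\delta_{y_0,c}$, where $\L\phi(x)=\iprod{x}{y_0}-c$, shows the two forms can only agree for all $y_0$ when $v_0=0$. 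So your argument in fact establishes the fuller version of this result quoted earlier in the paper, $(\T\phi)(x)=(\L\phi)(Bx+v_0)+\iprod{x}{v_0}+C_0$ with $B$ symmetric; the statement under review has simply dropped the $v_0$ terms, and the discrepancy lies in the printed statement, not in your proof.
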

\begin{thm} Let $n \ge 2$. If $\T: Cvx_0(\R^n)\to Cvx_0(\R^n)$ is an
order reversing involution, then $\T$ is either of the form $\T f =
(\L f) \circ B$, or of the form $\T f = C_0(\A f) \circ B$, for some
symmetric $B\in GL_n$ and $C_0>0$.
\end{thm}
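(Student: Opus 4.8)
The plan is to deduce this theorem from the classification of order \emph{preserving} isomorphisms of $Cvx_0(\R^n)$ already established in Theorem~\ref{Thm_Cvx0-n>1-Old}, by composing the given order reversing involution with a known order reversing involution and then using the involution hypothesis to remove the free parameters. Throughout I will use the following standard facts about $Cvx_0(\R^n)$: both $\L$ and $\A$ restrict to order reversing involutions of $Cvx_0(\R^n)$; $\J=\L\A=\A\L$, and hence $\J\L=\A$ and $\J\A=\L$; and, directly from the definitions, for $C_0>0$ and $B\in GL_n$ one has $\L(g\circ B)=(\L g)\circ B^{-T}$, $\A(g\circ B)=(\A g)\circ B^{-T}$, $\A(C_0 g)=C_0^{-1}\A g$, and $\L(C_0 g)(x)=C_0\,(\L g)(x/C_0)$.

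Let $\T$ be an order reversing involution of $Cvx_0(\R^n)$, and set $S:=\T\circ\L$. Being a composition of two order reversing bijections, $S$ is an order preserving isomorphism of $Cvx_0(\R^n)$, so by Theorem~\ref{Thm_Cvx0-n>1-Old} either $S h=C_0\,h\circ B$ for all $h$, or $S h=C_0\,(\J h)\circ B$ for all $h$, for some $C_0>0$ and $B\in GL_n$. Since $\L$ is an involution, $\T=S\circ\L$; writing $h=\L f$ and using $\J\L=\A$ in the second case yields, respectively, $\T f=C_0\,(\L f)\circ B$ or $\T f=C_0\,(\A f)\circ B$. It then remains to impose $\T\circ\T=Id$. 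In the first case the transformation rules above give $(\T^2 f)(x)=C_0^{2}\,f(C_0^{-1}B^{-T}Bx)$, and evaluating this identity on the functions $x\mapsto\max(0,\langle a,x\rangle)\in Cvx_0(\R^n)$, as $a$ ranges over $\R^n$, forces $C_0=1$ and $B^{-T}B=I$, that is, $B$ is symmetric; hence $\T f=(\L f)\circ B$. In the second case the two scalars cancel, because $\A(C_0 g)=C_0^{-1}\A g$, so $(\T^2 f)(x)=f(B^{-T}Bx)$, and the same test family again forces $B^{-T}B=I$, that is, $B$ is symmetric, while $C_0>0$ is left unconstrained; hence $\T f=C_0\,(\A f)\circ B$. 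These are exactly the two asserted forms.

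I do not expect a genuine obstacle: once the reduction via $S=\T\circ\L$ is in place, everything is a routine manipulation of the transformation rules. The points that need care are keeping straight the identities $\J\L=\A$, $\J\A=\L$ together with the transpose-and-scaling behaviour of $\L$ and $\A$, and checking that the test family used to analyze $\T\circ\T=Id$ is rich enough to force simultaneously $C_0=1$ (in the Legendre case) and the symmetry of $B$ (in both cases), rather than merely a weaker condition such as $B^{-T}B$ being orthogonal. No geometric input beyond Theorem~\ref{Thm_Cvx0-n>1-Old} is required.
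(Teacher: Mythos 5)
Your proposal is correct and follows essentially the same route as the paper, which derives this theorem from Theorem \ref{Thm_Cvx0-n>1-Old} precisely by composing with the known order reversing involution $\L$ and then using the involution hypothesis (the paper leaves the resulting computation to the reader, and your verification of the transformation rules for $\L$, $\A$, $\J$ and of the constraint $C_0\,B^{T}B^{-1}=I$, whose determinant forces $C_0=1$ in the Legendre case, fills in exactly those details). No discrepancy with the paper's argument.
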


However, there exists a second case in which there is no order
reversing transform and this requires a different treatment, since
one cannot use the above mentioned strategy, and is forced to find
the real obstruction for the existence of such a transform (see
\cite{AM3} for examples). In Section \ref{Sect_Order-Rev-Cvx} we
deal with order reversing isomorphisms on $Cvx(K)$, and show that
when $K\neq\R^n$, there
are no such transforms. 

\section{The cone of convex functions on a window}\label{Sect_Cvx}
\subsection{Introduction}
We investigate the question of characterizing order isomorphisms on
convex functions, when the domain of the functions is not the whole
of $\R^n$ but a convex subset. One such example which has
already been studied (see \cite{hidden-structures}) is the case of
geometric convex functions on $\R^+$. Since this example is central
also for our setting, we describe it in detail below. First, let us
recall the following definition:

\begin{defn} The class of all lower-semi-continuous convex functions
$f: K \to \R\cup \{ \infty \}$ together with the constant $-\infty$
function on $K$ will be denoted $Cvx(K)$. It can be naturally
embedded into $Cvx(\R^n)$ by assigning to $f$ the value $+\infty$
outside $K$.
\end{defn}

We often call $K$ a window, on which we observe the functions of
$Cvx(\R^n)$. Our first results regard a description of order
isomorphisms on the class of convex functions defined on a window.
We state two versions, one of which does not assume surjectivity,
but in which the order preservation condition is replaced by a
slightly stronger condition of preservation of supremum and
generalized infimum.

\begin{thm}\label{Thm_Cvx-Bij} Let $n\ge1$, and let $K_1, K_2
\subseteq\R^n$ be convex sets with non empty interior. If $\T:
Cvx(K_1)\to Cvx(K_2)$ is an order preserving isomorphism, then there
exists a bijective fractional linear map $F: K_1\times \R \to
K_2\times \R$, such that $\T$ is given by \[ epi(\T f) =
F(epi(f)).\] In particular, $K_2$ is a fractional linear image of
$K_1$.
\end{thm}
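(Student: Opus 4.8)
The plan is to show that an order preserving isomorphism $\T: Cvx(K_1) \to Cvx(K_2)$ is induced by a point map on epi-graphs, to identify this map as fractional linear via Theorem \ref{Thm_FL-Unique-nD}, and finally to check that the map has the required domain and range.

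\textbf{Step 1: Identify an extremal family of functions detected by the order.} The first step is to single out, purely order-theoretically, the family of ``affine-like'' extremal functions. The natural candidates are the functions $\delta_{x,c}$ (equal to $c$ at the point $x$ and $+\infty$ elsewhere), which satisfy $f = \hat{\inf}\{\delta_{x,f(x)}\}$ for every $f\in Cvx(K)$, as noted in Section \ref{Sect_Background}. These are exactly the minimal nonzero elements in an appropriate sense — more precisely, I would characterize them order-theoretically as the functions $g$ such that $g$ is not the infimum (in the $\hat{\inf}$ sense) of elements strictly below it, together with the functions just above the minimal element $-\infty$; some care is needed because the constant functions and the $\delta_{x,c}$ both play a role, and one wants to pin down the set $\{\delta_{x,c}: x\in K,\ c\in\R\}$, which is parametrized by $K\times\R$. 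By Propositions \ref{Prop_Sup-To-Sup}, $\T$ preserves $\hat{\inf}$ and $\sup$, hence preserves this extremal family, and its restriction to it is a bijection $K_1\times\R \to K_2\times\R$ (using surjectivity of $\T$ and the same argument applied to $\T^{-1}$). Call this point map $F$; it satisfies $\T(\delta_{x,c}) = \delta_{F(x,c)}$, and since $\T$ is order preserving and every $f$ is $\hat{\inf}$ of $\delta$'s, $\T$ is completely determined by $F$ via $epi(\T f) = F(epi (f))$ once we know $F$ preserves the epi-graph structure.

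\textbf{Step 2: Show $F$ preserves intervals.} This is the crux. The point is that three functions $\delta_{x_1,c_1}, \delta_{x_2,c_2}, \delta_{x_3,c_3}$ have their points $(x_i,c_i)$ collinear in $\R^{n+1}$ with $(x_2,c_2)$ between the others \emph{if and only if} a certain order-theoretic relation holds among them and the constant (or affine) functions. Concretely, $(x_2,c_2)$ lies on the segment $[(x_1,c_1),(x_3,c_3)]$ iff every affine function $\ell$ with $\ell(x_1)\le c_1$ and $\ell(x_3)\le c_3$ also satisfies $\ell(x_2)\le c_2$; and ``$\ell(x_i)\le c_i$'' translates to ``$\ell \le \delta_{x_i,c_i}$'' — wait, rather ``$\delta_{x_i,c_i}\ge \ell$'', i.e. $\ell$ is below $\delta_{x_i,c_i}$. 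So collinearity-with-betweenness is expressed via: $\delta_{x_2,c_2} \ge \hat{\inf}$-type combination, or more cleanly, via the statement that the largest convex function below $\{\delta_{x_1,c_1},\delta_{x_3,c_3}\}$ (which is $\hat{\inf}$ of the two, an affine function on $[x_1,x_3]$ extended by $+\infty$... actually it is the affine function through the two epigraph points, restricted appropriately) is $\le \delta_{x_2,c_2}$. Since $\T$ preserves $\hat{\inf}$ and the order, it preserves this relation, so $F$ maps collinear triples (with betweenness) to collinear triples (with betweenness); by Lemma \ref{Lem_Edge-To-Edge}-type reasoning and injectivity, $F$ is interval preserving on $K_1\times\R$.

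\textbf{Step 3: Apply the uniqueness theorem and check domain/range.} Since $K_1\times\R$ is a convex set with non-empty interior in $\R^{n+1}$ and $F$ is an injective interval preserving map, Theorem \ref{Thm_FL-Unique-nD} gives that $F$ is a fractional linear map $\R^{n+1}\supseteq K_1\times\R \to \R^{n+1}$ (for $n\ge 1$ we have $n+1\ge 2$, so the theorem applies). The image of $F$ is $K_2\times\R$ since $F$ is the bijection onto the extremal family of $Cvx(K_2)$, parametrized by $K_2\times\R$. Finally one verifies that $epi(\T f) = F(epi(f))$ for all $f$, not just for the $\delta$'s: write $f = \hat{\inf}\{\delta_{x,f(x)}\}$, apply $\T$ and Proposition \ref{Prop_Sup-To-Sup} to get $\T f = \hat{\inf}\{\delta_{F(x,f(x))}\}$, and observe that taking $\hat{\inf}$ of a family of $\delta$'s whose defining points form the set $\{(x,f(x)):x\in K_1\}$ — whose ``lower-left-closure'' is $epi(f)$ (or rather its closure/complement appropriately) — is transported by the fractional linear $F$ to the analogous operation for the image set. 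The statement ``$K_2$ is a fractional linear image of $K_1$'' then follows by projecting, or rather is built into $F(K_1\times\R) = K_2\times\R$.

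\textbf{Main obstacle.} The hard part is Step 2: finding the precise order-theoretic encoding of ``betweenness of three points in $\R^{n+1}$'' that survives application of $\T$. The subtlety is that $Cvx(K)$ contains not only the $\delta_{x,c}$ but also genuine affine functions and $-\infty$, and one must use these auxiliary elements to express collinearity, while being careful that the $\hat{\inf}$ of two $\delta$'s lying over $[x_1,x_3]$ is an honest element of $Cvx(K)$ (it is the affine function on the segment joining the two epigraph points, valued $+\infty$ off $[x_1,x_3]$) only when that segment projects into $K_1$ — which it does, by convexity of $K_1$. A second, more technical point is handling vertical directions (the $\R$ factor) and the behavior of $F$ on the ``boundary at infinity'' of $K_1\times\R$, i.e.\ making sure $F$ does not fold the $c\to+\infty$ direction in a way that breaks the epi-graph correspondence; this is where one uses that $\T$ is order preserving rather than merely a bijection, to control orientation.
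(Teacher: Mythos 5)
Your overall plan is the paper's: characterize the delta functions $\delta_{x,c}$ order-theoretically, obtain from $\T$ a bijective point map $F:K_1\times\R\to K_2\times\R$, show $F$ preserves intervals by encoding collinearity through $\hat{\inf}$ of pairs of deltas, invoke Theorem \ref{Thm_FL-Unique-nD}, and recover $epi(\T f)=F(epi(f))$ from $f=\hat{\inf}\{\delta_{x,y}:(x,y)\in epi(f)\}$. Your Steps 2 and 3 match the paper's argument; in particular the paper resolves the ``above the chord'' versus ``on the chord'' issue exactly by the minimality-over-the-fiber observation you allude to (for $c<c_3$ the delta $\delta_{x_3,c}$ is \emph{not} above $\hat{\inf}\{\delta_{x_1,c_1},\delta_{x_2,c_2}\}$, and the deltas over a fixed base point form a chain mapped order-isomorphically onto the deltas over its image), and your worry about the vertical direction is discharged the same way: comparability of $\delta_{x,c}$ and $\delta_{x,c'}$ forces $F$ to map fibers to fibers, so no separate orientation argument is needed.

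The one genuine gap is in Step 1. The criterion you propose --- ``$g$ is not the $\hat{\inf}$ of elements strictly below it, together with the functions just above $-\infty$'' --- does not isolate the deltas under any reading: the $\hat{\inf}$ of the elements strictly below $g$ is never $g$ unless $g$ is minimal (so the condition is vacuous), while if you meant sup of elements below or $\hat{\inf}$ of elements above, note that $\delta_{x,c}=\sup\{\delta_{x,c'}:c'<c\}=\hat{\inf}\{\delta_{x,c'}:c'>c\}$, so the deltas themselves would be excluded. The paper's characterization is the clean fix: call $f$ extremal if any two functions $g,h\ge f$ are comparable. This forces the finiteness set of $f$ to be a single point (two incomparable functions can always be put above any $f$ supported on more than one point), so the extremal family is exactly $\{\delta_{x,c}\}$, and the defining property is manifestly transported by an order isomorphism in both directions. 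With that substitution your argument goes through and coincides with the paper's proof.
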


\begin{thm}\label{Thm_Cvx-Inj} Let $n\ge1$, and let $K_1, K_2
\subseteq\R^n$ be convex sets with non empty interior. If $\T:
Cvx(K_1)\to Cvx(K_2)$ is an injective transform satisfying:
\begin{enumerate}
\item $\T(      \sup_\alpha f_\alpha)=      \sup_\alpha\T f_\alpha$.
\item $\T(\hat{\inf}_\alpha f_\alpha)=\hat{\inf}_\alpha\T f_\alpha$.
\end{enumerate} for any family $\{f_{\alpha}\} \subseteq Cvx(K_1)$,
then there exist $K_2'\subseteq K_2$, and a bijective fractional
linear map $F: K_1\times\R \to K_2'\times\R$, such that $\T$ is
given by \[epi(\T f)= F(epi(f)).\] Note that for $x\not\in K_2'$ we
get $(\T f)(x)= +\infty$.
\end{thm}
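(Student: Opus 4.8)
The plan is to mirror the structure of the proof of Theorem \ref{Thm_Cvx-Bij}, tracking carefully where surjectivity was used and checking that the weaker hypothesis (preservation of $\sup$ and $\hat{\inf}$) suffices in its place. First I would identify the \emph{extremal family} of functions in $Cvx(K_1)$ — these are the functions whose epigraphs are half-spaces (or the affine functions, together with the $\delta$-functions $\delta_{x,c}$, depending on how the section sets things up), each describable by a point in $\R^{n+1}$. The key structural observation is that an element $f$ lies in the extremal family exactly when it cannot be written as $\hat{\inf}$ of a family of functions all strictly above it in a suitable sense, equivalently it satisfies a lattice-theoretic characterization phrased purely in terms of $\sup$ and $\hat{\inf}$. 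Since $\T$ preserves both operations (hypotheses 1 and 2), $\T$ maps the extremal family of $Cvx(K_1)$ \emph{into} the extremal family of $Cvx(K_2)$. Injectivity of $\T$ then gives that this restriction is an injective point map $\varphi$ on (a subset of) $\R^{n+1}$.

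Next I would show this point map $\varphi$ is interval preserving. The incidence relations among extremal functions — e.g. when a $\delta$-function lies below an affine function, or when one affine function is the $\sup$ of two others restricted appropriately — encode collinearity of the corresponding points in $\R^{n+1}$, and these relations are expressed via $\le$, $\sup$, $\hat{\inf}$, hence preserved by $\T$. This forces $\varphi$ to send intervals to intervals. Because $K_1$ has non-empty interior, the relevant domain in $\R^{n+1}$ (a cylinder $K_1\times\R$ or a subset thereof with non-empty interior) is a convex set with non-empty interior, so Theorem \ref{Thm_FL-Unique-nD} applies and $\varphi$ extends to a fractional linear map $F$ on $K_1\times\R$. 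Here is where the departure from the bijective case matters: without surjectivity we only know $\varphi$ is injective, so $F$ need not be onto $K_2\times\R$; its image is $F(K_1\times\R)$, a fractional linear image of the cylinder, which I would call $K_2'\times\R$ — and one checks $K_2'\subseteq K_2$ because every value $\T f$ is a function on $K_2$, so its epigraph sits inside $K_2\times\R$, forcing the image points to project into $K_2$. For $x\notin K_2'$ the epigraph of $\T f$ has empty fiber over $x$, i.e. $(\T f)(x)=+\infty$.

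Finally I would verify that $\T$ is genuinely \emph{induced} by $F$ on all of $Cvx(K_1)$, not merely on the extremal family: any $f\in Cvx(K_1)$ is $\hat{\inf}$ (indeed $\inf$, since it is already convex) of the extremal functions below it — e.g. $f=\inf\{\delta_{x,f(x)}\}$ — and $\T$ preserves $\hat{\inf}$, so $\T f = \hat{\inf}\{\T(\text{extremals})\}$, whose epigraph is the closed convex hull / appropriate join of the images of the corresponding half-spaces under $F$; since $F$ is fractional linear it commutes with these convex-hull operations (by its interval- and convexity-preservation, Lemmas \ref{Lem_Inv-is-CPM} and the interval-preservation item of Section \ref{Sect_Basic-Prop}), giving $epi(\T f)=F(epi(f))$. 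The main obstacle I anticipate is the bookkeeping in the first step: pinning down the precise lattice-theoretic characterization of the extremal family that uses \emph{only} $\sup$ and $\hat{\inf}$ (and not any completeness or surjectivity of $\T$), and ensuring that the induced point map is defined on a set large enough — with non-empty interior — for Theorem \ref{Thm_FL-Unique-nD} to bite. The surjectivity-free handling of the image (producing $K_2'$ rather than all of $K_2$) is then a relatively painless modification once the point map is in hand.
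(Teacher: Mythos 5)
There is a genuine gap, and it sits exactly at the point you yourself flag as ``the main obstacle'': showing that $\T$ maps delta functions to delta functions without surjectivity. Your proposed route is a lattice-theoretic characterization of extremality ``phrased purely in terms of $\sup$ and $\hat{\inf}$,'' but any such characterization (e.g.\ ``every two functions above $f$ are comparable,'' which is the one used for the bijective case, or ``$f$ is not the $\hat{\inf}$ of incomparable functions strictly above it'') quantifies over \emph{all} of $Cvx(K_2)$. A non-surjective $\T$ only controls the image of $\T$: the set of functions above $\T f$ may strictly contain $\T(\{g : g\ge f\})$, and a witness to non-extremality of $\T f$ need not pull back under $\T$. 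So preservation of $\sup$ and $\hat{\inf}$ does not, by itself, give that extremal elements go to extremal elements. (Note also that a delta function \emph{can} be written as $\hat{\inf}$ of functions strictly above it, namely $\delta_{x,c}=\hat{\inf}_{c'>c}\delta_{x,c'}$, so the specific characterization you suggest is false even before the preservation question arises.) The paper's actual argument here is of a completely different nature: it first proves a geometric lemma (Lemmas \ref{Lem_GeomLemma-Classic}--\ref{Lem_GeomLemma-Functions}) about uncountable families of convex sets pairwise intersecting in a fixed set, and uses it to run a dimension count. The uncountable family $\{\T\delta_x\}_{x\in K_1}$ pairwise maxes to $\T(+\infty)$, which forces $\dim(epi(\T(+\infty)))\le n-1$; iterating with delta functions on segments and higher-dimensional convex hulls pins down $\T(+\infty)=+\infty$ and then $\dim(epi(\T\delta_{x,c}))=1$, i.e.\ $\T\delta_{x,c}=\delta_{y,d}$. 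This is a genuinely new idea, not a ``relatively painless modification.''

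A second, smaller gap: even once the point map $F$ is in hand, proving that $F$ preserves intervals (in the fiber direction, not just on the base) used surjectivity in the bijective proof. The paper replaces that step by exploiting the $\sup$ hypothesis concretely: writing $\delta_{x_3,c_3}=\sup_{c_3'<c_3}\delta_{x_3,c_3'}$ and deducing $d_3=\sup\{d_3'\}$, which sandwiches $(y_3,d_3)$ onto the image segment. Your sketch of the interval-preservation step via ``incidence relations'' does not engage with this, and it is precisely where hypothesis (1) of the theorem is needed rather than mere order preservation (Remark \ref{Rem_Lattice-Is-Necessary-For-Cvx-Inj} gives a counterexample showing order preservation alone is insufficient). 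The final step of your proposal (reconstructing $\T f$ from $\hat{\inf}$ of delta functions and reading off $epi(\T f)=F(epi(f))$) is fine and matches the paper.
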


Note that by Proposition \ref{Prop_Sup-To-Sup}, an order isomorphism
respects the actions of $\sup$ and $\hat{\inf}$. Therefore Theorem
\ref{Thm_Cvx-Inj} is stronger, and implies Theorem
\ref{Thm_Cvx-Bij}. However, in the bijective case some of the
reasoning is much simpler, and therefore below we prove both
theorems independently, for clarity.

\begin{rem} Let us elaborate on the meaning of the equation $epi(\T f) =
F(epi(f))$. When $F$ induces a transform on $Cvx(K)$, it is shown in
Section \ref{Sect_Which-FL-Work-Cvx} that up to some affine linear
functional $L_1$, $F$ is of the form \[F(x,y)=\left(
\frac{Ax+u}{\iprod{v}{x}+d}, \frac{y}{\iprod{v}{x}+d} \right),\]
where $A\in L_n(\R)$, $u,v\in\R^n$, and $d\in\R$. Denoting $L_0=
\iprod{v}{\cdot}+d$ for the affine linear functional in
the denominator, and $F_b(x)= \frac{Ax+u}{\iprod{v}{x}+d}$ for the
base-map (projection of $F$ to the first $n$ coordinates), we
conclude that
\begin{equation}\label{eq-changeofvars}(\T f) = \left(\frac{f}{L_0}
\right) \circ F_b^{-1} + L_1,\end{equation} where $L_1$ is some
affine linear functional and $F_b^{-1}:K_2\to K_1$ is bijective.
Note that $L_0$ and $F_b$ are not independent, since $L_0$ must
vanish on the defining hyperplane of $F_b$ (where $F_b$ is not
defined). Moreover, note that for a general $f$, the function
$\frac{f}{L_0}$ may not be convex, but the composition with
$F_b^{-1}$ exactly compensates this problem, and the result is again
a convex function. In the special case of $A=I, u=0, L_0(x)=x_1+1,
L_1(x)\equiv0$ we get $F(x,y)=(\frac{x}{x_1+1},\frac{y}{x_1+1})$,
and $(\T f)(x)=(1-x_1)f(\frac{x}{1-x_1})$. This simpler form of the
transform is not general, but if one allows linear actions on the
epi-graphs, before and after $F$ acts on them, it suffices to
consider this form. There is another important, {\em different,}
instance of the equation $epi(\T f) = F(epi(f))$, which may occur
when the transform is defined on the subset of $Cvx(K)$ consisting
of non-negative functions vanishing at the origin. We state it now
for comparison and elaborate below (Theorem \ref{Thm_Cvx0-n>1-New}).
A transform of this second, essentially different, type (a-la-$\J$,
see \cite{hidden-structures}), corresponds to the inducing
fractional linear map: \[F_\J(x,y)=(\frac{x}{y}, \frac{1}{y}),\] and
to the explicit formula:
\[(\J f)(x)=\inf\{r>0 : rf(\frac{x}{r})\le1 \}.\]
\end{rem}

\subsection{The bijective case}
\noindent{\bf Proof of Theorem \ref{Thm_Cvx-Bij}.} The proof is
composed of several steps.\\
\noindent{\sl Extremality of delta functions.} As in
\cite{hidden-structures}, we define the following family $P$ of
extremal functions: $f\in P$ if every two functions above $f$ are
comparable, that is: \[f\le g,h \quad\Rightarrow \quad g\le h \quad
\mbox{or}\quad h\le g.\] This implies that the support of $f$ (the
set on which $f$ is finite) consists of only one point. We call
these functions {\bf delta functions}, and denote by $\delta_{x,c}$
the function which equals $c$ at the point $x$, and $+\infty$
elsewhere.

$\T$ is a bijection between the family $P$ in $Cvx(K_1)$ and the
family $P$ in $Cvx(K_2)$, since this property is defined only using
the ``$\le$'' relation, which $\T$ preserves in both directions.
Thus $\T (\delta_{x, c}) = \delta_{y, d}$, and this map between
delta functions is bijective. This allows us to define a bijection
$F: K_1\times \R \to K_2\times \R$; $F(x,c)=(y(x,c),d(x,c))$, such
that $\T(\delta_{x,c}) = \delta_{F(x,c)}$. In fact, we get that $y = y(x)$ and $d = d(x,c)$
because two functions $\delta_{x, c}$ and $\delta_{x, c'}$ are
comparable, and so must be mapped to comparable functions. Note that
also $y(x)$ is bijective. Indeed, it is injective since the images
of two functions are comparable if, and {\em only} if, the original
functions are comparable, and it is surjective since all delta
functions are in the image of $\T$.

\noindent{\sl Preservation of intervals.} The ``projection'' of $F$
to the first $n$ coordinates, i.e. the mapping $x\mapsto y(x)$, is a
bijective interval preserving map. Indeed, assume $y(x_1)=y_1$,
$y(x_2)=y_2$, and $x_3\in [x_1, x_2]$. Since $\delta_{x_3,
0}\ge\hat{\inf}\{\delta_{x_1, 0}, \delta_{x_2, 0}\}$, the function
$\delta_{x_3, 0}$ must be mapped to a function $\delta_{y_3,d_3}$
which is above $\hat{\inf}\{\delta_{y_1, d_1}, \delta_{y_2, d_2}\}$.
Since $\hat{\inf}\{\delta_{y_1, d_1}, \delta_{y_2, d_2}\}$ is
$+\infty$ outside $[y_1, y_2]$, this implies $y_3\in [y_1, y_2]$.
For $n\ge 2$, it implies that $y(x)$ is fractional linear, by
Theorem \ref{Thm_FL-Unique-nD}. In fact this is true also when
$n=1$, but for $n=1$ it follows from interval preservation of $F$
itself. To see that $F$ is interval preserving, consider $(x_3,
c_3)$ on the interval between $(x_1, c_1)$ and $(x_2, c_2)$. We know
it is mapped to $(y_3, d_3)$ with $y_3\in [y_1, y_2]$ and moreover,
letting $y_3 = \lambda y_1 + (1-\lambda)y_2$, we know $d_3 \ge
\lambda d_1 + (1-\lambda)d_2$. Using surjectivity, we deduce that
$F(x_3, c_3) = \delta_{y_3, \lambda d_1 + (1-\lambda)d_2}$, since
$\delta_{y_3, \lambda d_1 + (1-\lambda)d_2}$ is above the function
$\hat{\inf}\{\delta_{y_1, d_1}, \delta_{y_2, d_2}\}$ and for all
$c<c_3$, $\delta_{x_3, c}$ is {\em not} above the function
$\hat{\inf}\{\delta_{x_1, c_1}, \delta_{x_2, c_2}\}$.

Since $F$ is an injective interval preserving map, we may apply
Theorem \ref{Thm_FL-Unique-nD}, to conclude that $F$ is a fractional
linear map.

To complete the proof of Theorem \ref{Thm_Cvx-Bij}, let $f \in
Cvx(K_1)$, and write it as
\[ f = \hat{\inf} \{ \delta_{x,y} : (x,y)\in epi(f) \}. \]
\begin{align*}
\Rightarrow\T f&=\hat{\inf} \{\T(\delta_{x,y}):(x,y)\in   epi(f) \}
\\             &=\hat{\inf} \{ \delta_{F(x,y)}:(x,y)\in   epi(f) \}
\\             &=\hat{\inf} \{  \delta_{x,y}  :(x,y)\in F(epi(f))\}.
\end{align*}
On the other hand:
\[\T f = \hat{\inf} \{  \delta_{x,y}: (x,y)\in epi(\T f) \}.\]
Therefore we get \[ epi (\T f) = F(epi(f)),\] as desired. This
completes the proof. \qed

Of course, there are restrictions on the structure of $F$ for it to
induce such a transform. This is elaborated in Section
\ref{Sect_Which-FL-Work-Cvx}.

\subsection{The injective case}
We next move to the case of injective transforms. Let us first
remark why in Theorem \ref{Thm_Cvx-Inj} we had to change the
conditions from mere order preservation to preservation of $\sup$
and $\hat{\inf}$.

\begin{rem}\label{Rem_Lattice-Is-Necessary-For-Cvx-Inj} In the
bijective case, order preservation (in both directions) is
equivalent to preservation of $\sup$ and $\hat{\inf}$. One direction
is given in Proposition \ref{Prop_Sup-To-Sup}, and the other is
given here:
\[f\le g\quad\Rightarrow\quad \T(g)=\T(\sup\{f,g\})=\sup\{\T(f),\T(g)\}
\quad \Rightarrow\quad \T(f)\le \T(g),\]
\[\T(f)\le \T(g)\Rightarrow \T(g)=\sup\{\T(f),\T(g)\}=\T(\sup\{f,g\})
\Rightarrow g = \sup\{f,g\} \Rightarrow f\le g.\] This direction is
true also in the injective case (preservation of $\sup$ and
$\hat{\inf}$ implies order preservation), but the opposite (order
preservation in both directions implies preservation of $\sup$ and
$\hat{\inf}$) is not, as shown in the following example. The
following $\T : Cvx(\R^n) \to Cvx(\R^n)$ is injective and $f \le g$
if and only if $\T f \le \T g$: \[ (\T f)(x) = f (x) + x_1^2.\] But
$\T$ does {\em not} map $\hat{\inf}$ to $\hat{\inf}$. The reason
behind this fact is that $\T$ is not surjective. Moreover, there
exist $f, g$, such that $\hat{\inf} \{\T(f),\T(g)\}$ is not in the
image of $\T$, and in particular it is not equal to
$\T(\hat{\inf}\{f,g\})$; for example take $f(x)=x_1, g(x)=-x_1$.
\end{rem}

For the proof of the more general Theorem \ref{Thm_Cvx-Inj}, we need
the following known geometric lemma. The dimension of a set $K$
denotes the minimal dimension of an affine subspace which contains
the set.

\begin{lem}\label{Lem_GeomLemma-Classic} In an $m$-dimensional
affine space, let $M$ be a closed convex set. Let $\cal F$ be a
family of $m$-dimensional closed convex sets such that $K\neq M$ for
all $K\in\cal F$, and $K_1\cap K_2 = M$ whenever $K_1\neq K_2$ and
$K_1, K_2\in\cal F$. Then $\cal F$ is at most countable.
\end{lem}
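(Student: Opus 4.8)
The plan is to reduce the statement to the elementary topological fact that a collection of pairwise disjoint nonempty open subsets of an $m$-dimensional affine space (which, after choosing coordinates, is just $\R^m$) is at most countable. So the real task is to attach to each $K\in\cal F$ a nonempty open set $U_K$ in such a way that distinct members of $\cal F$ receive disjoint sets.

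If $\cal F$ has at most one element there is nothing to prove, so I would assume it has two distinct members $K_1\neq K_2$, for which $K_1\cap K_2=M$. The first observation is that $M\subseteq K$ for every $K\in\cal F$: if $K\neq K_1$ this is because $M=K\cap K_1\subseteq K$, and if $K=K_1$ then $K\neq K_2$ and $M=K\cap K_2\subseteq K$. Consequently the sets $K\setminus M$, $K\in\cal F$, are pairwise disjoint, since for $K\neq K'$ one has $(K\setminus M)\cap(K'\setminus M)=(K\cap K')\setminus M=M\setminus M=\emptyset$.

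The second, and only slightly less immediate, step is to show that $int(K)\setminus M$ is nonempty for each $K\in\cal F$. Since $K$ is an $m$-dimensional convex subset of an $m$-dimensional affine space, it has nonempty interior, and since $K$ is convex with nonempty interior, $\overline{int(K)}=\overline{K}=K$ (the last equality because $K$ is closed). If $int(K)$ were contained in $M$, then $K=\overline{int(K)}\subseteq\overline{M}=M$, contradicting $K\neq M$; hence $int(K)\setminus M\neq\emptyset$. Now set $U_K:=int(K)\setminus M$. As the intersection of the open set $int(K)$ with the open set $\R^m\setminus M$ (here we use that $M$ is closed), $U_K$ is open; it is nonempty by the above, and $U_K\subseteq K\setminus M$, so the family $\{U_K\}_{K\in\cal F}$ is pairwise disjoint by the previous paragraph.

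Finally, after fixing affine coordinates, each nonempty $U_K$ contains a point of $\Q^m$, and by disjointness the resulting assignment $K\mapsto(\text{such a point})$ is an injection $\cal F\hookrightarrow\Q^m$; since $\Q^m$ is countable, $\cal F$ is at most countable. I do not expect any serious obstacle here: the only place requiring a moment's care is the nonemptiness of $int(K)\setminus M$, which rests on the standard convex-geometry fact that the closure of the interior of a convex set with nonempty interior equals its closure — everything else is bookkeeping.
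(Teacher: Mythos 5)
Your proof is correct. Note that the paper itself does not prove this lemma at all --- it is quoted as a ``known geometric lemma'' and used as a black box --- so there is no internal argument to compare against; your write-up supplies exactly the standard argument one would expect. All the steps check out: since ${\cal F}$ may be assumed to have at least two members, $M=K\cap K'\subseteq K$ for every $K\in{\cal F}$, so the sets $K\setminus M$ are pairwise disjoint; each $K$ is full-dimensional and hence has nonempty interior, and $\overline{int(K)}=K$ rules out $int(K)\subseteq M$ (which together with $M\subseteq K$ would force $K=M$); thus the sets $int(K)\setminus M$ form a pairwise disjoint family of nonempty open sets, and separability of $\R^m$ finishes the job. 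The one spot you flag as needing care --- nonemptiness of $int(K)\setminus M$, resting on $\overline{int(K)}=\overline{K}$ for convex sets with interior and on the closedness of $M$ --- is indeed the only non-bookkeeping step, and you handle it correctly.
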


We reformulate it, to better suit our need:
\begin{lem}\label{Lem_GeomLemma-Rephrase} Let $M\subseteq\R^n$ be a
fixed closed convex set of dimension $m$. Let $\cal F$ be an
uncountable family of closed convex sets such that $K\neq M$ for all
$K\in\cal F$, and $K_1\cap K_2 = M$ whenever $K_1\neq K_2$ and $K_1,
K_2\in\cal F$. Then for at least one set $K\in\cal F$, $dim(K)\ge
m+1$. In particular, $m\le n-1$.
\end{lem}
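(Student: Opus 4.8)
The plan is to obtain Lemma \ref{Lem_GeomLemma-Rephrase} as a direct consequence of the classical Lemma \ref{Lem_GeomLemma-Classic}, arguing by contradiction; the only genuine point is to locate the right ambient affine space in which to invoke the classical statement.

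First I would record the elementary observation that $M\subseteq K$ for every $K\in\mathcal F$. Indeed, since $\mathcal F$ is uncountable it contains at least two distinct sets, so given any $K\in\mathcal F$ we may pick $K'\in\mathcal F$ with $K'\neq K$ and conclude $M=K\cap K'\subseteq K$.

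Next, suppose toward a contradiction that $dim(K)\le m$ for every $K\in\mathcal F$. Since $M\subseteq K$ and $dim(M)=m$, comparing affine hulls gives $m=dim(\mathrm{aff}(M))\le dim(\mathrm{aff}(K))=dim(K)\le m$, hence $dim(K)=m$ and $\mathrm{aff}(K)=\mathrm{aff}(M)$ for every $K\in\mathcal F$. Call this common $m$-dimensional affine subspace $A$. Then $M$ and all members of $\mathcal F$ are closed convex subsets of $A$, each of dimension $m$ (i.e. full-dimensional in $A$), with $K\neq M$ and $K_1\cap K_2=M$ whenever $K_1\neq K_2$ in $\mathcal F$. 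Applying Lemma \ref{Lem_GeomLemma-Classic} with ambient space $A$ shows that $\mathcal F$ is at most countable, contradicting the hypothesis that $\mathcal F$ is uncountable. Therefore some $K\in\mathcal F$ satisfies $dim(K)\ge m+1$; since $K\subseteq\R^n$ forces $dim(K)\le n$, we get $m+1\le n$, i.e. $m\le n-1$.

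The ``main obstacle'' here is essentially non-existent: the statement is a repackaging of Lemma \ref{Lem_GeomLemma-Classic}. The one step that must be spelled out is the reduction to the common affine hull $A$ — Lemma \ref{Lem_GeomLemma-Classic} is phrased for a family of $m$-dimensional sets living inside a single fixed $m$-dimensional affine space, and the fact that all the $K$'s share such a hull is valid precisely under the negation of the conclusion, which is exactly the regime in which we need it.
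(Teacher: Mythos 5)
Your proof is correct and follows the same route as the paper, which disposes of this lemma in one sentence by applying Lemma \ref{Lem_GeomLemma-Classic} inside the affine hull of $M$; you have simply made explicit the contrapositive setup and the observation that, if every $K\in\mathcal F$ had dimension at most $m$, then $M\subseteq K$ forces all members of $\mathcal F$ to share the affine hull $\mathrm{aff}(M)$. No gaps.
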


Lemma \ref{Lem_GeomLemma-Rephrase} follows from Lemma
\ref{Lem_GeomLemma-Classic}, where the minimal subspace which
contains $M$ is taken to be the $m$-dimensional affine space of
Lemma \ref{Lem_GeomLemma-Classic}. Our application of this lemma
requires a little more, so we prove:
\begin{lem}\label{Lem_GeomLemma-New} Let $M\subseteq\R^n$ be a
fixed closed convex set of dimension $m$. Let $\cal F$ be an
uncountable family of closed convex sets such that $K\neq M$ for all
$K\in\cal F$, and $K_1\cap K_2 = M$ whenever $K_1\neq K_2$ and $K_1,
K_2\in\cal F$. Then for at least one set $K\in\cal F$, $dim(K)\ge
m+1$. Moreover, $m\le n-2$.
\end{lem}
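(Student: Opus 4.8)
The assertion that some $K\in\mathcal F$ has $\dim K\ge m+1$ is precisely Lemma~\ref{Lem_GeomLemma-Rephrase}, which also yields $m\le n-1$, so the only new content is the improvement to $m\le n-2$, and I would establish it by contradiction. Suppose $m=n-1$. Then the affine hull of $M$ is a hyperplane $H\subseteq\R^n$, which splits $\R^n$ into two open half-spaces $H^+$ and $H^-$; write $\overline{H^+}=H\cup H^+$ and $\overline{H^-}=H\cup H^-$. Note first that $K_1\cap K_2=M$ forces $M\subseteq K$ for every $K\in\mathcal F$. I would then partition $\mathcal F$ according to the position of its members relative to $H$: let $\mathcal F_0=\{K\in\mathcal F:K\subseteq H\}$, let $\mathcal F_+=\{K\in\mathcal F:K\cap H^+\neq\emptyset\}$, and let $\mathcal F_-=\{K\in\mathcal F:K\cap H^-\neq\emptyset\}$. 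These three sets cover $\mathcal F$ (a set not contained in $H$ must meet $H^+$ or $H^-$), so it suffices to show each of them is at most countable, which contradicts the uncountability of $\mathcal F$.

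For $\mathcal F_0$ the argument is immediate: every $K\in\mathcal F_0$ contains the $(n-1)$-dimensional set $M$ and is contained in the $(n-1)$-dimensional plane $H$, hence $\dim K=n-1$, and Lemma~\ref{Lem_GeomLemma-Classic} applied inside the affine space $H$ (which is $(n-1)$-dimensional), with the same $M$, shows $\mathcal F_0$ is at most countable.

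For $\mathcal F_+$ (and symmetrically for $\mathcal F_-$) I would pass to the truncations $K\mapsto K\cap\overline{H^+}$. For $K\in\mathcal F_+$, pick $p\in K\cap H^+$; then $K\cap\overline{H^+}$ is a closed convex set containing $\mathrm{conv}(M\cup\{p\})$, which is full-dimensional because $M$ spans $H$ while $p\notin H$, so $\dim(K\cap\overline{H^+})=n$ and in particular $K\cap\overline{H^+}\neq M$. For distinct $K_1,K_2\in\mathcal F_+$ one has $(K_1\cap\overline{H^+})\cap(K_2\cap\overline{H^+})=(K_1\cap K_2)\cap\overline{H^+}=M\cap\overline{H^+}=M$. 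The truncation map is moreover injective on $\mathcal F_+$: if $K_1\cap\overline{H^+}=K_2\cap\overline{H^+}=:L$ then $L\subseteq K_1\cap K_2=M\subseteq H$, contradicting that $L$ meets $H^+$. Hence $\{K\cap\overline{H^+}:K\in\mathcal F_+\}$ is a family of $n$-dimensional closed convex subsets of $\R^n$, pairwise meeting in $M$ and none equal to $M$, so by Lemma~\ref{Lem_GeomLemma-Classic} it is at most countable; therefore $\mathcal F_+$ is at most countable. The identical argument with $\overline{H^-}$ handles $\mathcal F_-$, completing the contradiction, and hence $m\le n-2$.

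The only point requiring care — and the place where a careless argument could slip — is the bookkeeping relative to $H$ in the $\mathcal F_{\pm}$ case: one must verify simultaneously that the truncated sets are genuinely $n$-dimensional (so that Lemma~\ref{Lem_GeomLemma-Classic} is applicable in the full space $\R^n$), that distinct members of $\mathcal F_+$ remain distinct after truncation, and that the pairwise intersections of the truncations are exactly $M$ rather than some larger set. Each of these is a one-line convexity observation, so there is no genuine difficulty here; the whole proof is a matter of organizing the case split and invoking the already-established Lemma~\ref{Lem_GeomLemma-Classic} three times.
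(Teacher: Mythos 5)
Your proof is correct, and its skeleton matches the paper's: both reduce to ruling out $m=n-1$ by contradiction, both dispose of the subfamily $\{K\in\mathcal F: K\subseteq H\}$ by applying Lemma~\ref{Lem_GeomLemma-Classic} inside $H$, and both then turn to the sets meeting an open side of $H$. The decisive step, however, is handled differently. The paper needs only \emph{two} sets $A,B$ meeting the same open half-space $H^+$ (guaranteed by pigeonhole once at least three members of $\mathcal F$ are not contained in $H$): taking $x$ in the relative interior of $M$ and $a\in A\cap H^+$, the hull $\mathrm{conv}(M\cup\{a\})\subseteq A$ contains an open half-ball $B(x,r)\cap H^+$, likewise for $B$, and the two half-balls meet in an open subset of $H^+$ disjoint from $M$ --- an immediate contradiction with $A\cap B=M$. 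You instead show that the entire families $\mathcal F_{\pm}$ are countable by truncating each member to $\overline{H^{\pm}}$ and re-invoking Lemma~\ref{Lem_GeomLemma-Classic} in the full space $\R^n$; your verifications (full-dimensionality of the truncations, injectivity of truncation, and that truncated intersections are exactly $M$) are all sound. The trade-off: your route avoids the small geometric fact about half-balls at relative interior points but uses the quoted countability lemma a second time, while the paper's argument is elementary convexity and in fact proves the stronger statement that each open side of $H$ can be met by at most one member of $\mathcal F$.
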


\noindent{\bf Proof.} We wish to prove that $m\neq n-1$; the rest
follows from Lemma \ref{Lem_GeomLemma-Rephrase}. Assume otherwise,
then let $H=\{\iprod{x}{u}=c\}$ be the affine subspace of dimension
$n-1$ which contains $M$. Our assumption is that the relative
interior of $M$ in $H$ is not empty. The set $\{K\in\cal F :
K\subseteq H\}$ is at most countable, by Lemma
\ref{Lem_GeomLemma-Classic}. Since $\cal F$ is not countable, there
are at least three sets which are not contained in $H$, and
therefore (without loss of generality) we have $A,B\in\cal F$ such
that $A\cap H^+\neq\emptyset$, $B\cap H^+\neq\emptyset$, where
$H^+:=\{\iprod{x}{u}>c\}$. Let $a\in A, b\in B$ such that $a,b\in
H^+$, and let $x\in M$ be a point in the relative interior of $M$.
Since $conv\{M,a\}\subseteq A$, we conclude that there is some open
half ball of the form $B_{(x,r)} \cap H^+$ contained in $A$, and
likewise for $B$. The two half balls have non empty intersection, in
contradiction to $A\cap B=M$. \qed

We will use this lemma for epi-graphs of functions. Noting that
\[ epi (\max\{f,g\}) = epi(f) \cap epi(g),\] we get the following
lemma for convex functions:
\begin{lem}\label{Lem_GeomLemma-Functions} Let $M:\R^n\to\R$ be a
fixed convex function, such that $epi(M)\subseteq\R^{n+1}$ is of
dimension $m$. Let $\cal F$ be an uncountable family of convex
functions such that $f<M$ for all $f\in\cal F$, and $max\{f_1, f_2\}
= M$ whenever $f_1, f_2\in\cal F$ and $f_1\neq f_2$. Then for at
least one function $f\in\cal F$, $dim(epi(f))\ge m+1$. Moreover,
$m\le n-1$.
\end{lem}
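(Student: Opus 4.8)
The statement is the functional reformulation of Lemma \ref{Lem_GeomLemma-New}, so the plan is to transfer the geometric statement to epi-graphs verbatim. Recall that for convex functions $f,g$ one has $epi(\max\{f,g\}) = epi(f)\cap epi(g)$, and $f<M$ on the (common) domain is equivalent to the strict inclusion $epi(f)\supsetneq epi(M)$ when the domain is fixed; more precisely, $\max\{f_1,f_2\}=M$ translates into $epi(f_1)\cap epi(f_2)=epi(M)$, and $f\neq M$ (with $f\le M$) translates into $epi(f)\neq epi(M)$. Thus the family $\{epi(f):f\in\cal F\}$ is an uncountable family of closed convex subsets of $\R^{n+1}$, each different from the fixed closed convex set $epi(M)$ (which has dimension $m$), and pairwise intersecting exactly in $epi(M)$.

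First I would spell out these epi-graph translations carefully, taking care that the convex functions in question are the ones extended by $+\infty$ outside their domain so that their epi-graphs are genuinely closed convex subsets of $\R^{n+1}$, and that $epi(M)$ has dimension exactly $m$ by hypothesis. Then I would simply apply Lemma \ref{Lem_GeomLemma-New} in $\R^{n+1}$ (so with ``$n$'' there replaced by ``$n+1$''): it yields both that at least one $epi(f)$ has dimension $\ge m+1$ — which is the first conclusion, since $dim(epi(f))$ is by definition the dimension of $epi(f)$ — and that $m\le (n+1)-2 = n-1$, which is the ``moreover'' part. That is the entire argument; the content is in Lemma \ref{Lem_GeomLemma-New}, already proved.

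**Main obstacle.** There is no real mathematical obstacle here — the lemma is a pure restatement. The only point requiring a little care is the dictionary between ``$f<M$ for all $f\in\cal F$'' together with ``$\max\{f_1,f_2\}=M$ whenever $f_1\neq f_2$'' on the one hand, and the hypotheses ``$K\neq M$'' and ``$K_1\cap K_2=M$'' of Lemma \ref{Lem_GeomLemma-New} on the other. One must check that $\max\{f_1,f_2\}=M$ with $f_1\neq f_2$ does force $epi(f_1)\neq epi(f_2)$ (otherwise both would equal their intersection $epi(M)$, forcing $f_1=f_2=M$, contradicting $f_1<M$), and conversely that distinctness of the epi-graphs in $\cal F$ is automatic from $f<M$ plus the max condition. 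Once this bookkeeping is in place, the proof is one line: apply Lemma \ref{Lem_GeomLemma-New} in dimension $n+1$ to the family of epi-graphs. I would present it exactly that way, perhaps with a sentence noting that the closedness of the epi-graphs uses lower semi-continuity of the convex functions involved.
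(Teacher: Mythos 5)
Your proposal is correct and is exactly the paper's argument: the paper derives this lemma from Lemma \ref{Lem_GeomLemma-New} in one line, using $epi(\max\{f,g\})=epi(f)\cap epi(g)$ to pass to the uncountable family of epi-graphs in $\R^{n+1}$, which gives both $dim(epi(f))\ge m+1$ for some $f$ and $m\le (n+1)-2=n-1$. Your extra bookkeeping about the dictionary between $f<M$, $\max\{f_1,f_2\}=M$ and the set-theoretic hypotheses is sound and only makes explicit what the paper leaves implicit.
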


\noindent{\bf Proof of Theorem \ref{Thm_Cvx-Inj}.} We start by
checking where the constant function $+\infty$ is mapped to. Let us
call its image $f_\infty$. Consider the family $\{\delta_x\}_{x\in
K_1}$, and its image $\{ \T \delta_x\}_{x \in K_1}$. It is
uncountable, and every two functions in the second family satisfy
$\max \{g_1, g_2\} = f_\infty$.

This means, by Lemma \ref{Lem_GeomLemma-Functions}, that there
exists $x_1\in K_1$ such that the dimension of the epi-graph of $\T
\delta_{x_1}$ must be higher by at least $1$ than the dimension of
the epi-graph of $f_\infty$. Similarly, for $x_1$ we construct an
uncountable family of functions $\{\delta_{[x_1,y]}\}_{y \in K_1}$
such that the maximum of every two is $\delta_{x_1}$, and by
applying Lemma \ref{Lem_GeomLemma-Functions} again we get that there
exists at least one such function, the image of which has an
epi-graph with dimension higher by at least 1 than the dimension of
the epi-graph of $\T \delta_{x_1}$. After repeating this
construction an overall of $n-1$ times, we conclude that there exist
$x_1,\dots,x_{n-1}\in K_1$ such that the epi-graph of the function
$\T\delta_{conv\{x_1,\dots,x_{n-1}\}}$ is of dimension higher by at
least $n-1$ than the dimension of the epi-graph of $f_\infty$.
Applying Lemma \ref{Lem_GeomLemma-Functions} one last time, we get
that the dimension of the epi-graph of
$\T\delta_{conv\{x_1,\dots,x_{n-1}\}}$ is at most $(n+1)-2=n-1$.
This means that the epi-graph of $f_\infty$ is of dimension $0$,
that is, $f_\infty=+\infty$.

This also shows that $\T (\delta_{x,c}) = \delta_{y,d}$. Indeed,
since the only epi-graph with dimension $0$ has already been
designated to $f_\infty$, the dimension of the epi-graph of $\T
(\delta_{x,c})$ is at least $1$; but we may construct a chain as
above which implies that it is also at most $1$. We define the
injective map $F: K_1\times\R\to K_2\times\R$ by the relation
$\T(\delta_{x,c})= \delta_{F(x,c)}$, and denote
$F(x,c)=(y(x,c),d(x,c))$.

In fact, we get that $y = y(x)$ and $d = d(x,c)$ because the two
functions $\delta_{x,c}$ and $\delta_{x,c'}$ are comparable, and so
must be mapped to comparable functions (by Remark
\ref{Rem_Lattice-Is-Necessary-For-Cvx-Inj}). Note that $y(x)$ is
injective because the images of two functions are comparable if and
{\em only if} the original functions are comparable. In addition,
$y(x)$ is interval preserving. Indeed, assume $y(x_1)=y_1$,
$y(x_2)=y_2$, and $x_3\in [x_1, x_2]$. Since
$\delta_{x_3}\ge\hat{\inf}\{\delta_{x_1}, \delta_{x_2}\}$, the
function $\delta_{x_3}$ must be mapped to a function
$\delta_{y_3,c}$ which is above $\hat{\inf}\{\delta_{y_1,c_1},
\delta_{y_2,c_2}\}$, which implies $y_3\in [y_1, y_2]$. For $n\ge
2$, the fact that $y(x)$ is an injective interval preserving map
implies that it is fractional linear, by Theorem
\ref{Thm_FL-Unique-nD}. Actually this is true also for $n=1$, but it
only follows from the fact that $(x,c) \mapsto (y,d)$ is also
interval preserving, which we will next show.

\noindent{\bf Remark.} We note that until this point in the proof
(for $n\ge 2$) we only use the max/min condition, and not the
stronger assumed condition for sup/inf; we already get that the map
$F$ is very restricted: it is a fractional linear map on the base,
and some one dimensional map $d_x(c)$ on each fiber, and all these
maps $d_x$ must join together to preserve convexity of epi-graphs.
This seems to restrict $d(x,c)$ enough to determine its form, but we
chose to continue using a different argument, which works also for
$n=1$, but requires the preservation of sup/inf.

To see that $F$ is interval preserving consider the function
$\hat{\min}\{\delta_{x_1,c_1}, \delta_{x_2,c_2}\}$, which is
$+\infty$ outside the interval $[x_1, x_2]$ and linear in it, with
$f(x_1) = c_1$ and $f(x_2) = c_2$. By assumption, it is mapped to
$\hat{\min}\{\delta_{y_1,d_1}, \delta_{y_2,d_2}\}$. Taking $(x_3,
c_3) \in [(x_1, c_1), (x_2, c_2)]$ we have that $\delta_{x_3,c_3}
\ge \hat{\min}\{\delta_{y_1,d_1}, \delta_{y_2,d_2} \}$ and so the
point $(y_3, d_3)$ lies above or on the segment $[(y_1, d_1), (y_2,
d_2)]$.

On the other hand, look at $x_3 = \lambda x_1 + (1-\lambda)x_2$ and
$c_3' < \lambda c_1 + (1-\lambda) c_2$. That is, we take a point
$(x_3, c_3')$ which is under the segment $[(x_1, c_1), (x_2, c_2)]$.
From the ``only if'' condition, we have that $\T (\delta_{x_3,c_3'})
\not \ge \hat{\min}\{\delta_{x_1,c_1}, \delta_{x_2,c_2}\}$. So
$(y_3, d_3')$ is under the segment $[(y_1, d_1), (y_2, d_2)]$, since
$y_3 \in [y_1, y_2]$ and it cannot be above or on it. Since
$\delta_{x_3,c_3} = \sup_{c_3 ' < c_3} \{\delta_{x_3,c_3'}\}$, we
may use the condition of supremum to get $d_3 = \sup \{d_3'\}$, and
thus $(y_3, d_3)$ is below or on the segment $[(y_1, d_1), (y_2,
d_2)]$. Together with what we saw before, this implies $(y_3,
d_3)\in[(y_1, d_1), (y_2, d_2)]$.

So, we have shown that $F: K_1\times \R \to K_2 \times \R$ is an
injective interval preserving map, and we may apply Theorem
\ref{Thm_FL-Unique-nD} to conclude that it is fractional linear.

To complete the proof of Theorem \ref{Thm_Cvx-Inj}, we proceed in
exactly the same way as in the proof of Theorem \ref{Thm_Cvx-Bij},
to conclude that
\begin{align*}
\T f&=\hat{\inf} \{  \delta_{x,y}  :(x,y)\in F(epi(f))\}
\\  &=\hat{\inf} \{  \delta_{x,y}: (x,y)\in epi(\T f) \},
\end{align*}
and thus \[ epi (\T f) = F(epi(f)),\] which completes the proof.\qed

Both proofs generalize without any complication to various other
settings in which one considers different classes, such as the class
of all non negative functions in $Cvx(\R^n)$, or in $Cvx(K)$, or
more generally:
\[S_{f_0} = Cvx(\R^n) \cap \{ f: f_0\le f\},\] for
some fixed $f_0 \in Cvx(\R^n)$. We get:

\begin{thm}\label{Thm_Cvx-Bij-Above-F0} Let $n\ge1$, and let $f_1,
f_2\in Cvx(\R^n)$ be convex functions with support of full
dimension. If $\T:S_{f_1}\to S_{f_2}$ is an order isomorphism, then
there exists a bijective fractional linear map $F: epi(f_1) \to
epi(f_2)$, such that $\T$ is given by \[ epi(\T f) = F(epi(f)).\]
\end{thm}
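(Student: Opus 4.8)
The plan is to follow the proof of Theorem \ref{Thm_Cvx-Bij} almost verbatim, replacing the lattice $Cvx(K_i)$ by $S_{f_i}$ and the ``cylinder'' $K_i\times\R$ by the epigraph $epi(f_i)$. First I would isolate the extremal elements exactly as there: call $f\in S_{f_i}$ extremal if $f\le g,h$ implies that $g$ and $h$ are comparable. Apart from the top element, the constant $+\infty$ (which any order isomorphism must fix, being the maximum of the lattice), the same argument as in Theorem \ref{Thm_Cvx-Bij} shows that the extremal functions are precisely the delta functions $\delta_{x,c}$ lying in $S_{f_i}$, i.e.\ those with $c\ge f_i(x)$. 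Thus the extremal family is parametrised by the closed epigraph $\widehat E_i:=\{(x,c): c\ge f_i(x)\}$, and since $f_i$ has full-dimensional support this is a convex set with non-empty interior in $\R^{n+1}$. Extremality being an order-theoretic notion, $\T$ restricts to a bijection between the extremal families of $S_{f_1}$ and of $S_{f_2}$, hence induces a bijection $F:\widehat E_1\to\widehat E_2$ determined by $\T(\delta_{x,c})=\delta_{F(x,c)}$, which restricts to a bijection $epi(f_1)\to epi(f_2)$.

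The heart of the argument is to show $F$ preserves intervals, and this is the only place where the lower bound $f_i$ needs attention. The key observation is that for $(x_1,c_1),(x_2,c_2)\in\widehat E_i$ the function $\hat{\inf}\{\delta_{x_1,c_1},\delta_{x_2,c_2}\}$, computed inside $S_{f_i}$, is still supported on the segment $[x_1,x_2]$: for any point $z$ off that segment one can exhibit, for arbitrarily large $N$, a convex lower-semi-continuous function which is $\ge f_i$, takes the prescribed values $\le c_1,\le c_2$ at $x_1,x_2$, equals $+\infty$ outside a thin simplex (or segment) through $x_1,x_2,z$, and takes value $N$ at $z$; hence the supremum defining $\hat{\inf}$ is $+\infty$ at $z$. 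Granting this, the proof of Theorem \ref{Thm_Cvx-Bij} transcribes: if $(x_3,c_3)\in[(x_1,c_1),(x_2,c_2)]$ then $\delta_{x_3,c_3}\ge\hat{\inf}\{\delta_{x_1,c_1},\delta_{x_2,c_2}\}$, so applying $\T$ and Proposition \ref{Prop_Sup-To-Sup} forces the base point of $F(x_3,c_3)$ onto the base segment of $[F(x_1,c_1),F(x_2,c_2)]$ with height at least the chord height; then surjectivity of $\T$, together with the fact that for $c<c_3$ the function $\delta_{x_3,c}$ is \emph{not} above $\hat{\inf}\{\delta_{x_1,c_1},\delta_{x_2,c_2}\}$, pins $F(x_3,c_3)$ exactly onto the segment. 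So $F$ is an injective interval preserving map on the convex body $\widehat E_1$, and Theorem \ref{Thm_FL-Unique-nD} makes it fractional linear (for $n=1$ one reads interval preservation off $F$ itself, as in Theorem \ref{Thm_Cvx-Bij}).

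Finally, writing any $f\in S_{f_1}$ as $f=\hat{\inf}\{\delta_{x,c}:(x,c)\in epi(f)\}$, which is valid in $S_{f_1}$ because $f$ itself is the largest function $\ge f_1$ lying below all those deltas, and applying Proposition \ref{Prop_Sup-To-Sup}, one gets $\T f=\hat{\inf}\{\delta_{x,c}:(x,c)\in F(epi(f))\}$; comparing this with $\T f=\hat{\inf}\{\delta_{x,c}:(x,c)\in epi(\T f)\}$ yields $epi(\T f)=F(epi(f))$, as desired. I expect the main obstacle to be precisely the interval-preservation step, namely verifying that $\hat{\inf}$ of two delta functions inside $S_{f_i}$ stays concentrated on the joining segment despite the presence of $f_i$ (and handling the boundary of $\widehat E_i$ when passing to general convex sets, exactly as at the end of the proof of Theorem \ref{Thm_FL-Unique-nD}); once that is settled the rest is a routine adaptation of the proof of Theorem \ref{Thm_Cvx-Bij}.
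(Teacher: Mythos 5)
Your proposal is correct and matches the paper's own treatment: the paper proves this theorem simply by asserting that the proof of Theorem \ref{Thm_Cvx-Bij} ``generalizes without any complication'' to the classes $S_{f_0}$, and your argument carries out precisely that generalization, correctly isolating the one point that genuinely needs checking (that $\hat{\inf}$ of two delta functions computed inside $S_{f_i}$ is still concentrated on the joining segment). For that step your construction can be simplified: the chord function itself --- linear interpolation of $(x_1,c_1)$, $(x_2,c_2)$ on $[x_1,x_2]$ and $+\infty$ elsewhere --- already lies in $S_{f_i}$ because $c_j\ge f_i(x_j)$ and $f_i$ is convex, so it is a direct witness that the $\hat{\inf}$ equals $+\infty$ off the segment, with no need for the thin-simplex functions.
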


\begin{thm}\label{Thm_Cvx-Inj-Above-F0} Let $n\ge1$, and let $f_1,
f_2\in Cvx(\R^n)$ be convex functions with support of full
dimension. If $\T:S_{f_1}\to S_{f_2}$ is an injective transform
satisfying:
\begin{enumerate}
\item $\T(      \sup_\alpha f_\alpha)=      \sup_\alpha\T f_\alpha$.
\item $\T(\hat{\inf}_\alpha f_\alpha)=\hat{\inf}_\alpha\T f_\alpha$.
\end{enumerate} for any family $\{f_{\alpha}\} \subseteq S_{f_1}$,
then there exist $f_2'\in S_{f_2}$, and a bijective fractional
linear map $F: epi(f_1) \to epi(f_2')$, such that $\T$ is given by
\[ epi(\T f) = F(epi(f)).\]
\end{thm}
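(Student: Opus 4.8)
The plan is to follow, mutatis mutandis, the proof of Theorem \ref{Thm_Cvx-Inj}, replacing the window $K_1$ by the epi-graph $epi(f_1)$ — a convex set of full dimension $n+1$, since $f_1$ has full-dimensional support — and replacing the constant $+\infty$ function on $K_1$ by the top element of $S_{f_1}$, which is again the constant $+\infty$ (it lies in $S_{f_1}$ because $+\infty\ge f_1$). The one point that genuinely needs adaptation is the choice of the ``extremal'' family of functions: for a simplex $S=conv\{x_1,\dots,x_k\}$ whose vertices lie in the interior of the support of $f_1$, let $g_S$ be the function which equals $+\infty$ off $S$ and on $S$ equals the restriction to $S$ of the affine function interpolating the values $f_1(x_i)$. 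Since $f_1$ is convex, this affine interpolant dominates $f_1$ on $S$, so $g_S\in S_{f_1}$; its $0$-dimensional instance $g_{\{x\}}$ is the $\delta$-function $\delta_{x,f_1(x)}$, and more generally the functions $\delta_{x,c}$ with $c\ge f_1(x)$ are precisely the members of $S_{f_1}$ supported at a single point, in bijection with the points $(x,c)$ of $epi(f_1)$. The key compatibility is that if two such simplices $S_1,S_2$ share a common face $S_0$ and the remaining vertices are in general position, then $\max\{g_{S_1},g_{S_2}\}=g_{S_0}$.

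With this in hand, I would first locate $f_\infty:=\T(+\infty)$. Exactly as in Theorem \ref{Thm_Cvx-Inj}, one builds (for $n\ge 2$) an increasing chain of uncountable families whose pairwise maxima equal the preceding member — $\{\delta_{x,f_1(x)}\}_x$, then $\{g_{[x_1,y]}\}_y$, then $\{g_{conv\{x_1,x_2,y\}}\}_y$, and so on up to $g_{conv\{x_1,\dots,x_{n-1}\}}$ — and applies Lemma \ref{Lem_GeomLemma-Functions} at each stage; the dimension bound in that lemma, invoked once more at the top, forces $\dim(epi(f_\infty))\le 0$, hence $epi(f_\infty)=\emptyset$ and $f_\infty=+\infty$ (for $n=1$ this is immediate from one application of the lemma). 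The same chains, started one notch higher, show that $epi(\T\delta_{x,c})$ is one-dimensional for every $(x,c)\in epi(f_1)$, so $\T\delta_{x,c}$ is again a $\delta$-function; this defines the point map $F:epi(f_1)\to\R^{n+1}$ by $\T\delta_{x,c}=\delta_{F(x,c)}$. Since $\T$ is injective and $\sup$-preserving it both preserves and reflects the order (Remark \ref{Rem_Lattice-Is-Necessary-For-Cvx-Inj}), so the first coordinate $y(x)$ of $F$ is independent of $c$ and injective; feeding $\delta_{x_3,c_3}\ge\hat{\inf}\{\delta_{x_1,c_1},\delta_{x_2,c_2}\}$ through $\T$ makes $y(x)$ interval preserving on $K_1$. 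To upgrade this to interval preservation of the whole map $F$ on $epi(f_1)$ I would copy the argument of Theorem \ref{Thm_Cvx-Inj}: applying $\T$ to $\hat{\inf}\{\delta_{x_1,c_1},\delta_{x_2,c_2}\}$ forces the image of any point of the segment $[(x_1,c_1),(x_2,c_2)]$ to lie weakly above the image segment, while points strictly below it go strictly below (order reflection), and then $\delta_{x_3,c_3}=\sup_{c_3'<c_3}\delta_{x_3,c_3'}$ together with preservation of $\sup$ pins the image exactly onto the segment.

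At this stage $F:epi(f_1)\to\R^{n+1}$ is an injective interval preserving map on a convex set with non-empty interior, so Theorem \ref{Thm_FL-Unique-nD} shows $F$ is fractional linear; in particular it extends lines to lines and, carrying the vertical rays $epi(\delta_{x,c})$ to vertical rays, it carries the vertical direction to itself, so $F(epi(f_1))$ is itself the epi-graph of some $f_2'\in S_{f_2}$. Putting $f_2':=\T f_1$ and writing $f_1=\hat{\inf}\{\delta_{x,y}:(x,y)\in epi(f_1)\}$, preservation of $\hat{\inf}$ gives $epi(f_2')=F(epi(f_1))$, so $F$ restricts to a bijection $epi(f_1)\to epi(f_2')$; and for a general $f\in S_{f_1}$ the identity
\[\T f=\T\bigl(\hat{\inf}\{\delta_{x,y}:(x,y)\in epi(f)\}\bigr)=\hat{\inf}\{\delta_{z,w}:(z,w)\in F(epi(f))\},\]
compared with $\T f=\hat{\inf}\{\delta_{z,w}:(z,w)\in epi(\T f)\}$, yields $epi(\T f)=F(epi(f))$, as required. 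I expect the delicate step to be, as in Theorem \ref{Thm_Cvx-Inj}, the $\sup$-argument that makes $F$ (and not merely its base component) interval preserving — it is precisely this step that uses the hypothesis on $\sup$ rather than just $\max$, and it is also what handles the case $n=1$, where the base map alone does not pin down $F$; the dimension-counting step, though it rests on the new family $\{g_S\}$, then runs in complete parallel with Theorem \ref{Thm_Cvx-Inj}.
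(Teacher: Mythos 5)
Your proposal is correct and coincides with the paper's treatment: the paper offers no separate argument for Theorem \ref{Thm_Cvx-Inj-Above-F0}, stating only that the proof of Theorem \ref{Thm_Cvx-Inj} ``generalizes without any complication'' to classes of the form $S_{f_0}$, and your write-up is exactly that generalization (replacing $K_1\times\R$ by $epi(f_1)$, the delta/segment families by the affine-interpolant functions $g_S$, and rerunning the dimension count, the $\sup$/$\hat{\inf}$ interval-preservation argument, and Theorem \ref{Thm_FL-Unique-nD}). If anything, you supply more detail than the paper does, including correctly identifying the $\sup$-step as the one place where the half-line fibers of $epi(f_1)$ require a word of care.
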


It is tempting to consider Theorems \ref{Thm_Cvx-Bij} and
\ref{Thm_Cvx-Inj} as manifestations of Theorems
\ref{Thm_Cvx-Bij-Above-F0} and \ref{Thm_Cvx-Inj-Above-F0}, where
$f_i$ is the function which attains only the values $-\infty$ on
$K_i$ and $+\infty$ outside $K_i$. The only problem is that these
functions are not elements of $Cvx(\R^n)$, but in fact Theorems
\ref{Thm_Cvx-Bij-Above-F0} and \ref{Thm_Cvx-Inj-Above-F0} can be
further generalized without any effort. Instead of considering only
classes of the form $S_{f_0}=\{f\in Cvx(\R^n): epi(f)\subseteq
epi(f_0)\}$, consider also $\{f\in Cvx(\R^n): epi(f)\subseteq K\}$,
where $K$ is some convex set (in the case of Theorems
\ref{Thm_Cvx-Bij} and \ref{Thm_Cvx-Inj}, $K$ is the infinite
cylinder $K_i\times\R$).

\subsection{Classification of admissible fractional linear
maps}\label{Sect_Which-FL-Work-Cvx} Since fractional linear maps
send intervals to intervals, it is clear (a-posteriori, once we know
the transform is induced by a fractional linear map) that a delta
function $\delta_{x,c}$ is mapped to a delta function
$\delta_{y,d}$; since these are the only functions with epi-graphs
that are half-lines. Moreover, by order preservation, we see that
$y$ is a function only of $x$. Observations of this kind allow us to
classify the type of fractional linear maps that induce
transforms as in Theorem \ref{Thm_Cvx-Bij}.

Let the inducing matrix $A_F\in GL_{n+2}$ be given by
\[A_F = \left(\begin{array}{ccccc}
               &        &      &  u'_1  &   u_1  \\
               &    A   &      & \vdots & \vdots \\
               &        &      &  u'_n  &   u_n  \\
          v'_1 & \cdots & v'_n &    a   &    b   \\
          v_1  & \cdots & v_n  &    c   &    d   \\
      \end{array} \right),\] where $A$ is an $n\times n$ matrix, $v,v',
u,u'\in\R^n$, and $a,b,c,d\in \R$.

The infinite cylinder $K_1\times\R$ is contained in the domain of
$F$, so it must not intersect the defining hyperplane
$H=\{\iprod{v}{x}+cy = -d\}$, which implies $c = 0$. In particular,
$K_1 \subseteq \{\iprod{v}{x} > -d\}$ (the sign of the denominator
is constant on $dom(F)$, and we choose it to be positive; we may do
so due to the multiplicative degree of freedom in the choice of
$A_F$).

Since $F$ must map fibers $\{(x, y): y\in \R\}$ to fibers, we see
that for $i=1, \ldots, n$, $F((x,y))_i=
\left(\frac{Ax+yu'+u}{\iprod{v}{x}+d}\right)_i$ does not depend on
$y$, which implies $u' = 0$.

Let $f\in Cvx(K_1)$. The image of $epi(f)$ must be the epi-graph of
some $g\in Cvx(K_2)$. Since we have chosen a positive sign for the
denominator, this simply means that $a>0$, and we choose $a=1$, thus
exhausting the multiplicative degree of freedom in the choice of
$A_F$.

Finally, let $F'$ be the map corresponding to the following
$(n+1)\times (n+1)$ matrix, having removed the next to last row and
column from $A_F$: \[ A_{F'} = \left(\begin{array}{cccc}
                                      &     &  &   \\
                                      &  A  &  & u \\
                                      &     &  &   \\
                                      & v^T &  & d \\
                                     \end{array}\right).\] The map
$F':K_1\to K_2$ is fractional linear, and corresponds to the action
of $F$ on fibers (the ``projection'' of $F$ to $\R^n$). Thus
$A_{F'}$ must be invertible. We note that this condition always
holds; we have $A_F\in GL_{n+2}$, and since the $(n+1)^{th}$ column
of $A_F$ is $e_{n+1}$, $det(A_{F'})=\pm det(A_F)\neq0$.

We claim that these restrictions are not only necessary but also
sufficient:
\begin{prop} Let $K_1\subseteq\R^n$ be a convex set with interior,
for $n\ge1$. Let $A$ be an $n\times n$ matrix, $u,v,v'\in\R^n$,
$b,d\in\R$, and let $F,F'$ be the fractional linear maps defined by
the following matrices:
\[A_F = \left(\begin{array}{ccccc}
         &        &      &    0   &        \\
         &    A   &      & \vdots &    u   \\
         &        &      &    0   &        \\
         &  v'^T  &      &    1   &    b   \\
         &   v^T  &      &    0   &    d   \\
  \end{array}\right),\qquad A_{F'}=\left(\begin{array}{cccc}
     &     &  &   \\
     &  A  &  & u \\
     &     &  &   \\
     & v^T &  & d \\
  \end{array}\right).\] If the following two conditions are
satisfied:
\begin{enumerate}
\item $K_1 \subseteq \left\{ \iprod{v}{x} > -d\right\}$.
\item $A_{F'}\in GL_{n+1}$, or equivalently $A_F\in GL_{n+2}$.
\end{enumerate}
then $F$ induces an order isomorphism from $Cvx(K_1)$ to $Cvx(K_2)$
by its action on epi-graphs, where $K_2 = F'(K_1)$.
\end{prop}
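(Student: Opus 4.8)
The strategy is to verify directly that under conditions (1) and (2) the map $F$ sends the epi-graph of any $f\in Cvx(K_1)$ to the epi-graph of a function in $Cvx(K_2)$, and that the induced transform is an order isomorphism. First I would write out the explicit formula for $F$ from the matrix $A_F$: with the chosen normalization ($a=1$, $c=0$), for $(x,y)\in K_1\times\R$ one gets
\[
F(x,y)=\left(\frac{Ax+u}{\iprod{v}{x}+d},\ \frac{y+\iprod{v'}{x}+b}{\iprod{v}{x}+d}\right).
\]
Condition (1) guarantees the denominator $\iprod{v}{x}+d$ is strictly positive on $K_1$, so $F$ is well-defined on $K_1\times\R$ and fractional linear there; condition (2) (equivalent to $A_F\in GL_{n+2}$, which in turn forces $A_{F'}\in GL_{n+1}$ as noted in the paragraph preceding the proposition) makes $F$ and $F'$ injective, with inverses that are again fractional linear (Item 5 of Section \ref{Sect_Basic-Prop}). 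Set $K_2:=F'(K_1)$, a convex set with non-empty interior since $F'$ is an injective interval-preserving — hence convexity-preserving and open-ish — map.

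The heart of the argument is that $F(epi(f))=epi(g)$ for some $g\in Cvx(K_2)$. I would argue this as follows. The second coordinate of $F(x,y)$ is affine in $y$ with positive slope $1/(\iprod{v}{x}+d)$, so for fixed $x$ the fiber ray $\{(x,y):y>f(x)\}$ is mapped bijectively and monotonically onto a fiber ray $\{(x',y'):y'>g(x')\}$ over the point $x'=F'(x)$; this defines $g$ on $K_2$ by the formula in equation (\ref{eq-changeofvars}), namely $g=\bigl(f/L_0\bigr)\circ F_b^{-1}+L_1$ with $L_0(x)=\iprod{v}{x}+d$, $F_b=F'$, and $L_1$ the affine functional $x'\mapsto (\iprod{v'}{\cdot}+b)/(\iprod{v}{\cdot}+d)$ read off at $F'^{-1}(x')$. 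Convexity of $g$ is then not obtained by manipulating this (generally non-convex) formula, but geometrically: $epi(f)$ is a convex subset of $K_1\times\R\subseteq\R^{n+1}$, $F$ is fractional linear hence interval-preserving on this convex domain, so $F(epi(f))$ is convex; and since $F$ maps fibers to fibers monotonically, $F(epi(f))$ is exactly the region strictly above the graph of $g$ over $K_2$, whence $g$ is convex. Lower-semi-continuity of $g$ follows because $epi(g)=F(epi(f))$ is relatively open in the appropriate sense, or equivalently because $F$ and $F^{-1}$ are continuous (Lemma \ref{Lem_Int-Pres-Continuous}) so that sublevel-type closedness is preserved; the constant $-\infty$ function on $K_1$ corresponds to $epi=K_1\times\R$, which maps to $K_2\times\R$, the epi-graph of the $-\infty$ function on $K_2$. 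Thus $\T f:=g$ is a well-defined map $Cvx(K_1)\to Cvx(K_2)$ with $epi(\T f)=F(epi(f))$.

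Finally I would check $\T$ is an order isomorphism. Bijectivity: applying the same construction to the fractional linear map $F^{-1}$ (whose inducing matrix is $A_F^{-1}$, satisfying the analogous conditions with roles of $K_1,K_2$ swapped, since $\iprod{v}{\cdot}+d>0$ on $K_1$ is exactly the statement that $K_2$ lies on the correct side of the image hyperplane — use Item 3 of Section \ref{Sect_Basic-Prop}) yields a map $\T^{-1}$, and $F^{-1}\circ F=Id$ on epi-graphs gives $\T^{-1}\T=Id$. Order preservation in both directions is immediate from the epi-graph description: $f\le g\iff epi(g)\subseteq epi(f)\iff F(epi(g))\subseteq F(epi(f))\iff epi(\T g)\subseteq epi(\T f)\iff \T f\le\T g$, where the second equivalence uses injectivity of $F$. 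I expect the main obstacle to be the careful bookkeeping that $K_2=F'(K_1)$ is genuinely contained in the maximal domain of $F^{-1}$ and that the chosen sign normalizations (positivity of $\iprod{v}{x}+d$, $a=1$) are consistent on both sides — i.e., verifying that the "correct side" conditions are self-dual under $F\mapsto F^{-1}$; once the fiber-to-fiber monotonicity and the convexity-from-interval-preservation observations are in place, everything else is formal.
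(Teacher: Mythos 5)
Your proposal is correct and follows essentially the same route as the paper's (much terser) proof: the paper likewise reduces the claim to four checks --- epi-graphs go to epi-graphs (from the zeros in the next-to-last column of $A_F$, i.e.\ your fiber-to-fiber monotone affine action), convexity preservation (from interval preservation), bijectivity (from invertibility of $A_F$), and order preservation (automatic for point-map-induced transforms). You have simply filled in the details the paper leaves to the reader.
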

\noindent {\bf Proof.} The following four conditions must be
checked: that epi-graphs are mapped to epi-graphs, that convexity of
the functions is preserved under the transform, that it is
bijective, and that it is order preserving. Bijectivity and
convexity preservation follow easily by the bijectivity and interval
preservation properties of fractional linear maps, and order
preservation is immediate for transforms induced by a point map. The
fact that epi-graphs are mapped to epi-graphs follows from the zeros
in the ${(n+1)}^{th}$ (next to last) column of $A_F$. \qed

Denote the map from the fiber above $x_1$ to the fiber above
$F'(x_1)=x_2$ by $F_{x_1}:\R\to\R$. It is an affine linear map,
given by\[F_{x_1}(y)=\frac{\iprod{v'}{x_1}+y+b}{\iprod{v}{x_1}+d}.\]

\begin{rem} Letting $x_2 = F'(x_1)$ we get
\[(\T f)(x_2) = F_{x_1}(f(x_1)).\]\\
Note that there is a sort of coupling between the ``projected'' map
$F'$, which determines the $x\in\R^n$ dependency, and $F_{x_1}$,
which determines the $y$ dependency. More precisely: given $F'$, the
transform induced by $F$ is determined, up to multiplication by a
positive scalar, and addition of an affine linear function. We next
show that the linear part is determined by $v'$ and $b$. Consider a
transform $\T$ induced by a map $F$, where \[A_F = \left(
\begin{array}{ccccc}
   &        &        &    0   &       \\
   &    A   &        & \vdots &   u   \\
   &        &        &    0   &       \\
 0 & \cdots &    0   &    1   &   0   \\
   &   v^T  &        &    0   &   d   \\
  \end{array} \right).\] Next, consider the transform:
$(\tilde{\T}f)(x) = (\T f)(x) + \iprod{x}{w} + e$, induced by a map
$\tilde{F}$, where $w\in\R^n$ and $e\in\R$. As before, denote
\[A_{\tilde{F}} = \left(\begin{array}{ccccc}
      &               &      &    0   &           \\
      &   \tilde{A}   &      & \vdots & \tilde{u} \\
      &               &      &    0   &           \\
      &  \tilde{v}'^T &      &    1   & \tilde{b} \\
      &  \tilde{v}^T  &      &    0   & \tilde{d} \\
  \end{array}\right).\] Then $A=\tilde{A}$, $u=\tilde{u}$,
$v=\tilde{v}$, and $d=\tilde{d}$. The only difference is in the next
to last row, namely $v'$ and $b$, and a simple calculation shows
that \[\left(\begin{array}{c}
                        \\
              \tilde{v} \\
                        \\
              \tilde{b} \\
             \end{array}\right)=
\left(\begin{array}{cccc}
       &     &   &   \\
       & A^T &   & v \\
       &     &   &   \\
       & u^T &   & d \\
      \end{array}\right)
\left(\begin{array}{c}
         \\
       w \\
         \\
       e \\
      \end{array}\right).\]
The matrix appearing above is exactly $A_{F'}^T$, so it is
invertible, and therefore, the set of all $v, b$ corresponds exactly
to the set of all affine linear additions to $\T$ (clearly these
affine additions do not harm the properties of order preservation,
bijectivity, etc.).
\end{rem}

\subsection{Order reversing isomorphisms}\label{Sect_Order-Rev-Cvx}
The Legendre transform $\L: Cvx(\R^n)\to Cvx(\R^n)$, is the unique
order reversing isomorphism on $ Cvx(\R^n)$. The corresponding
question for windows is, given $K_1,K_2\subseteq\R^n$, what are all
the possible order reversing isomorphisms between $Cvx(K_1)$ and
$Cvx(K_2)$? It turns out that there are no such order reversing
isomorphisms, except in the aforementioned case where
$K_1=K_2=\R^n$. This is due to the fact that the delta functions
``have nowhere to be mapped to''. We formulate this simple
observation in the following Proposition \ref{Prop_Cvx-Rev}. To this
end we use the following two definitions.

\begin{defn}
Let $P_K\subset Cvx(K)$ denote the following subset of extremal
functions:
\[P_K:=\{f\in Cvx(K):g,h\ge f \Rightarrow g,h \mbox{ are comparable}\}.\]
\end{defn}

\begin{defn}
Let $Q_K\subset Cvx(K)$ denote the following subset of extremal
functions (dual to $P$):
\[Q_K:=\{f\in Cvx(K):g,h\le f \Rightarrow g,h \mbox{ are comparable}\}.\]
\end{defn}

Recall that in this new notation, for any closed convex $K$
(actually, for any $K\subseteq\R^n$), $P_K$ consists exactly of the
delta functions. In $Cvx(\R^n)$, it is clear that $Q_{\R^n}$
consists of linear functions; it follows from the fact that the only
functions below $f=\iprod{c}{x}+ d$ are of the form $g(x)=
\iprod{c}{x}+d'$, for $d'<d$. In the next lemma we see that when
$K\neq\R^n$ is a convex set with non empty interior,
$Q_K=\emptyset$.

\begin{lem}\label{lem:noQ} If $K\subsetneq\R^n$ is a convex set with
non empty interior, then $Q_K=\emptyset$.
\end{lem}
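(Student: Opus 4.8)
The plan is to suppose, for contradiction, that some $f \in Q_K$ exists, and derive a contradiction from the fact that $K$ has a boundary. Recall that $f \in Q_K$ means: whenever $g, h \le f$ in $Cvx(K)$, the functions $g$ and $h$ are comparable. The key observation is that in $Cvx(\R^n)$ the elements of $Q_{\R^n}$ are exactly the affine functions $x \mapsto \iprod{c}{x} + d$, because the only functions pointwise below such a function are affine functions $\iprod{c}{x} + d'$ with $d' \le d$; these form a chain. When $K \subsetneq \R^n$, however, the class $Cvx(K)$ contains many more functions below a given $f$, since one is free to prescribe very different (finite) values on the part of $K$ that ``sticks out'' towards $\partial K$, and this extra freedom will destroy the chain property.

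First I would fix $f \in Cvx(K)$ and produce two incomparable functions below it. Pick a point $x_0 \in \mathrm{int}(K)$ at which $f$ is finite, and pick a point $z \in \partial K$ (exists since $K \subsetneq \R^n$ has non-empty interior). Along the segment $[x_0, z)$, which lies in $K$, I would construct two convex functions $g$ and $h$, both $\le f$ on all of $K$, which disagree in their ordering: for instance let $g$ be a large affine (or very negative) minorant of $f$ that is extremely negative near $z$, and let $h$ be a different minorant of $f$ that is only mildly negative near $z$ but is more negative than $g$ somewhere near $x_0$. Concretely, one can take $g = f - \lambda_1$ adjusted to dip sharply toward $-\infty$ as one approaches $z$, and $h = f - \lambda_2$ with a different profile; the point is to exploit that near the boundary point $z$ one can make a convex function below $f$ take arbitrarily negative values on a thin sliver without affecting comparability on the rest of $K$. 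Then $g$ and $h$ are both $\le f$ but neither dominates the other, so $f \notin Q_K$. Since $f$ was arbitrary, $Q_K = \emptyset$.

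The cleanest way to organize the sliver construction is probably to work with two supporting-type configurations: choose two distinct affine functions $\ell_1, \ell_2$ with $\ell_1, \ell_2 \le f$ on $K$ (such exist because $f$ is l.s.c.\ convex and finite somewhere), arrange that $\ell_1 < \ell_2$ at one point of $\mathrm{int}(K)$ and $\ell_1 > \ell_2$ at another point of $K$ — which is possible precisely because $K$, not being all of $\R^n$, does not force the global comparison of two affine functions that a full space would — and if necessary replace $\ell_i$ by $\min\{\ell_i, f\}$ (still convex? no — so instead keep $\ell_i$ affine and simply note $\ell_i \le f$ suffices). Then $g = \ell_1$ and $h = \ell_2$ are both below $f$ and incomparable. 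The main obstacle, and the only subtlety, is to verify that two affine functions lying below $f$ on $K$ can genuinely be incomparable: on $\R^n$ two affine functions are always comparable, so one must use the geometry of $K$ — specifically that $K$ lies in a halfspace, so there is a linear functional $w$ with $\iprod{w}{\cdot}$ bounded on $K$, and then $\ell_1(x) = \ell_0(x) + \iprod{w}{x} - M$ and $\ell_2(x) = \ell_0(x) - \iprod{w}{x} - M$ (for a suitable affine minorant $\ell_0$ of $f$ and large $M$) are both $\le f$ on $K$ yet incomparable there. This establishes $Q_K = \emptyset$ and completes the proof.
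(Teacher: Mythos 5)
Your overall strategy is the right one and is essentially the paper's: one shows that below any given $f$ there are two affine minorants that are incomparable on $K$, precisely because $K\subsetneq\R^n$ sits in a half-space. (The paper splits into two cases --- for non-affine $f$ it takes two supporting hyperplanes of $epi(f)$ in different directions, and only for affine $f$ does it invoke the half-space trick --- whereas you apply the half-space trick uniformly to an affine minorant $\ell_0$ of $f$; that is a legitimate simplification, modulo the standard fact that a proper convex function admits an affine minorant.)

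However, your concrete construction has a gap. You take $w$ with ``$\iprod{w}{\cdot}$ bounded on $K$'' and set $\ell_1=\ell_0+\iprod{w}{\cdot}-M$, $\ell_2=\ell_0-\iprod{w}{\cdot}-M$. For $\ell_2\le\ell_0\le f$ on $K$ you need $\iprod{w}{x}\ge-M$ on $K$, i.e.\ $\iprod{w}{\cdot}$ bounded \emph{below} as well as above; containment of $K$ in a half-space only gives an upper bound. Two-sided boundedness of a nonzero linear functional on $K$ means $K$ lies in a slab, which can fail: if $K$ is itself a half-space (or, say, $K=\{(x,y):y\ge x^2\}$), no nonzero linear functional is bounded on $K$, and your $\ell_2$ is then not a minorant of $f$. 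The repair is easy and is what the paper's parenthetical remark amounts to: take $c=\sup_K\iprod{w}{\cdot}<\infty$, put $\ell_1=\ell_0+\iprod{w}{\cdot}-c$ and $\ell_2=\ell_0-\eps$ for small $\eps>0$; both are $\le f$ on $K$, and $\ell_1-\ell_2=\iprod{w}{\cdot}-c+\eps$ changes sign on $K$ because $\iprod{w}{\cdot}$ comes within $\eps$ of its supremum on $K$ while, $K$ having nonempty interior, it also takes values below $c-\eps$. With that substitution (and discarding the earlier, purely heuristic ``sliver'' discussion, which is not needed once the affine construction is in place) your argument is complete.
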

\noindent{\bf Proof.} Clearly, if $f$ is a non linear convex
function, $f\not\in Q$ (take two hyperplanes supporting $epi(f)$ in
different directions). For a linear function $f$, one may easily
construct two non-parallel linear functions below it, which are not
comparable (they will satisfy $g(x), h(x)\le f(x)$ for every $x\in
K$, not for every $x\in\R^n$). Note that the fact that $K$ has
non empty interior is essential, otherwise there is no guarantee
that the functions will differ on $K$, as demonstrated by the
example of $K$ being a subspace.\qed

We have shown in the proof of Theorem \ref{Thm_Cvx-Bij} that an
order preserving isomorphism $\T: Cvx(K_1)\to Cvx(K_2)$ defines a
bijection from $P_{K_1}$ to $P_{K_2}$. Similarly, an order reversing
isomorphism defines a bijection from $P_{K_1}$ to $Q_{K_2}$ (and
from $Q_{K_1}$ to $P_{K_2}$, of course), which is why we say $Q$ is
``dual'' to $P$.

\begin{prop}\label{Prop_Cvx-Rev} Let $n\ge1$, and let $K_1, K_2
\subseteq\R^n$ be convex sets with non empty interior, such that
either $K_1\neq\R^n$ or $K_2\neq\R^n$. Then there does not exist any
order reversing isomorphism $\T:Cvx(K_1)\to Cvx(K_2)$.
\end{prop}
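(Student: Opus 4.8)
The plan is to exploit the asymmetry between the ``upper'' extremal family $P_K$ and the ``lower'' extremal family $Q_K$ that was already isolated in Lemma \ref{lem:noQ}. The entire argument reduces to two facts: that an order reversing isomorphism must carry $P$ to $Q$, and that $P_K$ is always nonempty while $Q_K$ is empty as soon as $K\neq\R^n$.

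First I would record that for any convex $K$ with nonempty interior the family $P_K$ is nonempty: it contains every delta function $\delta_{x,c}$ with $x\in K$ and $c\in\R$, and these are exactly the elements of $P_K$ (as observed in the proof of Theorem \ref{Thm_Cvx-Bij}). Next, suppose $\T:Cvx(K_1)\to Cvx(K_2)$ is an order reversing isomorphism; I claim $\T$ restricts to a bijection from $P_{K_1}$ onto $Q_{K_2}$. Indeed, $f\in P_{K_1}$ means any two functions $g,h\ge f$ are comparable. Applying $\T$, using that it reverses order and, by surjectivity, that $\{\T g: g\ge f\}$ is precisely $\{g': g'\le \T f\}$, this says exactly that any two functions below $\T f$ are comparable, i.e.\ $\T f\in Q_{K_2}$. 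Running the same reasoning on the order reversing isomorphism $\T^{-1}$ shows $\T^{-1}$ sends $P_{K_2}$ into $Q_{K_1}$, so $\T$ is genuinely a bijection between $P_{K_1}$ and $Q_{K_2}$ (and likewise between $Q_{K_1}$ and $P_{K_2}$).

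Finally I would conclude by a case split on which window is not all of $\R^n$. If $K_2\neq\R^n$, then Lemma \ref{lem:noQ} gives $Q_{K_2}=\emptyset$, so the bijection $P_{K_1}\to Q_{K_2}$ forces $P_{K_1}=\emptyset$, contradicting the first step. If instead $K_1\neq\R^n$, apply the same argument to $\T^{-1}:Cvx(K_2)\to Cvx(K_1)$, obtaining a bijection $P_{K_2}\to Q_{K_1}=\emptyset$ and the same contradiction. Since by hypothesis at least one of $K_1,K_2$ differs from $\R^n$, no such $\T$ exists.

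There is essentially no hard step here; the proposition is a clean structural consequence of Lemma \ref{lem:noQ}. The only point requiring care is the bookkeeping in the claim that order reversal interchanges $P$ and $Q$ — in particular using surjectivity of $\T$ to identify the set of functions below $\T f$ with the $\T$-image of the set of functions above $f$ — but this is purely formal and uses nothing beyond the definitions of $P_K$, $Q_K$, and an order reversing isomorphism.
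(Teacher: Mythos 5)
Your proof is correct and follows essentially the same route as the paper: observe that delta functions give $P_{K_1}\neq\emptyset$, that an order reversing isomorphism carries $P_{K_1}$ into $Q_{K_2}$, and then invoke Lemma \ref{lem:noQ} (with the same without-loss-of-generality reduction via $\T^{-1}$ when it is $K_1$ that differs from $\R^n$). The only difference is that you spell out the formal verification that order reversal interchanges $P$ and $Q$, which the paper states just before the proposition without detailed proof.
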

\noindent {\bf Proof of Proposition \ref{Prop_Cvx-Rev}.} Without
loss of generality, assume $K_2\neq\R^n$ (otherwise consider
$\T^{-1}$). Let $x\in K_1$, then $\delta_{x,0}\in P_{K_1}$.
Therefore $\T(\delta_{x,0})\in Q_{K_2}$, which contradicts the
conclusion of Lemma \ref{lem:noQ}. \qed

\section{Geometric convex functions on a window}\label{Sect_Cvx0}

Recall the definition of geometric convex functions on a window:
\begin{defn} For a convex set $K\subseteq\R^n$ with $0\in K$, the
subclass of $Cvx(K)$ containing non negative functions
satisfying $f(0) = 0$ is called the class of {\em geometric convex
functions}, and denoted by $Cvx_0(K)$, i.e. \[Cvx_0(K) = \{f\in
Cvx(K): f\geq 0, f(0)=0\}.\] It is naturally embedded in
$Cvx_0(\R^n)$ by assigning to $f$ the value $+\infty$ outside $K$.
Therefore an equivalent definition is \[Cvx_0(K)= \{f\in Cvx(\R^n):
1_K \le f\le 1_{\{0\}}\}\] where $1_K$ denotes the {\em convex}
indicator function of $K$, which is zero on $K$ and $+\infty$
elsewhere, and similarly $1_{\{0\}}$. Note that these functions are
usually denoted by $1_K^\infty$, however, we never use in this paper
the standard characteristic functions, so this notation can not lead
to a misunderstanding.
\end{defn}

In this section we deal with order isomorphisms from $Cvx_0(K_1)$ to
$Cvx_0(K_2)$, where $K_i$ are convex sets (containing $0$, of
course), and some generalizations of these classes.

As the example of $\J$ in $Cvx_0(\R^n)$ (which was discussed in
Section \ref{Sect_Cvx0-Old}) shows us, the case of
$Cvx_0(K)$ is more involved than $Cvx(K)$, and a transform can be
more complicated than a mere fractional linear change in the domain
with the corresponding change in the fiber. Indeed, here we know
already of an example where an indicator function is not mapped to
such.

However, for the cases of $K = \R^+$ and $K = \R^n$ we do have
theorems of the sort, see Theorem \ref{Thm_Cvx0-n=1-Old} and Theorem
\ref{Thm_Cvx0-n>1-Old}. There, the transform {\em is} given by a
fractional linear point map on the epi-graphs. In each of these
cases we observe two different types of behavior; one where fibers
{\em are} mapped to fibers (a-la-$\I$), and one when they are not
(a-la-$\J$).

In this section we generalize these theorems to apply to an order
isomorphism $\T: Cvx_0(K_1)\to Cvx_0(K_2)$, for convex domains
$K_1$, $K_2$.

\begin{thm}\label{Thm_Cvx0-n>1-New} Let $n\ge2$, and let $K_1, K_2
\subseteq\R^n$ be convex sets with non empty interior. If
$\T:Cvx_0(K_1)\to Cvx_0(K_2)$ is an order preserving isomorphism,
then there exists a bijective fractional linear map $F: K_1\times
\R^+ \to K_2\times\R^+$, such that $\T$ is given by \[ epi(\T f) =
F(epi(f)).\]
\end{thm}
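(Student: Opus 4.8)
The plan is to mimic the proof of Theorem \ref{Thm_Cvx-Bij}, with the family of delta functions replaced by a family of \emph{wedge functions} adapted to the constraint $f(0)=0$. For $x\in K\setminus\{0\}$ and $c>0$ let $g_{x,c}\in Cvx_0(K)$ denote the function equal to $c\lambda$ at the point $\lambda x$ for $\lambda\in[0,1]$ and equal to $+\infty$ off the segment $[0,x]$; equivalently, $epi(g_{x,c})$ is the convex hull of the vertical ray $\{0\}\times\R^+$ and the point $(x,c)$. Note that $Cvx_0(K)$ has least element $0$ and greatest element $1_{\{0\}}$, is closed under $\sup$ and $\hat{\inf}$, and that convexity together with $f(0)=0$ gives $f(\lambda x)\le\lambda f(x)$, whence
\[ f=\hat{\inf}\{g_{x,c}:(x,c)\in epi(f)\}\qquad\text{for every }f\in Cvx_0(K).\]
By Proposition \ref{Prop_Sup-To-Sup}, $\T$ commutes with $\sup$ and $\hat{\inf}$; in particular $\T 0=0$ and $\T 1_{\{0\}}=1_{\{0\}}$.

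The first step is to characterize the wedges order-theoretically. I would prove that the $\hat{\inf}$-\emph{irreducible} elements of $Cvx_0(K)$ --- those $f$ which cannot be written as $\hat{\inf}\{f_1,f_2\}$ with $f<f_1$ and $f<f_2$ --- are precisely $1_{\{0\}}$ together with the wedges $g_{x,c}$ (and, when $K$ is unbounded, their limiting ``infinite'' analogues supported on a whole ray, which I treat as boundary cases). The non-obvious direction: if $\hat{\inf}\{f_1,f_2\}=g_{x,c}$ then $epi(f_i)\subseteq epi(g_{x,c})$ and the closed convex hull of $epi(f_1)\cup epi(f_2)$ equals $\overline{epi(g_{x,c})}$; since $(x,c)$ is an extreme point of the latter it lies in $\overline{epi(f_1)}$ or $\overline{epi(f_2)}$, and as both epigraphs already contain the ray, this forces one of them to equal $epi(g_{x,c})$. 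Conversely, a function whose support is full-dimensional, or whose restriction to a lower-dimensional support is not affine, splits as a nontrivial $\hat{\inf}$ by cutting its epigraph with a hyperplane. Since $\hat{\inf}$-irreducibility is expressed using only $\le$ and $\hat{\inf}$, $\T$ maps this family onto the corresponding one for $K_2$, and since it fixes $1_{\{0\}}$ it restricts to a bijection of the wedges. As $g_{x,c}$ determines $(x,c)$, this yields a bijective point map $F$, defined by $\T g_{x,c}=g_{F(x,c)}$; restricting to interior parameters (away from the origin, where the parametrization collapses onto $1_{\{0\}}$) gives a bijection $F$ of $(int(K_1)\setminus\{0\})\times\R^+$ onto $(int(K_2)\setminus\{0\})\times\R^+$.

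Next I would show $F$ is interval preserving, following the proof of Theorem \ref{Thm_Cvx-Bij} almost verbatim. If $(x_3,c_3)$ lies on the segment $[(x_1,c_1),(x_2,c_2)]$ and $0,x_1,x_2$ are affinely independent, a direct computation identifies $\hat{\inf}\{g_{x_1,c_1},g_{x_2,c_2}\}$ with the affine-interpolation function on the triangle $conv(0,x_1,x_2)$, and shows that $g_{x_3,c}\ge\hat{\inf}\{g_{x_1,c_1},g_{x_2,c_2}\}$ if and only if $c\ge c_3$. Applying $\T$ (which respects $\ge$ and $\hat{\inf}$) and then invoking surjectivity together with preservation of $\sup$, one obtains that $F(x_3,c_3)$ lies on $[F(x_1,c_1),F(x_2,c_2)]$. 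It is here that $n\ge2$ is essential: only then can the triangle $conv(0,x_1,x_2)$ be non-degenerate, so that the above threshold identity holds and these segments are ``seen'' by the lattice --- for $n=1$ the extra $\J$-type automorphisms of Theorem \ref{Thm_Cvx0-n=1-Old} show the conclusion genuinely fails. The domain $(int(K_1)\setminus\{0\})\times\R^+$ is open and connected for $n\ge2$, so by Theorem \ref{Thm_FL-Unique-nD} in the open-connected form (Remark \ref{Rem_FL-Uniq-Open-Conn}), $F$ is a fractional linear map, extending uniquely to a fractional linear map on an open half-space $U$. I would then check, as at the end of the proof of Theorem \ref{Thm_FL-Unique-nD} (using item 2 of Section \ref{Sect_Basic-Prop}), that $U\supseteq K_1\times\R^+$ --- in particular $U$ contains the removed ray $\{0\}\times\R^+$, on which $F$ satisfies $F(\{0\}\times\R^+)=\{0\}\times\R^+$ because $\T 1_{\{0\}}=1_{\{0\}}$ --- and that $F(K_1\times\R^+)=K_2\times\R^+$; a short limiting argument along the parameters $(1-1/k)x$ then shows the extended $F$ still satisfies $\T g_{x,c}=g_{F(x,c)}$ for every wedge, including those over $\partial K_1$.

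Finally I would reconstruct $\T$: for any $f\in Cvx_0(K_1)$,
\[ \T f=\T\bigl(\hat{\inf}\{g_{x,c}:(x,c)\in epi(f)\}\bigr)=\hat{\inf}\{g_{F(x,c)}:(x,c)\in epi(f)\}=\hat{\inf}\{g_{z,w}:(z,w)\in F(epi(f))\},\]
and since $F(epi(f))$ is convex and contains $F(\{0\}\times\R^+)=\{0\}\times\R^+$, this last $\hat{\inf}$ is the function whose epigraph is $F(epi(f))$; comparing with $\T f=\hat{\inf}\{g_{z,w}:(z,w)\in epi(\T f)\}$ gives $epi(\T f)=F(epi(f))$, as required. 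I expect the main obstacle to be two intertwined points: getting the right order-theoretic description of the extremal wedges --- the ``chain up-set'' characterization of delta functions used for $Cvx(K)$ \emph{fails} here, so one really needs the $\hat{\inf}$-irreducibility characterization, and the unbounded case requires care --- and the point-set bookkeeping around the origin-ray and the boundary of $K_1$, which is also the place where the hypothesis $n\ge2$ is genuinely used.
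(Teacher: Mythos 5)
Your wedges $g_{x,c}$ are exactly the paper's triangle functions $\lhd^{x,h}$ (parametrized by endpoint and value there), and the second half of your argument --- interval preservation of the induced point map on segments whose base triangle $conv(0,x_1,x_2)$ is non-degenerate, the application of Theorem \ref{Thm_FL-Unique-nD} together with the extension across the axis and the boundary, and the reconstruction of $\T$ from its action on wedges --- matches the paper's Proposition \ref{Prop_Triangle-Rule-Is-FL} and its final computation closely, including the correct identification of where $n\ge2$ enters. The gap is in the first half: your characterization of the $\hat{\inf}$-irreducible elements is wrong as stated. The indicator functions $1_{[0,x]}$ (the case $c=0$, which you exclude) are also $\hat{\inf}$-irreducible --- by your own extreme-point argument applied to the extreme point $(x,0)$ of $\overline{epi(1_{[0,x]})}$, any decomposition $1_{[0,x]}=\hat{\inf}\{f_1,f_2\}$ forces some $f_i$ to vanish at $x$ and hence to equal $1_{[0,x]}$ --- and on unbounded rays so are the linear functions $l_c$ and the full-ray indicators. (Your own converse criterion, ``restriction to a lower-dimensional support is affine,'' already admits $1_{[0,x]}$, contradicting the ``precisely'' claim.) Consequently $\hat{\inf}$-irreducibility does not single out the open family $\{g_{x,c}:c>0\}$, and the bijection between irreducibles does not automatically restrict to a bijection of that family, so your point map $F$ is not yet well defined.

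This is not a removable formality, because it sits exactly where the content of the theorem lives: when $0\in\partial K_1$ there exist genuine $\J$-type isomorphisms, and these send indicators to linear functions and vice versa --- the boundary families of irreducibles really are permuted nontrivially, and only the open family of finite wedges is always preserved. You must therefore prove separately that $\T$ maps $\{g_{x,c}:c>0\}$ onto $\{g_{y,d}:d>0\}$, for instance by observing that the irreducibles lying below an indicator or below a linear function form a chain while those below a finite wedge contain incomparable pairs, or, as the paper does, via the ray-wise reduction (Lemma \ref{Lem_Raywise}), the max-extremality characterization of indicators and linear functions (Lemma \ref{Lem_Only-Extremal}), and the dichotomy and monotonicity statements (Lemmas \ref{Lem_Dichotomy} and \ref{Lem_Monotone-Bij}), after which triangles are preserved because each is the $\sup$ of an indicator and a linear function. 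Note also that the monotonicity obtained along that route is what the paper uses to pin $F(x_3,c_3)$ down to the segment $[F(x_1,c_1),F(x_2,c_2)]$ rather than merely to the lower boundary of $epi(\hat{\inf}\{g_{F(x_1,c_1)},g_{F(x_2,c_2)}\})$ over the whole support triangle; your sketch of the interval-preservation step needs the same input.
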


The case $n=1$ is slightly different since the two domains $\R^+$
and $\R^-$ do not interact.  Other than that, the result is the
same, for example see Theorem \ref{Thm_Cvx0-n=1-New}.

\begin{rem} Of course, it is not true that every fractional
linear map on $K_1 \times \R^+$ induces such a transform. A
discussion of which fractional linear maps do induce such a
transform (similar to that in Section \ref{Sect_Which-FL-Work-Cvx})
is given in Section \ref{Sect_Which-FL-Work-Cvx0}.
\end{rem}
\begin{rem}
In Section \ref{Sect_Which-FL-Work-Cvx0} we will also see that there
is a difference between the cases $0\in\partial K$ and $0\in int(K)$
, where in the former a ``$\J$-type'' transform does exist, and in
the latter it does not (except in the case $K_1=K_2=\R^n$).
\end{rem}

First, we will prove the one-dimensional theorem. We will do this in
two ways. The first (in Section \ref{Sect_Cvx0-1D}) is by using the
known uniqueness Theorem \ref{Thm_Cvx0-n=1-Old} for $\J$ and $\I$.
The second is a direct proof, which we postpone to Section \ref
{Sect_Cvx0-1D-ind}. We add this second proof for two purposes; to
make the paper self contained, and also to clarify the case of a
transform $\T: Cvx_0([0, x_1])\to Cvx_0([0, x_2])$, that is when
the domain of all functions is bounded.

Second, we will prove the multi-dimension theorem, in the following
stages: we show that the transform must act ``ray-wise''. Then, on
each ray, we {\em could} already apply the one-dimensional
conclusion, but in fact we need much less - thus we continue
directly and show that two extremal families of functions, namely
linear functions and indicator functions, determine the
full shape of $\T$. The extremality property forces the transform to
act bijectively on these two families, and in a monotone way. Here,
we do not need to discover the exact rule of this monotone mapping
(even though we have it, since we've solved the one dimensional
case). Instead, we prove that there is some point map on the
epi-graphs, controlling the rule of the transform for a third
family, namely triangle functions. We show that this point map is
interval preserving, and then apply Theorem \ref{Thm_FL-Unique-nD}
to show that it is fractional linear. Finally, we show that the rule
of the transform for triangles determines the whole transform, thus
completing the proof. This plan follows the proof from
\cite{hidden-structures} of the case $K_1=K_2=\R^n$.

\subsection{Dimension one}\label{Sect_Cvx0-1D}
In \cite{hidden-structures}, the first and third named authors
showed that essentially, any order isomorphism $\T :Cvx_0(\R^+) \to
Cvx_0(\R^+)$ is either $\I$ or $\J$, see Theorem \ref{Thm_Cvx0-n=1-Old}.
We note that in this case, indeed, for each of these two families of
transforms, the transform is induced by a point map on the
epi-graphs which is fractional linear. The first family of
transforms (a-la-$\I$) is given by \[ (\T \phi)(x) = \beta \phi
(x/\alpha),\] for positive $\alpha$ and $\beta$, and the inducing
maps are $F_{\alpha,\beta}^\I(x,y) = (\alpha x,\beta y)$. The second
family of transforms (a-la-$\J$) is given by \[ (\T \phi)(x) = \beta
(\J \phi) (x/\alpha),\] for positive $\alpha$ and $\beta$, and the
inducing maps are $F_{\alpha,\beta}^\J(x,y) = \left(\frac{\alpha
x}{y}, \frac{\beta}{y}\right)$.

We introduce a third transform, with a parameter $z>0$, to be able
to switch between the bounded and non bounded cases;

\begin{defn} Let $z>0$, and $F_z:[0, z)\times\R^+ \to\R^+\times\R^+$
be the bijective fractional linear map defined by $F_z(x,y) =
\left(\frac{x}{z-x}, \frac{y}{z-x}\right)$.
\end{defn}

\begin{lem} $F_z$ induces an order isomorphism $\T_z:Cvx_0([0, z))
\to Cvx_0(\R^+)$ by its action on epi-graphs, that is
\[epi(\T_z(f)) = F_z(epi(f)).\]
\end{lem}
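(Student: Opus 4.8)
The plan is to verify four routine requirements in turn: that $F_z$ is a bijective fractional linear map between the claimed domains; that it carries strict epi-graphs to strict epi-graphs while respecting the normalization defining $Cvx_0$; that this makes $\T_z$ a well-defined bijection; and that $\T_z$ is then automatically order preserving, being induced by a point map.

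First I would record that $F_z$ is a fractional linear map in the sense of Section \ref{Sect_I.P.M}: writing points of $\R^2$ as $(x,y)$, the map $F_z(x,y)=\bigl(\tfrac{x}{z-x},\tfrac{y}{z-x}\bigr)$ has denominator $z-x=\iprod{(-1,0)}{(x,y)}+z$ and numerator the identity, so it is induced by the $3\times 3$ matrix with rows $(1,0,0)$, $(0,1,0)$, $(-1,0,z)$, which lies in $GL_3$ since its determinant is $z\neq 0$. Its maximal domain is the half-space $\{x<z\}$, which contains $[0,z)\times\R^+$. A direct computation gives $F_z^{-1}(u,v)=\bigl(\tfrac{zu}{1+u},\tfrac{zv}{1+u}\bigr)$, again fractional linear, with maximal domain $\{u\neq-1\}\supseteq\R^+\times\R^+$; reading off these two formulas one checks at once that $F_z$ maps $[0,z)\times\R^+$ \emph{bijectively} onto $\R^+\times\R^+$. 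In particular the base map $b(x)=\tfrac{x}{z-x}$ is an increasing bijection of $[0,z)$ onto $\R^+$ with $b(0)=0$.

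Next I would show that $F_z$ takes $\{epi(f):f\in Cvx_0([0,z))\}$ bijectively onto $\{epi(g):g\in Cvx_0(\R^+)\}$, where $epi(f)=\{(x,y)\in[0,z)\times\R^+:f(x)<y\}$. Fix $f\in Cvx_0([0,z))$. Since the first coordinate of $F_z$ is independent of $y$, $F_z$ carries the fibre $\{x\}\times\R^+$ onto the fibre $\{b(x)\}\times\R^+$ through the increasing affine bijection $y\mapsto\tfrac{y}{z-x}$; hence $F_z(epi(f))$ meets the fibre over $u=b(x)$ in the open upward ray $\bigl(\tfrac{f(x)}{z-x},\infty\bigr)$, so it is again ``upward closed in each fibre''. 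Being fractional linear, $F_z$ is interval preserving, hence convexity preserving, so $F_z(epi(f))$ is convex; and $F_z$ is a homeomorphism of $[0,z)\times\R^+$ onto $\R^+\times\R^+$ (it is the restriction of a smooth map on $\{x<z\}$, with fractional linear, hence continuous, inverse), so lower semicontinuity is inherited. Consequently $F_z(epi(f))=epi(g)$ for a unique convex l.s.c.\ function $g$, defined on all of $\R^+$ because $b$ is onto; and $g\ge 0$, $g(0)=0$, since the values of $g$ have the form $\tfrac{f(x)}{z-x}$ with $z-x>0$ and $f\ge 0$, while $F_z(0,0)=(0,0)$. Thus $g\in Cvx_0(\R^+)$, and setting $\T_z f:=g$ we get $epi(\T_z f)=F_z(epi(f))$ by construction. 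Running the identical argument with the fractional linear map $F_z^{-1}$ shows that $h\mapsto F_z^{-1}(epi(h))$ sends $Cvx_0(\R^+)$ back into $Cvx_0([0,z))$ and is a two-sided inverse of $\T_z$, so $\T_z$ is a bijection.

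Order preservation is then formal, exactly as in the proof of Theorem \ref{Thm_Cvx-Bij}: for $f_1,f_2\in Cvx_0([0,z))$ one has $f_1\le f_2\iff epi(f_2)\subseteq epi(f_1)$, and since $F_z$ is a bijection this is equivalent to $epi(\T_z f_2)=F_z(epi(f_2))\subseteq F_z(epi(f_1))=epi(\T_z f_1)$, i.e.\ to $\T_z f_1\le\T_z f_2$. The only step that is not a one-line verification is the claim that $F_z(epi(f))$ is genuinely the strict epi-graph of a convex l.s.c.\ function on $\R^+$; but this is delivered precisely by the three properties used above --- convexity preservation, fibrewise monotonicity, and the homeomorphism property of fractional linear maps --- all of which were established in Section \ref{Sect_I.P.M}, so I do not anticipate any real obstacle.
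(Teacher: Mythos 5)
Your proof is correct, and it follows the paper's overall decomposition exactly: check that $F_z$ is a bijective fractional linear map between the two half-cylinders, that it carries epi-graphs of geometric convex functions to epi-graphs of geometric convex functions (well-definedness), and then note that bijectivity and order preservation in both directions are automatic for a transform induced by a bijective point map on epi-graphs. The one place where your route genuinely differs is the well-definedness step. The paper dispatches it abstractly: among \emph{convex} sets, epi-graphs of functions in $Cvx_0(K)$ are characterized by the sandwich $epi(1_{\{0\}})\subseteq epi(f)\subseteq epi(1_K)$, and since $F_z$ preserves convexity (being interval preserving), maps the vertical ray $\{(0,y):y>0\}$ onto itself, and maps the cylinder $[0,z)\times\R^+$ onto $\R^+\times\R^+$, the image is again sandwiched and hence again such an epi-graph. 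You instead compute fibre-by-fibre: the base map $b(x)=x/(z-x)$ is an increasing bijection fixing $0$, each fibre is carried to a fibre by an increasing linear bijection, so the image of the open ray $(f(x),\infty)$ is the open ray $(f(x)/(z-x),\infty)$, and convexity, lower semicontinuity, nonnegativity and vanishing at the origin are read off directly. The paper's argument is shorter and generalizes immediately to any point map preserving the two extremal epi-graphs (this is exactly the criterion reused in Section \ref{Sect_Which-FL-Work-Cvx0}), but it silently assumes that a convex set sandwiched between the two extremes is a \emph{strict} epi-graph of an l.s.c.\ function, i.e.\ that its fibres are open upward rays; your explicit fibre computation is more pedestrian but actually closes that small gap. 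Both arguments are sound.
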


\noindent {\bf Proof.} To see that a transform defined using a point
map on the epi-graphs, is an order isomorphism, three things need to
be checked; that it is well defined, that it is bijective, and that
it preserves order in both directions. For $\T_z$ to be well
defined, $F_z$ must map epi-graphs of geometric convex functions to
epi-graphs of geometric convex functions. Since $F_z$ is fractional
linear, it is interval preserving, thus a convex epi-graph is mapped
to some convex set. Among all convex sets, epi-graphs of geometric
convex functions are characterized by two inclusions; \[\{(0, y) :
y>0\}=epi(1_{\{0\}})\subseteq epi(f)\subseteq epi(1_K)
=\{(x, y):x\in K, y>0\}.\] Note that $F_z$ maps the half line $\{(0,
y) : y>0\}$ onto itself, and the entire domain $[0, z)\times\R^+$
onto the image $\R^+\times\R^+$. Therefore also $F_z(epi(f))$ is
between these two sets, which means it is the epi-graph of some
geometric convex function. Bijectivity of $F_z$ implies bijectivity
of $\T_z$. Since $f\le g\Leftrightarrow epi(g)\subseteq epi(f)$, a
transform induced by a bijective point map on the epi-graphs,
automatically preserves order in both directions. \qed

We are ready to prove the one dimensional theorem, dealing with
$I_1, I_2\subseteq\R$ which may be either bounded intervals or half
lines.

\begin{thm}\label{Thm_Cvx0-n=1-New} Let $I_1\subseteq\R$ be either of
the form $I_1=[0,x_1)$ for some positive $x_1$, or $I_1=[0,\infty)$,
and likewise $I_2$. If $\T : Cvx_0(I_1) \to Cvx_0(I_2)$ is an order
isomorphism, then there exists a bijective fractional linear map $F:
I_1\times \R^+ \to I_2\times \R^+$, such that $\T$ is given by \[
epi(\T f) = F(epi(f)).\]
\end{thm}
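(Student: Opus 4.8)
\noindent{\bf Proof plan.}
The plan is to reduce the statement to the already-solved case $I_1=I_2=\R^+$, which is Theorem \ref{Thm_Cvx0-n=1-Old}, by transporting both windows onto $\R^+$ using the fractional linear maps $F_z$ introduced above. Concretely, if $I_1=[0,x_1)$ I would let $\T_{x_1}:Cvx_0(I_1)\to Cvx_0(\R^+)$ be the order isomorphism induced on epi-graphs by $F_{x_1}$ (the preceding Lemma, with $z=x_1$); and if $I_1=[0,\infty)$ I would simply take $\T_{x_1}=Id$ and $F_{x_1}=Id$, which is a (trivial) fractional linear map. Doing likewise on the target side produces $\T_{x_2}$ and $F_{x_2}$. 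Since $\T$ and the two maps $\T_{x_i}$ are order isomorphisms (the $\T_{x_i}$ being induced by bijective point maps on epi-graphs, hence order preserving in both directions), the composition
\[\T':=\T_{x_2}\circ\T\circ\T_{x_1}^{-1}:Cvx_0(\R^+)\to Cvx_0(\R^+)\]
is again an order isomorphism.

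Next I would invoke Theorem \ref{Thm_Cvx0-n=1-Old}: there are constants $\alpha,\beta>0$ such that either $(\T'\phi)(x)=\beta\,\phi(x/\alpha)$ for all $\phi$, or $(\T'\phi)(x)=\beta\,(\J\phi)(x/\alpha)$ for all $\phi$. In the first case $\T'$ is induced on epi-graphs by $F'=F^{\I}_{\alpha,\beta}$, $(x,y)\mapsto(\alpha x,\beta y)$; in the second by $F'=F^{\J}_{\alpha,\beta}$, $(x,y)\mapsto\bigl(\tfrac{\alpha x}{y},\tfrac{\beta}{y}\bigr)$, which is, up to the scalings, the injective fractional linear map of Remark \ref{Rem_example-not-inv}. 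In either case $F'$ is a bijective fractional linear map of $\R^+\times\R^+$ onto itself with $epi(\T'g)=F'(epi(g))$ for every $g\in Cvx_0(\R^+)$.

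Finally I would set $F:=F_{x_2}^{-1}\circ F'\circ F_{x_1}$. By Items 4 and 5 of Section \ref{Sect_Basic-Prop} this is again a fractional linear map, and chasing domains shows that $F:I_1\times\R^+\to I_2\times\R^+$ is a bijection. For $f\in Cvx_0(I_1)$, putting $g=\T_{x_1}f$ (so that $epi(g)=F_{x_1}(epi(f))$) and using $\T f=\T_{x_2}^{-1}\T'g$ yields
\[epi(\T f)=F_{x_2}^{-1}\bigl(epi(\T'g)\bigr)=F_{x_2}^{-1}\bigl(F'(epi(g))\bigr)=F(epi(f)),\]
which is exactly the assertion. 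The only point requiring (routine) care is the domain bookkeeping — checking that $F$ really is a bijection between the two half-cylinders and that the $\J$-type $F'$ is a bona fide injective fractional linear map on $\R^+\times\R^+$ — so I do not expect a serious obstacle here: the theorem is essentially a transported restatement of Theorem \ref{Thm_Cvx0-n=1-Old}, which is why it is worth also giving a second, self-contained proof of the bounded case separately.
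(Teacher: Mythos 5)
Your proposal is correct and follows essentially the same route as the paper: the paper likewise conjugates $\T$ by the transforms $\T_{x_i}$ (omitting them when the corresponding interval is $[0,\infty)$), applies Theorem \ref{Thm_Cvx0-n=1-Old} to the resulting order isomorphism of $Cvx_0(\R^+)$, and concludes via closure of fractional linear maps under composition and inversion. The only cosmetic difference is that the paper spells out the four cases explicitly rather than using the $\T_{x_1}=Id$ convention.
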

\noindent {\bf Proof of Theorem \ref{Thm_Cvx0-n=1-New}.} Define
$\tilde{\T}:Cvx_0(\R^+)\to Cvx_0(\R^+)$ in the following way:\\
If $I_1=[0, x_1   )$ and $I_2=[0, \infty)$, then: $\tilde{\T}:=               \T \circ \T_{x_1}^{-1}$\\
If $I_1=[0, x_1   )$ and $I_2=[0, x_2   )$, then: $\tilde{\T}:=\T_{x_2} \circ \T \circ \T_{x_1}^{-1}$\\
If $I_1=[0, \infty)$ and $I_2=[0, \infty)$, then: $\tilde{\T}:=               \T                    $\\
If $I_1=[0, \infty)$ and $I_2=[0, x_2   )$, then: $\tilde{\T}:=\T_{x_2} \circ \T                    $\\
$\tilde{\T}$ is clearly an order isomorphism. Next, by simply
applying Theorem \ref{Thm_Cvx0-n=1-Old}, we get that our original
$\T$ is some composition of the transforms $\I$, $\J$, $\T_z$, and
$\T_z^{-1}$, which are all induced by fractional linear point maps
on the epi-graphs. Thus we conclude that $\T$ is also induced by
such a map. \qed

\begin{rem}\label{Rem_Cvx0-For-[0,z]} For transforms on (or to)
$Cvx_0([0,z])$ simply note that all elements of $Cvx_0([0,z))$ are
non decreasing and lower-semi-continuous functions, and thus have a
unique extension to $[0,z]$, which preserves order in both
directions. Therefore, by embedding $Cvx_0([0,z))=Cvx_0([0,z])$
(where $f$ is mapped to its unique extension) we get an order
isomorphism of the form described in Theorem \ref{Thm_Cvx0-n=1-New},
and thus have the same result for closed intervals $[0,z]$, where
epi-graphs are taken {\em without} the point $z$. In particular we
see that there exist order isomorphisms between $Cvx_0([0,z])$ and
$Cvx_0(\R^+)$.
\end{rem}

\subsubsection{Table of one dimension transforms}
Straightforward computation of the transform in each of the cases
gives, in each of the four scenarios, two types of transforms;
a-la-identity and a-la-$\J$. We list them here, indicated by the
fractional linear maps which induce them, namely $F_{a,b}:
I_1\times\R^+\to I_2\times\R^+$. Each family is two-parametric, for
convenience we choose the parameters $a,b$ such that $a, b> 0$ gives
exactly all the functions in the family:
\begin{center}
    \begin{tabular}{ | l | l | l | p{5cm} |} \hline
    $\quad I_1$ & $\quad I_2$ &
    $            \mbox{a-la-}{\I};\quad F_{a,b}(x,y)$ &
    $\qquad\quad \mbox{a-la-}{\J};\quad F_{a,b}(x,y)$\\ \hline

    $[0, x_1)$ & $[0, x_2)$ & $\frac{x_2}{x(1-a)+x_1a}\cdot
    \left(\begin{array}{c}x\\by\\ \end{array}\right)$ &
    $\qquad\frac{bx_2}{bx+y}\cdot
    \left(\begin{array}{c}x\\a(x_1-x)\\ \end{array}\right)$\\ \hline

    $[0, x_1)$ & $[0, \infty)$ & $\qquad\frac{a}{x_1-x}\cdot
    \left(\begin{array}{c}x\\by\\ \end{array}\right)$ &
    $\qquad\quad\frac{b}{y}\cdot
    \left(\begin{array}{c}x\\a(x_1-x)\\ \end{array}\right)$\\ \hline

    $[0, \infty)$ & $[0, x_2)$ & $\qquad\frac{ax_2}{ax+1}\cdot
    \left(\begin{array}{c}x\\by\\ \end{array}\right)$ &
    $\qquad\quad\frac{bx_2}{bx+y}\cdot
    \left(\begin{array}{c}x\\a\\ \end{array}\right)$\\ \hline

    $[0, \infty)$ & $[0, \infty)$ & $\qquad\quad a\cdot
    \left(\begin{array}{c}x\\by\\ \end{array}\right)$ &
    $\qquad\qquad\frac{b}{y}\cdot
    \left(\begin{array}{c}x\\a\\ \end{array}\right)$\\ \hline

    \end{tabular}
\end{center}

There is an essential difference between the $\I$-type and $\J$-type
transforms; they handle differently the extremal elements of
$Cvx_0(I)$, which are indicators and linear functions (see Section
\ref{Sect_Extrm} for exact definitions). The $\I$-type transforms
map indicators to themselves (bijectively), and likewise linear
functions. The $\J$-type transforms, however, interchange between
the two sub-families, mapping indicators to linear functions
(bijectively) and vice versa. In the inducing maps, we also have a
natural distinction between the $\I$-type and $\J$-type maps. In
both cases the determinant of the Jacobian of the inducing map never
vanishes; it is positive for $\I$-type maps, and negative for
$\J$-type maps.

\subsection{Multi dimension}
\subsubsection{Acting on rays}
We next prove that in the $n$-dimensional case, one merely deals
with many copies of the one dimensional problem (in fact, the case
of functions on $\R^+$).

The next lemma states that an order isomorphism basically works in
the following way: first, there is a permutation on the rays, and
then on each ray, the transform acts independently of the functions'
values on other rays.

There are two nuances here; first, if $K\neq\R^n$, then in some
directions it does not contain a {\em full} ray. Since this does not
affect the argumentation in any way, we don't distinguish between a
full ray ($\R^+z$) and a restricted ray ($\R^+z\cap K$), which may
be a bounded interval, and use ``ray'' to describe both. Second, if
$0\in int(K)$, then the set of all relevant rays can be described by
$S^{n-1}$, but if $0\in\partial K$, then there are less relevant
rays (in some directions $z$, $\R^+z\cap K = \{0\}$). Therefore we
are again forced to add another definition, for the set of all
relevant rays - $\S(K)\subseteq S^{n-1}$. $\S(K):=\{z\in S^{n-1} :
\R^+z\cap K\neq \{0\}\}$. In what follows, the support of a function
is defined to be (the closure of) the set on which it is finite;
$\overline{\{x:f(x)<\infty\}}$

\begin{lem}\label{Lem_Raywise} Let $n\ge2$, and let $K_1, K_2
\subseteq\R^n$ be convex sets with non empty interior. If
$\T:Cvx_0(K_1)\to Cvx_0(K_2)$ is an order preserving isomorphism,
then there exists a bijection $\Phi:\S(K_1)\to\S(K_2)$, such that
any function supported on $\R^+y$ is mapped to a function supported
on $\R^+z$, for $z = \Phi(y)$. Moreover, $\T$ acts ray-wise, namely
$(\T f)|_{\R^+z}$ depends only on $f|_{\R^+y}$, for $z = \Phi(y)$.
\end{lem}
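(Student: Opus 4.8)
The plan is to recover the ray structure of $Cvx_0(K_i)$ from its order structure alone. Since $\T$ is an order isomorphism, both $\T$ and $\T^{-1}$ preserve $\le$, and by Proposition~\ref{Prop_Sup-To-Sup} they preserve the operations $\sup$ and $\hat{\inf}$; in particular $\T$ sends the maximal element $1_{\{0\}}$ of $Cvx_0(K_1)$ to the maximal element $1_{\{0\}}$ of $Cvx_0(K_2)$, and it carries every up-set $\{g:g\ge f\}$ onto the up-set $\{g:g\ge \T f\}$. For $y\in\S(K_i)$ put $\iota_y:=1_{\R^+y\cap K_i}$, the function vanishing on the ray $\R^+y\cap K_i$ and equal to $+\infty$ elsewhere, and set $\mathcal C_y:=\{f\in Cvx_0(K_i):f\ge \iota_y\}$. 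Directly from the definitions, $\mathcal C_y$ is exactly the set of $f\in Cvx_0(K_i)$ whose set of finiteness is contained in $\R^+y$; it is closed under $\sup$ and $\hat{\inf}$, has least element $\iota_y$ and greatest element $1_{\{0\}}$, and the restriction map $f\mapsto f|_{\R^+y\cap K_i}$ identifies it with the one dimensional class $Cvx_0(\R^+y\cap K_i)$.

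The key step is a purely order theoretic description of $\{\iota_y:y\in\S(K_i)\}$. Call $f\in Cvx_0(K_i)$ \emph{irreducible} if $f\neq 1_{\{0\}}$ and $\hat{\inf}\{g,h\}=f$ implies $g,h$ are comparable. First, if the support of $f$ has dimension $\ge 2$, one splits it by a suitable hyperplane through $0$ as a union $C_1\cup C_2$ of two closed convex pieces with $0\in C_1\cap C_2$ and neither contained in the other; then $f=\hat{\inf}\{f|_{C_1},f|_{C_2}\}$ while $f|_{C_1},f|_{C_2}$ are incomparable, so $f$ is not irreducible. Hence every irreducible $f$ has support contained in a single ray, i.e.\ $f\in\mathcal C_y$ for some $y\in\S(K_i)$. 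Second, $\iota_y$ itself is irreducible: if $\hat{\inf}\{g,h\}=\iota_y$ then $g,h\in\mathcal C_y$, and a one dimensional computation — on an unbounded ray using that a nonzero geometric convex function has positive radial slope, on a bounded ray using that $\hat{\inf}\{g,h\}$ agrees with $\min\{g,h\}$ at the far endpoint — forces one of $g,h$ to be the least element $\iota_y$ of $\mathcal C_y$, so $g,h$ are comparable. Combining these, the minimal elements of the set of irreducible functions are exactly the $\iota_y$: each irreducible $f$ lies in some $\mathcal C_y$, hence $f\ge\iota_y$, and $\iota_y$ is irreducible, so a minimal irreducible element is an $\iota_y$; conversely nothing below $\iota_y$ is irreducible, since an irreducible element $\le\iota_y$ must lie in $\mathcal C_y$ (were its support on a ray $\R^+y'$ with $y'\neq y$ it would be $+\infty$ on $\R^+y\cap K_i\setminus\{0\}\neq\emptyset$) and hence be $\ge\iota_y$. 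The main obstacle will be the careful verification of the one dimensional facts in the second point — in particular excluding that a radially linear function, or any element of $\mathcal C_y$ other than $\iota_y$, arises as $\hat{\inf}$ of an incomparable pair — treated uniformly for bounded and unbounded rays, following the pattern of the corresponding argument in \cite{hidden-structures}.

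Since irreducibility and minimality are formulated only in terms of $\le$, $\sup$ and $\hat{\inf}$, which $\T$ and $\T^{-1}$ respect, $\T$ maps $\{\iota_y:y\in\S(K_1)\}$ onto $\{\iota_z:z\in\S(K_2)\}$, and the inverse statement for $\T^{-1}$ makes this a bijection. As $y\mapsto\iota_y$ is a bijection of $\S(K_i)$ onto this family (distinct rays meet $K_i$ in distinct sets, because $\R^+y\cap K_i\neq\{0\}$ contains a point lying on $\R^+y$ only), we obtain a bijection $\Phi:\S(K_1)\to\S(K_2)$ determined by $\T\iota_y=\iota_{\Phi(y)}$. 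Because $\T$ carries up-sets to up-sets, $\T(\mathcal C_y)=\{g:g\ge\T\iota_y\}=\mathcal C_{\Phi(y)}$; thus every function supported on $\R^+y$ is sent to a function supported on $\R^+\Phi(y)$, and $\T$ restricts to an order isomorphism of the one dimensional classes $\mathcal C_y$ and $\mathcal C_{\Phi(y)}$.

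Finally, to see that $\T$ acts ray-wise, fix $f\in Cvx_0(K_1)$ and $y\in\S(K_1)$ and set $f^{(y)}:=\sup\{f,\iota_y\}$. Since $f\ge 0$, this function equals $f$ on $\R^+y\cap K_1$ and $+\infty$ off it, so $f^{(y)}$ depends only on $f|_{\R^+y\cap K_1}$. By Proposition~\ref{Prop_Sup-To-Sup}, $\T(f^{(y)})=\sup\{\T f,\T\iota_y\}=\sup\{\T f,\iota_{\Phi(y)}\}$, and applying the same computation in $K_2$ this equals $\T f$ on $\R^+\Phi(y)\cap K_2$ and $+\infty$ elsewhere. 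Hence $(\T f)|_{\R^+\Phi(y)\cap K_2}$ is the restriction of $\T(f^{(y)})$, which is determined by $f^{(y)}$, that is, by $f|_{\R^+y\cap K_1}$. This is exactly the asserted ray-wise behaviour, with $z=\Phi(y)$, and completes the proof.
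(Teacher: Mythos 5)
Your proof is correct, but it reaches the bijection $\Phi$ by a different mechanism than the paper. The paper's argument is shorter and more ad hoc: it observes that two functions supported on distinct rays have $\max$ equal to $1_{\{0\}}$, that this forces their images to have supports meeting only at the origin, and that an invertibility argument (applied to $\T^{-1}$) then prevents the image of a ray-supported function from containing two positively independent points in its support; from this it deduces directly that each ray's functions go to a fixed ray's functions. You instead give a purely lattice-theoretic characterization of the ray indicators $R_y=\iota_y$ as the minimal elements among functions that are not a $\hat{\inf}$ of an incomparable pair, transport these under $\T$, and recover the supports as the up-sets $\{f : f\ge \iota_y\}$. This is more systematic and makes explicit which order-theoretic data pins down the ray structure, but it buys this at the price of the one-dimensional verification that $\iota_y$ is irreducible — the step you rightly flag as the main obstacle. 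Your sketch of that step is sound (positive asymptotic slope of a nonzero element of $Cvx_0(\R^+)$ in the unbounded case; evaluation of $\hat{\inf}$ at the far endpoint in the bounded closed case), but note that since $K_i$ is not assumed closed, a ray $\R^+y\cap K_i$ may be a half-open interval $[0,a)$ with no far endpoint, so that case needs the same argument run at points approaching $a$; likewise your hyperplane-splitting argument, stated for supports of dimension $\ge 2$, must also be invoked for a one-dimensional support containing $0$ in its relative interior (split at $0$) to conclude that every irreducible function is supported on a single ray. The final ray-wise step, via $\sup\{f,\iota_y\}$ and Proposition \ref{Prop_Sup-To-Sup}, is exactly the paper's argument with $R_y$ in place of $\iota_y$.
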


We remark that if we were to prove the theorem directly for order
reversing transformations then we would not encounter this ray-wise
behavior, and get a transform $\A$ (or $\L$) which, miraculously,
when combined with $\L$ acts ray-wise. Later on, it will follow that
$\Phi$ must be induced by a linear map.

The proof uses the following simple observation: if $x,y\in\S(K_1)$
are two different points, and $f_x, f_y\in Cvx_0(K_1)$ are two
functions supported on $\R^+x$ and $\R^+y$ respectively, then
$\max\{   f_x,    f_y\} = 1_{\{0\}}$, and thus also $\max\{\T f_x,
\T f_y\} = 1_{\{0\}}$, which means that $\T f_x$ and $\T f_y$ are
supported on different sets.

\noindent {\bf Proof of Lemma \ref{Lem_Raywise}.} For two functions
$f,g$ to have $\max \{f,g\} = 1_{\{0\}}$ they must be supported on
two sets whose intersection equals $\{0\}$. A function with support
in a line cannot be mapped to one whose support includes two
positively-linearly-independent points because then $\T^{-1}$ would
map two functions whose support intersects at $\{0\}$ only, to
functions supported on the same ray - impossible. Thus functions
supported on a given ray are all mapped to functions supported on
another fixed ray. By invertibility, we get that this defines a
mapping $\Phi:\S(K_1)\to\S(K_2)$ which is bijective.

As for the ray-wise action of $\T$, the values of $\T f$ on $\R^+z$
are the same as the values of $\max \{\T f, R_z\}$, where $R_z$
denotes the function which is $0$ on $\R^+z\cap K_2$ and $+\infty$
elsewhere. This maximum is the image of the function
$\max\{f,R_y\}$, because $\T R_y = R_z$ (each being the smallest
function supported on the corresponding ray). Since $\max \{f,
R_y\}$ does not depend on the values $f$ attains outside $\R^+y$,
our claim follows. \qed

\subsubsection{Extremal elements and monotonicity}\label{Sect_Extrm}
Restricted to a ray $I$, we consider two families of extremal
functions in $Cvx_0(I)$; indicator functions, and linear functions.

a) $1_{[0,z]}$ which equals to $0$ on $[0,z]$ and $+\infty$
elsewhere (indicator).

b) $l_c(t) = \max\{ct, 1_I(t)\}$ (linear).

\noindent Formally, the function $l_c$ is defined on the whole of
$\R^n$, therefore it is not really linear, but we will use this name
in short. All the $\J$-type transforms switch (a) and (b) -
bijectively, and all the $\I$-type transforms fix (a) and fix (b) -
again, bijectively. We will show that this is no coincidence - a
general order isomorphism $\T$ must act in one of these two ways. We
derive this from two properties of these families - the extremality
property, and the non-comparability relation between these two
families.

\begin{defn} A function $f \in Cvx_0(I)$ is called {\em extremal} if there
exist no two functions $g,h \in Cvx_0(I)$ such that $g\not\ge f$ and
$h \not \ge f$ but $\max\{g,h\}\ge f$.
\end{defn}
In the language of epi-graphs, this means that for $epi(f)$ to
contain $A\cap B$, it must contain either $A$ or $B$ - whenever $A,
B$ are also epi-graphs of geometric convex functions.

We claim that extremality {\em characterizes} indicator and linear
functions in $Cvx_0(I)$:
\begin{lem}\label{Lem_Only-Extremal} The only extremal functions in
$Cvx_0(I)$ are either of the form $1_{[0,z]}$ for some
$z\in I$ or of the form $l_c$ for some $c\in\R^+$.
\end{lem}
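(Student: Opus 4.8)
The plan is to prove the two implications separately, the ``easy'' one (indicators and linear functions \emph{are} extremal) by a direct argument and the ``only'' part by an explicit splitting. The single tool behind everything is the following remark: if $f\in Cvx_0(I)$ then $f$ is non-decreasing and the slope function $s_f(t):=f(t)/t$ is non-decreasing on $I\cap(0,\infty)$. This is immediate from $f(0)=0$, $f\ge 0$ and convexity, since for $0<x<y$ in $I$, writing $x=\tfrac xy\,y+(1-\tfrac xy)\cdot 0$ gives $f(x)\le\tfrac xy f(y)$ (and this holds even when $f(y)=+\infty$).

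\emph{Step 1: the listed functions are extremal.} Suppose $\max\{g,h\}\ge l_c$ but $g\not\ge l_c$ and $h\not\ge l_c$; pick (necessarily positive) $x_1,x_2$ with $g(x_1)<cx_1$, $h(x_2)<cx_2$. Then $h(x_1)\ge cx_1$, so $s_h(x_1)\ge c$, hence $h(x)\ge cx$ on $[x_1,\sup I)$; symmetrically $g(x)\ge cx$ on $[x_2,\sup I)$, while $s_g(x_1)<c$ forces $x_1<x_2$, so $h(x_2)\ge cx_2$ --- a contradiction. For $1_{[0,z]}$: if $\max\{g,h\}\ge 1_{[0,z]}$ then for each $x\in(z,\sup I)$ at least one of $g(x),h(x)$ equals $+\infty$; by monotonicity the sets where $g$, resp.\ $h$, is finite on $(z,\sup I)$ are intervals with infimum $z$, and they are disjoint, so one is empty, say that of $g$; then $g=+\infty$ on $(z,\sup I)$ and $g\ge 0=1_{[0,z]}$ on $[0,z]$, i.e.\ $g\ge 1_{[0,z]}$. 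So both families consist of extremal functions.

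\emph{Step 2: nothing else is extremal.} Let $f\in Cvx_0(I)$ with $f\ne l_c$ for every $c\ge 0$ and $f\ne 1_{[0,z]}$ for every $z\in I$; I will produce $g,h\in Cvx_0(I)$ with $\max\{g,h\}\ge f$, $g\not\ge f$ and $h\not\ge f$, contradicting extremality, and the point is that $g,h$ can be taken to be one linear and one indicator function. Since $f$ is not linear, $s_f$ is non-constant, so $c_{\min}:=\inf s_f<\sup s_f=:S$, and since $f\ne l_0$ we have $S>0$; pick $c\in(c_{\min},S)$ (automatically $c>0$), and put $\tau:=\sup\{x\in I: f(x)\le cx\}$ and $\rho_0:=\sup\{x\in I: f(x)=0\}$. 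Using monotonicity of $s_f$ together with $c>c_{\min}=s_f(0^+)$, with $c<S$, and with the fact that (since $f\ne 1_{[0,\rho_0]}$) $f$ is finite just to the right of $\rho_0$, one checks that $\rho_0<\tau<\sup I$. Now take $g:=l_c$ and $h:=1_{[0,\tau]}$, the latter being a genuine element of $Cvx_0(I)$ different from $l_0$ because $0<\tau<\sup I$. Then $\max\{g,h\}\ge f$: on $[0,\tau]$ one has $g(x)=cx\ge f(x)$ (for $x<\tau$ by definition of $\tau$, and at $x=\tau$ by lower semicontinuity), while on $(\tau,\sup I)$ we have $h=+\infty\ge f$; also $g(x)<f(x)$ for $x>\tau$, so $g\not\ge f$; and $\tau>\rho_0$ yields a point $x_0$ with $h(x_0)=0<f(x_0)$, so $h\not\ge f$.

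\emph{Main obstacle.} The verifications in Step 1 and the inequality $\max\{g,h\}\ge f$ in Step 2 are routine once monotonicity of $s_f$ is available; the crux is the claim $\rho_0<\tau<\sup I$ in Step 2, since this is precisely the place where one must use that $f$ is neither an $l_c$ nor a $1_{[0,z]}$ --- indeed the splitting degenerates ($g\ge f$ or $h\ge f$) exactly when $f$ is globally linear or coincides with an indicator near $\rho_0$. A secondary nuisance is that $f$ may take the value $+\infty$ on part of $I$ (even blowing up there) and that $I$ may be a bounded closed interval; choosing $h=1_{[0,\tau]}$ with $\tau<\sup I$, whose epigraph is truncated at $x=\tau$, handles all of this uniformly, because such an $h$ automatically covers any pinching of $epi(f)$ near the far end of $I$ without the companion function $g$ ever having to reach there.
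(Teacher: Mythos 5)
Your proof is correct and rests on the same mechanism as the paper's: testing extremality against the maximum of one indicator function and one linear function. The paper runs the ``only'' direction directly --- if $f(x)=c\in(0,\infty)$ then $f\le\max\{1_{[0,x]},L_x\}$ with $L_x(y)=\tfrac{c}{x}y$, so extremality together with $f\not\le 1_{[0,x]}$ forces $f=L_x$ --- whereas you argue the contrapositive with the explicit witness pair $(l_c,1_{[0,\tau]})$ and also write out in full the ``both families are extremal'' half that the paper leaves as an easy check; the mathematical content is the same.
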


\noindent {\bf Proof of Lemma \ref{Lem_Only-Extremal}.} It is easy
to check that both families are extremal. To show that any extremal
function $f \in Cvx_0(I)$, must be of one of the two forms, we first
show that if it assumes some value $0<c\neq \infty$, it must be
linear. Indeed, let $f(x) = c$. Without loss of generality we may
assume $x\in int(I)$, since $f$ is lower-semi-continuous. Consider
the function $1_{[0,x]}$ assuming $0$ in the interval $[0,x]$ and
$+\infty$ elsewhere; $f \not\le 1_{[0,x]}$, since $1_{[0,x]}(x) = 0
< f(x)$. Consider the function $L_x(y) = \frac{c}{x}y$. By convexity
of $f$, on the interval $[0,x]$, $f \le L_x$. Since outside $[0,x]$
we have $f \le 1_{[0,x]}$, this implies $f\le \max\{1_{[0,x]},
L_x\}$, and so by extremality it must be that $f \le L_x$. Since $x$
is in the interior of $I$, this means that $f=L_x$, and therefore
$f$ is linear. The only other option is that $f$ assumes only the
values $0$ and $+\infty$, which implies it is an indicator function,
by convexity. \qed

\begin{lem}\label{Lem_Dichotomy} If $\T :Cvx_0(I_1) \to Cvx_0(I_2)$
is an order isomorphism then either:

$\T$ is a bijection from linear functions to indicators, and a
bijection from indicators to linear functions, or:

$\T$ is a bijection from linear functions to themselves, and a
bijection from indicator functions to themselves.
\end{lem}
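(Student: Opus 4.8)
The plan is to read off the dichotomy purely from the order structure of the set of extremal functions, combined with their classification in Lemma~\ref{Lem_Only-Extremal}.

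First note that extremality is an order-theoretic property: its definition uses only $\le$ and $\max$, and in $Cvx_0(I)$ the supremum is the pointwise supremum, so $\max\{g,h\}$ is precisely the least upper bound of $g$ and $h$. Since $\T$ is an order isomorphism it preserves $\le$ and $\sup$ (Proposition~\ref{Prop_Sup-To-Sup}), hence $\T$ and $\T^{-1}$ both send extremal functions to extremal functions. By Lemma~\ref{Lem_Only-Extremal}, $\T$ therefore restricts to an order isomorphism between the sets of extremal functions $\mathcal E(I_1)$ and $\mathcal E(I_2)$, each of which is the disjoint union of the family of indicators $\{1_{[0,z]}:z\in I\}$ and the family of linear functions $\{l_c:c\in\R^+\}$.

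Next I would describe the poset $\mathcal E(I)$ explicitly. A direct check gives: any two indicators are comparable ($1_{[0,z]}\le 1_{[0,z']}\iff z'\le z$); any two linear functions are comparable ($l_c\le l_{c'}\iff c\le c'$); and a non-maximal indicator $1_{[0,z]}$ ($z>0$) is incomparable to a non-minimal linear function $l_c$ ($c>0$) — evaluating at some $t\in I$ with $0<t<z$ rules out $l_c\le 1_{[0,z]}$, and at some $t\in I$ with $t>z$ rules out $1_{[0,z]}\le l_c$. The minimum $1_I=l_0$ and the maximum $1_{\{0\}}=1_{[0,0]}$ of $Cvx_0(I)$ are themselves extremal and are comparable to everything, with $1_{\{0\}}$ lying in the indicator family and $1_I$ in the linear family. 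It follows that $\mathcal E(I)$ has exactly two maximal chains, $C_{\mathrm{ind}}=\{1_I\}\cup\{1_{[0,z]}:z\in I\}$ and $C_{\mathrm{lin}}=\{l_c:c\in\R^+\}\cup\{1_{\{0\}}\}$; they intersect precisely in $\{\min,\max\}=\{1_I,1_{\{0\}}\}$, and their union is all of $\mathcal E(I)$.

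Finally, being an order isomorphism, $\T$ carries maximal chains of $\mathcal E(I_1)$ onto maximal chains of $\mathcal E(I_2)$, and it fixes the minimum and the maximum ($\T(1_{\{0\}})=1_{\{0\}}$ and $\T(1_{I_1})=1_{I_2}$). Hence $\T$ either maps $C_{\mathrm{ind}}$ to $C_{\mathrm{ind}}$ and $C_{\mathrm{lin}}$ to $C_{\mathrm{lin}}$, or it interchanges the two. Deleting the fixed minimum from $C_{\mathrm{ind}}$ and the fixed maximum from $C_{\mathrm{lin}}$ then yields: in the first case $\T$ is a bijection of the indicators onto the indicators and of the linear functions onto the linear functions; in the second case a bijection of the indicators onto the linear functions and vice versa (the self-dual extremal functions $1_{\{0\}}$ and $1_I$ being fixed). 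This is the asserted dichotomy. I expect no deep obstacle; the only delicate point is the bookkeeping of the two degenerate extremal functions $1_{\{0\}}$ and $1_I$, which lie in the indicator and linear families respectively but are necessarily fixed by any order isomorphism, so that the ``swap'' alternative interchanges the two families only after these two self-dual elements are set aside.
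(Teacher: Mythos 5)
Your proposal is correct and follows essentially the same route as the paper: extremality is an order-theoretic notion (preserved by $\T$ and $\T^{-1}$ via Proposition \ref{Prop_Sup-To-Sup}), Lemma \ref{Lem_Only-Extremal} identifies the extremal functions as the indicators and the linear functions, and the comparability pattern (each family totally ordered, cross-comparisons failing except for the minimal and maximal elements) forces $\T$ either to preserve or to swap the two families. Your maximal-chain phrasing and the explicit bookkeeping of $1_{\{0\}}$ and $1_I$ is just a more formal rendering of the paper's remark that these two ``trivial'' elements are the only mutual ones and are necessarily fixed.
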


\noindent {\bf Proof.} Extremality is preserved under $\T$. Indeed,
if there exist two functions $g,h \in Cvx_0(I_2)$ such that
$g\not\ge \T f$ and $h \not \ge \T f$ but $\max\{g,h\}\ge \T f$,
then the functions $\T^{-1}g$ and $\T^{-1}h$ contradict extremality
for $f$. So, we see that the family of all extremal functions is
mapped to itself, and by Lemma \ref{Lem_Only-Extremal} this family
is exactly the union of linear and indicator functions. Since
$\T^{-1}$ shares the same properties as $\T$, we see that the map is
surjective.

Secondly, all linear functions are comparable to one another and all
indicator functions are comparable to one another (by $f$ and $g$
comparable we mean that either $f \le g$ or $g \le f$). However, no
indicator function is comparable to a linear function - except for
the trivial examples of $1_{\{0\}}$ and $0$, whose behavior is
obvious - since in $Cvx_0(I)$, these are the maximal and minimal
elements (they are also the only mutual elements in both families).
Hence, once we know that one linear function is mapped to a linear
function then all of them must be, and then all indicator functions
are mapped to indicators. The alternative is of course that all
linear functions are mapped to indicators, and then all indicators
are mapped to linear functions. \qed

In this last Lemma, a dichotomy, not apparent at first sight,
appears. We have two very different possibilities, one corresponding
to $\I$, the identity transform (which clearly maps linear functions
to themselves, likewise for indicator functions), and the other
possibility corresponds to the transform $\J$, which - as can be
checked - maps linear functions to indicator functions and
vice-versa. Despite this dichotomy, in the statement of the next
lemma we do not need to separate the two cases.

Next we claim that $\T$ is a {\em monotone} bijection on each of the
extremal families. Monotonicity has a meaning here since both
families are fully ordered subsets of $Cvx_0(I)$ - ``chains'' -
bounded together by the minimal and maximal elements $f_0\equiv0$
and $f_\infty=1_{\{0\}}$.

If $\T$ maps linear functions to themselves (and likewise indicator
functions), we define $S:I_1\to I_2$ to be the function for which
$\T 1_{[0,x]} = 1_{[0,S(x)]}$, and $A:\R^+\to\R^+$, for which
$\T(l_c) = l_{A(c)}$. If $\T$ interchanges between the two families,
we define $S:I_1\to\R^+$ to be the function for which $\T 1_{[0,x]}=
l_{S(x)}$, and $A:\R^+\to I_2$, for which $\T(l_c) = 1_{[0,A(c)]}$.
In this next simple lemma we formulate the monotonicity property:

\begin{lem}\label{Lem_Monotone-Bij} Assume $\T :Cvx_0(I_1) \to
Cvx_0(I_2)$ is an order isomorphism.

If $\T$ maps linear functions to themselves, then $S$ and $A$ are
increasing bijections.

If $\T$ interchanges between the two families, then $S$ and $A$ are
decreasing bijections.
\end{lem}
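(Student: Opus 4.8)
The plan is to read off monotonicity of $S$ and $A$ directly from order-preservation of $\T$, once one understands how the pointwise order restricts to the two extremal chains; there is essentially no difficulty beyond keeping the orientations straight. First I would record the internal order of each chain. The indicator $1_{[0,x]}$ equals $0$ on $[0,x]$ and $+\infty$ elsewhere, so $1_{[0,x]}\le 1_{[0,x']}$ (pointwise) iff $[0,x']\subseteq[0,x]$, i.e.\ iff $x'\le x$; thus the parametrization $x\mapsto 1_{[0,x]}$ is order \emph{reversing}. The linear function $l_c$ equals $t\mapsto ct$ on $I$ and $+\infty$ elsewhere, so $l_c\le l_{c'}$ iff $c\le c'$; thus $c\mapsto l_c$ is order \emph{preserving}.

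Next I would invoke Lemma \ref{Lem_Dichotomy}. In the first alternative, $\T$ restricts to a bijection of the indicator chain onto itself and of the linear chain onto itself; in the second, it restricts to a bijection of the indicator chain onto the linear chain and of the linear chain onto the indicator chain. Either way, comparing parameters shows that the maps $S$ and $A$ defined just before the statement are bijections between the relevant parameter sets ($S:I_1\to I_2$ and $A:\R^+\to\R^+$ in the first case, $S:I_1\to\R^+$ and $A:\R^+\to I_2$ in the second). It then remains only to check monotonicity, which is now immediate from order-preservation of $\T$ together with the two comparison rules above.

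In the first case one has $\T 1_{[0,x]}=1_{[0,S(x)]}$ and $\T l_c=l_{A(c)}$. From $x\le x'$ we get $1_{[0,x]}\ge 1_{[0,x']}$, so applying $\T$ gives $1_{[0,S(x)]}\ge 1_{[0,S(x')]}$, hence $S(x)\le S(x')$; and from $c\le c'$ we get $l_c\le l_{c'}$, hence $l_{A(c)}\le l_{A(c')}$ and $A(c)\le A(c')$. So $S$ and $A$ are increasing. In the second case $\T 1_{[0,x]}=l_{S(x)}$ and $\T l_c=1_{[0,A(c)]}$; now $x\le x'$ gives $1_{[0,x]}\ge 1_{[0,x']}$, hence $l_{S(x)}\ge l_{S(x')}$ and $S(x)\ge S(x')$, while $c\le c'$ gives $l_c\le l_{c'}$, hence $1_{[0,A(c)]}\le 1_{[0,A(c')]}$ and $A(c)\ge A(c')$. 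So $S$ and $A$ are decreasing. Since a monotone bijection between (sub)intervals of $\R$ is automatically strictly monotone, these are strictly increasing, respectively strictly decreasing, bijections.

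I do not expect any genuine obstacle: the argument is bookkeeping on top of Lemma \ref{Lem_Dichotomy} and the two comparison rules. The single point that is easy to get backwards — and the one that explains why the ``interchange'' case yields \emph{decreasing} rather than increasing maps — is the orientation reversal in the indicator chain, namely that the longer interval $[0,x]$ gives the \emph{smaller} function $1_{[0,x]}$.
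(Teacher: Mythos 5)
Your proof is correct and follows exactly the paper's argument: the bijectivity of $S$ and $A$ comes from the dichotomy lemma, and the monotonicity claims reduce to the two comparison rules $1_{[0,x]}\le 1_{[0,y]}\Leftrightarrow x\ge y$ and $l_c\le l_d\Leftrightarrow c\le d$, with the orientation reversal in the indicator chain accounting for the decreasing case. The paper's proof is just a more compressed version of the same bookkeeping.
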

\noindent{\bf Proof.} $S$ and $A$ are bijections, since $\T$ is a
bijection. Note that $1_{[0,x]} \le 1_{[0,y]}  \Leftrightarrow x\ge
y$ and $\l_c \le l_d  \Leftrightarrow c\le d$. Therefore, if $\T$
fixes each of the families, $S$ and $A$ are increasing, and if $\T$
switches between the families, $S$ and $A$ are decreasing. \qed

\subsubsection{Triangles functions - completing the proof}
Next, we handle another family of functions, ``triangle'' functions.
We show it is preserved under $\T$, and that the rule of the
transform for it is monotone. We show that when leaving the
one-dimensional perspective, the rule of the transform for triangles
is controlled by an interval preserving bijection; and thus we apply
our uniqueness theorem for such maps, Theorem
\ref{Thm_FL-Unique-nD}. Finally we show that the transform is
determined by its behavior on triangles, which proves Theorem
\ref{Thm_Cvx0-n>1-New}.

For $z\in K$ and $c\in\R^+$, we introduce the ``triangle'' functions,
denoted  $\lhd_{z,c}\in Cvx_0(K)$:
\[ \lhd_{z,c}(x) =
\left\{
  \begin{array}{ll}
     c|x|, & \hbox{if~~} x \in [0,z] \\
    +\infty, & \hbox{otherwise}.
  \end{array}\right. \]

Note that they are one-dimensional (i.e. supported on a ray), so
they can be thought of as elements of $Cvx_0(I)$ where $I$ is a ray,
and then $\lhd_{z,c} = \max\{1_{[0,z]}, l_c\}$.

\begin{lem}\label{Lem_Triangle-To-Triangle} If $\T :Cvx_0(I_1) \to
Cvx_0(I_2)$ is an order isomorphism then a triangle function
$\lhd_{z,c}$ is mapped under $\T$ to a triangle function
$\lhd_{z',c'}$, where $(z',c')$ is a function of $(z,c)$.
\end{lem}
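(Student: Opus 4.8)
The plan is to exploit the identity $\lhd_{z,c} = \max\{1_{[0,z]}, l_c\}$ noted just above, together with the fact that the binary maximum is a purely order-theoretic operation and is therefore respected by $\T$. Concretely, first I would observe that for $f,g \in Cvx_0(I)$ the pointwise maximum $\max\{f,g\}$ again lies in $Cvx_0(I)$ (it is convex, lower-semi-continuous, non-negative and vanishes at the origin) and is the least upper bound of $\{f,g\}$ in the partial order, since any $k \in Cvx_0(I)$ with $k \ge f$ and $k \ge g$ satisfies $k \ge \max\{f,g\}$ pointwise. Hence $\max\{f,g\} = \sup\{f,g\}$ in the sense of Section \ref{Sect_Gen-Ord-Iso}, and applying Proposition \ref{Prop_Sup-To-Sup} to the two-element family $\{f,g\}$ gives $\T(\max\{f,g\}) = \max\{\T f, \T g\}$. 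In particular,
\[ \T(\lhd_{z,c}) = \T\big(\max\{1_{[0,z]}, l_c\}\big) = \max\{\T 1_{[0,z]}, \T l_c\}. \]

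Next I would invoke the dichotomy of Lemma \ref{Lem_Dichotomy} and the notation $S, A$ of Lemma \ref{Lem_Monotone-Bij}. If $\T$ fixes each of the two extremal families, then $\T 1_{[0,z]} = 1_{[0,S(z)]}$ and $\T l_c = l_{A(c)}$, so
\[ \T(\lhd_{z,c}) = \max\{1_{[0,S(z)]}, l_{A(c)}\} = \lhd_{S(z),A(c)}, \]
that is $z' = S(z)$ and $c' = A(c)$. If instead $\T$ interchanges the two families, then $\T 1_{[0,z]} = l_{S(z)}$ and $\T l_c = 1_{[0,A(c)]}$, so
\[ \T(\lhd_{z,c}) = \max\{l_{S(z)}, 1_{[0,A(c)]}\} = \lhd_{A(c),S(z)}, \]
that is $z' = A(c)$ and $c' = S(z)$. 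In both cases $\T(\lhd_{z,c})$ is a triangle function, and $(z',c')$ is determined by $(z,c)$ — indeed each coordinate of $(z',c')$ depends on a single coordinate of $(z,c)$.

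I do not expect a real obstacle here: the statement is essentially a formal consequence of results already in hand (the characterization of the extremal functions in Lemma \ref{Lem_Only-Extremal}, the dichotomy in Lemma \ref{Lem_Dichotomy}, the monotonicity of $S$ and $A$, and the preservation of $\max$). The only point requiring a little care is to check that the images $S(z)$ and $A(c)$ lie in the ranges that make the resulting maxima genuinely of the form $\lhd_{z',c'}$, but this is exactly the content of the definitions of $S$ and $A$ in Lemma \ref{Lem_Monotone-Bij}.
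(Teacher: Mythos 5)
Your proposal is correct and follows essentially the same route as the paper: write $\lhd_{z,c}=\max\{1_{[0,z]},l_c\}$, use Proposition \ref{Prop_Sup-To-Sup} to push $\T$ through the maximum, and conclude via the dichotomy of Lemma \ref{Lem_Dichotomy} that the image is again a maximum of an indicator and a linear function, hence a triangle. The paper states this in one line; you merely spell out the two cases and the explicit dependence of $(z',c')$ on $(z,c)$ through $S$ and $A$, which is a harmless elaboration.
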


\noindent {\bf Proof of Lemma \ref{Lem_Triangle-To-Triangle}.} A
triangle is the maximum of an indicator and a linear function. By
Proposition \ref{Prop_Sup-To-Sup} $\T$ respects $\sup$ and
$\hat{\inf}$, and thus in both cases of Lemma \ref{Lem_Dichotomy}, a
triangle is mapped to the maximum of an indicator and a linear
function; that is, to a triangle. \qed

\noindent{\bf Remark.} Since in Lemma \ref{Lem_Monotone-Bij} we
showed that $\T$ maps indicator and linear functions in a monotone
way, it is obvious that this is the case also for triangles, meaning
either $\T(\lhd_{z, c})=\lhd_{S(z), A(c)}$, or $\T(\lhd_{z,
c})=\lhd_{A(c), S(z)}$, and in both cases, fixing any of the
parameters $z$ or $c$ and changing the other monotonously, changes
also the triangle in the image monotonously. Since we already know
the exact shape of 1D transforms, we could have concluded this
immediately. However, in what follows, we only use the fact that
$\T(\lhd_{x,c})=\lhd_{y,d}$, and that this map is monotone, meaning
that on a fixed ray, either $y=y(x)$, $d=d(c)$, and both functions
are bijective and increasing, or $y=y(c)$, $d=d(x)$, and both
functions are bijective and decreasing.

We return to the $n$-dimensional picture, the first time since we
reduced the discussion to ray-wise action. We wish to see how the
different mappings of triangles on different rays all fit together.
To this end, we replace the ``parametrization'' of triangles, from
the point $z$ (indicating the support of the function) and the slope
$c$, to the point $z$ and the {\em value of the function at that
point} $h=c|z|$. To avoid abuse of notation, for $h=c|z|$ we will
denote $\lhd_{z,c}$ by $\lhd^{z,h}$. With this notation, we denote
by $F: (K_1\setminus\{0\})\times\R^+\to
(K_2\setminus\{0\})\times\R^+$ the bijective map for
which $\T\lhd^{z,h} = \lhd^{F(z,h)}$. 

\begin{prop}\label{Prop_Triangle-Rule-Is-FL}  Let $n\ge2$,  $K_1, K_2
\subseteq\R^n$ convex sets with non empty interior, and
$\T:Cvx_0(K_1)\to Cvx_0(K_2)$  an order preserving isomorphism.
Assume $F: (K_1\setminus\{0\})\times\R^+\to
(K_2\setminus\{0\})\times\R^+$ is the bijection satisfying
$\T(\lhd^{x,h}) = \lhd^{F(x,h)}$ for every $(x,h)\in
(K_1\setminus\{0\})\times\R^+$. Then $F$ is a fractional linear map.
\end{prop}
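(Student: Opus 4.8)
The plan is to show that the map $F$ on the ``triangle parameter space'' $(K_i\setminus\{0\})\times\R^+$ is interval preserving, and then invoke Theorem \ref{Thm_FL-Unique-nD}. The first step is to understand what an interval in the parameter space corresponds to on the level of functions. A point $(x,h)$ parametrizes the triangle $\lhd^{x,h}$, whose epi-graph in $\R^n\times\R^+$ is the cone-like set spanned by the segment from $0$ (at height $0$) to $x$ (at height $h$), truncated appropriately. The key observation I would establish is that a segment in parameter space $[(x_0,h_0),(x_1,h_1)]$ corresponds precisely to a one-parameter family of triangles whose $\hat\inf$ (equivalently: whose epi-graphs' union's convex hull, or rather: the largest geometric convex function below all of them) is again a triangle-like object, and, dually, the $\sup$ (pointwise max) of the two endpoint triangles is a nicely structured function. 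More usefully, I would look for an \emph{order-theoretic} characterization: $(x_2,h_2)$ lies on the segment $[(x_0,h_0),(x_1,h_1)]$ if and only if $\lhd^{x_2,h_2}$ satisfies some relation, expressed purely in terms of $\le$, $\sup$, $\hat\inf$, with respect to $\lhd^{x_0,h_0}$ and $\lhd^{x_1,h_1}$ — because any such relation is automatically preserved by $\T$ (using Proposition \ref{Prop_Sup-To-Sup}), and hence $F$ preserves the corresponding subsets of parameter space, i.e. intervals.

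Concretely, here is the characterization I expect to use. Fix two triangles $T_0 = \lhd^{x_0,h_0}$ and $T_1 = \lhd^{x_1,h_1}$ with $x_0,x_1$ positively independent (the generic case; the collinear case is handled by the 1-dimensional analysis via Lemma \ref{Lem_Raywise} and the remark following Lemma \ref{Lem_Triangle-To-Triangle}). Consider the function $g := \hat{\inf}\{T_0,T_1\}$: its epi-graph is (the geometric convex hull of) the union of the two epi-graphs, and it is supported on $\mathrm{conv}\{0,x_0,x_1\}$. Now a triangle $\lhd^{x_2,h_2}$ lies ``above'' $g$ and is ``extremal among triangles with this property'' exactly when $(x_2,h_2)$ ranges over the segment $[(x_0,h_0),(x_1,h_1)]$ — geometrically, because the epi-graph of $g$ is a three-dimensional convex body whose boundary, on the relevant side, is swept out by precisely the segments spanning $0$ and the points $x_2 = \lambda x_0 + (1-\lambda)x_1$ at height $h_2 = \lambda h_0 + (1-\lambda)h_1$. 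I would make this precise by writing down $epi(g)$ explicitly and observing that the maximal triangle functions lying below $g$ (equivalently, whose epi-graphs contain $epi(g)$) are exactly the $\lhd^{x_2,h_2}$ with $(x_2,h_2)$ on the segment. Since ``being a maximal triangle below $\hat\inf\{\T T_0,\T T_1\}$'' is preserved by $\T$, and since $\T$ sends triangles to triangles monotonically (Lemma \ref{Lem_Triangle-To-Triangle} and its remark), $F$ maps the segment $[(x_0,h_0),(x_1,h_1)]$ onto the segment $[F(x_0,h_0),F(x_1,h_1)]$, possibly with endpoints swapped — which is fine.

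Having shown $F$ is an injective interval preserving map on the convex set $(K_1\setminus\{0\})\times\R^+$ — or rather on its interior, which is an open connected (not convex, but that is covered by Remark \ref{Rem_FL-Uniq-Open-Conn}) subset of $\R^{n+1}$, with $n+1\ge 3\ge 2$ — Theorem \ref{Thm_FL-Unique-nD} (together with Remark \ref{Rem_FL-Uniq-Open-Conn} to handle the punctured-cylinder domain) immediately gives that $F$ is fractional linear. One should note that injectivity of $F$ is clear: $\T$ is a bijection and distinct triangles have distinct parameters. One should also double-check that the domain $(K_1\setminus\{0\})\times\R^+$ is genuinely open-connectible to the hypotheses of the uniqueness theorem: its interior (in $\R^{n+1}$) is $\mathrm{int}(K_1)\times\R^+$ minus nothing when $0\notin\mathrm{int}(K_1)$, or $(\mathrm{int}(K_1)\setminus\{0\})\times\R^+$ when $0\in\mathrm{int}(K_1)$, and in either case this is open and connected (removing a point from an open set in $\R^n$, $n\ge 2$, keeps it connected), so $F$ restricted to this interior is fractional linear by Remark \ref{Rem_FL-Uniq-Open-Conn}, and then by continuity (Lemma \ref{Lem_Int-Pres-Continuous}) it is fractional linear on the closure intersected with the domain.

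The main obstacle I anticipate is the first step: finding the correct purely order-theoretic description of intervals in $(x,h)$-space, and verifying rigorously that the $\hat\inf$ of two triangles has the epi-graph I claim, so that the ``maximal triangles below it'' are exactly the ones on the segment and no others. In particular, one must rule out stray triangles — supported on rays $\R^+ x_2$ with $x_2\in\mathrm{conv}\{0,x_0,x_1\}$ but not on the segment $[x_0,x_1]$ — sitting below $\hat\inf\{T_0,T_1\}$ in a way that would also be ``maximal''; this requires a careful look at which rays through $\mathrm{conv}\{0,x_0,x_1\}$ support a triangle below $g$, and at what height, and checking that maximality in the triangle order singles out precisely the boundary segment. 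Once that geometric lemma is nailed down, the rest is a direct application of the machinery already developed in Section \ref{Sect_I.P.M}.
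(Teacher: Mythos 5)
Your proposal follows essentially the same route as the paper: you characterize membership of $(x_2,h_2)$ in the segment $[(x_0,h_0),(x_1,h_1)]$ order-theoretically via $\hat{\inf}$ of the two endpoint triangles together with an extremality condition (which $\T$ preserves by Proposition \ref{Prop_Sup-To-Sup} and the monotonicity of the triangle rule), conclude that $F$ preserves exactly those intervals whose base segment avoids the origin — i.e.\ all intervals contained in the punctured cylinder — and then invoke the uniqueness theorem plus gluing, where the paper's version (convex subdomains with $(0,0)$ as an extreme point, patched by Corollary \ref{Lem_Mizdahim-Interior}) and your version (Remark \ref{Rem_FL-Uniq-Open-Conn} applied to the punctured open connected domain) are the same argument in different packaging. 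One small slip to fix: the triangles parametrized by the segment are the \emph{minimal} triangles lying \emph{above} $g=\hat{\inf}\{T_0,T_1\}$ (their epi-graphs are contained in $epi(g)$ and maximal with respect to inclusion among such), as your earlier sentence correctly states — not "maximal triangles below $g$", of which there are none when $x_0,x_1$ are independent.
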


\noindent {\bf Proof of Proposition \ref{Prop_Triangle-Rule-Is-FL}.}
First we show that the restriction of $F$ to any domain for which
$(0,0)$ is an extreme point, is fractional linear. Let $(x_1, h_1),
(x_2, h_2)\in K_1\times\R^+$ such that $0\not\in [x_1, x_2]$. This
merely means that our argument does not hold if $x_1$ and $x_2$ are
on opposite rays. Letting $(x_3, h_3)\in [(x_1, h_1), (x_2, h_2)]$,
and denoting $F(x_i, h_i)= (y_i, l_i)$, we need to prove that
$(y_3,l_3)\in[(y_1, l_1), (y_2, l_2)]$. If $x_i$ are on the same
ray, then it follows from the one dimensional case, handled in
Section \ref{Sect_Cvx0-1D-ind}, that the restriction of $F$ to this
line is fractional linear, and in particular it maps intervals to
intervals, that is $F([(x_1, h_1),(x_2, h_2)])=[(y_1, l_1),
(y_2, l_2)]$. Assume otherwise, that $x_i$ are linearly independent.
Note that $\lhd^{x_3, h_3}\ge
\hat{\inf}\{\lhd^{x_1, h_1}, \lhd^{x_2, h_2}\}$, and that in this
inequality $x_3$ is maximal, and $h_3$ is minimal. Therefore
$\lhd^{y_3, l_3}\ge \hat{\inf}\{\lhd^{y_1, l_1}, \lhd^{y_2, l_2}\}$,
and in {\em this} inequality - due to the monotonicity of $\T$ on
triangles - again $y_3$ is maximal, and $l_3$ is minimal (recall
that in Lemma \ref{Lem_Monotone-Bij} we saw that if indicators and
linear functions are exchanged, $S$ and $A$ are decreasing, and if
they are preserved, $S$ and $A$ are increasing - thus in any case
maximality of $x_3$ and minimality of $h_3$ coincides with
maximality of $y_3$ and minimality of $l_3$). Therefore $y_3$, which
lies on a different ray than those of $y_1, y_2$ ($\Phi$ is
bijective), is in the triangle with vertices $0, y_1, y_2$, and due
to its maximality - $y_3\in[y_1, y_2]$. Moreover, the point $(y_3,
l_3)$ is above or on the interval $[(y_1, l_1), (y_2, l_2)]$, and
due to its minimality, it is {\em on} this line. Therefore $(y_3,
l_3)\in [(y_1, l_1), (y_2, l_2)]$, which means that $F$ preserves
intervals which do not intersect the positive $h$-axis;
$\{(0,h):h\ge0\}$. In other words, the restriction of $F$ to any
domain for which $(0,0)$ is an extreme point, is interval
preserving. By applying Theorem \ref{Thm_FL-Unique-nD}, we conclude
that $F$ is fractional linear on each such domain, and thus, since
$n\ge2$, we may use Corollary \ref{Lem_Mizdahim-Interior} to
conclude that $F$ is a fractional linear map on the whole of
$(K_1\setminus\{0\})\times\R^+$. \qed

\noindent{\bf Remark.} The proof of Proposition
\ref{Prop_Triangle-Rule-Is-FL} does {\em not} work in one dimension,
since the only two rays; $\R^+,\R^-$ cannot interact - they have $0$
in their convex hull, and therefore a direct proof is needed in this
case, to show that the transform is given by a fractional linear map
on the epi-graphs. In fact, while it is true for transforms on a
ray, it is indeed {\em not} the case for transforms on $Cvx_0(\R)$,
or on $Cvx_0(I)$ where $I$ is an interval containing $0$ in the
interior.

\noindent{\bf Remark.} The function $F$ which is defined formally
only for $(x,h)\in (K\setminus\{0\})\times\R^+$, can in fact be
extended to $K\times\R^+$, since the defining hyperplane of $F$ does
not intersect $epi(1_{\{0\}})= \{(0,h):h>0\}$. Indeed, it is obvious
that if it intersects this ray in one point it must contain the
whole ray. In such a case, it follows from the properties of
fractional linear maps, that rays emanating from a point in the
hyperplane are mapped to parallel rays emanating from the
hyperplane. Such a point map does not induce a transform on
$Cvx_0(K)$. Therefore $F$ can be defined on the whole of
$K\times\R^+$. Moreover, using the fact that the supremum of all
triangles is $1_{\{0\}}$, we get that $F(epi(1_{\{0\}})) =
epi(1_{\{0\}})$. 

Finally, knowing that the transform
rule for triangle functions is controlled by a fractional linear map
$F$, we turn to see that this is also the case for the {\em
epi-graph} of any function. We use the following simple equality
$epi(f)= \{(x,h)\in (K\setminus\{0\})\times\R^+: \lhd^{x,h}>f\}\cup
epi(1_{\{0\}}) $ which holds for every $f\in Cvx_0(K)$. 

\noindent {\bf Proof of Theorem \ref{Thm_Cvx0-n>1-New}.} By the
previous proposition, there exists a bijective fractional linear map
$F:(K_1\setminus\{0\})\times\R^+ \to (K_2\setminus\{0\})\times\R^+$,
and we need to show that $F(epi(f))=epi(\T f)$. \begin{align*}
  epi(\T                     f                       )  &=\hskip 12pt
  \{(y,l)\in (K_2\setminus\{0\})\times\R^+:   \lhd^{y,l}   >\T f\} \hskip 21pt\cup   epi(1_{\{0\}})  \\&=
F(\{(x,h)\in (K_1\setminus\{0\})\times\R^+:   \lhd^{F(x,h)}>\T f\})           \cup F(epi(1_{\{0\}})) \\&=
F(\{(x,h)\in (K_1\setminus\{0\})\times\R^+: \T\lhd^{x,h}   >\T f\})\hskip 05pt\cup F(epi(1_{\{0\}})) \\&=
F(\{(x,h)\in (K_1\setminus\{0\})\times\R^+:   \lhd^{x,h}   >   f\})\hskip 21pt\cup F(epi(1_{\{0\}})) \\&=
F(epi(                    f                         ))
\end{align*} \qed

\subsection{Additional results}
\subsubsection{Direct uniqueness proof in the one dimensional
bounded case}\label{Sect_Cvx0-1D-ind} We focus on the possible
transforms in the case where linear functions are mapped to
themselves, likewise indicator functions. Clearly, the function $S:
I_1 \to I_2$ for which we have that $\T 1_{[0,x]} = 1_{[0,S(x)]}$ is
bijective and increasing (so it is continuous as well). Similarly
$A: \R^+\to \R^+$, for which $\T(l_c) = l_{A(c)}$, is bijective,
increasing, and continuous. Note that we deal now only with $I_1$
and $I_2$ which are bounded, which means that $S$ maps an interval
to an interval, and $A$ maps a full ray to a full ray.

\begin{lem}\label{Lem_Rule-Of-I} Let $I_1=[0, x_1), I_2=[0, x_2)$,
where $x_i\in\R$ are two positive numbers. Let $\T :Cvx_0(I_1)\to
Cvx_0(I_2)$ be an order preserving isomorphism. Assume further, that
for some increasing bijective function $S:I_1\to I_2$ we have $\T
1_{[0,x]} = 1_{[0,S(x)]}$, and for another
increasing bijective function $A: \R^+\to \R^+$, we have that $\T
l_c = l_{A(c)}$. Then there exist two constants $\alpha>0$ and $d<1$
such that $A(c) = \alpha c$ and $S(x) =
\frac{x_2}{x_1}\cdot\frac{x}{d(x/x_1-1)+1}$.
\end{lem}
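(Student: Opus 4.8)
The plan is to read off $S$ and $A$ by combining the known action of $\T$ on the extremal families with the mere requirement that $\T$ and $\T^{-1}$ send convex functions to convex functions.

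\emph{Step 1: $\T$ is induced by an explicit formula.} Since $\lhd_{z,c}=\sup\{1_{[0,z]},l_c\}$ and $\T$ respects suprema (Proposition \ref{Prop_Sup-To-Sup}), we get $\T\lhd_{z,c}=\lhd_{S(z),A(c)}$; in the notation $\lhd^{z,h}$ (corner $z$, value $h=cz$) this says $\T\lhd^{z,h}=\lhd^{S(z),\,A(h/z)S(z)}$. Moreover every $f\in Cvx_0(I_1)$ is the $\hat{\inf}$ of the triangles lying above it, $f=\hat{\inf}\{\lhd^{z,f(z)}:f(z)<\infty\}$ (the ratio $f(z)/z$ is non-decreasing, so $\lhd^{z_0,h_0}\ge f$ iff $h_0\ge f(z_0)$). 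Applying $\T$ and re-indexing with $y=S(z)$, $\phi:=S^{-1}$, one finds that $\T f$ is the largest convex minorant of
\[\tilde f(y):=y\,A\!\Big(\tfrac{f(\phi(y))}{\phi(y)}\Big),\qquad y\in I_2,\]
so in particular $\T f\le\tilde f$. The same argument applied to $\T^{-1}$ (again of ``$\I$-type'', with $S^{-1},A^{-1}$) gives $\T^{-1}g\le\widetilde g$ where $\widetilde g(x):=x\,A^{-1}(g(S(x))/S(x))$. Composing, $f=\T^{-1}(\T f)\le\widetilde{\T f}\le f$, so everything is an equality and $\T f=\tilde f$. Thus \textbf{$\tilde f$ is convex for every convex $f$}, and symmetrically for the inverse transform.

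\emph{Step 2: extracting the constraints.} Take $f$ whose slope-to-origin $r(x):=f(x)/x$ is, near a point $x_0$, a quadratic $c+m(x-x_0)+\tfrac k2(x-x_0)^2$ with $c>0$, $m>0$, $k\ge-2m/x_0$ (the condition for $f=xr$ to be convex at $x_0$). A routine computation of $\tilde f''$ at $y_0=S(x_0)$ yields
\[\tilde f''(y_0)=A''(c)m^2\phi'^2y_0+A'(c)\big[k\phi'^2y_0+m(\phi''y_0+2\phi')\big],\]
with $\phi$ and its derivatives taken at $y_0$. Letting $m\to\infty$ along $k=-2m/x_0$ forces $A''\ge0$; letting $m\to0^+$ forces $\Omega[\phi](y_0)\ge0$, where $\Omega[h](t):=h(t)h''(t)-2h'(t)^2+\tfrac2t\,h(t)h'(t)$. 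The inverse condition gives $A^{-1}$ convex and $\Omega[S]\ge0$ on $I_1$. Since $A$ and $A^{-1}$ are both convex and increasing with $A(0^+)=0$, $A$ must be linear: $A(c)=\alpha c$, $\alpha>0$.

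\emph{Step 3: solving for $S$, and the main obstacle.} The crucial observation is the identity
\[h'(t)^3\,\Omega[h^{-1}]\big(h(t)\big)=-\frac{t}{h(t)}\,\Omega[h](t),\]
valid for every increasing bijection $h$ with $h(0)=0$; it is checked most easily after substituting $w:=t/h(t)$, which turns $\Omega[h](t)$ into $-t^2w''(t)/w(t)^3$. Its left-hand side is $\ge0$ and its right-hand side is $\le0$, so both vanish: $\Omega[S]\equiv0$, i.e.\ $w=x/S(x)$ has $w''\equiv0$, i.e.\ $x/S(x)$ is affine. Hence $S$ is fractional linear with $S(0)=0$, and imposing that $S$ maps $[0,x_1)$ bijectively and increasingly onto $[0,x_2)$ (so $S(x_1^-)=x_2$) yields exactly $S(x)=\frac{x_2}{x_1}\cdot\frac{x}{d(x/x_1-1)+1}$ with $d<1$, as claimed. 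The step that deserves the most care is Step 1 --- proving that the candidate formula is \emph{exact}, not merely an upper bound --- because everything afterwards rests on translating convexity of $\tilde f$ into the differential inequalities; once that is in hand, the appearance of $\Omega$ and the sign identity that collapses the two inequalities to an equality do the rest almost mechanically.
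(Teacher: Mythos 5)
Your Step 1 is correct, and is in fact a cleaner statement than anything in the paper's own argument: the sandwich $f=\T^{-1}(\T f)\le\widetilde{\T f}\le f$ really does pin down the exact formula $(\T f)(y)=y\,A\big(f(\phi(y))/\phi(y)\big)$, and the algebra in Steps 2--3 (the expression for $\tilde f''$, the identity $h'(t)^3\,\Omega[h^{-1}](h(t))=-\tfrac{t}{h(t)}\Omega[h](t)$, and the substitution $w=t/h(t)$) all checks out. The route is genuinely different from the paper's, which instead tests $\T$ on the pair $\lhd_{x,c}$, $g_{tx,\cdot}$ to extract a multiplicative Cauchy equation $A(cu)=A(c)A(u)/A(1)$ (solved using only continuity of $A$) and then a cross-ratio identity for $S$, concluding via Theorem \ref{Thm_FL-Unique-1D}.

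The genuine gap is regularity. The hypotheses give only that $S$ and $A$ are increasing bijections, hence continuous and differentiable almost everywhere, and nothing more; yet Step 2 differentiates $\phi=S^{-1}$ and $A$ \emph{twice} at an arbitrary point $y_0$ in order to write $\tilde f''(y_0)$ and read off $A''\ge 0$ and $\Omega[\phi](y_0)\ge 0$. A monotone bijection need not have a second derivative anywhere in a usable sense (consider $x\mapsto x+C(x)$ with $C$ strictly increasing and singular), and even if one retreats to almost-everywhere statements, the final inference ``$w''\equiv 0$, hence $w=x/S(x)$ is affine'' fails with only a.e.\ information and no absolute continuity --- the Cantor function has vanishing derivative a.e.\ without being affine. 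This is precisely where the paper works harder: it first derives a closed-form expression for $S(tx)$ in terms of $S(x)$ and $t$, from which differentiability of $S$ follows, and it determines $A$ from the Cauchy functional equation, which needs no differentiability at all. Your argument can very likely be repaired --- for instance by running Step 2 with one-sided second difference quotients of $\tilde f$ along the piecewise-linear test functions $g_{x,c}=\hat{\inf}\{1_{[0,x]},l_c\}$, which reproduces the paper's functional equation and hence the missing regularity --- but as written the passage from ``convexity is preserved'' to the differential inequalities is not justified.
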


\noindent {\bf Proof of Lemma \ref{Lem_Rule-Of-I}.} Denote as before
$\lhd_{x,c} = \max\{1_{[0,x]}, l_c\}$, and similarly $g_{x,c} =
\hat{\inf}\{1_{[0,x]}, l_c\}$. We get (on $I_2$ replace $x_1$ by
$x_2$):
\[g_{x,c}(z) =
\left\{
  \begin{array}{ll}
                0            & ;\hbox{if~~} z \in [0,x] \\
     c(z-x)\frac{x_1}{x_1-x} & ;\hbox{if~~} z \in [x,x_1] \\
              +\infty        & ;\hbox{otherwise}
  \end{array}
\right.,\,\,  \lhd_{x,c}(z) = \left\{
  \begin{array}{ll}
       cz   & ;\hbox{if~~} z \in [0,x] \\
    +\infty & ;\hbox{otherwise}
  \end{array}
\right.. \] By Proposition \ref{Prop_Sup-To-Sup} we get
$\T(\lhd_{x,c})=\lhd_{S(x),A(c)}$, $\T(g_{x,c})=g_{S(x),A(c)}$. Let
$0<t<1$ and consider $g=g_{tx,
(\frac{c}{1-t})(\frac{x_1-tx}{x_1})}$. It can be easily checked that
$g \le \lhd_{x,c}$, and $g(x)=\lhd_{x,c}=cx$, so that $g \not\le
\lhd_{x,c'}$ for any $c'<c$, and $g \not\le\lhd_{x',c}$ for any
$x'>x$. In fact, when a $g$-type function and a $\lhd$-type function
behave that way ($g\le\lhd_{x,c}$ with maximal $x$ and minimal $c$)
it must be that they are equal at the ``breaking point of the
triangle'', i.e. at the point $x$. Since $\T$ preserves order in
both directions, $\T(g)$ and $\T(\lhd_{x,c})$ behave in the same
way, and therefore:
\[g_{S(tx), A((\frac{c}{1-t})(\frac{x_1-tx}{x_1}))}\le\lhd_{S(x), A(c)}\]
with equality between the two functions at the point $S(x)$,
meaning:
\[A\left(c\frac{x_1-tx}{x_1-tx_1}\right) \cdot (S(x)-S(tx))
 \cdot \left(\frac{x_2}{x_2-S(tx)}\right)=  A(c)S(x)\]
for every $0<t<1$, every $0<x<x_1$, and every $0<c$. By defining
$u=\frac{x_1-tx}{x_1-tx_1}$ and rearranging the equation, we get:
\begin{equation}\label{Eq_Separate-A-And-S}
\frac{A(cu)}{A(c)}=\left(\frac{S(x)}{S(x)-S(tx)}\right)\cdot
\left(\frac{x_2-S(tx)}{x_2}\right). \end{equation}

In particular, the ratio $\frac{A(cu)}{A(c)}$ does not depend on $c$
- thus it is equal to $\frac{A(u)}{A(1)}$, and we may write
\begin{equation}\label{Eq_Just-A-Of-c} A(cu) =
\frac{A(c)A(u)}{A(1)},\end{equation} which holds for all $0<c$ and
$1<u$ (see the definition of $u$). For $u=1$ it is true trivially.
For $0<u<1$ we denote $u':=1/u>1$. Noticing the symmetry between $u$
and $c$, we interchange their roles to see that $A(1)=
\frac{A(u)A(u')}{A(1)}$, and write

\[\frac{A(cu)}{A(c)} = \frac{1}{\frac{A(cu\cdot u')}{A(cu)}} =
 \frac{1}{\frac{A(u')}{A(1)}} = \frac{1}{\frac{A(1)}{A(u)}} =
 \frac{A(u)}{A(1)}.\]
%

Equation (\ref{Eq_Just-A-Of-c}), valid for all $c>0, u>0$, together
with the continuity of $A$, implies that $A$ is of the form \[A(c) =
\alpha c^\gamma\] for some fixed $\alpha>0$ and $\gamma$.

Therefore, $\frac{A(cu)}{A(u)}=u^\gamma$. Returning to equation
\eqref{Eq_Separate-A-And-S} with this new information, and
substituting $u=\frac{x_1-tx}{x_1-tx_1}$, we get
\begin{equation}\label{Eq_Just-S-Of-x}
\left(\frac{x_1-tx}{x_1-tx_1}\right)^\gamma =
\left(\frac{S(x)}{S(x)-S(tx)}\right)
\cdot\left(\frac{x_2-S(tx)}{x_2}\right).
\end{equation}

This can be written also as \[S(tx)=S(x)\cdot\left(\frac
{x_2\left(\frac{x_1-tx}{x_1-tx_1}\right)^\gamma-x_2}
{x_2\left(\frac{x_1-tx}{x_1-tx_1}\right)^\gamma-S(x)}\right),\] to
show that for a given $0<x<x_1$, $f(t):=S(tx)$ is differentiable as
a function of $t$, for all $0<t<1$. This means $S$ is differentiable
in $(0,x_1)$ (the interior of $I_1$).
%
%

Denote $D_{a,b}=\frac{S(b)-S(a)}{b-a}$ for $a,b\in [0,x_1]$, and
similarly $D_{a,a}=S'(a)$ for $a\in (0,x_1)$, so that $D_{a,b}\to
D_{a,a}$ when $b\to a$. Note that for $a\not=b$, $0<D_{a,b}<\infty$.
Rearranging equation \eqref{Eq_Just-S-Of-x} yields:
\[\left(\frac{x_1-tx}{x_1-tx_1}\right)^{\gamma-1} =
\frac{D_{0,x}\cdot D_{tx,x_1}}{D_{0,x_1}\cdot D_{tx,x}}.\]

Choose $x<x_1$ such that $S'(x)\not=0$ and let $t\to1^-$, then the
right hand side of the equation tends to a finite, strictly positive
number, and since $\frac{x_1-tx}{x_1-tx_1}\to\infty$ when $t\to 1$,
this implies $\gamma=1$. Therefore for every $0<t<1, 0<x<x_1$ we
have:
\[D_{0,x_1}\cdot D_{tx,x} = D_{0,x}\cdot D_{tx,x_1}\]
or alternatively: \[[0,tx,x,x_1]=[S(0),S(tx),S(x),S(x_1)]\] which by
Theorem \ref{Thm_FL-Unique-1D} implies that $S$ is fractional
linear. Combined with $S(0)=0$, $S(x_1)=x_2$, $S'(x)>0$, and
$\gamma=1$, this implies that $S$ and $A$ each belongs to a
one-parametric family of maps of the form \[S(x) =
x_2\cdot\frac{x/x_1}{d(x/x_1-1)+1}\qquad A(c) = \alpha c\] where
$d<1$ and $\alpha<0$. \qed

\subsubsection{Classification of admissible fractional linear
maps}\label{Sect_Which-FL-Work-Cvx0} We wish to fully classify the
type of fractional linear maps that induce transforms as in Theorem
\ref{Thm_Cvx0-n>1-New}. (The one dimensional case was fully
described in Section \ref{Sect_Cvx0-1D}). Denote by
$A_\infty=\{(0,y) : y>0\}$ the epi-graph of
$\delta_{0,0}=1_{\{0\}}$; the maximal function in $Cvx_0(K)$, and by
$A_0^1=\{(x,y): x\in K_1, y>0\}$ the epi-graph of $1_{K_1}$; the
minimal function in $Cvx_0(K_1)$ (similarly $A_0^2=\{(x,y): x\in
K_2, y>0\}$ for $Cvx_0(K_2)$). Since $Cvx_0(K) = \{f\in Cvx(\R^n) :
1_K\le f \le 1_{\{0\}}\}$, it turns out that a necessary and
sufficient condition for a bijection $F: K_1\times\R^+ \to
K_2\times\R^+$ to induce an order isomorphism is that it maps the
minimal and maximal elements in $Cvx_0(K_1)$ to the minimal and
maximal elements in $Cvx_0(K_2)$, namely:
\begin{equation}\label{Eq_Cvx0-Condition-Max}
F(A_\infty)=A_\infty,\end{equation}
\begin{equation}\label{Eq_Cvx0-Condition-Min}
F(A_0^1)=A_0^2.\end{equation} Indeed, since $F$ is a bijection
from the cylinder $K_1\times\R^+$ to the cylinder $K_2\times\R^+$,
we see that the transform is bijective. Order preservation (in both
directions) is automatic for point-map-induced transforms. One must
check that $F(epi(f))$ is an epi-graph of some convex function,
which follows from it being a convex set containing the fiber
$A_\infty$. Since $A_\infty\subseteq epi(f) \subseteq A_0^1$, we get
$A_\infty\subseteq F(epi(f)) \subseteq A_0^2$, meaning that
$F(epi(f))$ is an epi-graph of a function in $Cvx_0(K_2)$.
Therefore, we give the description of a general fractional linear
map $F$ which satisfies (\ref{Eq_Cvx0-Condition-Max}) and
(\ref{Eq_Cvx0-Condition-Min}). Let the matrix $A_F$ be given by
\[A_F = \left(
  \begin{array}{ccccc}
         &        &      &  v'_1  &  u'_1  \\
         &    A   &      & \vdots & \vdots \\
         &        &      &  v'_n  &  u'_n  \\
    v_1  & \cdots & v_n  &    a   &    b   \\
    u_1  & \cdots & u_n  &    c   &    d   \\
  \end{array} \right),\] for $A\in L_n(\R)$, $v,v',u,u'\in\R^n$, and
$a,b,c,d\in \R$. Thus $F$ is given by: \[
\left(\begin{array}{c} x\\ y\\ \end{array}\right) \mapsto
\left(\begin{array}{c}
\frac{Ax+yv'+u'}        {\iprod{u}{x}+cy+d}\\ \\
\frac{\iprod{v}{x}+ay+b}{\iprod{u}{x}+cy+d}\\ \end{array}\right)\]

Condition (\ref{Eq_Cvx0-Condition-Max}) means that
$\left(\begin{array}{c} 0\\   y \\ \end{array}\right)$ is mapped to
$\left(\begin{array}{c} 0\\ g(y)\\ \end{array}\right)$, where
$g:\R^+\to\R^+$ is some bijection, and therefore
\[\frac{yv'+u'}{cy+d}=0 \qquad \mbox{for all } y>0,\] which
implies $v'=u'=0$. For $g(y)=\frac{ay+b}{cy+d}$ to be a bijection
there exist only two options, corresponding to the two types of
transforms on $Cvx_0(K)$: either $g$ is increasing, and then $g(y)
=\frac{y}{d}$ for some $d>0$, which is associated with the $\I$-type
transforms, or $g$ is decreasing, and then $g(y)=\frac{b}{y}$ for
some $b>0$, which is associated with the $\J$-type transforms. We
denote these two different cases by $F^\I$ and $F^\J$, and (using
the multiplicative degree of freedom in $A_F$) get:
\[A_{F^\I} = \left(
  \begin{array}{ccccc}
         &        &      &    0   &    0   \\
         &    A   &      & \vdots & \vdots \\
         &        &      &    0   &    0   \\
    v_1  & \cdots & v_n  &    1   &    0   \\
    u_1  & \cdots & u_n  &    0   &    d   \\
  \end{array} \right),\qquad
  A_{F^\J} = \left(
  \begin{array}{ccccc}
         &        &      &    0   &    0   \\
         &    A   &      & \vdots & \vdots \\
         &        &      &    0   &    0   \\
    v_1  & \cdots & v_n  &    0   &    b   \\
    u_1  & \cdots & u_n  &    1   &    0   \\
  \end{array} \right)\]
Note that in both cases, $A_F\in GL_{n+2}\Leftrightarrow A\in GL_n$.
Turning to condition (\ref{Eq_Cvx0-Condition-Min}), we separate the
two cases, dealing first with the $\I$-type.

This case is very similar to the $Cvx(K)$ case (see discussion in
Section \ref{Sect_Which-FL-Work-Cvx}), where the preservation of
infinite cylinders is replaced with preservation of a part of those
cylinders. Since \[ F^\I\left(\begin{array}{c} x\\ y\\
\end{array}\right) = \left(\begin{array}{c}
\frac{Ax}            {\iprod{u}{x}+d}\\ \\
\frac{\iprod{v}{x}+y}{\iprod{u}{x}+d}\\ \end{array}\right)\] we have
that  $\frac{\iprod{v}{x}+y} {\iprod{u}{x}+d}>0$ for all $x\in K_1$
and $y>0$, which implies $K_1\subseteq\{\iprod{u}{x}+d>0\}$. Since
$\frac{\iprod{v}{x}+y}{\iprod{u}{x}+d}$ maps $\R^+$ to $\R^+$ (as a
function of $y$), $\iprod{v}{x}=0$ for all $x\in K_1$, which implies
$v=0$ (recall that $K_1$ has interior). The general form of an
$\I$-type inducing map, is thus given, for $A\in GL_n$, $u\in\R^n$,
and $d>0$, such that $K_1\subseteq\{\iprod{u}{x}+d>0\}$ by
\[
F^\I \left(\begin{array}{c} x\\ y\\
\end{array}\right) = \left(\begin{array}{c}
\frac{Ax}{\iprod{u}{x}+d}\\ \\
\frac{ y}{\iprod{u}{x}+d}\\ \end{array}\right).\]

For the $\J$-type case, we know that
\[ F^\J\left(\begin{array}{c} x\\ y\\
\end{array}\right) = \left(\begin{array}{c}
\frac{Ax}            {\iprod{u}{x}+y}\\ \\
\frac{\iprod{v}{x}+b}{\iprod{u}{x}+y}\\ \end{array}\right).\]
Therefore $\frac{\iprod{v}{x}+b} {\iprod{u}{x}+y}>0$ for all $x\in
K_1$ and $y>0$, which implies $K_1\subseteq\{\iprod{u}{x}\ge0\}$,
and also $K_1\subseteq\{\iprod{-v}{x}\le b\}$.

In the image, we know that each fiber $\{(x_2, y)\}$ above a point
$x_2\in K_2$ must contain all positive $y$. The fiber above
$\frac{Ax_0}{\iprod{u}{x_0}+y_0}$ is given by
$\frac{t\iprod{v}{x_0}+b}{t(\iprod{u}{x_0}+y_0)}$, and is the image
of the ray $(tx_0, ty_0)$ in $K_1\times\R^+$, which may be bounded
or not. If it is bounded, say of the form $[(0,0), (x_0, y_0)]$, we
must have $\iprod{v}{x_0}=-b$. If it is not bounded, we must have
$\iprod{v}{x_0}=0$. Therefore we handle the following cases
separately:

\noindent{\bf A cone $K_1$:} In this case, all rays $(tx_0, ty_0)$
in $K_1\times\R^+$ are not bounded, therefore all directions $x_0$
in $K_1$ satisfy $\iprod{v}{x_0}=0$, and therefore $v=0$, since
$K_1$ has interior.

\noindent{\bf Bounded $K_1$:} In this case, all rays $(tx_0, ty_0)$
in $K_1\times\R^+$ are bounded, therefore all rays in $K_1$
emanating from the origin have end points in the hyperplane
$\{\iprod{v}{\cdot} = -b\}$ (in particular, $v\neq0$). This means
that $K_1$ is a truncated cone, i.e. $K_1=\K_1\cap S_1$, where
$\K_1$ is the minimal cone containing $K_1$ and $S_1$ is the slab
$\{0\le \iprod{-v}{\cdot} \le b\}$.

\noindent{\bf General $K_1$:} In this case, some rays $(tx_0, ty_0)$
in $K_1\times\R^+$ are bounded, which implies $v\neq0$. All non
bounded directions $x_0$ must satisfy as before $\iprod{v}{x_0}=0$,
which implies that $K_1$ is bounded in directions $x_0\not\in
v^\perp$, and $K_1\cap v^\perp$ is a (degenerate) cone. As in the
bounded case, we get $K_1=\K_1\cap S_1$.

We can sum up the above three options as follows. The set $K_1$ is
the intersection of some cone, with the (possibly degenerate; if
$v=0$) slab $\{0\le \iprod{-v}{\cdot}\le b\}$.

Similarly, since in every direction, $K_2$ is given by $\{tAx:\quad
0\le t\le \iprod{u}{x}^{-1}\}$ for some $x\in K_1$ (if $\iprod{u}{x}
=0$ we let $\iprod{u}{x}^{-1}=\infty$), it contains full rays in all
directions $A(u^\perp)$, and in other directions it contains
intervals with end points $x_0$ which satisfy $\iprod{A^{-T}u}{x_0}
=1$. This implies, as before, that $K_2$ is some cone, intersected
with the (possibly degenerate) slab $\{0\le \iprod{A^{-T}u}{\cdot}
\le 1\}$.

One can further investigate the possible restrictions on $v, u, A$ 
with respect to the bodies $K_i$, but it involves considering
different cases for $K_1$ and $K_2$. We do not go into this in
detail but instead give a few examples.

\begin{rem}\label{Exm_0-Int-K1} Under the condition $0\in int(K_1)$,
the only $\J$-type order isomorphism $\T: Cvx_0(K_1)\to Cvx_0(K_2)$
is possible when $K_1=K_2=\R^n$. Indeed, in that case $K_1\subseteq
\{\iprod{u}{x}\ge0\}$ implies $u=0$, which means that the projection
of $F^\J$ to the first $n$ coordinates is $ P_n  F^\J  \left(
\begin{array}{c} x\\ y\\ \end{array}\right) =\frac{1}{y}Ax$.
Clearly, since $A$ is invertible, and $K_1$ contains $0$ in the
interior, this means $K_2=\R^n$, and by the exact same argument also
$K_1=\R^n$. In addition, $K_1\subseteq \{\iprod{v}{x}+b\ge0\}$
implies $v=0$, thus the general form of $F^\J$ which induces a
transform on $Cvx_0(\R^n)$ is \[F^\J\left(\begin{array}{c} x\\ y\\
\end{array}\right) = \left(\begin{array}{c} \frac{Ax}{y}\\ \\
\frac{b}{y}\\ \end{array}\right)\] for $A\in GL_n$ and $b>0$, as
stated in Theorem \ref{Thm_Cvx0-n>1-Old}.
\end{rem}

\begin{exm}\label{Exm_u-Not-Zero} When $u\neq0$, the defining
hyperplane of $F^\J$ intersects the cylinder $K_1\times\R$, and the
defining hyperplane of the image intersects the cylinder $K_2\times
\R$. This is a restriction on the bodies $K_i$; since $K_2$ is the
intersection of a cylinder (which has its base on the defining
hyperplane of the image) with a half space ($y>0$), we have (see
Section \ref{Sect_Geom_Desc}) that $K_1\times\R^+$ is the
intersection of a cone (emanating from the origin) with a half
space. This is true also for $K_2\times\R^+$, and so both our bodies
are simultaneously the intersection of a half space with a cylinder
and the intersection of a half space with a cone. For example, let
$K_1=(\R^+)^n$ and $K_2=\mbox{conv}\left\{0,e_1,\dots,e_n\right\}$,
and let $F^\J$ be given by \[A_{F^\J} = \left(
  \begin{array}{ccccc}
        &        &     &    0   &    0   \\
        &   I_n  &     & \vdots & \vdots \\
        &        &     &    0   &    0   \\
    0   & \cdots &  0  &    0   &    b   \\
    1   & \cdots &  1  &    1   &    0   \\
  \end{array} \right).\]
\end{exm}

\begin{exm}\label{Exm_v-Not-Zero} Let $K=K_1=K_2$ be the slab
$\{0\le x_1\le 1\}$. The following matrix induces a mapping $F^\J:
K\times\R^+\to K\times\R^+$:
\[A_{F^\J} = \left( \begin{array}{ccccc}
 &        & &    0   &    0   \\
 &   I_n  & & \vdots & \vdots \\
 &        & &    0   &    0   \\
 & -e_1^T & &    0   &    1   \\
 &  e_1^T & &    1   &    0   \\ \end{array} \right),\] which
induces an order isomorphism on $Cvx_0(K)$.
\end{exm}

\subsection{Generalized geometric convex functions}\label{Sect_CvxTK}
\subsubsection{Introduction}
\begin{defn} Let $n\ge2$, and let $T\subset K$ be two closed convex
sets. The subclass of $Cvx(\R^n)$ consisting of functions above
$1_K$ and below $1_T$ will be denoted $Cvx_T(K)$, that is
\[Cvx_T(K) := \{f\in Cvx(\R^n): 1_K\le f\le 1_T\}.\]
\end{defn}
%

\noindent{\bf Remarks

\noindent 1.} In the case $n=1$ this definition would still make
sense, but it does not really generalize the case of $T=\{0\}$.
Indeed, $Cvx_T(K)$ is isomorphic to $Cvx_0([0,1])$ if $K\setminus T$
is connected, and to $Cvx_0([-1,1])$ otherwise.

\noindent{\bf 2.} When $T=\emptyset$, this is the case of convex
functions on a window, $Cvx(K)$.

\noindent{\bf 3.} When $T=\{0\}$, this is the case of geometric
convex functions on a window, $Cvx_0(K)$.

\noindent{\bf 4.} Throughout this section we will assume that $K$ is of
dimension $n$, and that the interior of $K\setminus T$ is connected.

\begin{defn} Let $\T:Cvx_{T_1}(K_1)\to Cvx_{T_2}(K_2)$ be an order
preserving isomorphism, and $F:K_1\times\R^+\to K_2\times\R^+$ a
fractional linear map such that $epi(Tf)=F(epi(f))$ for every $f\in
Cvx_{T_1}(K_1)$. The transform $\T$ and the map $F$ are said to be
of $\I$-type in two cases: the first, if $F$ is linear map, and the
second, if $F$ is a non affine fractional linear map with its
defining hyperplane containing a ray in the $\R^+$ direction.
Otherwise, $\T$ and $F$ are said to be of $\J$-type.

Note that this definition coincides with that of the particular case
$Cvx_0(K)$, given in the previous section, \ref{Sect_Which-FL-Work-Cvx0}.
\end{defn}

\begin{defn} Let $\T:Cvx_{T_1}(K_1)\to Cvx_{T_2}(K_2)$ be an order
reversing isomorphism. We say that $\T$ is of $\A$-type if the
composition $\T\circ\A$ is an order preserving isomorphism of
$\I$-type, otherwise we say $\T$ is of $\L$-type.
\end{defn}

We deal with order isomorphisms from $Cvx_{T_1}(K_1)$ to
$Cvx_{T_2}(K_2)$. We show that order preserving isomorphisms are
induced by fractional linear point maps on $K_1\times\R^+$, which
are always of $\I$-type. We show that up to a composition with such
transforms, the only order reversing isomorphism is the geometric
duality $\A$. It may be formulated for order preserving or for order
reversing transforms:

\begin{thm}\label{Thm_CvxTK-Preserving}
Let $n\ge2$, and let $T_1\subset K_1\subset\R^n, T_2\subset K_2
\subset\R^n$ be four non empty, convex, compact sets, and assume
that $int(K_i)\neq\emptyset$, and that $int(K_1\setminus T_1)$ is
connected. If $\T:Cvx_{T_1}(K_1)\to Cvx_{T_2}(K_2)$ is an order
preserving isomorphism, then there exists a fractional linear map
$F:K_1\times\R^+\to K_2\times\R^+$ such that for every $f\in
Cvx_{T_1}(K_1)$, we have \[epi(\T f)=F(epi(f)).\] Moreover, $F$ is
of $\I$-type, and in particular $T_2$ is a fractional linear image
of $T_1$, and $K_2$ is a fractional linear image of $K_1$.
\end{thm}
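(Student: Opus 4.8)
The plan is to run the argument of Theorem~\ref{Thm_Cvx0-n>1-New} with the set $T_1$ taking the place of the single point $0$, and at the end to read off the type of the inducing map from the compactness hypotheses.

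First I would isolate the distinguished functions. Since $\T$ is an order isomorphism it preserves the maximal and minimal elements, so $\T 1_{T_1}=1_{T_2}$ and $\T 1_{K_1}=1_{K_2}$; by Proposition~\ref{Prop_Sup-To-Sup} it also respects $\sup$ and $\hat{\inf}$. For $x\in K_1$ and $h>0$ introduce the \emph{generalized triangle} $\lhd^{x,h}$, the function whose epi-graph is $conv\bigl(epi(1_{T_1})\cup\{(x,h)\}\bigr)$, i.e.\ the cone over $T_1$ with apex $(x,h)$ closed upwards, together with its degenerate limit as $h\to 0^+$, the indicator $1_{conv(T_1\cup\{x\})}$. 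As in Lemma~\ref{Lem_Raywise}, whenever $conv(T_1\cup\{x\})\cap conv(T_1\cup\{x'\})=T_1$ one has $\max\{\lhd^{x,h},\lhd^{x',h'}\}=1_{T_1}$, and this consequence of order preservation forces $\T$ to act directionally: the family of functions supported on a set of the form $conv(T_1\cup\{x\})$ is carried onto the analogous family over $T_2$, governed by a bijection $\Phi$ of the boundary data. On each of these one-parameter families the extremality analysis of Lemmas~\ref{Lem_Only-Extremal}--\ref{Lem_Monotone-Bij} and the dichotomy of Lemma~\ref{Lem_Dichotomy} apply, yielding that $\T$ carries generalized triangles to generalized triangles monotonically; reparametrizing, this produces a bijection $F$ on $(K_1\setminus T_1)\times\R^+$ determined by $\T\lhd^{x,h}=\lhd^{F(x,h)}$.

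The heart of the proof is to show that $F$ is fractional linear. Exactly as in Proposition~\ref{Prop_Triangle-Rule-Is-FL}, monotonicity together with the relation $\lhd^{x_3,h_3}\ge\hat{\inf}\{\lhd^{x_1,h_1},\lhd^{x_2,h_2}\}$ -- in which $(x_3,h_3)\in[(x_1,h_1),(x_2,h_2)]$ forces $x_3$ to be maximal and $h_3$ minimal, a feature transported to the images because $\Phi$ and the triangle rule are monotone -- shows that $F$ maps the segment $[(x_1,h_1),(x_2,h_2)]$ to a segment whenever the whole line through $x_1$ and $x_2$ misses $T_1$, so that $[x_1,x_2]$ is an exposed face of $conv(T_1\cup\{x_1,x_2\})$. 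Because $T_1$ is no longer a point this is \emph{not} every interval of the cylinder; but through each point the admissible directions form a nonempty open cone, so one is in exactly the situation covered by Shiffman's strengthening of Theorem~\ref{Thm_FL-Unique-nD}, in which only a sufficiently large subfamily of intervals need be preserved (this is the generality flagged earlier in the paper). It follows that $F$ is fractional linear on the connected open set $(int(K_1)\setminus T_1)\times\R^+$. One then checks, using Section~\ref{Sect_Geom_Desc}, that the defining hyperplane of $F$ cannot meet $epi(1_{T_1})$ (otherwise it would contain a whole $\R^+$-ray and the rays emanating from it would go to parallel rays, which induces no transform), and extends $F$ across $T_1\times\R^+$; the patching is automatic for $n\ge2$ by Corollary~\ref{Lem_Mizdahim-Interior}.

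It remains to recover $\T$ and identify its type. From the identity $epi(f)=\{(x,h)\in(K_1\setminus T_1)\times\R^+:\lhd^{x,h}>f\}\cup epi(1_{T_1})$ and $\T\lhd^{x,h}=\lhd^{F(x,h)}$, the computation at the end of Section~\ref{Sect_Cvx0} gives $epi(\T f)=F(epi(f))$ for every $f\in Cvx_{T_1}(K_1)$; in particular $F(epi(1_{T_1}))=epi(1_{T_2})$ and $F(epi(1_{K_1}))=epi(1_{K_2})$, so $T_2$ and $K_2$ are fractional linear images of $T_1$ and $K_1$. Finally, $F$ is of $\I$-type: if $F$ were non-affine with its defining hyperplane transverse to the $\R^+$-direction, then by the lens picture of Section~\ref{Sect_Geom_Desc} it would map the half-cylinder $K_1\times\R^+$, whose axis is the $\R^+$-direction, onto a cone; but $F(K_1\times\R^+)=K_2\times\R^+$, and since $K_1$ and $K_2$ are compact with nonempty interior, each $K_i\times\R^+$ is a genuine full-dimensional half-infinite cylinder and not a cone -- a contradiction. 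Hence the defining hyperplane of $F$ contains an $\R^+$-ray, i.e.\ $F$ is of $\I$-type. I expect the real obstacle to be the middle paragraph: pinning down exactly which intervals $F$ is guaranteed to preserve and checking that this subfamily satisfies the hypotheses of Shiffman's theorem near $\partial T_1$, where the cone of admissible directions becomes thin; the directional reduction, with its bookkeeping of the faces of $conv(T_1\cup\{x\})$, is the next most delicate step.
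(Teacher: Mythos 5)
Your second half is essentially the paper's proof: the preserved intervals are exactly those lying on lines through $E_1=int((K_1\setminus T_1)\times\R^+)$ that miss $T_1\times\R^+$ (the paper's $\L_0$, Lemmas \ref{Lem_Lines-Family-Is-Rich}--\ref{Lem_Lines-Family-Is-Good}), Shiffman's Theorem \ref{Thm_Shiffman} then gives fractional linearity, and the $\I$-type conclusion follows because a compact-based half-cylinder cannot be a cone. But the step you correctly sense is delicate --- getting from the order isomorphism to the statement that $\T$ permutes the generalized triangles $\lhd^{x,h}$ --- has a genuine gap as you set it up. You port the $Cvx_0$ machinery (Lemma \ref{Lem_Raywise} plus Lemmas \ref{Lem_Only-Extremal}--\ref{Lem_Monotone-Bij} and the dichotomy of Lemma \ref{Lem_Dichotomy}), but that machinery does not transfer. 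First, the sets $conv(T_1\cup\{x\})$ do not partition $K_1\setminus T_1$ the way rays through $0$ do: for $x'\in conv(T_1\cup\{x\})\setminus T_1$ the two ``generalized rays'' overlap in much more than $T_1$, so the support argument of Lemma \ref{Lem_Raywise} does not produce a bijection $\Phi$. Second, the sub-poset of functions supported on $conv(T_1\cup\{x\})$ is not a copy of $Cvx_0(I)$ for a one-dimensional $I$ (that set is $n$-dimensional), so the max-extremality analysis characterizing ``indicators and linear functions'' and the $\I$/$\J$ dichotomy have no meaning there; indeed for a general compact $T_1$ there is no analogue of the linear functions $l_c$ at all. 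Note also that the triangles in $Cvx_0$ are \emph{not} the max-extremal elements of Lemma \ref{Lem_Only-Extremal} (they are maxima of an indicator and a linear function), so even in the model case the lemmas you invoke characterize a different family than the one you need.

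The paper's missing idea is to change the notion of extremality: a function $f$ is extremal when $f=\hat{\inf}\{g,h\}$ forces $f=g$ or $f=h$ (meet-irreducibility), a property manifestly preserved by any order isomorphism between the two classes with no directional reduction whatsoever. Lemma \ref{Lem_Extremal-Property-Epi-Graphs} then shows, via the Krein--Milman theorem for non-compact convex sets, Straszewicz's density of exposed points, and a separation argument (Lemma \ref{Lem_Like-Compact}), that the extremal elements of $\K^{n+1}(epi(1_{T_1}),epi(1_{K_1}))$ are exactly the sets $epi(1_{T_1})\vee\{(x,h)\}$, i.e.\ your generalized triangles together with $1_{T_1}$. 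This gives the point map $F$ in one stroke. It also repairs your interval-preservation step, which as written leans on the monotonicity you extracted from the directional reduction: the paper instead characterizes $[a,b]$ for \emph{non-comparable} $a,b$ purely order-theoretically (Lemma \ref{Lem_Lines-Family-Is-Good}: $x\in[a,b]$ iff $\delta_x\ge\hat{\inf}\{\delta_a,\delta_b\}$ and no $\delta_y\ge\hat{\inf}\{\delta_a,\delta_b\}$ satisfies $\delta_y<\delta_x$), and Lemma \ref{Lem_Lines-Family-Is-Non-Comp} identifies the non-comparable pairs with exactly the lines in $\L_0$. With those substitutions your outline becomes the paper's proof.
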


\begin{thm}\label{Thm_CvxTK-Reversing}
Let $n\ge2$, let $K\subset T\subseteq\R^n$ be two convex sets such
that $0\in int(K)$, and assume that $K$ does not contain a full
line, and that $int(T\setminus K)$ is connected. Let $T'\subset K'
\subset\R^n$ be two non empty, convex, compact sets, and assume that
$int(K')\neq\emptyset$. If $\T: Cvx_K(T)\to Cvx_{T'}(K')$ is an
order reversing isomorphism, then 
$\T$ is of $\A$-type. In particular, $T',K'$ are fractional linear
images of $T^\circ,K^\circ$ respectively.
\end{thm}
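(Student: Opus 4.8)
The plan is to reduce the statement to the order preserving case, Theorem \ref{Thm_CvxTK-Preserving}, by precomposing $\T$ with the geometric duality $\A$. Throughout we may assume $K$ and $T$ are closed, since passing to closures does not change $Cvx_K(T)$. Since $0\in int(K)\subseteq int(T)$, both polars $K^\circ,T^\circ$ are compact; since $K$ contains no full line, $int(K^\circ)\neq\emptyset$; and $K\subseteq T$ gives $T^\circ\subseteq K^\circ$, with $K^{\circ\circ}=K$ and $T^{\circ\circ}=T$.

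First I would observe that $\A$ restricts to an order reversing isomorphism $\A:Cvx_{T^\circ}(K^\circ)\to Cvx_K(T)$. A direct computation from the definition of $\A$ gives $\A(1_{K^\circ})=1_{K^{\circ\circ}}=1_K$ and $\A(1_{T^\circ})=1_T$. Since $\A$ is an order reversing involution on $Cvx_0(\R^n)$ (see \cite{hidden-structures}) and $Cvx_{T^\circ}(K^\circ)=\{g\in Cvx_0(\R^n):1_{K^\circ}\le g\le 1_{T^\circ}\}$, applying order reversal to these two identities shows that $\A$ carries this class bijectively onto $\{f:1_T\le f\le 1_K\}=Cvx_K(T)$.

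Then $\T\circ\A:Cvx_{T^\circ}(K^\circ)\to Cvx_{T'}(K')$ is a composition of two order reversing isomorphisms, hence an order preserving isomorphism, and I would apply Theorem \ref{Thm_CvxTK-Preserving} to it with $T_1=T^\circ,K_1=K^\circ,T_2=T',K_2=K'$. The required hypotheses hold: the four sets are non empty, convex and compact; $T^\circ\subseteq K^\circ$ and $T'\subseteq K'$; $int(K^\circ)\neq\emptyset$ and $int(K')\neq\emptyset$; the only point requiring genuine work is that $int(K^\circ\setminus T^\circ)$ be connected. Granting this, Theorem \ref{Thm_CvxTK-Preserving} produces a fractional linear $F:K^\circ\times\R^+\to K'\times\R^+$ of $\I$-type with $epi((\T\circ\A)g)=F(epi(g))$ for all $g$; its base map (the projection to the first $n$ coordinates) is fractional linear and sends $K^\circ$ to $K'$ and $T^\circ$ to $T'$, which is the final assertion of the theorem. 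Since $\T\circ\A$ is then an order preserving isomorphism of $\I$-type, $\T$ is of $\A$-type by definition, and the proof is complete.

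The hard part is the connectedness of $int(K^\circ\setminus T^\circ)$, that is, transferring the standing connectedness assumption through polarity. Using radial and support functions ($r_{K^\circ}=1/h_K$, $r_{T^\circ}=1/h_T$), one first checks that for closed convex $M\subseteq L$ with $0\in int(M)$, the open set $int(L\setminus M)=int(L)\setminus M$ is connected exactly when $\partial M\setminus\partial L$ (the boundary points of $M$ lying in $int(L)$) is connected: the radial homeomorphism $S^{n-1}\to\partial M$ carries the fibrewise-interval description of $int(L\setminus M)$ to $\partial M\setminus\partial L$. Hence it suffices to show that $\partial K\setminus\partial T$ connected implies $\partial K^\circ\setminus\partial T^\circ$ connected, and I would do this via the Gauss map: a boundary point of $K$ lying in $int(T)$ corresponds, through its outward normal cone, to a boundary point of $K^\circ$ lying in $int(T^\circ)$, while the common contact boundary $\partial K\cap\partial T$ is matched with $\partial K^\circ\cap\partial T^\circ$, and connectedness is carried along this correspondence. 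This polarity invariance of the annular region between $K$ and $T$ is the technical heart; the remaining steps are formal.
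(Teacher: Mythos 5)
Your proposal is correct and follows essentially the same route as the paper: compose with $\A$ to get an order preserving isomorphism $\T\circ\A:Cvx_{T^\circ}(K^\circ)\to Cvx_{T'}(K')$, verify the hypotheses of Theorem \ref{Thm_CvxTK-Preserving} (compactness and nonempty interior of the polars, and connectedness of $int(K^\circ\setminus T^\circ)$), and conclude that the composition is of $\I$-type, hence $\T$ is of $\A$-type. The only difference is one of detail: the paper dismisses the polarity-invariance of connectedness of the annular region as ``easy to check,'' whereas you sketch an actual argument for it via radial/support functions and the normal-cone correspondence.
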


\noindent{\bf Proof of Theorem \ref{Thm_CvxTK-Reversing}.} The
composition $\tilde{\T}:=\T\circ\A: Cvx_{T^\circ}(K^\circ)\to
Cvx_{T'}(K')$ is an order preserving isomorphism, and the
assumptions on $T$ and $K$ imply that $T, K, T', K'$ satisfy the
conditions of Theorem \ref{Thm_CvxTK-Preserving}. Indeed, $T^\circ,
K^\circ$ are non empty convex sets, and $0\in int(K)$ implies that
they are compact. Since $K$ does not contain a full line, $K^\circ$
is not contained in any hyperplane, thus it has non empty interior.
It is easy to check that for two convex sets $A\subset B$,
$int(B\setminus A)$ is connected if and only if
$int(A^\circ\setminus B^\circ)$ is connected, thus we may apply
Theorem \ref{Thm_CvxTK-Preserving}. We get that $\tilde{\T}$ is
induced by some fractional linear map $F: T^\circ\times\R^+\to
T'\times\R^+$, which is of $\I$-type, thus $\T$ is of $\A$-type, as
desired. \qed

For the proof of Theorem \ref{Thm_CvxTK-Preserving}, we first need to
define and characterize extremal elements in the class $Cvx_T(K)$.
Then we show that extremal elements are mapped to such, which will
imply that the transform induces a point map on a subset of
$\R^{n+1}$. We show that this point map is 
interval preserving for a sufficiently large set of intervals, in
order to use a theorem of Shiffman \cite{Shiffman}, which states that the map is
fractional linear. Finally we show that under our assumptions, the
transform is of $\I$-type, thus completing the proof of Theorem
\ref{Thm_CvxTK-Preserving}. We will need the following notations
throughout this section.

\begin{itemize}
\item Let $n\geq 2$, and let $A\subset B\subseteq\R^n$ be two closed
convex sets. We denote \[\kn(A,B)=\{K\subseteq\R^n: K\mbox{ is
closed, convex, and } A\subseteq K\subseteq B\}.\] For $\kn(
\emptyset,\R^n)$ we simply write $\K^n$. Note that if $T\neq
\emptyset$, any element in $\K^{n+1}(epi(1_T),epi(1_K))$ is an
epi-graph of some function $f\in Cvx_T(K)$.

\item For the convex hull of two sets $A$ and $B$ we write \[A\vee B=
\bigcap_{K\in\K^n, (A\cup B)\subseteq K} K.\]
\end{itemize}

\subsubsection{Extremal elements}\label{Sect_Extrm2}
\begin{defn} A set $K\in \kn(A,B)$ is called {\em extremal } if $
\quad\forall T,P \in \kn(A,B)$: \[K=T\vee P\qquad\Longrightarrow
\qquad K=T, \quad\text{ or }\quad K=P.\]
\end{defn}
\begin{defn} A function $f\in Cvx_T(K)$ is called {\em extremal } if
$\quad\forall g,h \in Cvx_T(K)$: \[f=\hat{\inf}\{h,g\}\qquad
\Longrightarrow\qquad f=h, \quad\text{ or }\quad f=g.\] Another
formulation of which is: \[epi(f)=epi(h)\vee
epi(g)\qquad\Longrightarrow\qquad f=h, \quad\text{ or }\quad f=g,\]
which (in the case $T\neq\emptyset$), means that $epi(f)$ is
extremal in $\K^{n+1}(epi(1_T),epi(1_K))$.
\end{defn}

Recall that for bijective transforms, order-preservation in both
directions is equivalent to preservation of the lattice operations 
$\hat{\inf}$ and $\sup$ (see Proposition \ref{Prop_Sup-To-Sup} and
Remark \ref{Rem_Lattice-Is-Necessary-For-Cvx-Inj}). Since the
extremality property is defined by the $\hat{\inf}$ operation, all
extremal elements in the domain are mapped to all extremal elements
in the range.

In the next few lemmas we investigate extremal elements of
$Cvx_T(K)$. We need the following simple observation.

\begin{lem}\label{Lem_Like-Compact}
Let $\varphi:\R^n\to\R$ be an affine linear functional and $K\subset
\R^n$ a closed, convex set that does not contain a ray on which
$\varphi$ is constant. If $\varphi(K)>0$, then there exists some $c\in
\R$ such that $\varphi(K)\ge c>0$.
\end{lem}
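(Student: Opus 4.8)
The plan is a proof by contradiction built around the recession cone of $K$. Write $\varphi(x)=\langle a,x\rangle+b$ for suitable $a\in\R^n$ and $b\in\R$ (if $K=\emptyset$ the statement is vacuous, and if $a=0$ then $\varphi\equiv b>0$ is already constant, so we may assume $K\neq\emptyset$ and $a\neq 0$). Set $c:=\inf_{x\in K}\varphi(x)$; since $\varphi>0$ on $K$ we have $c\ge 0$, and the goal is to rule out $c=0$. So suppose $c=0$ and choose a minimizing sequence $x_k\in K$ with $\varphi(x_k)\to 0$.

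First I would dispose of the bounded case: if $(x_k)$ has a bounded subsequence, extract a convergent one; its limit lies in $K$ because $K$ is closed, and $\varphi$ vanishes there, contradicting $\varphi(K)>0$. Hence one may assume $\|x_k\|\to\infty$, and after passing to a further subsequence $u_k:=x_k/\|x_k\|\to u$ with $\|u\|=1$. The geometric heart of the argument is then to show that $u$ is a recession direction of $K$, i.e.\ that for a fixed $x_0\in K$ the entire ray $\{x_0+tu:t\ge 0\}$ is contained in $K$. This follows because for $t\ge 0$ and $k$ large the point $x_0+t\,(x_k-x_0)/\|x_k-x_0\|$ lies in $K$ by convexity, and as $k\to\infty$ it converges to $x_0+tu$, which then lies in $K$ since $K$ is closed.

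To finish, evaluate $\varphi$ along this ray: $\varphi(x_0+tu)=\varphi(x_0)+t\langle a,u\rangle$ must be positive for all $t\ge 0$, which forces $\langle a,u\rangle\ge 0$. On the other hand $\varphi(x_k)=\|x_k\|\langle a,u_k\rangle+b$, so if $\langle a,u\rangle>0$ this would tend to $+\infty$, contradicting $\varphi(x_k)\to 0$; therefore $\langle a,u\rangle=0$. But then $\varphi$ is constant on the ray $\{x_0+tu:t\ge 0\}\subseteq K$, contradicting the hypothesis that $K$ contains no ray on which $\varphi$ is constant. Hence $c>0$, and this $c$ works. The only step that requires any care is the verification that $u$ is a recession direction — a standard consequence of convexity together with the closedness of $K$ — while everything else is a direct computation.
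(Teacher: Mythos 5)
Your proof is correct. The logic is sound: a minimizing sequence for $c:=\inf_K\varphi$ either has a bounded subsequence (whose limit would be a zero of $\varphi$ in the closed set $K$, contradicting $\varphi(K)>0$) or escapes to infinity, in which case the normalized directions converge to a recession direction $u$ of $K$; the finiteness of $\lim\varphi(x_k)$ forces $\langle a,u\rangle=0$, so the ray $x_0+\R^+u\subseteq K$ is one on which $\varphi$ is constant, contradicting the hypothesis. (Your intermediate step deducing $\langle a,u\rangle\ge 0$ from positivity on the ray is actually superfluous, since $\langle a,u_k\rangle=(\varphi(x_k)-b)/\|x_k\|\to 0$ already gives equality, but it does no harm.)

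The paper takes a different and shorter route: it intersects $K$ with the slab $S=\varphi^{-1}([0,1])$, observes that the only rays contained in $S$ are ones on which $\varphi$ is constant, so $K\cap S$ contains no rays at all; it then invokes the standard fact that a closed convex set with no rays is bounded, hence $K\cap S$ is compact and $\varphi$ attains a strictly positive minimum there, which bounds $\varphi$ from below on all of $K$ since $\varphi>1$ off the slab. The two arguments rest on the same underlying principle --- an unbounded closed convex set contains a ray in any limiting direction of an escaping sequence --- but the paper quotes this as a known equivalence (``contains no rays iff bounded''), whereas your recession-cone computation essentially proves that equivalence from scratch in the case at hand. Your version is therefore more self-contained and elementary; the paper's is more economical and localizes the analysis to the slab where the infimum could fail to be attained.
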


\noindent{\bf Proof.} Consider the slab $S=\varphi^{-1}([0,1])$. If
the intersection $K\cap S$ is empty then we may take $c=1$. Assume
otherwise, then $K\cap S$ is a closed convex set, and moreover, it
is bounded. Indeed, the slab $S$ contains only rays on which
$\varphi$ is constant, and $K$ contains no such rays, therefore
$K\cap S$ contains no rays, and one can easily verify that for a
convex set this is equivalent to boundedness. Since $K\cap S$ is
compact and $\varphi$ is continuous, there exists $x_0 \in K$ such
that $\varphi(K)\ge\varphi(x_0) \equiv c>0$. \qed

\begin{lem}\label{Lem_Extremal-Property-Epi-Graphs} Let $n\ge2$, and
let $T\subset K\subset\R^n$ be two non empty, compact, convex sets.
Consider the subsets $A=T\times\R^+$, $B=K\times\R^+$ of $\,\R^n
\times\R=\R^{n+1}$. If $K\in\K^{n+1}(A,B)$ is extremal, then
$K=A \vee \{x\}$, for some $x\in B$.
\end{lem}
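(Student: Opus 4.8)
\textbf{Proof plan for Lemma \ref{Lem_Extremal-Property-Epi-Graphs}.}
The plan is to characterize extremal elements of $\K^{n+1}(A,B)$ (with $A = T\times\R^+$, $B = K\times\R^+$) by showing that if an extremal set $L$ strictly contains $A$, then the ``extra'' part $L\setminus A$ can be no larger than a single extra point. First I would observe that any $L\in\K^{n+1}(A,B)$ with $L\neq A$ contains some point $x\in B\setminus A$, and hence $L\supseteq A\vee\{x\}$; the claim is that for $L$ extremal this inclusion is an equality. The natural strategy is the contrapositive: if $L\supsetneq A\vee\{x\}$ for \emph{every} $x\in L\setminus A$, then I want to exhibit two distinct sets $P,Q\in\K^{n+1}(A,B)$, both properly contained in $L$, with $L = P\vee Q$, contradicting extremality.

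The key step is the splitting construction. Suppose $L$ is not of the form $A\vee\{x\}$. Pick a point $x_0\in L\setminus A$; since $L\neq A\vee\{x_0\}$, there is a second point $x_1\in L\setminus(A\vee\{x_0\})$. The idea is to set $P = A\vee\{x_0\}$ and to build $Q$ as a ``thin slice'' of $L$ that contains $x_1$ (and $A$) but misses $x_0$, so that $P\cup Q$ recovers all of $L$ under convex hull. The geometry that makes this possible is that $A = T\times\R^+$ is a cylinder over a compact base, so it has a distinguished direction (the $\R^+$-axis) and a natural family of supporting functionals; separating $x_0$ from the convex set $A\vee\{x_1\}$ by an affine functional $\varphi$, one wants to take $Q = L\cap\{\varphi \le c\}$ for an appropriate level $c$ with $\varphi(x_1)\le c < \varphi(x_0)$, and then check (i) $Q$ is closed and convex and lies in $\K^{n+1}(A,B)$ — which requires $A\subseteq\{\varphi\le c\}$ — and (ii) $P\vee Q = L$, i.e. every point of $L$ lies in $\mathrm{conv}(P\cup Q)$. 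For (ii), a point $z\in L$ with $\varphi(z)\le c$ is already in $Q$; a point with $\varphi(z) > c$ should be written as a convex combination of $x_0\in P$ and a point of $Q$ lying on the segment from $z$ into the region $\{\varphi\le c\}$, using convexity of $L$.

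The main obstacle I anticipate is arranging that $A$ itself lies in the half-space $\{\varphi\le c\}$ while $\varphi(x_0) > c$ — that is, choosing the separating functional $\varphi$ to be bounded above on the cylinder $A = T\times\R^+$. A generic supporting functional of $A\vee\{x_1\}$ that separates $x_0$ need not be bounded on $A$, since $A$ is unbounded in the $\R^+$-direction. This is exactly where Lemma \ref{Lem_Like-Compact} enters: one restricts attention to functionals $\varphi$ that are non-increasing (or constant) along the positive $(n+1)$st axis, so that $\varphi$ is bounded above on $A$ precisely when it is bounded above on $T\times\{0\}$, which is compact; and the lemma guarantees that the relevant infimum/supremum is attained and strictly separated, so a clean level $c$ can be chosen. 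The remaining routine points — that $P$ and $Q$ are genuinely distinct and each properly contained in $L$ (both hold because $x_1\in Q\setminus P$ after shrinking, and $x_0\in P\setminus Q$), and that $A\vee\{x\}$ is itself always an element of $\K^{n+1}(A,B)$ (immediate, as $A\subseteq B$, $x\in B$, $B$ convex) — I would dispatch quickly. Finally, combining: an extremal $L$ that is not equal to $A$ must then equal $A\vee\{x\}$ for some $x\in B$, and the degenerate case $L = A$ is covered by taking $x$ any point of $A$ itself.
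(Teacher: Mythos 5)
There is a genuine gap in the central splitting step. You set $P=A\vee\{x_0\}$ and $Q=L\cap\{\varphi\le c\}$ and claim $P\vee Q=L$ by writing any $z\in L$ with $\varphi(z)>c$ as a convex combination of $x_0$ and a point of $Q$ obtained by prolonging the segment from $x_0$ through $z$. That prolonged segment need not stay inside $L$ until it reaches $\{\varphi\le c\}$ (and if $\varphi(z)\ge\varphi(x_0)$ it never reaches that half-space at all). A two-dimensional picture (ignoring the cylinder direction) already breaks it: take $A=\{(0,0)\}$, $L=[0,1]^2$, $x_0=(1,1)$, $x_1=(0,1)$, $\varphi(x,y)=x$, $c=1/2$. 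Then $x_1\notin A\vee\{x_0\}$ and $\varphi$ separates $x_0$ from $A\vee\{x_1\}$ as you require, but $P\vee Q$ is the convex hull of the left half-square and the single point $(1,1)$, which is contained in $\{y\ge 2x-1\}$ and therefore misses $(1,0)\in L$. The underlying problem is structural: $P$ contains only one point on the far side of the slab, so it cannot regenerate the other boundary points of $L$ there; symmetrically, your witness $x_1$ may be an interior point of $L$, so there is no way to certify that it lies outside the other piece. A secondary issue is that your separation step presupposes $x_0\notin A\vee\{x_1\}$, which does not follow from $x_1\notin A\vee\{x_0\}$.

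The paper's proof supplies exactly the missing ingredient: before splitting, it represents the extremal set as $K=A\vee E$, where $E$ is the set of \emph{exposed} extreme points of $K$ not lying in $A$. This uses Klee's Krein--Milman theorem for unbounded closed convex sets \cite{Klee1}, the observation that the only rays of $K$ are translates of $\{0\}\times\R^+$ (so all extreme rays are absorbed into $A$), and Straszewicz's density of exposed points among extreme points \cite{Straszewicz}. The hyperplane split is then applied to $E$ rather than to $K$: with $K^{\pm}=A\vee(E\cap H_2^{\pm})$ one gets $K=K^+\vee K^-$ for free, since $E=(E\cap H_2^+)\cup(E\cap H_2^-)$; the separating functional certifies $x_2\notin K^+$, and the exposing functional of $x_1$ --- combined with Lemma \ref{Lem_Like-Compact}, which you correctly anticipated is needed to keep affine functionals bounded away from zero on the unbounded cylinder $A$ --- certifies $x_1\notin K^-$. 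Your instinct about where the cylinder's unboundedness bites is right, but without first reducing to a generating set of exposed points, the two-piece decomposition cannot be made to work.
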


\begin{proof}[{\bf Proof of Lemma \ref{Lem_Extremal-Property-Epi-Graphs}.}]
Let $K\in\K^{n+1}(A,B)$ be extremal. By a Krein-Milman type theorem
for non compact sets, see \cite{Klee1}, $K$ is the convex hull of
its extreme points and extreme rays. Since the only rays in $K$ are
translates of $\{0\}\times\R^+$, and any extreme ray must emanate
from an extreme point, $K$ is the convex hull of $A$ and its extreme
points. Finally, since the set of exposed points is dense in the set
of extreme points, see \cite{Straszewicz}, if we denote by $E$ the
set of {\em exposed} extreme points of $K$ which are not in $A$, we
have $K=A\vee E$ (actually in \cite{Straszewicz} this is proved for
compact convex sets, but the non compact case follows as an
immediate consequence, and also appears in a more general setting of
normed spaces in \cite{Klee2}, as Theorem 2.3).

Let $x_1\in E$, and let
$\varphi_1$ be an affine functional such that
$\varphi_1(K\setminus \{x_1\})>0$ and $\varphi_1(x_1)=0$. Note that
$\varphi_1$ cannot be constant on translates of $\{0\}\times\R^+$,
since then it would be constant $0$ on the translate of
$\{0\}\times\R^+$ emanating from $x_1$, contradicting strict
positivity on $K\setminus\{x_1\}$. If $E\subseteq A\vee\{x_1\}$, the
proof is complete. Assume otherwise; that there exists $x_2 \in E
\setminus (A\vee\{x_1\})$. We may separate $x_2$ from the closed set
$A\vee\{x_1\}$ by an affine functional $\varphi_2$ such that
$\varphi_2(A\vee\{x_1\})>0$ and $\varphi_2(x_2)<0$.
Denote by $H_2^-$ the (closed) half space on which $\varphi_2\le0$
and by $H_2^+$ the (closed) half space on which $\varphi_2\ge0$.
Consider the sets $K^+=A\vee (E\cap H_2^+)$, $K^-=A\vee (E\cap H_2^-
)$. Clearly $K^i\in\K^{n+1}(A,B)$ and $K=K^+\vee K^-$, thus by
extremality of $K$ we must have either $K=K^+$ or $K=K^-$. Since
both $A$ and $E\cap H_2^+$ are contained in $H_2^+$, so is $K^+$,
thus $x_2\not\in K^+$. This implies $K\neq K^+$, i.e. $K=K^-$. We
next show that $x_1\not\in K^-$, which leads to the wanted
contradiction. To this end we claim that $\varphi_1(K^-)>0=\varphi_1
(x_1)$. Indeed, $\varphi_1(A)>0$, and the only rays contained in $A$
are translates of $\{0\}\times\R^+$, on which $\varphi_1$ is not
constant. Thus, by Lemma \ref{Lem_Like-Compact}, there exists some
constant $c$ such that $\varphi_1(A)\ge c>0$. Similarly
$\varphi_1(K\cap H_2^-)\ge c'>0$. For the convex hull we get
$\varphi_1(K^-)\ge\min\{c,c'\}>0$, so $x_1\not\in K^-$.
\end{proof}

The following is a simpler version of Lemma
\ref{Lem_Extremal-Property-Epi-Graphs}, which we do not use in this
paper but add it to complete the picture.

\begin{lem}\label{Lem_Extremal-Property-Sets} Let $n\geq 2$, and let
$A\subset B\subset\R^n$ be two compact convex sets. If $K\in
\kn(A,B)$ is extremal, then $K=A\vee \{x\}$, for some $x\in B$.
\end{lem}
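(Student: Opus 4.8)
The plan is to mimic the proof of Lemma \ref{Lem_Extremal-Property-Epi-Graphs}, which is in fact the harder (non-compact) analogue of this statement, and to note that the compact case is actually simpler because one can invoke the classical Krein--Milman theorem directly, with no need for Klee's extension to non-compact sets, no extreme rays to worry about, and no auxiliary Lemma \ref{Lem_Like-Compact}.

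First I would observe that since $B$ is compact, so is $K\in\kn(A,B)$, and by the classical Krein--Milman theorem $K$ is the closed convex hull of its extreme points; since the set of exposed points is dense in the set of extreme points (\cite{Straszewicz}), we get $K=A\vee E$, where $E$ denotes the set of exposed points of $K$ not lying in $A$. (If $E$ is empty then $K=A$, contradicting $K\neq A$ if one insists on that; but for the statement as given, $K=A\vee\{x\}$ for any $x\in A$ works trivially, so assume $E\neq\emptyset$.) Pick $x_1\in E$ and an affine functional $\varphi_1$ exposing it, i.e. $\varphi_1(x_1)=0$ and $\varphi_1(K\setminus\{x_1\})>0$. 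If $E\subseteq A\vee\{x_1\}$ we are done. Otherwise there is $x_2\in E\setminus(A\vee\{x_1\})$, which we separate strictly from the compact convex set $A\vee\{x_1\}$ by an affine functional $\varphi_2$ with $\varphi_2(A\vee\{x_1\})>0$ and $\varphi_2(x_2)<0$.

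Then, exactly as in Lemma \ref{Lem_Extremal-Property-Epi-Graphs}, set $K^+=A\vee(E\cap H_2^+)$ and $K^-=A\vee(E\cap H_2^-)$, where $H_2^{\pm}=\{\varphi_2\gtrless 0\}$ are the closed half-spaces. Both lie in $\kn(A,B)$ and $K=K^+\vee K^-$, so extremality of $K$ forces $K=K^+$ or $K=K^-$. Since $A\cup(E\cap H_2^+)\subseteq H_2^+$ and $x_2\notin H_2^+$, we have $K\neq K^+$, hence $K=K^-$. Now in the compact setting the final contradiction is cleaner than in the epigraph case: $A\subseteq H_2^+$ and $E\cap H_2^-\subseteq H_2^-$, and $\varphi_1$ is a continuous affine function which is strictly positive on the compact set $A$ (so $\varphi_1(A)\ge c>0$ for some $c$) and strictly positive on the compact set $K\cap H_2^-$ (so $\ge c'>0$), hence $\varphi_1(K^-)\ge\min\{c,c'\}>0=\varphi_1(x_1)$, so $x_1\notin K^-=K$, contradicting $x_1\in E\subseteq K$. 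Therefore $E\subseteq A\vee\{x_1\}$ and $K=A\vee\{x_1\}$.

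The main point — and the only place one must be a little careful — is the reduction $K=A\vee E$: one needs the density of exposed points among extreme points together with compactness so that the closed convex hull of $A\cup E$ is all of $K$ (here $A\vee E$ is by definition a closed convex set containing $A\cup E$, hence contains $\overline{\mathrm{conv}}(A\cup\overline{E})=K$, and conversely is contained in $K$ since $K$ is convex closed and contains $A\cup E$). Everything else is a verbatim transcription of the argument in Lemma \ref{Lem_Extremal-Property-Epi-Graphs} with ``translate of $\{0\}\times\R^+$'' deleted and ``compact'' substituted wherever Lemma \ref{Lem_Like-Compact} was previously invoked. I expect no real obstacle; this lemma is strictly easier than the one already proved.
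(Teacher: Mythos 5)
Your proof is correct and follows exactly the route the paper intends: the paper omits the proof of Lemma \ref{Lem_Extremal-Property-Sets}, stating that it is contained in the proof of Lemma \ref{Lem_Extremal-Property-Epi-Graphs} with the use of Lemma \ref{Lem_Like-Compact} replaced by a straightforward compactness argument, which is precisely your transcription. No issues.
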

We omit the proof, as it is contained in the proof of the previous
lemma (the use of Lemma \ref{Lem_Like-Compact} is replaced by a
straightforward compactness argument).

A reformulation of Lemma \ref{Lem_Extremal-Property-Epi-Graphs} is:

\begin{lem}\label{Lem_Extremal-Property-Functions} Let $n\ge2$, and
let $T\subset K\subset\R^n$ be two non empty, compact, convex sets.
If $f\in Cvx_T(K)$ is extremal, then either:
\begin{itemize}
\item $f=1_T$, or:
\item $f=\hat{\inf}\{1_T, \delta_{k,h}\}$ for some $k\in
K\setminus T$ and $h\ge0$.
\end{itemize}
\end{lem}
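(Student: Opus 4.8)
The plan is to deduce this directly from Lemma \ref{Lem_Extremal-Property-Epi-Graphs} through the epi-graph correspondence, which is exactly why the statement is labelled a reformulation. First I would record the dictionary used in Section \ref{Sect_CvxTK}: the assignment $f\mapsto epi(f)$ is an order-reversing bijection from $Cvx_T(K)$ onto $\K^{n+1}(A,B)$ with $A=epi(1_T)=T\times\R^+$ and $B=epi(1_K)=K\times\R^+$, and it intertwines the two lattice operations, $epi(\hat{\inf}\{g,h\})=epi(g)\vee epi(h)$. Since extremality of $f\in Cvx_T(K)$ is defined purely through $\hat{\inf}$ of two elements, while extremality of a set in $\K^{n+1}(A,B)$ is defined purely through $\vee$ of two elements, $f$ is extremal if and only if $epi(f)$ is extremal in $\K^{n+1}(A,B)$.

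Next, given an extremal $f$, I would apply Lemma \ref{Lem_Extremal-Property-Epi-Graphs} to $epi(f)$ to get a point $x\in B$ with $epi(f)=A\vee\{x\}$, and write $x=(k,h)$ with $k\in K$, $h\ge 0$. If $k\in T$ then $A\vee\{x\}=A$, so $epi(f)=epi(1_T)$ and $f=1_T$. If $k\in K\setminus T$, I would identify the singleton $\{(k,h)\}$ with an epi-graph: the vertical ray $\{k\}\times(h,\infty)=epi(\delta_{k,h})$ and the point $\{(k,h)\}$ have the same closed convex hull together with $A$, so $A\vee\{(k,h)\}=A\vee epi(\delta_{k,h})$. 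Translating back through the dictionary, $epi(f)=epi(1_T)\vee epi(\delta_{k,h})=epi(\hat{\inf}\{1_T,\delta_{k,h}\})$, hence $f=\hat{\inf}\{1_T,\delta_{k,h}\}$. That the two listed functions genuinely lie in $Cvx_T(K)$ is immediate, since in each case $epi(f)=A\vee\{x\}\in\K^{n+1}(A,B)$.

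There is essentially no hard step here; it is a bookkeeping translation once Lemma \ref{Lem_Extremal-Property-Epi-Graphs} is available. The only point requiring a little care — and the only thing I would call a potential obstacle — is the convention on epi-graphs, namely the open-versus-closed behaviour in the last coordinate: because $\vee$ is defined in this paper as an intersection of \emph{closed} convex sets whereas $epi(f)$ is naturally strict in the vertical variable, one must make sure the bijection $f\leftrightarrow epi(f)$ borrowed from Section \ref{Sect_CvxTK} is applied consistently, in particular that it is the ray $\{k\}\times(h,\infty)$ rather than its closure that corresponds to $\delta_{k,h}$, and that this distinction does not affect any of the joins above. This is purely notational and does not change the argument.
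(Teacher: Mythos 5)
Your proposal is correct and follows essentially the same route as the paper: the paper's own proof likewise just applies Lemma \ref{Lem_Extremal-Property-Epi-Graphs} to $epi(f)$ and splits on whether the resulting point $x$ lies in $epi(1_T)$, concluding $f=1_T$ or $f=\hat{\inf}\{1_T,\delta_{k,h}\}$ accordingly. You spell out the lattice-theoretic bookkeeping and the open-versus-closed convention more explicitly than the paper does, but the substance is identical.
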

\noindent{\bf Proof of Lemma \ref{Lem_Extremal-Property-Functions}.}
By Lemma \ref{Lem_Extremal-Property-Epi-Graphs}, $epi(f)=epi(1_T)
\vee\{x\}$ for some $x\in epi(1_K)$. If $x\in epi(1_T)$, then
$f=1_T$. If $x\not\in epi(1_T)$, then $f=\hat{\inf}\{1_T,
\delta_{k,h}\}$ for some $k,h$ as stated above.
\qed 

\subsubsection{The point map}\label{Sect_The-Point-Map}
So far we have seen that an order isomorphism $\T:Cvx_{T_1}(K_1)\to
Cvx_{T_2}(K_2)$ is in particular a bijection between the extremal
families. Clearly $\T(1_{T_1})=1_{T_2}$. Aside of the maximal
element $1_{T_1}$, each extremal function in $Cvx_{T_1}(K_1)$
corresponds to a point in $\R^{n+1}$, thus $\T$ induces a bijective
point map $F:(K_1\setminus T_1)\times\R^+\to (K_2\setminus T_2)
\times\R^+$.
%
%
Denote by $E_1$ the interior of the set $(K_1\setminus T_1)\times
\R^+$ (by our assumption, it is connected). The sets
$(K_i\setminus T_i)\times\R^+$ inherit the partial order structure
of $Cvx_{T_i}(K_i)$, after restriction to the set of extremal
elements, and the bijective map $F:(K_1\setminus T_1)\times\R^+ \to
(K_2\setminus T_2)\times\R^+$ is an order isomorphism. In the
following lemmas we will use the fact that the injective map
$F|_{E_1}:E_1\to(K_2\setminus T_2)\times\R^+$ is an order
isomorphism on its image, to prove 
that for {\em some} intervals $[a,b]\subset E_1$, $F([a,b])$ is
again an interval (these can be characterized as the ones that,
extended to a full line, do not intersect $epi(1_{T_1})$). Since the
use of the uniqueness Theorem \ref{Thm_FL-Unique-nD} requires the
preservation of {\em all} intervals, we apply a result by Shiffman
from \cite{Shiffman}, which roughly states that if a set of points
is covered by an open set of intervals which are all mapped to
intervals, then the inducing map is fractional linear. More
precisely, denote by $\L(\R^n)$ the set of all lines in $\R^n$, not
necessarily intersecting the origin. It may be seen as a subset of
the Grassmannian $G_{n+1,2}$, therefore it is equipped with the
usual inherited metric topology (for some details see Remark 4
below). Denoting by $\L(U)\subseteq\L(\R^n)$ the set of all such
lines intersecting a given set $U\subseteq\R^n$, we have

\begin{thm}\label{Thm_Shiffman}\cite{Shiffman}
Let $n\ge2$, let $U$ be an open connected set in $\R^n$, and let
$\L_0$ be an open subset of $\L(U)$, which covers $U$, i.e. $U
\subseteq\cup_{l\in\L_0}l$. Assume that $F:U\to\R^n$ is a continuous
injective map, and that $F(l\cap U)$ is contained in a line for all
$l\in\L_0$. Then $F$ is fractional linear.
\end{thm}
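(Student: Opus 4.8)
We sketch a proof; the plan is to reduce everything to the uniqueness result already proved, Theorem \ref{Thm_FL-Unique-nD}. Concretely, I would show that every $p\in U$ has an open convex neighbourhood $V\subseteq U$ on which $F$ maps every interval to an interval. Granting this, Theorem \ref{Thm_FL-Unique-nD} makes $F|_V$ fractional linear; for two overlapping such neighbourhoods the two fractional linear maps agree on the (open) intersection, hence are induced by the same matrix by Corollary \ref{Lem_Mizdahim-Interior}; and since $U$ is open and connected, a chain of overlapping balls forces a single matrix to induce $F$ on all of $U$, exactly as in the last step of the proof of Theorem \ref{Thm_FL-Unique-nD} and in Remark \ref{Rem_FL-Uniq-Open-Conn}. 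Thus the whole content is the \emph{local} upgrade ``an open family of lines is mapped to lines'' $\Rightarrow$ ``all nearby lines are mapped to lines.''

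First I would record what openness of $\L_0$ gives. Since $\L_0$ is open in $\L(U)\subseteq G_{n+1,2}$ and $U\subseteq\bigcup_{l\in\L_0}l$, each $p\in U$ lies on some $l\in\L_0$, and because the map $\R P^{n-1}\to\L(U)$ sending a direction to the line through $p$ in that direction is an embedding near the direction of $l$, the set $\Theta_p$ of directions $\theta$ for which the line through $p$ with direction $\theta$ belongs to $\L_0$ is a \emph{nonempty open} subset of $\R P^{n-1}$ — in particular an infinite continuum. Openness also yields: if $\ell_0\in\L_0$ passes through $p$ and $P\in\ell_0\cap U$, then every line joining $P$ to a point sufficiently near $p$ is again in $\L_0$ (it is close to $\ell_0$ in $\L(U)$). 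Hence, fixing one $\ell_0\in\L_0$ through $p$ and two points $P\ne Q$ on $\ell_0\cap U$, there is a ball $V=B(p,\delta)\subseteq U$ small enough that all the lines $\overline{Px}$, $\overline{Qx}$ with $x\in V$, and the line $\overline{PQ}=\ell_0$ itself, lie in $\L_0$.

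The heart is to show $F$ sends every segment $[a,b]\subseteq V$ into a line. Fix such $[a,b]$ and a point $r\in[a,b]$, so $a,b,r$ are collinear; after possibly replacing $(\ell_0,P,Q)$ by another triple drawn from the continuum $\Theta_p$, so as to dodge the finitely many degenerate choices for this segment (such as $P$ or $Q$ lying on the line $\overline{ab}$, or $a$ or $b$ lying on $\ell_0$), I may assume general position. The seven lines $\overline{PQ},\overline{Pa},\overline{Pb},\overline{Pr},\overline{Qa},\overline{Qb},\overline{Qr}$ then all lie in $\L_0$, so $F$ carries each into an honest line, and by injectivity $F(a)$ is the unique common point of the image lines of $\overline{Pa}$ and $\overline{Qa}$, and similarly for $b$ and $r$. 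Arranging $a,b,r$ as the collinear ``axis'' points of a Desargues configuration whose two perspective triangles are governed by $P$ and $Q$, and applying the converse of Desargues' theorem to the $F$-image configuration (valid since $\R P^n$ with $n\ge2$ is Desarguesian), I obtain that $F(r)$ lies on the line through $F(a)$ and $F(b)$. As $r$ was arbitrary, $F([a,b])$ is contained in that line; since $F$ is continuous and injective, $F|_{[a,b]}$ is a homeomorphism onto a compact connected subset of the line, i.e. $F([a,b])=[F(a),F(b)]$. So $F|_V$ is an injective interval preserving map, as required.

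Applying Theorem \ref{Thm_FL-Unique-nD} to the convex body $V$ and then patching over a covering of $U$ as described in the first paragraph completes the proof. The hard part will be making the Desargues step airtight: one must organize the choice of the auxiliary line $\ell_0$ and the points $P,Q$ — possibly using several such triples to cover all segments inside a single $V$ — so that the seven auxiliary lines genuinely belong to the prescribed family $\L_0$ and the $F$-image configuration stays non-degenerate, and then verify that the converse Desargues theorem delivers precisely the collinearity $F(r)\in\overline{F(a)F(b)}$ rather than some weaker relation. This bookkeeping is essentially Shiffman's extension-by-Desargues mechanism, and it is where nearly all of the effort goes.
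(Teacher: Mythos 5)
The paper does not actually prove this theorem: it is quoted verbatim from \cite{Shiffman}, so there is no internal proof to compare against. Judged on its own terms, your architecture is sensible and even attractive: localize to small balls $V$, show $F|_V$ preserves intervals, invoke Theorem \ref{Thm_FL-Unique-nD}, and patch overlapping balls with Corollary \ref{Lem_Mizdahim-Interior} exactly as in Remark \ref{Rem_FL-Uniq-Open-Conn}. This replaces Shiffman's global route (extend to a collineation of all of $\R P^n$, then apply the fundamental theorem of projective geometry) by the paper's local uniqueness theorem, which is a legitimate simplification of the endgame. Your openness bookkeeping (the set $\Theta_p$ of admissible directions is open and nonempty, and all lines $\overline{Px}$, $\overline{Qx}$ with $x$ in a small ball lie in $\L_0$) is also correct.

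The genuine gap is the Desargues step, which is the entire mathematical content of the theorem, and the configuration you describe does not work. With five points $P,Q,a,b,r$ and the seven lines $\overline{PQ},\overline{Pa},\overline{Pb},\overline{Pr},\overline{Qa},\overline{Qb},\overline{Qr}$, the hypothesis ``each of these lines is mapped into a line'' imposes \emph{no constraint whatsoever} on the images: for any injective assignment of images to the five points, each of the seven image sets is automatically contained in the line through the two relevant image points. So nothing forces $F(a),F(b),F(r)$ to be collinear, and the converse of Desargues' theorem cannot be applied, because this is not a Desargues configuration (Desargues needs ten points and ten lines: two genuine triangles $A_1A_2A_3$, $B_1B_2B_3$, a center $O$ on the three lines $A_iB_i$, and the three axis points $A_iA_j\cap B_iB_j$, which is where $a,b,r$ would have to sit). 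To make the argument work you must produce, for each segment $[a,b]$ and each $r\in[a,b]$, six auxiliary points and roughly nine or ten auxiliary lines, \emph{all} lying in the prescribed open family $\L_0$ and all meeting inside $U$, with the image configuration nondegenerate; verifying that openness of $\L_0$ and a perturbation argument always permit such a choice is precisely Shiffman's ``extension-by-Desargues'' mechanism. You have named this as the hard part and deferred it, so the proposal is a correct plan with the central step missing rather than a proof.
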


\noindent{\bf Remarks

\noindent 1.} Theorem \ref{Thm_Shiffman} is adjusted to the real,
linear, setting (i.e. when $U$ is a subset of $\R^n$, which is
embedded in $\R P^n$), and is a particular case of the more general
statement Shiffman proves in \cite{Shiffman}. The general result
applies for subsets of $\R P^n$ or $\C P^n$, and states that the map
$F$ is projective linear.

\noindent{\bf 2.} In \cite{Shiffman}, Theorem \ref{Thm_Shiffman} is
proved for $\R P^n$ and $\C P^n$ simultaneously. However,
considering only the case of $\R P^n$, one may check (by following
the proof in \cite{Shiffman}), that in this case continuity is
actually not required, and may be replaced by the following weaker
condition; if $I\subset U$ is an interval and $I\subset l\in\L_0$,
then $F(I)$ is again an interval. We will use this stronger version
of Theorem \ref{Thm_Shiffman}.
%

\noindent{\bf 3.} In our setting, we have epi-graphs of functions in
$Cvx_T(K)$, therefore we apply Theorem \ref{Thm_Shiffman} to the
function $F$ defined on the set $U=E_1\subset\R^{n+1}$.

\noindent{\bf 4.} A line in $\L(\R^n)$ is determined by its closest
point to the origin and its direction. That
is, for every $l\in\L(\R^n)$ let $x_l\in l$ be the unique point
satisfying $|x_l|=\min\{|x|: x\in l\}$, and let $u_l\in\{x_l\}
^\perp$ be one of the two points satisfying $l=\{x_l+tu_l,t\in\R\}$,
$|u_l|=1$ (the other being $-u_l$). Note that directions in $\{x_l\}
^\perp$ correspond to $S^{n-2}$ if $x_l\neq0$, and to $S^{n-1}$ if
$x_l=0$. Denoting the line $l$ by the pair $(x_l,u_l)$, we get a
correspondence between $\L(\R^n)\subset G_{n+1,2}$ and
$\left((\R^n\setminus\{0\})\times S^{n-2}\right)\,\bigcup\,
\left(\{0\}\times S^{n-1}\right)$, which is 1-1, modulo the $\pm$
choice in the direction $u$. The metric $d$ on $\L(\R^n)$ is
inherited from that on $G_{n+1,2}$, and it follows that
$d((x,u_1),(x,u_2))=|u_1-u_2|$, and that $d((x_1,u),(x_2,u))=
d(\hat{(x_1,1)},\hat{(x_2,1)})$, where $\hat{x}=\frac{x}{|x|}$.\\
A neighborhood of $(x,u)$ is therefore constructed by perturbing
simultaneously $x$ and $u$.
It can be checked that such a perturbation contains the following
``cylinder'' of lines; fix a point $z\in(x,u)$, let $M>0$, let $a,b
\in(x,u)$ satisfy $|a-z|=|b-z|=M$, and let $A,B$ be open balls of
radius $1/M$ and centers $a,b$ respectively. We take our
``cylinder'' of lines to be $\L_{l,z,M}:=\L(A)\cap\L(B)$. For every
$z\in l$ and every $M>0$, there exists a small perturbation of
$l=(x,u)$ which is contained in $\L_{l,z,M}$. More precisely, there
exists $\varepsilon>0$ such that $G_{l,\varepsilon}=
\{(y,v):|y-x|+|v-u|<\varepsilon\}\subset\L_{l,z,M}$. This fact is
useful in the proof of Lemma \ref{Lem_Lines-Family-Is-Rich}.

Let $\tilde{\L_0}:=\L(E_1)\setminus \L(T_1\times\R^+)$, that is,
$\tilde{\L_0}$ is the set of lines through $E_1$ (the domain of
$F$), which do not intersect the inner half cylinder $T_1\times
\R^+$. In Lemma \ref{Lem_Lines-Family-Is-Rich} we prove that the
interior of $\tilde{\L_0}$, denoted $\L_0$, is an open subset of
$\L(E_1)$ which covers $E_1$, and in Lemma
\ref{Lem_Lines-Family-Is-Good} we prove that $F(l\cap E_1)$ is
contained in a line for all $l\in\L_0$, and that intervals which are
segments of lines in $\L_0$ are mapped to intervals.
%
%

\begin{lem}\label{Lem_Lines-Family-Is-Rich} The open set $\L_0=int
(\tilde{\L_0})$ described above, covers $E_1$. That is, \[E_1
\subseteq\bigcup_{l\in\L_0} l.\]
\end{lem}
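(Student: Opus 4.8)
The goal is to show that every point $z \in E_1$ lies on some line $l \in \L_0 = \mathrm{int}(\tilde{\L_0})$, where $\tilde{\L_0} = \L(E_1) \setminus \L(T_1\times\R^+)$. First I would fix a point $z = (x,h) \in E_1$, so $x \in \mathrm{int}(K_1\setminus T_1)$ and $h > 0$. The plan is to exhibit an explicit line through $z$ that (a) stays inside $E_1$, and more importantly (b) has an entire neighbourhood (in the topology on $\L(\R^{n+1})$ described in Remark 4 of the excerpt) of lines that also avoid $T_1\times\R^+$ and meet $E_1$ — this neighbourhood condition is exactly what it means for $l$ to lie in the interior $\L_0$.

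The natural candidate is a line through $z$ pointing ``straight up'', i.e. in the $e_{n+1}$ direction, or more safely a line through $z$ in a direction that points away from $T_1$. Since $T_1$ is compact and $x \notin T_1$, there is a hyperplane in $\R^n$ strictly separating $x$ from $T_1$; pick a direction $u \in \R^n$ pointing from the $T_1$-side toward $x$, combine it with a positive $e_{n+1}$-component, and let $l = \{ z + tv : t \in \R\}$ with $v = (u, s)$ for suitable $s>0$. The key geometric facts to verify are: the open set $E_1$, being the interior of $(K_1\setminus T_1)\times\R^+$, contains a full neighbourhood of $z$; and the half-cylinder $T_1\times\R^+$ is a closed set whose ``shadow'' avoids $l$ because $l$'s $\R^n$-projection is a ray (well, a line) that, once it leaves a bounded region, moves monotonically away from the compact set $T_1$ in the chosen separating direction, while its $(n+1)$-st coordinate grows. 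Concretely, one shows $l \cap (T_1\times\R^+) = \emptyset$ by checking that for all $t\in\R$, either the $\R^n$-coordinate $x + tu$ is on the wrong side of the separating hyperplane of $T_1$, or (this is where $h>0$ and $s$ are used) we are close enough to $z\in E_1$ that the point is in the interior of $(K_1\setminus T_1)\times\R^+$, hence not in $T_1\times\R^+$. Then, using the explicit description of neighbourhoods in $\L(\R^{n+1})$ from Remark 4 — a small perturbation $(y,w)$ of $(x_l,u_l)$ — and the ``cylinder of lines'' $\L_{l,z,M} = \L(A)\cap\L(B)$ construction mentioned there, I would choose $M$ large enough that the balls $A, B$ of radius $1/M$ around points $a,b \in l$ at distance $M$ from $z$ are themselves far from $T_1\times\R^+$ and contained in $E_1$; every line through both $A$ and $B$ then both meets $E_1$ and misses $T_1\times\R^+$, giving a whole neighbourhood $G_{l,\varepsilon} \subseteq \tilde{\L_0}$, so $l \in \L_0$.

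The main obstacle, I expect, is handling the interplay near the boundary of $K_1$: the line $l$ must not only avoid $T_1\times\R^+$ but also meet $E_1 = \mathrm{int}((K_1\setminus T_1)\times\R^+)$ — and a whole neighbourhood of lines must do so. Choosing the direction $v$ with a strictly positive last coordinate helps, since moving up into $\R^{n+1}$ keeps us inside $K_1\times\R^+$ for free as long as the $\R^n$-coordinate stays in $K_1$; but one must be careful that the $\R^n$-projection of $l$ (and of nearby lines) does not exit $K_1$ before it has cleared $T_1$. This is where the compactness of $T_1$ and $K_1$, the connectedness of $\mathrm{int}(K_1\setminus T_1)$, and the freedom to take the two reference balls $A,B$ high up (large $e_{n+1}$-coordinate, hence comfortably inside the cylinder $K_1\times\R^+$ and away from $T_1\times\R^+$) all get used together. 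Once the reference line and the scale $M$ are chosen correctly, the covering statement $E_1 \subseteq \bigcup_{l\in\L_0} l$ follows since $z$ was arbitrary.
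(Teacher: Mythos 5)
Your overall strategy matches the paper's: exhibit an explicit line through the given point $z\in E_1$ that misses the half-cylinder $T_1\times\R^+$, and then use the ``cylinder of lines'' $\L_{l,z,M}$ from Remark 4, together with compactness of $T_1$, to show that for large $M$ a whole neighbourhood of that line still misses $T_1\times\R^+$ (membership in $\L(E_1)$ is automatic for nearby lines, since $E_1$ is open and $z\in l\cap E_1$). The difference is only in the witness line. The paper separates $z$ from the closed convex set $T_1\times\R^+$ by a hyperplane in $\R^{n+1}$, translates it to pass through $z$, and takes \emph{any} line through $z$ inside that translate which is not parallel to $\{0\}\times\R^+$; such a line keeps a uniform positive distance from the half-cylinder, and its non-verticality makes both tails escape horizontally, which is exactly what the $\L_{l,z,M}$ estimate needs. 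Your line is transverse instead: direction $(u,s)$ with $u$ a separating direction for $T_1$ in $\R^n$ and $s>0$ tuned so that by the time the horizontal projection crosses back to the $T_1$ side, the last coordinate has already gone negative. This works, but three points need repair in a full writeup. (i) Your dichotomy (``wrong side of the hyperplane'' or ``close to $z$'') does not literally cover the tail $t\to-\infty$; the case you actually need there is ``$(n{+}1)$-st coordinate negative,'' which is what the choice of $s$ buys you — spell that out. (ii) The claim that the reference balls $A,B$ can be taken ``high up\dots hence away from $T_1\times\R^+$'' and ``contained in $E_1$'' is wrong: the half-cylinder is unbounded upward, so large height creates no distance from it, and the downward ball leaves $E_1$ entirely for large $M$. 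Neither is needed — lines through $A$ and $B$ need only \emph{meet} $E_1$ (they pass within $1/M$ of $z$) and \emph{miss} $T_1\times\R^+$, for which the relevant facts are that $\inf_t d(l(t),T_1\times\R^+)>0$ and that this distance grows linearly along both tails. (iii) For the same reason, there is no requirement that $l$ or nearby lines stay inside $K_1\times\R^+$, so the ``boundary of $K_1$'' obstacle you anticipate does not arise.
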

\noindent {\bf Proof of Lemma \ref{Lem_Lines-Family-Is-Rich}.} Let
$x\in E_1$. We may separate $x$ from the closed set $T_1\times\R^+$
by a hyperplane. Denote by $H$ the translate of this hyperplane
containing $x$. We claim that if $l\subset H$ is a line containing
$x$, which is not parallel to the ray $\{0\}\times\R^+$, then
$l\in\L_0$. Indeed, it is clear that $l\in\tilde{\L_0}$. Consider
the set of lines $\L_{l,x,M}$, for some $M>0$ (see the last remark).
It is an open neighborhood of $l$, and since $T_1$ is compact and
$l$ is not parallel to $\{0\}\times\R^+$, we have (for large enough
$M$) that $\L_M\subset\tilde{\L_0}$, thus $l\in\L_0$. This implies
$x\in\bigcup_{l\in\L_0} l$, and hence $\L_0$ covers $E_1$. \qed

Next we prove that the set $\L_0$ consists exactly of all the lines
in $\L(E_1)$, with the property that points along these lines are
non comparable.
\begin{lem}\label{Lem_Lines-Family-Is-Non-Comp}
Let $a,b\in E_1$ be two different points, and let $l_{a,b}$ be the
line containing $a$ and $b$. Then $l_{a,b}\not\in\L_0$ if and only
if $a$ and $b$ are comparable.
\end{lem}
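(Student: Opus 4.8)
Here is my plan for proving Lemma \ref{Lem_Lines-Family-Is-Non-Comp}.

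\textbf{Setup and statement of what to prove.} Recall that two extremal functions correspond to points in $(K_1\setminus T_1)\times\R^+$, and the partial order on these points is the one inherited from $Cvx_{T_1}(K_1)$: by Lemma \ref{Lem_Extremal-Property-Functions}, the point $(k,h)$ corresponds to the function $f_{k,h}:=\hat{\inf}\{1_{T_1},\delta_{k,h}\}$, and $f_{k,h}\le f_{k',h'}$ exactly when $epi(f_{k,h})\supseteq epi(f_{k',h'})$, i.e. when $(k',h')\in epi(f_{k,h}) = epi(1_{T_1})\vee\{(k,h)\}$. So the comparability of $a=(k,h)$ and $b=(k',h')$ is a purely geometric condition: one of the two points lies in the convex hull of $epi(1_{T_1})=T_1\times\R^+$ with the other. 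The plan is to show: the open line $l_{a,b}$ fails to be in $\L_0$ precisely when this geometric ``one-in-the-cone-over-the-other'' relation holds.

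\textbf{The two directions.} First, suppose $a$ and $b$ are comparable, say $b\in epi(1_{T_1})\vee\{a\}$. I want to show $l_{a,b}\notin\L_0=\mathrm{int}(\tilde\L_0)$. The point is that $b$ being in the convex hull of $T_1\times\R^+$ and $a$ means $b$ lies on a segment from $a$ to some point of $T_1\times\R^+$; hence the ray from $a$ through $b$, continued, enters $T_1\times\R^+$ — or, if $b$ is a boundary case, every such ray does. Thus the line $l_{a,b}$, extended, meets $T_1\times\R^+$ (or is a limit of lines that do), so $l_{a,b}\notin\mathrm{int}(\tilde\L_0)$: any neighborhood of $l_{a,b}$ in $\L(\R^n)$ contains lines that hit $T_1\times\R^+$. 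I need to be a little careful about the degenerate situation where $b$ lies on the relative boundary of $epi(1_{T_1})\vee\{a\}$; here the direction from $a$ to $b$ is a limit of directions pointing into $T_1\times\R^+$, so again $l_{a,b}$ is on the boundary of $\tilde\L_0$ and not in its interior. Conversely, suppose $l_{a,b}\notin\L_0$. By Lemma \ref{Lem_Lines-Family-Is-Rich}'s proof technique, if $l_{a,b}\in\tilde\L_0$ and $l_{a,b}$ is not parallel to $\{0\}\times\R^+$ then (since $T_1\times\R^+$ is closed and $T_1$ compact) a whole cylinder-neighborhood $\L_{l,z,M}$ of $l_{a,b}$ avoids $T_1\times\R^+$, giving $l_{a,b}\in\L_0$ — contradiction. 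So either $l_{a,b}\in\L(T_1\times\R^+)$ already, i.e. the extended line meets $T_1\times\R^+$, or $l_{a,b}$ is parallel to $\{0\}\times\R^+$. In the first case, the line through $a$ and $b$ hits $T_1\times\R^+$; combined with $a,b\in(K_1\setminus T_1)\times\R^+$, an elementary convexity argument (the intersection point lies on one side, forcing one of $a,b$ into the convex hull of $T_1\times\R^+$ with the other) gives comparability. In the second ``vertical'' case, $a$ and $b$ differ only in the last coordinate, hence are trivially comparable (one $\delta$ is above the other, and $\hat\inf$ with $1_{T_1}$ preserves this).

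\textbf{The geometric core.} The key computation is the following elementary fact, which I will state and verify: for $a,b\in(K_1\setminus T_1)\times\R^+\subseteq\R^{n+1}$, the line $l_{a,b}$ meets the closed convex ``cylinder'' $C:=T_1\times\R^+$ if and only if $a\in C\vee\{b\}$ or $b\in C\vee\{a\}$. The ``if'' direction is immediate from convexity. For ``only if'', let $p\in l_{a,b}\cap C$; then $p,a,b$ are collinear, so one of $a,b$ lies between the other and $p$ (on the line), say $b\in[a,p]$ — but then $b\in\mathrm{conv}\{a,p\}\subseteq C\vee\{a\}$ — unless $a$ and $b$ lie on the same side of $p$, in which case one of them, say $a$, lies in $[b,p]\cup\text{(the ray beyond }p)$; the segment case gives $a\in C\vee\{b\}$ directly, and the ``beyond $p$'' case means $p\in[a,b]$, impossible since then $p$ would be a convex combination of $a,b\notin C$... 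I'll need to handle this last sub-case by noting $C$ is a cylinder over the compact $T_1$: if $p\in[a,b]$ with $p\in C$ but $a,b\notin C$, then since $C=T_1\times\R^+$ is convex, this is genuinely impossible, so it does not occur. Actually the cleanest route is to parametrize $l_{a,b}$ and use that $\{t : a+t(b-a)\in C\}$ is a closed interval (possibly a ray), and then observe where $t=0$ (point $a$) and $t=1$ (point $b$) sit relative to it.

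\textbf{Anticipated main obstacle.} The main delicacy is \emph{not} the generic case but the boundary/degenerate cases: when the line $l_{a,b}$ is tangent to $C$, when $b$ lies on the relative boundary of $epi(1_{T_1})\vee\{a\}$, and the vertical-line case where $l_{a,b}\parallel\{0\}\times\R^+$. These are exactly the configurations where ``$l_{a,b}$ meets $C$'' and ``$l_{a,b}\in\mathrm{int}(\tilde\L_0)$'' could conceivably diverge, and where the distinction between $\tilde\L_0$ and its interior $\L_0$ matters. I expect to resolve all of them by using: (i) $T_1$ is \emph{compact} (so $C$ is a ``thin'' cylinder and nearby lines to a tangent line can be pushed off $C$ only on one side — which is why tangent lines are on $\partial\tilde\L_0$, hence excluded from $\L_0$, \emph{and} are counted as comparable), and (ii) the remark at the end of Section \ref{Sect_The-Point-Map} on cylinder-neighborhoods $\L_{l,z,M}$ of a line, which makes precise what ``a small perturbation of $l$ avoids $C$'' means. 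Once the geometric equivalence is pinned down in all cases, translating back through Lemma \ref{Lem_Extremal-Property-Functions} to the comparability of $f_a$ and $f_b$ is routine.
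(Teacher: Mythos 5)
Your overall route is the same as the paper's: you split the equivalence into the same two implications, you isolate the same delicate configurations (line meeting the cylinder, tangent lines on $\partial\tilde{\L_0}$, vertical lines), and your use of the cylinder-neighborhoods $\L_{l,z,M}$ to show that a non-vertical line of $\tilde{\L_0}$ lies in the interior $\L_0$ is exactly the paper's argument. The forward implication (comparable $\Rightarrow l_{a,b}\notin\L_0$) is handled correctly and as in the paper.

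The genuine gap is in your ``geometric core''. You claim that the configuration $p\in[a,b]$, $p\in C=T_1\times\R^+$, $a,b\notin C$ is ``genuinely impossible'' because $C$ is convex. That is backwards: convexity of $C$ prevents a segment with both endpoints \emph{inside} $C$ from leaving it; it does nothing to stop a segment with both endpoints outside $C$ from crossing it. Take $n=2$, $T_1$ the closed unit disk, $K_1$ the disk of radius $2$ (so $int(K_1\setminus T_1)$ is a connected annulus, as the hypotheses permit), $a=(-3/2,0,1)$ and $b=(3/2,0,1)$. The segment $[a,b]$ meets $T_1\times\R^+$ at $(0,0,1)$, so $l_{a,b}\notin\tilde{\L_0}\supseteq\L_0$; yet neither point lies in the convex hull of the cylinder with the other (the height-$1$ slice of $(T_1\times\R^+)\vee\{a\}$ is $conv(T_1\cup\{(-3/2,0)\})$, which misses $(3/2,0)$), so $a$ and $b$ are not comparable. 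Hence your claimed equivalence ``$l_{a,b}$ meets $C$ iff one of $a,b$ lies in $C\vee\{\text{the other}\}$'' is false, and with it your proof of the implication $l_{a,b}\notin\L_0\Rightarrow a,b$ comparable collapses; it cannot be repaired by the convexity appeal you propose. You should know that the paper's own proof disposes of this case in a single unjustified sentence (``Thus $a$ and $b$ are comparable'') and is open to the same objection, so the example above in fact challenges the ``only if'' direction of the lemma as literally stated. The implication actually used downstream, in Lemma \ref{Lem_Lines-Family-Is-Good}, is only the forward one ($l\in\L_0$ forces every pair of points of $l\cap E_1$ to be non-comparable), which both you and the paper establish correctly.
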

\noindent {\bf Proof of Lemma \ref{Lem_Lines-Family-Is-Non-Comp}.}
The point $a$ is ``greater'' than the point $b$, if and only if
$a\in epi(1_{T_1})\vee\{b\}$, therefore $a$ and $b$ are comparable
if and only if $l_{a,b}$ is in the closure of $\L(\{b\})\cap\L(T_1
\times\R^+)\subset\L(E_1)\setminus\tilde{\L_0}$ (in fact, the
closure is only necessary if $a$ and $b$ are on the same translate
of $\{0\}\times\R^+$). This closure does not intersect $\L_0$, the
interior of $\tilde{\L_0}$, therefore we have shown:\[a,b \mbox{ are
comparable}\qquad\Rightarrow\qquad l_{a,b}\not\in\L_0.\]

If $l_{a,b}\not\in\L_0$ there are two cases. First assume $l_{a,b}
\not\in\tilde{\L_0}$ (that is, $l_{a,b}$ intersects $T_1\times
\R^+$). Thus $a$ and $b$ are comparable (one is in the convex hull
of the other and $epi(1_{T_1})$). Otherwise, assume $l_{a,b}\in
\tilde{\L_0}$. Since it is not in the interior $\L_0$, we get
$l_{a,b}\in\partial\tilde{\L_0}$. Since $\L(E_1)\setminus\bigcup_
{x\in K_1\setminus T_1}\{x\}\times\R$ is open, and $\L(T_1\times
\R^+)$ is closed, $\L_0=int(\tilde{\L_0})$ must contain
$\left(\L(E_1)
\setminus \bigcup_{x\in K_1\setminus T_1}\{\{x\}\times\R\}
\right)
\setminus \L(T_1\times\R^+)$, and therefore $\L_0=\tilde{\L_0}
\setminus\bigcup_{x\in K_1\setminus T_1}\{x\}\times\R.$ Thus
$l_{a,b}\in\partial\tilde{\L_0}$ implies that $l_{a,b}$ is parallel
to the ray $\{0\}\times\R$, and that $a$ and $b$ are comparable.
Thus we have shown: \[l_{a,b}\not\in\L_0\qquad\Rightarrow\qquad a,b
\mbox{ are comparable}.\] \qed

\begin{lem}\label{Lem_Lines-Family-Is-Good} If $l\in\L_0$, then
$F(l\cap E_1)$ is contained in a line. Moreover, if $I\subset E_1$
is an interval and $I\subset l\in\L_0$, then $F(I)$ is again an
interval.
\end{lem}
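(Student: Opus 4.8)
The plan is to recover the affine betweenness relation along a line $l\in\L_0$ purely from the partial order that $F$ preserves, so that the geometric conclusion follows formally. Recall from Lemma \ref{Lem_Extremal-Property-Functions} that each point $x=(k,h)\in(K_1\setminus T_1)\times\R^+$ corresponds to the extremal function $f_x:=\hat{\inf}\{1_{T_1},\delta_{k,h}\}$ with $epi(f_x)=epi(1_{T_1})\vee\{x\}$, and that $F$ is an order isomorphism for the order $x\le y\iff f_x\le f_y$; concretely $x\le y$ exactly when $y\in epi(1_{T_1})\vee\{x\}$, so ``$x$ and $y$ are comparable'' is an order-theoretic notion that $F$ and $F^{-1}$ respect. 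First I observe that $l\cap E_1$ is an interval: since $l$ misses $T_1\times\R^+$ we have $l\cap E_1=l\cap int(K_1\times\R^+)$, a relatively open segment; and by Lemma \ref{Lem_Lines-Family-Is-Non-Comp} any two of its points — which determine the line $l\in\L_0$ — are incomparable, so $l\cap E_1$ is an antichain, and hence so is $F(l\cap E_1)$.

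The key step is an order-theoretic description of betweenness: for incomparable $a,b\in E_1$ and a point $c\in E_1$, one has $c\in[a,b]$ \emph{if and only if} $c$ is incomparable to both $a$ and $b$, and every $d$ in the domain that is comparable to both $a$ and $b$ is also comparable to $c$. For ``only if'': if $d\le a$ and $d\le b$ then $a,b$ lie in the convex set $epi(1_{T_1})\vee\{d\}$, hence so does $c\in[a,b]$, i.e. $d\le c$; while if $d\ge a$ and $d\ge b$ then $epi(f_d)\subseteq epi(f_a)\cap epi(f_b)$, so it suffices to know $(epi(1_{T_1})\vee\{a\})\cap(epi(1_{T_1})\vee\{b\})\subseteq epi(1_{T_1})\vee\{c\}$ for $c\in[a,b]$, which is a direct convexity computation (the ``tent'' over $c$ is squeezed between those over $a$ and $b$, where the compactness of $T_1$ is used to control the lower boundary). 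For ``if'' one argues by contraposition: if $c\notin[a,b]$ one separates $c$ from the appropriate convex hull and uses compactness of $K_1$ and $T_1$ (and connectedness of $int(K_1\setminus T_1)$, to land inside $E_1$) to produce a witness $d$ comparable to both $a$ and $b$ — either dominated by both via a large tent when $c$ lies outside $epi(1_{T_1})\vee[a,b]$, or dominating both via a suitable point of $(epi(1_{T_1})\vee\{a\})\cap(epi(1_{T_1})\vee\{b\})$ near the base when $c$ lies inside that hull but off the segment — yet incomparable to $c$.

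Granting this characterization, which is phrased solely through comparability, $F$ carries betweenness triples on $\L_0$-lines to betweenness triples: for $a,b,c\in l\cap E_1$ with $c\in[a,b]$ we get $F(c)\in[F(a),F(b)]$. Fixing two points $a,b\in l\cap E_1$, any other $c\in l\cap E_1$ lies between two of $a,b,c$, so $F(a),F(b),F(c)$ are collinear; hence $F(l\cap E_1)$ lies on the single line through $F(a)$ and $F(b)$, which is the first assertion. The ``moreover'' then follows: for an interval $I\subset E_1$ contained in $l\in\L_0$ we get $F(I)\subseteq[F(a'),F(b')]$ for images of its endpoints, and applying the same betweenness preservation to the order isomorphism $F^{-1}$ (the image line lies in the $\L_0$ of the target side because the endpoint images are incomparable) yields the reverse inclusion, so $F(I)$ is an interval — exactly the hypothesis needed for Shiffman's Theorem \ref{Thm_Shiffman}. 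I expect the main obstacle to be the ``if'' direction of the betweenness characterization, namely the explicit construction of the witness point $d$ in $E_1$ under the various positions of $c$, together with the convex-geometry identity for the tent intersection used in the ``only if'' direction.
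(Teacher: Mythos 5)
Your overall strategy is the right one (it is also the paper's): describe the betweenness relation along $\L_0$-lines in purely order-theoretic terms and transport it through $F$. However, the specific characterization you base everything on is false, and this is a genuine gap rather than a technical obstacle. You claim that for incomparable $a,b$, a point $c$ lies in $[a,b]$ if and only if $c$ is incomparable to $a$ and $b$ and every point $d$ of the domain comparable to both $a$ and $b$ is comparable to $c$. Take $T_1=\{0\}$ (so we are in $Cvx_0(K_1)$), $a=(e_1,1)$, $b=(e_2,1)$; the line through them lies in $\L_0$. The tent of a point $d=(k,h)$ has support $[0,k]$, and no segment $[0,k]$ contains both $e_1$ and $e_2$, nor is any such segment (other than $\{0\}$, which is excluded) contained in both $[0,e_1]$ and $[0,e_2]$; hence \emph{no} point of $(K_1\setminus T_1)\times\R^+$ is comparable to both $a$ and $b$. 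Your test is then vacuously satisfied by every $c$ incomparable to $a$ and $b$, e.g.\ by $c=(e_1+e_2,1)$ or by $c=\left(\frac{e_1+e_2}{2},5\right)$, neither of which lies on $[a,b]$. (The second example survives even if you let $d$ range over all of $Cvx_{T_1}(K_1)$: every $g$ below both $\delta_a$ and $\delta_b$ is below $\hat{\inf}\{\delta_a,\delta_b\}\le\delta_c$, and the only $g$ above both is $1_{\{0\}}$, so comparability alone cannot separate $[a,b]$ from the points vertically above it.) Since your argument needs the ``if'' direction of the characterization on the target side to conclude $F(c)\in[F(a),F(b)]$, the proof does not go through. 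Your ``only if'' direction, by contrast, is correct: if $z=\lambda p+(1-\lambda)a=\mu q+(1-\mu)b$ with $p,q\in epi(1_{T_1})$, then $z$ is a convex combination of $p$, $q$ and $c=ta+(1-t)b$, which is exactly the convexity identity you invoke.

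The missing ingredient is \emph{minimality}, and this is precisely how the paper's proof proceeds: writing $\delta_x$ for the extremal function with epi-graph $(T_1\times\R^+)\vee\{x\}$, the points of $[a,b]$ are characterized as those $x$ with $\delta_x\ge\hat{\inf}\{\delta_a,\delta_b\}$ that are minimal among all points with this property. The condition $\delta_x\ge\hat{\inf}\{\delta_a,\delta_b\}$ says $x$ lies in the convex hull $epi(1_{T_1})\vee\{a\}\vee\{b\}$, every point of that hull dominates some point of $[a,b]$, and points of $[a,b]$ are pairwise incomparable since $l_{a,b}\in\L_0$; minimality therefore picks out exactly the segment (this is where your raised midpoint $\left(\frac{e_1+e_2}{2},5\right)$ gets eliminated). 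Crucially, this characterization uses the non-extremal lattice element $\hat{\inf}\{\delta_a,\delta_b\}$, which is preserved by $\T$ by Proposition \ref{Prop_Sup-To-Sup}; it cannot be reduced to the comparability relation among the extremal points alone. Your incidental observations (that $l\cap E_1$ is an interval of mutually incomparable points, and that reducing the lemma to $F([a,b])=[F(a),F(b)]$ suffices) agree with the paper and are fine.
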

\noindent {\bf Proof of Lemma \ref{Lem_Lines-Family-Is-Good}.} The
intersection of every $l\in\L_0$ with the convex set $K_1\times\R^+$
is either a ray or an interval. Since $l\in\L_0$, it does not
intersect $T_1\times\R^+$, and therefore also $l\cap E_1$ is either
a ray or an interval. Thus, by Lemma \ref{Lem_Lines-Family-Is-Non-Comp},
it is enough to show that for every two non comparable points
$a,b\in E_1$, we have $F([a,b])=[F(a),F(b)]$. Denote for every $x\in
E_1$ by $\delta_x$ the function with epi-graph $(T_1\times\R^+)\vee
\{x\}$. Of all the extremal functions $\delta_x$, only those
corresponding to $x\in[a,b]$ have the following minimality property:
$\delta_x\ge\hat{\inf}\{\delta_a, \delta_b\}$, and for every $y$
with $\delta_y\ge\hat{\inf}\{\delta_a, \delta_b\}$, we have
$\delta_y\not<\delta_x$. This property is preserved by $F$,
therefore the interval $[a,b]$ is mapped to the interval
$[F(a),F(b)]$. \qed

\noindent {\bf Proof of Theorem \ref{Thm_CvxTK-Preserving}.} The set
$E_1$ is open and connected. Therefore, by Lemmas 
\ref{Lem_Lines-Family-Is-Rich} and \ref{Lem_Lines-Family-Is-Good},
we may apply Theorem \ref{Thm_Shiffman} (see Remark 2 after Theorem
\ref{Thm_Shiffman}) to the map $F|_{E_1}$, and conclude it is
fractional linear. To see that $F:(K_1\setminus T_1)\times\R^+\to
(K_2\setminus T_2)\times\R^+$ is fractional linear, note that a
point in the boundary of $(K_1\setminus T_1)\times\R^+$ is the
infimum of all the points below it which are in $E_1$. To see that
$F$ induces the transform $\T:Cvx_{T_1}(K_1)\to Cvx_{T_2}(K_2)$,
note that the epi-graph of a function $f\in Cvx_{T_i}(K_i)$
corresponds to the set of extremal functions above it, and that $f$
is given as the infimum of those extremal functions. Finally we need
to show that $F$ is of $\I$-type, that is, assuming $F$ is a non
affine fractional linear map, we need to show that the defining
hyperplane is parallel to the $\R^+$ direction. If it is not, then
by Section \ref{Sect_Geom_Desc}, the half cylinder $K_1\times\R^+$
is mapped to some cone, which must be $K_2\times\R^+$. But since
$K_2$ is compact, $K_2\times\R^+$ is not a cone. Therefore the map
$F$ is either affine, or it is non affine, but with a defining
hyperplane containing the direction of the epi-graphs (the ray
$\{0\}\times\R^+$). \qed

\end{document}